\documentclass[a4paper, 11pt]{article}
\usepackage{amsmath, amssymb, amsthm, latexsym, enumerate}
\usepackage[alphabetic]{amsrefs}
\usepackage{xspace}
\usepackage{a4wide}
\usepackage[usenames,dvipsnames,svgnames]{xcolor}
\usepackage{tikz}
\usepackage{hyperref}
\usepackage{authblk}
\usepackage{chngcntr}
\usepackage{enumitem}

\newtheorem{btheorem}{Theorem}[]
\newtheorem{bcorollary}[btheorem]{Corollary}

\newtheorem*{ctheorem}{Theorem \hyperref[bthm:key]{\ref{bthm:key}.\ref{key:quasiisom}'}}

\newtheorem{proposition}{Proposition}[section]
\newtheorem{theorem}[proposition]{Theorem}
\newtheorem{corollary}[proposition]{Corollary}

\newtheorem{lemma}[proposition]{Lemma}

\newtheorem*{theorem*}{Theorem}

\theoremstyle{definition}
\newtheorem{definition}[proposition]{Definition}

\newtheorem{remark}[proposition]{Remark}

\newcommand{\set}[1]{\left\{#1\right\}}
\newcommand{\setcon}[2]{\left\{#1\ \left|\ #2\right.\right\}}

\newcommand{\abs}[1]{\left\lvert#1\right\rvert}
\newcommand{\into}{\hookrightarrow}

\newcommand{\R}{\mathbb{R}}
\newcommand{\Z}{\mathbb{Z}}
\newcommand{\N}{\mathbb{N}}

\newcommand{\fpres}[2]{\left\langle #1 \left| #2 \right.\right\rangle}

\newcommand{\cdim}[1]{\textrm{cdim}(#1)}

\newcommand{\Mcd}{\textrm{cdim}_{\partial_M}}
\newcommand{\stabdim}{\textrm{asdim}_s}

\newcommand{\Ngp}[2]{\left(#1\cdot_N #2 \right)_e}
\newcommand{\Npgp}[2]{\left(#1\cdot_{N'} #2 \right)_e}
\newcommand{\Npgpq}[2]{\left(#1\cdot_{N'} #2 \right)_{q(e)}}

\newcommand{\Mod}{\mathrm{Mod}(\Sigma)}
\newcommand{\teich}{\mathcal{T}(\Sigma)}

\title{Stability and the Morse boundary}

\author[1]{Matthew Cordes\thanks{The author was partially supported by NSF grant DMS-1106726}}

\author[2]{David Hume\thanks{During this work the author was supported by the ERC grant
no.\! 278469 and the grant ANR-14-CE25-0004 ``GAMME''}}

\affil[1]{Department of Mathematics, The Technion, Haifa, Israel}
\affil[2]{Mathematical Institute, University of Oxford, Woodstock Road, Oxford, UK}

\date{\today}

\begin{document}

\maketitle

\begin{abstract}
Stable subgroups and the Morse boundary are two systematic approaches to collect and study the hyperbolic aspects of finitely generated groups. In this paper we unify and generalise these strategies by viewing any geodesic metric space as a countable union of stable subspaces: we show that every stable subgroup is a quasi--convex subset of a set in this collection and that the Morse boundary is recovered as the direct limit of the usual Gromov boundaries of these hyperbolic subspaces.

We use this approach, together with results of Leininger--Schleimer, to deduce that there is no purely geometric obstruction to the existence of a non-virtually--free convex cocompact subgroup of a mapping class group.

In addition, we define two new quasi--isometry invariant notions of dimension: the stable dimension, which measures the maximal asymptotic dimension of a stable subset; and the Morse capacity dimension, which naturally generalises Buyalo's capacity dimension for boundaries of hyperbolic spaces.

We prove that every stable subset of a right--angled Artin group is quasi--isometric to a tree; and that the stable dimension of a mapping class group is bounded from above by a multiple of the complexity of the surface. In the case of relatively hyperbolic groups we show that finite stable dimension is inherited from peripheral subgroups.

Finally, we show that all classical small cancellation groups and certain graphical small cancellation groups - including some Gromov monster groups - have stable dimension at most $2$.
\end{abstract}

\section{Introduction}
A subset $Y$ of a geodesic metric space $X$ is \textbf{Morse} if for every $K\geq 1,C\geq 0$ there is some $N=N(K,C)$ such that every $(K,C)$-quasi--geodesic with endpoints on $Y$ is contained in the $N$--neighborhood of $Y$. We call the function $N$ a Morse gauge. This definition has its roots in a classical paper of Morse \cite{Morse}.

Morse geodesics and quasi--geodesics are a recurring theme in the study of groups admitting some sort of large--scale negative curvature.

The Morse lemma, which states that every quasi--geodesic in a hyperbolic space is Morse, is a vital ingredient in the proof that hyperbolicity is preserved by quasi--isometry. Moreover, any point in an asymptotic cone which is contained in the limit of a Morse quasi--geodesic is necessarily a cut--point. This plays a crucial role in the proof that metric relative hyperbolicity is also preserved by quasi--isometry \cite{drutu-sapir}. More generally, every acylindrically hyperbolic group has Morse bi-infinite geodesics \cite{sisto:2016aa}. This class of groups, recently unified by Osin in \cite{OsinAclynhyp}, encompasses many groups of significant interest in geometric group theory: hyperbolic and relatively hyperbolic groups, non-directly decomposable right-angled Artin groups, mapping class groups, and $\mathrm{Out}(F_n)$.

Morse geodesics in geodesic metric spaces can be classified by their contraction and divergence properties \cites{ACGH1,  Drutu-Mozes-Sapir, Aougab-Durham-Taylor}. Furthermore, there are classifications of Morse quasi--geodesics for CAT($0$)--spaces \cite{charney-sultan}, relatively hyperbolic groups and spaces \cite{drutu-sapir,OsinRelhyp} and graphical small cancellation groups \cite{ACGH2}.

There have been several attempts at understanding groups and spaces by considering subgroups/subspaces which are strongly quasi--convex in the ambient group/space. 

We say that a quasi--convex subspace $Y$ of a geodesic metric space $X$ is $N$--\textbf{stable} if every pair of points in $Y$ can be connected by a geodesic which is $N$--Morse in $X$. We say that a subgroup is stable if it is stable as a subspace. It is important to note that this is not the original definition of stability given in \cite{Durham-Taylor}. Our definition detects the same collection of stable subsets up to quasi--isometry, and the two definitions coincide for subgroups of finitely generated groups (cf.\! Lemma \ref{lem:stable-equiv}).

Durham--Taylor prove that the collection of stable subgroups of mapping class groups are precisely those which are convex--cocompact in the sense of Farb--Mosher \cites{Farb-Mosher, Durham-Taylor}. Convex--cocompact subgroups of the mapping class group are all purely pseudo--Anosov, i.e., all infinite order elements are pseudo--Anosov.  Farb--Mosher show if there is a purely pseudo--Anosov subgroup of $\Mod$ that is not convex cocompact, then this subgroup would be a counterexample to Gromov's conjecture that every group with a finite Eilenberg--Mac Lane space and no Baumslag--Solitar subgroups is hyperbolic \cite{BestvinaQList}. 

In an effort to identify examples of convex--cocompact subgroups of the mapping class group, Koberda, Manghas, and Taylor classify the stable subgroups of all right--angled Artin groups \cite{koberda:2014}. They show that these subgroups are always free.

\medskip

A very different approach to collecting Morse geodesics is to consider a boundary of Morse directions. The Morse boundary of a proper geodesic metric space was introduced by the first author in \cite{Cordes15}, generalising the construction of the contracting boundary of a CAT($0$)--space by Charney--Sultan \cite{charney-sultan}. Both boundaries consist of equivalence classes of geodesic rays that travel in ``hyperbolic directions'' in the space (where one ray is equivalent to another if they fellow travel).

The purpose of both constructions is to generalise the Gromov boundary of a hyperbolic space \cite{Gromov87} to other classes of spaces in a quasi--isometrically rigid way; that is, a quasi--isometry of spaces defines a homeomorphism between their boundaries. In general, this is not the case for the usual visual boundary of a CAT($0$)--space as a striking example of Croke--Kleiner proves \cite{CroKle}.

However, Gromov boundaries of hyperbolic spaces enjoy a much stronger form of rigidity. They are metrisable and a quasi--isometry of spaces yields a quasi--symmetry of their boundaries. Even in simple cases the Morse boundary is not metrisable: for example, there is no countable local basis for the topology on the Morse boundary of $(\Z \ast \Z^2)$, in particular it is not metrisable \cite{Murray}.

\medskip
Our goal in this paper is to unite and generalise both these approaches by viewing any geodesic metric space as a union of stable subsets. These subsets will be indexed by Morse gauges $N$ and will be hyperbolic, with hyperbolicity constant depending only on $N$.

Given a geodesic metric space $X$, a point $e\in X$, and a Morse gauge $N$ we define $X^{(N)}_e$ to be the set of all points in $X$ which can be connected to $e$ by an $N$--Morse geodesic.

Our key technical result is the following.

\begin{btheorem}\label{bthm:key} Let $X,Y$ be geodesic metric spaces and let $e\in X$. The family of subsets $X^{(N)}_e$ of $X$ indexed by functions $N:\N^2\to\N$ enjoys the following properties:
\begin{enumerate}[label=\textup{\Roman*}]
\item (Covering) $X=\bigcup_N X^{(N)}_e$.\label{key:covering}
\item (Partial order) If $N\leq N'$, then $X^{(N)}_e\subseteq X^{(N')}_e$. \label{key:PO}
\item (Hyperbolicity) Each $X^{(N)}_e$ is hyperbolic.\label{key:hyperbolic}
\item (Stability) Each $X^{(N)}_e$ is $N'$--stable, where $N'$ depends only on $N$.\label{key:stable}
\item (Universality) Every stable subset of $X$ is a quasi--convex subset of some $X^{(N)}_e$.\label{key:universal}
\item (Boundary) The sequential boundary $\partial_s X^{(N)}_e$ can be equipped with a visual metric which is unique up to quasi--symmetry. An inclusion $X^{(N)}_e\subseteq X^{(N')}_e$ induces a map $\partial_s X^{(N)}_e\to \partial_s X^{(N')}_e$ which is a quasi--symmetry onto its image.\label{key:boundary}
\item (Generalising the Gromov boundary) If $X$ is hyperbolic, then $X=X^{(N)}_e$ for all $N$ sufficiently large, and $\partial_s X^{(N)}_e$ is quasi--symmetric to the Gromov boundary of $X$. \label{key:Gromov}
\item (Generalising the Morse boundary) If $X$ is proper, then its Morse boundary is equal to the direct limit of the $\partial_s X^{(N)}_e$ as topological spaces.\label{key:Mboundary}
\item (Behaviour under quasi--isometry) If $q:X\to Y$ is a quasi--isometry then for every $N$ there exists an $N'$ such that $q(X^{(N)}_e)\subseteq Y^{(N')}_{q(e)}$ and there is an induced embedding $\partial q:\partial_s X^{(N)}_e\to \partial_s Y^{(N')}_{q(e)}$ which is a quasi--symmetry onto its image.\label{key:quasiisom}
\end{enumerate}
\end{btheorem}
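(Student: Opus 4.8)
The plan is to reduce everything to the theory of Gromov-hyperbolic spaces by first recording, once and for all, a short toolkit of \emph{quantitative} properties of Morse geodesics in which every constant depends only on the relevant Morse gauge: (T1) a subsegment of an $N$-Morse (quasi-)geodesic is $N$-Morse up to a controlled change of gauge; (T2) an $N$-Morse $(K,C)$-quasi-geodesic lies within Hausdorff distance $H(N,K,C)$ of any geodesic with the same endpoints, and that geodesic is $N'(N,K,C)$-Morse; (T3) two $N$-Morse geodesics issuing from a common point $e$ stay $D(N)$-close along an initial segment of length within $D(N)$ of their Gromov product $(x\cdot y)_e$ and then separate at a controlled linear rate, so that $d(x,y)=d(e,x)+d(e,y)-2(x\cdot y)_e\pm D(N)$, the Gromov four-point inequality based at $e$ holds with constant $\delta(N)$, and a geodesic triangle with two $N$-Morse sides sharing a vertex is $\delta(N)$-thin; (T4) the concatenation of two $N$-Morse geodesics with bounded backtracking is a $(K_0(N),C_0(N))$-quasi-geodesic, hence $N_0(N)$-Morse; (T5) a $(\lambda,\varepsilon)$-quasi-isometry carries $N$-Morse quasi-geodesics to $N^*(N,\lambda,\varepsilon)$-Morse quasi-geodesics, and a limit of $N$-Morse geodesics uniform on compacta is $N$-Morse. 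Versions of all of these appear in \cites{Cordes15,ACGH1,ACGH2,charney-sultan}; only (T3) is substantial. With the toolkit in hand, parts \ref{key:covering} and \ref{key:PO} are immediate: a geodesic of $X$ from $e$ to $x$ of length $d$ is $N$-Morse for $N(K,C)=Kd+C$, and the Morse condition is monotone in the gauge.

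For \ref{key:hyperbolic} and \ref{key:boundary} I would work with the set $\widehat X^{(N)}_e$ of points lying on some $N$-Morse geodesic from $e$, equipped with the induced length metric. Splicing a sub-ray of $\gamma_x$, a bridge of length $\le D(N)$, and a sub-ray of $\gamma_y$ as in (T3) shows this length metric differs from the subspace metric of $X$ by at most $D(N)$, so $\widehat X^{(N)}_e$ is quasi-isometric to $X^{(N)}_e$; the four-point inequality from (T3) then gives that both are $\delta(N)$-hyperbolic (and if $X$ is proper then $X^{(N)}_e$ is closed by (T5), so $\widehat X^{(N)}_e$ is a proper geodesic $\delta(N)$-hyperbolic space), which is \ref{key:hyperbolic}. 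The sequential boundary of a hyperbolic space carries the usual visual metric $\rho_\epsilon(\xi,\eta)\asymp e^{-\epsilon(\xi\cdot\eta)}$ for $\epsilon=\epsilon(\delta(N))$, unique up to quasi-symmetry; and because Gromov products are computed from the metric alone, the isometric inclusion of metric subspaces $X^{(N)}_e\hookrightarrow X^{(N')}_e$ induces an inclusion of sequential boundaries that is bi-Lipschitz for equal $\epsilon$ and snowflake-equivalent for unequal $\epsilon$, hence a quasi-symmetry onto its image — this is \ref{key:boundary}.

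The remaining parts are then largely applications. For \ref{key:stable}: given $x,y\in X^{(N)}_e$, the spliced path above is an $N_0(N)$-Morse quasi-geodesic by (T4), so a geodesic $[x,y]$ of $X$ is $N'(N)$-Morse by (T2); the thinness in (T3) places $[x,y]$ in the $\delta(N)$-neighbourhood of $\gamma_x\cup\gamma_y\subseteq X^{(N)}_e$, so $X^{(N)}_e$ is $\delta(N)$-quasi-convex in $X$ and is $N'$-stable. For \ref{key:universal}: if $Z$ is $M$-stable, fix $z_0\in Z$, note $[e,z_0]$ is $N_0$-Morse for some $N_0$ depending on $d(e,z_0)$, and use a two-Morse-sides triangle argument via (T2)/(T3)/(T4) to show $[e,z_0]\cup Z$ is stable with gauge depending only on $M$ and $N_0$; then $Z\subseteq X^{(N)}_e$ for $N=N(M,z_0,e)$, and $Z$ quasi-convex in $X$ together with the quasi-isometric embedding $X^{(N)}_e\hookrightarrow X$ gives quasi-convexity of $Z$ in $X^{(N)}_e$. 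Part \ref{key:Gromov}: if $X$ is $\delta_X$-hyperbolic the Morse lemma makes every geodesic $N_{\delta_X}$-Morse, so $X^{(N)}_e=X$ for $N\ge N_{\delta_X}$, and $\partial_s X^{(N)}_e$ with its visual metric is by construction the Gromov boundary. Part \ref{key:Mboundary}: for proper $X$, an $N$-Morse ray from $e$ has all vertices in the proper geodesic space $\widehat X^{(N)}_e$ and so defines a point of $\partial_s$, while a Gromov-Cauchy sequence in $\widehat X^{(N)}_e$ sub-converges by Arzela--Ascoli to a geodesic ray from $e$, which is $N$-Morse by (T5); this identifies the $N$-th stratum of Cordes's Morse boundary with $\partial_s X^{(N)}_e$ compatibly with the connecting maps, and since the sequential topology of a proper hyperbolic space is its visual topology, taking direct limits recovers $\partial_M X=\varinjlim_N\partial_s X^{(N)}_e$. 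Part \ref{key:quasiisom}: by (T5) the quasi-isometry $q$ sends the $N$-Morse geodesic $[e,x]$ to an $N^*$-Morse quasi-geodesic, so $[q(e),q(x)]$ is $N'$-Morse by (T2), whence $q(X^{(N)}_e)\subseteq Y^{(N')}_{q(e)}$; then $q$ restricts to a quasi-isometric embedding $\widehat X^{(N)}_e\hookrightarrow\widehat Y^{(N')}_{q(e)}$ of hyperbolic spaces, which induces a topological embedding of Gromov boundaries that is a quasi-symmetry onto its image.

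The main obstacle is (T3): proving that $N$-Morse geodesics out of a common basepoint fellow-travel then diverge and that the attendant triangles are thin, \emph{with every constant a function of $N$ alone}. I would derive this from the contraction characterisation of Morse geodesics — an $N$-Morse geodesic is $\mathfrak c$-contracting with $\mathfrak c$ depending only on $N$ \cite{ACGH1} — and then run the classical argument that uniformly contracting geodesics force uniformly thin triangles and controlled divergence. Keeping the dependence purely on $N$, and not letting distances in $X$ leak into the constants, at every step is the delicate point on which \ref{key:hyperbolic}, \ref{key:stable}, \ref{key:universal}, and hence the entire theorem, rest.
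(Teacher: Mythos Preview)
Your proposal is correct and the architecture is the same as the paper's: establish a quantitative ``Morse toolkit'' whose constants depend only on the gauge, use it to prove the four-point inequality for $X^{(N)}_e$, and then read off the remaining parts from standard hyperbolic-space theory. The one genuine difference is how you obtain the crucial item (T3).

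You propose to derive (T3) from the contraction characterisation of Morse geodesics in \cite{ACGH1}: an $N$-Morse geodesic is contracting with contraction function depending only on $N$, and uniformly contracting geodesics force uniformly thin triangles. This works. The paper instead proves (T3) by an elementary and self-contained trick (its Lemma~\ref{lem:Morserayshyp}): if $\ell_1,\ell_2$ are geodesics from $e$, $P$ is a geodesic between points $x_i\in\ell_i$, and $z\in P$ is closest to $e$, then the concatenations $x_i\to_P z\to e$ are $(3,0)$-quasi-geodesics; when the $\ell_i$ are $N$-Morse this pins $z$ within $N(3,0)$ of each $\ell_i$ and immediately gives $4N(3,0)$-slim triangles and the explicit hyperbolicity constant $8N(3,0)$. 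The paper's route avoids importing the contraction machinery, yields clean explicit constants, and makes the later estimates (e.g.\ the $(1,12N(3,0))$-quasi-geodesic structure of $X^{(N)}_e$ and the additive comparison of $N$- and $N'$-Gromov products in Lemma~\ref{lem:compareGrProd}) fall out with no extra work. Your route is shorter to state but outsources the hard step; it also gives less control over the constants.

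One small point to tighten in \ref{key:boundary}: ``Gromov products are computed from the metric alone'' is true for interior points, but for boundary points the product is a supremum over sequences \emph{in the ambient space}, so passing from $X^{(N)}_e$ to $X^{(N')}_e$ enlarges the admissible sequences and could a priori change the value. The paper handles this explicitly (Lemma~\ref{lem:compareGrProd}) using the $2\delta$-stability of the extended Gromov product; your argument needs the same one-line observation to justify the bi-Lipschitz/snowflake claim.
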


This approach of passing to strata indexed by Morse gauges is necessary. Consider the boundary of all Morse rays in a geodesic metric space topologised with the Gromov product. Cashen shows that when the space is not hyperbolic, quasi--isometries do not necessarily induce homeomorphisms of this boundary \cite{Cashen16}.
\medskip

We will prove a stronger version of \hyperref[bthm:key]{\ref{bthm:key}.\ref{key:quasiisom}} for which we require some additional terminology.

Let $X,Y$ be geodesic metric spaces, let $x\in X$ and $y\in Y$. We say $X$ is \textbf{stably subsumed} by $Y$ (denoted $X\hookrightarrow_s Y$) if, for every $N$ there exists some $N'$ and a quasi--isometric embedding $X^{(N)}_x\to Y^{(N')}_y$. By \hyperref[bthm:key]{\ref{bthm:key}.\ref{key:quasiisom}}, this property is independent of the choice of $x,y$. We say $X$ and $Y$ are \textbf{stably equivalent} (denoted $X\sim_s Y$) if they stably subsume each other. It is easy to see that two spaces are stably equivalent if and only if they have the same collection of stable subsets up to quasi--isometry.

Given a geodesic metric space $X$, we will consider the collection of boundaries $\left( \partial_s X^{(N)}_e\right)$ equipped with visual metrics as the \textbf{metric Morse boundary} of $X$. 

We say that one collection of spaces $\left( A_i\right)_{i\in I}$ is \textbf{quasi--symmetrically subsumed} by another $\left(B_j\right)_{j\in J}$ (denoted $(A_i)\hookrightarrow_{qs}(B_j)$) if, for every $i$ there exists a $j$ and an embedding $A_i \to B_j$ which is a quasi--symmetry onto its image. Two collections are \textbf{quasi--symmetrically equivalent} (denoted $(A_i)\sim_{qs}(B_j)$) if $(A_i)\hookrightarrow_{qs}(B_j)$ and $(A_i)\hookrightarrow_{qs}(A_i)$).

\begin{ctheorem} \label{bthm:keyquasiisom2} Let $X,Y$ be geodesic metric spaces, let $x\in X$ and $y\in Y$. Then $X\hookrightarrow_s Y$ if and only if $\left( \partial X^{(N)}_x\right)\hookrightarrow_{qs} \left( \partial Y^{(N)}_y\right)$.
\end{ctheorem}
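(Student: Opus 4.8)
The statement is essentially a repackaging of part IX of Theorem~\ref{bthm:key} together with part VI, so the proof is a matter of carefully threading through those two facts in both directions. Recall that $X \hookrightarrow_s Y$ means that for every Morse gauge $N$ there is a gauge $N'$ and a quasi-isometric embedding $X^{(N)}_x \to Y^{(N')}_y$, while $(\partial X^{(N)}_x) \hookrightarrow_{qs} (\partial Y^{(N)}_y)$ means that for every $N$ there is an $N'$ and an embedding $\partial_s X^{(N)}_x \to \partial_s Y^{(N')}_y$ that is a quasi-symmetry onto its image.

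**Forward direction.** First I would assume $X \hookrightarrow_s Y$ and fix a Morse gauge $N$. By hypothesis there is a gauge $N'$ and a quasi-isometric embedding $f : X^{(N)}_x \to Y^{(N')}_y$. The key point is that $f$ is a quasi-isometry onto its image $f(X^{(N)}_x)$, and by \ref{bthm:key}.\ref{key:hyperbolic} both $X^{(N)}_x$ and $Y^{(N')}_y$ are hyperbolic. A quasi-isometric embedding between hyperbolic spaces induces an embedding of sequential boundaries; moreover, $f(X^{(N)}_x)$ is a quasi-convex (indeed stable, by \ref{bthm:key}.\ref{key:stable}) subset of the hyperbolic space $Y^{(N')}_y$, so its inclusion induces a quasi-symmetric embedding $\partial_s f(X^{(N)}_x) \to \partial_s Y^{(N')}_y$ onto its image by the second assertion of \ref{bthm:key}.\ref{key:boundary} (applied via the argument that a quasi-convex subset of a hyperbolic space is itself a $Y^{(M)}$-type stratum up to quasi-isometry, or directly from the standard theory of visual metrics under quasi-isometric embeddings). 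Composing, $\partial_s f : \partial_s X^{(N)}_x \to \partial_s Y^{(N')}_y$ is a quasi-symmetry onto its image with respect to the visual metrics of \ref{bthm:key}.\ref{key:boundary}. Since $N$ was arbitrary, this gives $(\partial X^{(N)}_x) \hookrightarrow_{qs} (\partial Y^{(N)}_y)$.

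**Reverse direction, and the main obstacle.** Conversely, suppose $(\partial X^{(N)}_x) \hookrightarrow_{qs} (\partial Y^{(N)}_y)$ and fix $N$; we obtain $N'$ and a quasi-symmetric embedding $g : \partial_s X^{(N)}_x \to \partial_s Y^{(N')}_y$ onto its image. The task is to promote $g$ to a quasi-isometric embedding $X^{(N)}_x \to Y^{(N')}_y$. This is where the hyperbolicity of the strata is essential: for geodesic hyperbolic spaces that are visual (which the $X^{(N)}_e$ are, essentially by construction, since every point lies on an $N$-Morse geodesic from $e$, and after passing to a slightly larger gauge one can extend such geodesics far enough to make the space visual from $e$ with controlled constants), a quasi-symmetry of visual boundaries extends to a quasi-isometry of the spaces. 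This is the Bonk--Schramm / Paulin-type extension theorem; I would cite the version appropriate for visual hyperbolic spaces and check that the $X^{(N)}_e$ meet its hypotheses, enlarging $N$ to $N''$ if necessary so that $X^{(N)}_x \subseteq X^{(N'')}_x$ is visual. Concretely: $g$ being a quasi-symmetry onto its image means it is a quasi-symmetry onto a subspace of $\partial_s Y^{(N')}_y$, hence extends to a quasi-isometric embedding $X^{(N'')}_x \to Y^{(N')}_y$; restricting to $X^{(N)}_x$ and relabelling gives the required embedding. The main obstacle is precisely this extension step: one must be careful that the $X^{(N)}_e$ are visual hyperbolic spaces (or reduce to such by enlarging the gauge) so that the classical boundary-extension theorem applies, and one must track how the gauges change under this operation so that the final statement is stated purely in terms of $X^{(N)}_x$ and $Y^{(N)}_y$ with the gauge on the target allowed to grow.

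**Wrap-up.** Finally I would remark that the equivalence of $X \hookrightarrow_s Y$ with the containment of collections of stable subsets up to quasi-isometry — already noted in the text — means Theorem~\ref{bthm:keyquasiisom2} can be read as saying that the metric Morse boundary is a complete quasi-symmetry invariant for the stable-subsumption preorder, which is the content one wants for the applications to mapping class groups and right-angled Artin groups later in the paper.
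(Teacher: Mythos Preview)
Your proposal is correct and follows essentially the same route as the paper. The paper's proof is very terse: for the forward direction it invokes Remark~\ref{rmk:itoquasisymm}, which says that the argument of Theorem~\ref{thm:qitoquasisym} (via Proposition~\ref{prop:adaptqidiffofGrPr}) only requires a quasi--isometric embedding $X^{(N)}_x\to Y^{(N')}_y$ between strata to produce the desired quasi--symmetric embedding of boundaries; for the reverse direction it cites \cite[Theorems~7.4 and~8.2]{BonkSchramm}, noting that the surjectivity hypothesis there is only used to get coarse surjectivity of the extended map and can therefore be dropped. Your identification of visuality of the $X^{(N)}_e$ as the main technical point to check in the Bonk--Schramm step is apt; the paper handles this by asserting that each $\partial X^{(N)}_e$ is visual by construction (cf.\ the proof of Proposition~\ref{prop:lwbdMorsedim}), so no enlargement of the gauge is needed.
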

The forward implication is our goal. The reverse implication essentially follows from techniques introduced in \cite{BonkSchramm}.

\begin{bcorollary}\label{bcor:stableinvarience} Quasi--isometric geodesic metric spaces are stably equivalent and have quasi--symmetrically equivalent metric Morse boundaries. In particular, the metric Morse boundary is invariant under change of basepoint.
\end{bcorollary}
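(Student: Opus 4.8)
The plan is to read the whole statement off Theorem~\ref{bthm:key}.\ref{key:quasiisom}; the only extra ingredients needed are that every quasi-isometry admits a quasi-inverse, and the standard fact that displacing one endpoint of a Morse geodesic by a bounded amount leaves it Morse with a controlled change of gauge --- equivalently, that for any two basepoints $e,e'\in X$ and any Morse gauge $N$ one has $X^{(N)}_e\subseteq X^{(N')}_{e'}$ for some $N'$ depending only on $N$ and $d(e,e')$.

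Given a quasi-isometry $q:X\to Y$, I would fix a coarse inverse $\bar q:Y\to X$, which is again a quasi-isometry, and fix basepoints $e\in X$ and $y\in Y$. Theorem~\ref{bthm:key}.\ref{key:quasiisom} applied to $q$ then supplies, for each $N$, a gauge $N'$ for which $q$ restricts to a quasi-isometric embedding $X^{(N)}_e\to Y^{(N')}_{q(e)}$ inducing an embedding $\partial_s X^{(N)}_e\to\partial_s Y^{(N')}_{q(e)}$ that is a quasi-symmetry onto its image. To trade the target basepoint $q(e)$ for $y$, I would compose with the basepoint-change inclusion $Y^{(N')}_{q(e)}\subseteq Y^{(N'')}_y$ (valid for a suitable $N''$ by the fact above), whose induced map on sequential boundaries is a quasi-symmetric embedding by the argument of Theorem~\ref{bthm:key}.\ref{key:boundary} --- that argument never uses that the two strata share a basepoint. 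Since a composition of quasi-isometric embeddings (resp.\ of quasi-symmetries onto their images) is again one, this produces, for every $N$, a quasi-isometric embedding $X^{(N)}_e\to Y^{(N'')}_y$ and a quasi-symmetric embedding $\partial_s X^{(N)}_e\to\partial_s Y^{(N'')}_y$; that is precisely $X\hookrightarrow_s Y$ and $(\partial_s X^{(N)}_e)\hookrightarrow_{qs}(\partial_s Y^{(N)}_y)$. Running the same argument with $\bar q$ in place of $q$ gives the reverse subsumptions, so $X\sim_s Y$ and the two metric Morse boundaries are quasi-symmetrically equivalent; alternatively, once $X\sim_s Y$ is known, the boundary equivalence falls out in one step from Theorem~\ref{bthm:key}.\ref{key:quasiisom}'.

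For the last assertion I would apply the basepoint-change fact in both directions, $X^{(N)}_e\subseteq X^{(N')}_{e'}$ and $X^{(N)}_{e'}\subseteq X^{(N'')}_e$: each inclusion is a quasi-isometric embedding and, again by the argument of Theorem~\ref{bthm:key}.\ref{key:boundary}, induces a quasi-symmetric embedding of boundaries, whence $(\partial_s X^{(N)}_e)\sim_{qs}(\partial_s X^{(N)}_{e'})$. I do not expect a genuine obstacle: all the analysis --- hyperbolicity and stability of the strata, existence and quasi-symmetric uniqueness of the visual metrics, and the behaviour of inclusions and quasi-isometries on the boundaries --- is already packaged in Theorem~\ref{bthm:key}. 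What remains is bookkeeping: trading $q(e)$ for $y$ (handled by the basepoint-change inclusion) and matching the ``for every $N$ there is $N'$'' pattern against the definitions of $\hookrightarrow_s$ and $\hookrightarrow_{qs}$ for families indexed by the set of Morse gauges. The only mildly delicate point is ensuring the basepoint-change inclusions are well-behaved with respect to the visual metrics, which is exactly the content abstracted in Theorem~\ref{bthm:key}.\ref{key:boundary}.
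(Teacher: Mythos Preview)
Your proposal is correct and matches the paper's approach. The paper does not give Corollary~\ref{bcor:stableinvarience} its own proof; it is meant to follow from Theorem~\ref{bthm:key}.\ref{key:quasiisom} (and \ref{bthm:key}.\ref{key:quasiisom}') together with the basepoint-change statement, which the paper isolates as Proposition~\ref{prop:baseptinv} and the quasi-isometry statement as Theorem~\ref{thm:qitoquasisym} --- exactly the two ingredients you assemble, using a quasi-inverse for the reverse direction. The only caveat is that your appeal to ``the argument of Theorem~\ref{bthm:key}.\ref{key:boundary}'' for the basepoint-change inclusion glosses over a small extra step: comparing the $N$-Gromov product at $e$ with the $N'$-Gromov product at $e'$ requires the additive estimate of Lemma~\ref{lem:compareGrProd} together with the inclusion $X^{(N')}_{e'}\subseteq X^{(N'')}_e$, which is precisely how Proposition~\ref{prop:baseptinv} is proved.
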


Stable equivalence is a much weaker notion than quasi--isometry: virtually solvable groups, and more generally groups satisfying a non-trivial law have no Morse geodesic rays \cite{drutu-sapir}, consequently they are stably equivalent to a point. We will present more interesting examples later in the paper.
\medskip

Using the above theorem we can define two stable equivalence invariants for geodesic metric spaces. We define the \textbf{stable asymptotic dimension} of $X$ ($\stabdim (X)$) to be the maximal asymptotic dimension of a stable subset of $X$, which by universality, is the maximal asymptotic dimension of the $X^{(N)}_e$. One obvious but useful bound is that the stable asymptotic dimension is bounded from above by the asymptotic dimension.

Similarly, we define the \textbf{Morse capacity dimension} of $X$ ($\Mcd(X)$) to be the maximal capacity dimension of spaces in the metric Morse boundary. This is clearly an invariant of equivalent metric Morse boundaries. To make the next result easier to state we adopt the convention that the empty set has capacity dimension $-1$.

\begin{bcorollary}\label{bcor:comparedims} Let $X$ be a geodesic metric space. Then
\[
 \stabdim (X)-1\leq \Mcd(X) \leq \stabdim (X).
\] 
\end{bcorollary}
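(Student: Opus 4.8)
The plan is to pass to a single stratum, invoke the known relationship between the asymptotic dimension of a hyperbolic space and the capacity dimension of its boundary, and then take suprema over Morse gauges. By the discussion preceding the statement, $\stabdim(X)=\sup_N\operatorname{asdim}\bigl(X^{(N)}_e\bigr)$ (using universality, Theorem~\ref{bthm:key}.\ref{key:universal}) and $\Mcd(X)=\sup_N\cdim{\partial_s X^{(N)}_e}$, both suprema taken over all Morse gauges $N$. Hence it suffices to prove, for each fixed $N$,
\[
\operatorname{asdim}\bigl(X^{(N)}_e\bigr)-1\ \le\ \cdim{\partial_s X^{(N)}_e}\ \le\ \operatorname{asdim}\bigl(X^{(N)}_e\bigr).
\]
Indeed, the right--hand inequality gives $\Mcd(X)\le\stabdim(X)$ termwise, while taking the supremum of the left--hand inequality and using $\sup_N\bigl(\operatorname{asdim}(X^{(N)}_e)-1\bigr)=\stabdim(X)-1$ gives $\stabdim(X)-1\le\Mcd(X)$. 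The convention $\cdim{\emptyset}=-1$ makes the single--gauge inequality hold when $X^{(N)}_e$ is bounded, and the manipulation of suprema remains valid with the usual conventions when a value is infinite (if $\operatorname{asdim}(X^{(N)}_e)=\infty$ for some gauge, the left--hand inequality forces $\cdim{\partial_s X^{(N)}_e}=\infty$ as well).

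For the single--gauge statement, fix $N$. By Theorem~\ref{bthm:key}.\ref{key:hyperbolic} the space $X^{(N)}_e$ is hyperbolic, and by Theorem~\ref{bthm:key}.\ref{key:boundary} its sequential boundary carries a visual metric that is canonical up to quasi--symmetry; since capacity dimension is a quasi--symmetry invariant, $\cdim{\partial_s X^{(N)}_e}$ is well defined independently of the choice of visual metric. The upper inequality is then Buyalo's theorem that $\cdim{\partial_\infty Z}\le\operatorname{asdim}(Z)$ for every hyperbolic space $Z$, applied to $Z=X^{(N)}_e$. The lower inequality is the Buyalo--Lebedeva estimate $\operatorname{asdim}(Z)\le\cdim{\partial_\infty Z}+1$, which is available for visual hyperbolic spaces of bounded growth.

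The step I expect to be the main obstacle is checking that each $X^{(N)}_e$ --- or a quasi--isometric model with the same boundary --- satisfies the hypotheses (visuality, bounded growth) demanded by the Buyalo--Lebedeva estimate. A priori $X^{(N)}_e$ need not be geodesic: a point of it is joined to $e$ only by an $N$--Morse geodesic, whose subsegments lie in $X^{(N')}_e$ for a controlled $N'$. To get around this I would replace $X^{(N)}_e$ by a geodesic model --- for instance its injective hull, which is geodesic, hyperbolic and coarsely dense, hence quasi--isometric to $X^{(N)}_e$ (so with the same asymptotic dimension) and with a quasi--symmetric boundary (so the same capacity dimension). In this model, the Morse local--to--global structure already developed for Theorem~\ref{bthm:key}, together with a limiting argument, should extend the $N$--Morse segments from $e$ to uniformly Morse geodesic rays, placing every point uniformly close to a ray issuing from the image of $e$; bounded growth transfers along the quasi--isometry. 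Once these hypotheses are verified, the single--gauge sandwich follows from the cited results, and the corollary follows by the supremum bookkeeping described above.
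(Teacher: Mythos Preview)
Your overall strategy---reduce to a single stratum $X^{(N)}_e$, apply the known hyperbolic sandwich between $\operatorname{asdim}$ and $\cdim{\partial}$, then take suprema over $N$---is exactly what the paper does (Propositions~\ref{prop:upbdMorsedim} and~\ref{prop:lwbdMorsedim}). The problem is that you have overstated the hypotheses required and consequently proposed a lot of unnecessary machinery.

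First, the attributions are swapped: the upper bound $\cdim{\partial Z}\le\operatorname{asdim}(Z)$ is drawn from \cite[Proposition~3.6]{MackSis}, while the lower bound $\operatorname{asdim}(Z)\le\cdim{\partial Z}+1$ is Buyalo's \cite[Theorem~1.1]{Buycapdim} (not Buyalo--Lebedeva). More importantly, Buyalo's theorem does \emph{not} require bounded growth at some scale; its only hypothesis beyond hyperbolicity is visuality. You may be conflating it with the Bonk--Schramm embedding theorem, which does need bounded growth but is not what is being invoked here. Once bounded growth is struck from your list, the entire final paragraph---passing to an injective hull, transferring bounded growth along a quasi--isometry, running a limiting argument to manufacture rays---is beside the point.

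The paper dispatches the lower bound in one sentence: ``Each $\partial X^{(N)}_e$ is visual by construction, so by \cite[Theorem~1.1]{Buycapdim}\ldots''. Your instinct that \emph{something} deserves verification for these non-geodesic strata is not unreasonable, but the remedy is far lighter than what you sketch: no auxiliary model space is needed.
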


This follows from bounds proved in the hyperbolic setting: \cite[Theorem $1.1$]{Buycapdim} and \cite[Proposition $3.6$]{MackSis}.

By Theorem \hyperref[bthm:key]{\ref{bthm:key}.\ref{key:Gromov}}, the stable dimension of a hyperbolic space is precisely its asymptotic dimension and the Morse capacity dimension of a hyperbolic space is the capacity dimension of its boundary equipped with some visual metric.
\medskip

We note that Gruber has a method of constructing finitely generated groups which admit an infinite family of stable subgroups with unbounded asymptotic dimension, consequently they have infinite stable dimension  \cite{Gruber_personal}. We are not aware of any other examples or constructions of finitely generated groups with infinite stable dimension.
\medskip

The remainder of the paper is devoted to studying these new notions in the context of finitely generated groups in which Morse geodesics have been classified.
\medskip

We begin with mapping class groups, throughout the paper we will only consider orientable surfaces.

\begin{btheorem} \label{thm:Mod} Let $\Sigma$ be a surface of genus $g$ with $p$ punctures. Let $X$ be a Cayley graph of the mapping class group $\mathrm{Mod}(\Sigma)$ and let $\teich$ be the Teichm\"{u}ller space of $\Sigma$. The spaces $X$ and $\teich$ are stably equivalent.

Moreover, each $X^{(N)}_e$ quasi--isometrically embeds into the curve complex $\mathcal{C}(\Sigma)$. Consequently,
\begin{align*}
 \stabdim(X)=\stabdim(\teich) &\leq 4g+p-3 \text{ if } p>0 \\
	 &\leq 4g-4 \text{ if } p=0.
\end{align*}
\end{btheorem}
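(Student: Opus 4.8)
The plan is to route every stable subset of $\Mod$ and of $\teich$ through the curve complex $\mathcal{C}(\Sigma)$, exploiting the subsurface--projection machinery of Masur--Minsky and Rafi together with the fact (the Stability clause of Theorem~\ref{bthm:key}) that each stratum $X^{(N)}_e$ is itself uniformly stable. The whole theorem will follow once we have a \emph{dictionary} identifying, up to uniform constants, ``stable subset'' with ``coarsely projects isometrically onto a quasi--convex subset of $\mathcal{C}(\Sigma)$ while having bounded projections to all proper subsurfaces''.

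Concretely, I would first prove: a subset $Y$ of $\Mod$ is $M$--stable if and only if, with constants depending only on $M$, the subsurface projections $\pi_W(Y)$ to every proper essential subsurface $W\subsetneq\Sigma$ (annuli included) have uniformly bounded diameter and the coarse projection $Y\to\mathcal{C}(\Sigma)$ is a quasi--isometric embedding with quasi--convex image; and likewise with $\teich$ in place of $\Mod$, adding the requirement that $Y$ lie in a uniformly thick part of $\teich$. For the forward direction one uses that an $M$--Morse geodesic cannot fellow--travel, for long, a product region coming from a subsurface boundary or from the thin part (via the Behrstock inequality, Masur--Minsky hierarchies, and Minsky's product--regions theorem / Rafi's thick--thin description of Teichm\"uller geodesics), which caps its subsurface projections by a function of $M$ alone; feeding a threshold above this cap into the Masur--Minsky / Rafi distance formula collapses every term except the $\mathcal{C}(\Sigma)$ one on $Y$, giving the quasi--isometric embedding, and quasi--convexity of the image holds because geodesics of $\Mod$ (resp.\ $\teich$) between points of $Y$ are Morse, stay near $Y$, and project to uniform quasi--geodesics of $\mathcal{C}(\Sigma)$ (one may also invoke hyperbolicity of $Y$, e.g.\ from the Hyperbolicity clause of Theorem~\ref{bthm:key} for the strata, plus the Morse lemma). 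The reverse direction runs the distance formula the other way: a geodesic between two points of such a $Y$ is forced to have bounded subsurface projections, hence is Morse and, after projecting to $\mathcal{C}(\Sigma)$ and lifting back through the distance formula, stays uniformly near $Y$. (This refines Durham--Taylor's convex--cocompactness characterisation, but we argue directly because the $X^{(N)}_e$ are not subgroups.)

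Granting the dictionary, the ``Moreover'' assertion is immediate: $X^{(N)}_e$ is $M(N)$--stable by Theorem~\ref{bthm:key}, hence projects by a quasi--isometric embedding into $\mathcal{C}(\Sigma)$; since asymptotic dimension is monotone under quasi--isometric embeddings, $\stabdim(X)=\sup_N \mathrm{asdim}\,X^{(N)}_e\le \mathrm{asdim}\,\mathcal{C}(\Sigma)$, and the two displayed inequalities are exactly the known bounds on the asymptotic dimension of $\mathcal{C}(\Sigma_{g,p})$ (Bestvina--Bromberg--Fujiwara). For the stable equivalence, consider the orbit map $\phi\colon\Mod\to\teich$, $g\mapsto g x_0$: it is coarsely Lipschitz and coarsely equivariant, the surface underlying $g x_0$ is isometric to that underlying $x_0$ so $\phi(\Mod)\subseteq\teich_{\ge\varepsilon_0}$ for $\varepsilon_0=\mathrm{sys}(x_0)$, the coarse projections to $\mathcal{C}(\Sigma)$ from $\Mod$ and from $\teich$ agree along $\phi$, and $\phi$ admits a coarse section $\sigma$ over $\teich_{\ge\varepsilon_0}$ (send a thick point to a mapping class carrying $x_0$ nearby; well defined up to bounded ambiguity, and a coarse inverse to $\phi$ by proper discontinuity). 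All the data in the dictionary --- bounded subsurface projections, quasi--isometry type, thickness, quasi--convex $\mathcal{C}(\Sigma)$--image --- are preserved by $\phi$ and by $\sigma$, so $\phi$ carries each $X^{(N)}_e$ quasi--isometrically onto a stable subset of $\teich$, which by the Universality clause of Theorem~\ref{bthm:key} sits quasi--convexly in some $\teich^{(N')}_{x_0}$; and symmetrically $\sigma$ gives quasi--isometric embeddings $\teich^{(N)}_{x_0}\hookrightarrow X^{(N')}_e$. Hence $X\sim_s\teich$, and since $\stabdim$ is a stable--equivalence invariant, $\stabdim(X)=\stabdim(\teich)$.

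The main obstacle is the dictionary itself: extracting from an $M$--Morse geodesic a bound on its subsurface projections that depends \emph{only} on $M$, and making all the coarse identifications uniform (orbit map versus short markings, the two projections to $\mathcal{C}(\Sigma)$ along $\phi$, the section $\sigma$). Once this is in hand, everything else is bookkeeping with the Masur--Minsky and Rafi distance formulas and with the universality and hyperbolicity clauses of Theorem~\ref{bthm:key}.
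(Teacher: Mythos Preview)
Your proposal is correct but takes a considerably more elaborate route than the paper. The paper's argument is essentially a sequence of citations: it passes from $\Mod$ to the marking graph $\mathcal{M}(\Sigma)$ via Masur--Minsky, then invokes Lemmas~4.9 and~4.10 of \cite{Cordes15} as black boxes --- these already provide, for each Morse gauge $N$, quasi--isometric embeddings $\mathcal{M}(\Sigma)^{(N)}\hookrightarrow \teich^{(N'')}$, $\teich^{(N)}\hookrightarrow \mathcal{M}(\Sigma)^{(N''')}$, and $\mathcal{M}(\Sigma)^{(N)}\hookrightarrow \mathcal{C}(\Sigma)$, with all constants depending only on $N$. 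Stable equivalence then follows from Remark~\ref{rmk:itoquasisymm}, and the dimension bound from monotonicity under quasi--isometric embedding together with the Bestvina--Bromberg bound on $\mathcal{C}(\Sigma)$ (not Bestvina--Bromberg--Fujiwara, as you wrote). Your ``dictionary'' --- stable subset iff bounded proper-subsurface projections plus quasi--isometric projection to $\mathcal{C}(\Sigma)$ --- is precisely the content those cited lemmas encapsulate, and your orbit-map/section argument for the stable equivalence reproves what Lemmas~4.9--4.10 of \cite{Cordes15} deliver directly. What your approach buys is a self-contained characterisation of stable subsets (not just subgroups) of $\Mod$ and $\teich$, essentially a subset-level Durham--Taylor theorem; what the paper's approach buys is brevity, at the cost of deferring all the hierarchy work to \cite{Cordes15}.
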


An upper bound on the stable dimension of mapping class groups can be obtained via the bounds on asymptotic dimension for mapping class groups obtained by Bestvina--Bromberg--Fujiwara in \cite{BBF} or by Behrstock--Hagen--Sisto in \cite{BHS} which are exponential or quadratic in the Euler characteristic of the surface respectively. Here we show that each $\Mod^{(N)}_e$ quasi--isometrically embeds into the curve graph and use the bound found by Bestvina--Bromberg on the asymptotic dimension of the curve graph \cite{Bestvina-Bromberg}.

Leininger and Schleimer prove that for every $n$ there is a surface $\Sigma$ such that the Teichm\"{u}ller space $\teich$ contains a stable subset quasi--isometric to $\mathbb{H}^n$. This gives a lower bound on the stable dimension, but it is at best logarithmic in the complexity \cite{leininger:2014aa}. 

Since $\teich$ is stably equivalent to $\Mod$, we see that $\Mod$ contains a stable subset quasi--isometric to $\mathbb{H}^n$. The only known explicit examples of convex cocompact groups are virtually free groups \cite{Dowdall:2014ab, Kent:2009aa, koberda:2014, Min:2011aa}. Although the results of Leininger--Schleimer do not provide any non-virtually--free convex cocompact subgroups, the fact that $\stabdim(\mathrm{Mod} (\Sigma))>1$ for some surfaces shows that there is no purely geometric obstruction to the existence of a non-free convex cocompact subgroup of $\Mod$.
\medskip

In an effort to identify examples of convex--cocompact subgroups of the mapping class group, Koberda, Manghas, and Taylor classify the stable subgroups of all right--angled Artin groups \cite{koberda:2014}. They show that these subgroups are always free. Here we prove the natural analogue for stable subspaces.

\begin{btheorem} \label{thm:RAAG} Let $X$ be a Cayley graph of a right--angled Artin group. Every stable subset of $X$ is quasi--isometric to a proper tree. In particular, $X$ is stably equivalent to a line if the group is abelian of rank $1$, a point if it is abelian of rank $\neq 1$ and a regular trivalent tree otherwise.
\end{btheorem}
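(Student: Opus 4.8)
First I would use Universality to reduce to the strata $X^{(N)}_e$. By \hyperref[bthm:key]{\ref{bthm:key}.\ref{key:universal}} a stable subset $Y\subseteq X$ is quasi--convex in some $X^{(N)}_e$, which is hyperbolic by \hyperref[bthm:key]{\ref{bthm:key}.\ref{key:hyperbolic}}. A quasi--convex subset of a geodesic space quasi--isometric to a tree is, after transport, at finite Hausdorff distance from the subtree it spans; and since $X^{(N)}_e$ is a subset of the bounded--geometry graph $X$, any tree quasi--isometric to it --- hence the spanned subtree --- may be taken to be a bounded--valence, in particular proper, simplicial tree. So it suffices to show that \emph{each $X^{(N)}_e$ is quasi--isometric to a tree}; for this I may pass freely to a connected graph coarsely equal to $X^{(N)}_e$ and to the universal cover $\widetilde S_\Gamma$ of the Salvetti complex of $A_\Gamma$, a $\mathrm{CAT}(0)$ cube complex quasi--isometric to $X$.

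In $\widetilde S_\Gamma$ the stratum is, up to bounded error, the set of $0$--cubes joined to the base vertex by an $N$--Morse combinatorial geodesic, and by \hyperref[bthm:key]{\ref{bthm:key}.\ref{key:stable}} any two of its points are joined by a combinatorial geodesic $\alpha$ that is $N'$--Morse, with $N'$ a function of $N$ alone. I would then invoke the dictionary between contracting (equivalently Morse) geodesics and hyperplanes in $\mathrm{CAT}(0)$ cube complexes (Charney--Sultan \cite{charney-sultan}, Genevois, Behrstock--Charney): there is $L=L(N')$ so that the set of hyperplanes dual to $\alpha$ is $L$--well--separated --- any two disjoint members are crossed by at most $L$ common hyperplanes --- and no $L$ of them pairwise cross, so $\alpha$ spans no large cube. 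Informally, the hyperplane pattern along $\alpha$ is uniformly tree--like.

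The main step is to verify the bottleneck criterion of Manning for $X^{(N)}_e$: given $x,y$, a geodesic $\alpha=[x,y]$ as above and its midpoint $m$, there should be $\Delta=\Delta(N)$ such that every path from $x$ to $y$ meets $B(m,\Delta)$. Such a path $\pi$ must cross every hyperplane dual to $\alpha$; since those hyperplanes are $L$--well--separated and support no large cube, $\pi$ cannot cross the first and second halves of them in a badly shuffled order nor along a wide transversal detour, and one should be able to extract from $\pi$ a point $p$ lying coarsely between the two hyperplanes straddling $m$, hence within $\Delta(L)$ of $m$. Making this precise --- promoting the single constant $L(N')$ supplied by the cubical dictionary to one bottleneck constant $\Delta(N)$ valid uniformly over all pairs $x,y\in X^{(N)}_e$ and all paths between them --- is where I expect essentially all the difficulty to lie. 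Granting it, $X^{(N)}_e$ has the bottleneck property and so is quasi--isometric to a proper tree; everything else is either quoted (Theorem \ref{bthm:key}, the cubical dictionary, Manning's criterion) or routine (the comparisons between $X$, $\widetilde S_\Gamma$ and a connected graph near $X^{(N)}_e$, and the subtree argument of the first paragraph).

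The displayed special cases then follow from a case analysis on $\Gamma$, using that $A_\Gamma$ carries a Morse geodesic ray exactly when $\Gamma$ is non--empty and not a non--trivial join (Behrstock--Charney; in the join case $A_\Gamma$ is a non--trivial direct product, has linear divergence, and has no Morse geodesics at all, cf.\ \cite{drutu-sapir}). If $A_\Gamma$ is trivial or a non--trivial direct product --- in particular if it is abelian of rank $\neq 1$ --- then every $X^{(N)}_e$ is bounded and $X$ is stably equivalent to a point; if $A_\Gamma\cong\Z$ then $X$ is a line and every geodesic is Morse; and in every remaining case $A_\Gamma$ contains a stable free subgroup of rank $2$, hence a quasi--isometrically embedded regular trivalent tree (e.g.\ \cite{koberda:2014}), while by the main claim every stable subset of $X$ is quasi--isometric to a bounded--valence tree, so the stable subsets of $X$ coincide up to quasi--isometry with those of a regular trivalent tree.
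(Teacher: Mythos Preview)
Your overall strategy --- reduce to the strata $X^{(N)}_e$ via universality, then show each stratum is quasi--isometric to a proper tree --- matches the paper's, but the mechanism for the second step differs. The paper does not attempt Manning's bottleneck criterion. Instead it defines a map $q\colon \widetilde S_\Gamma^{(N)}\to\mathcal{CG}$ into Hagen's contact graph by sending a vertex to the last hyperplane crossed by an $N$--Morse geodesic from $e$, and proves $q$ is a quasi--isometric embedding. Since $\mathcal{CG}$ is already known to be a quasi--tree \cite{hagen:2014aa}, and the image has at most exponential growth in any ball (being a subset of a Cayley graph of a finitely generated group), the convex hull of the image --- hence $X^{(N)}_e$ itself --- is quasi--isometric to a proper simplicial tree. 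The lower bound for $q$ comes from a claim close in spirit to your hyperplane dictionary: an $N$--Morse geodesic crosses a sequence of pairwise \emph{strongly separated} hyperplanes at gaps bounded by some $r=r(N)$, proved by showing (via \cite{behrstock-charney}) that the segment between consecutive such hyperplanes lies in a product of three join subgroups, and Morse geodesics spend uniformly bounded time in any join subgroup.

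The step you flag as ``where essentially all the difficulty lies'' is a genuine gap, and the paper's route is precisely designed to avoid it. The $L$--well--separated condition on the hyperplanes dual to $\alpha$ constrains only the \emph{combinatorics} of which hyperplanes cross which; it says nothing about where along a hyperplane carrier an arbitrary path $\pi$ crosses. A path from $x$ to $y$ must indeed cross every hyperplane dual to $\alpha$, but it may do so arbitrarily far from $\alpha$, and the half--space between two strongly separated hyperplanes in $\widetilde S_\Gamma$ is not in general bounded. To force $\pi$ near $m$ you would need an additional metric fact --- for instance that the nearest--point projection between strongly separated hyperplanes has bounded image, and then an argument that $\pi$ is funnelled through that bridge --- which amounts to reproving the lower distance estimate for $q$ by hand. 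Outsourcing the tree structure to Hagen's theorem is what makes the paper's argument go through without this. Your case analysis at the end is fine (indeed slightly more careful than the statement you are asked to prove, since you correctly observe that any non--trivial direct product, not only the abelian ones, is stably equivalent to a point).
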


To prove this we will show that each $X^{(N)}_e$ quasi--isometrically embeds into the contact graph defined by Hagen \cite{hagen:2014aa}. As a result, each $X^{(N)}_e$ is quasi--isometric to a proper tree; to complete the proof we use the universality condition in Theorem \ref{bthm:key}.  By universality of stable subsets (Theorem \hyperref[bthm:key]{\ref{bthm:key}.\ref{key:universal}}), we know that any stable subgroup of a right--angled Artin group is quasi--isometric to a tree. Thus since groups which are quasi--isometric to trees are virtually free  \cite[Corollary 7.19]{GhysdlH}, we recover one result of \cite{koberda:2014}: stable subgroups of right--angled Artin groups are free.  Using this same map to the contact graph, we also classify when quasi--convex subsets of right--angled Artin groups are stable:

\begin{btheorem} Let $\Gamma$ be a finite graph, let $A_\Gamma$ be the corresponding right--angled Artin group. Let $\tilde{S}_\Gamma$ be the universal cover of the Salvetti complex and let $\mathcal{CG}_\Gamma$ be the contact graph of $\tilde{S}_\Gamma$. There is a map $q:A_\Gamma\to\mathcal{CG}_\Gamma$ such that if $Y\subseteq A_\Gamma$ is quasi--convex, then $Y$ is stable in $A_\Gamma$ if and only if $q|_Y$ is a quasi--isometric embedding.
\end{btheorem}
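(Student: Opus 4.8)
The plan is to use the coarsely Lipschitz map $q\colon A_\Gamma\to\mathcal{CG}_\Gamma$ constructed in the proof of Theorem~\ref{thm:RAAG}; writing $X$ for a Cayley graph of $A_\Gamma$, the two inputs I need are that $q$ is coarsely Lipschitz and that, for every Morse gauge $N$, the restriction of $q$ to $X^{(N)}_e$ is a quasi--isometric embedding with constants depending only on $N$. Given these, the forward implication is immediate: if $Y$ is stable then by universality of stable subsets (Theorem \hyperref[bthm:key]{\ref{bthm:key}.\ref{key:universal}}) it is a quasi--convex subset of some $X^{(N)}_e$, and since $q|_{X^{(N)}_e}$ is a quasi--isometric embedding and $q$ is coarsely Lipschitz, the further restriction $q|_Y$ is a quasi--isometric embedding.

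For the converse, suppose $Y$ is $\kappa$--quasi--convex and $q|_Y$ is a $(\lambda,\epsilon)$--quasi--isometric embedding; I will produce a single Morse gauge $N$, depending only on $\kappa,\lambda,\epsilon$, the Lipschitz data of $q$ and $\Gamma$, with the property that every geodesic of $A_\Gamma$ joining two points of $Y$ is $N$--Morse, which is exactly what stability asks. The first step is a transfer estimate. Let $\gamma$ be such a geodesic. Since $Y$ is $\kappa$--quasi--convex, $\gamma$ lies in the $\kappa$--neighbourhood of $Y$, so for any $\gamma(s),\gamma(t)$ I may pick $y_s,y_t\in Y$ within $\kappa$; feeding $y_s,y_t$ into the quasi--isometric embedding inequality for $q|_Y$ and using that $q$ is coarsely Lipschitz gives $d_{\mathcal{CG}_\Gamma}(q\gamma(s),q\gamma(t))\geq\tfrac1\lambda|s-t|-c$ for a uniform constant $c$. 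Thus $q$ does not compress $\gamma$ by more than a bounded linear amount.

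The second step uses the geometry of the contact graph to turn this into a Morse bound. The key observation is that $q$ collapses neighbourhoods of the flat--like regions of $A_\Gamma$: if $\Lambda\subseteq\Gamma$ spans a subgraph that is a non--trivial join, then $\tilde{S}_\Lambda$ is a product cube complex, so any two of its hyperplanes lie at distance at most $2$ in $\mathcal{CG}_\Gamma$ (cross a hyperplane coming from the other factor), and likewise for the hyperplanes of any translate $g\tilde{S}_\Lambda$, since crossings are preserved under inclusion. As a combinatorial geodesic from $e$ to the gate of a point in $g\tilde{S}_\Lambda$ extends to a geodesic inside $g\tilde{S}_\Lambda$, the coarsely Lipschitz map $q$ sends any point within $D$ of $g\tilde{S}_\Lambda$ to within a bounded distance $C(D)$ of this diameter--$2$ set. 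Consequently, if a subsegment $\gamma|_{[s,t]}$ of $\gamma$ lies in the $D$--neighbourhood of a coset of a join subgroup, then $q\gamma(s),q\gamma(t)$ are at distance at most $2C(D)+2$ in $\mathcal{CG}_\Gamma$; comparing with the transfer estimate forces $|s-t|$ to be bounded in terms of $D$, $\lambda$ and $c$ only. Finally, by the description of Morse geodesics in right--angled Artin groups underlying the proof of Theorem~\ref{thm:RAAG} (cf.\ also \cite{charney-sultan}) — a geodesic all of whose subsegments lying in a fixed--radius neighbourhood of a coset of a join subgroup are uniformly short is itself uniformly Morse — this makes $\gamma$ uniformly $N$--Morse, so $Y$ is $N$--stable.

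The main obstacle is the interface in the second step: pinning down the precise form of the Morse classification in $A_\Gamma$ and matching the fixed radius occurring there with the collapsing constant $C(D)$ for $q$. This is the same estimate that powers the quasi--isometric embedding of $X^{(N)}_e$ into $\mathcal{CG}_\Gamma$ in Theorem~\ref{thm:RAAG}, now run in the opposite direction, and it is here that the fine structure of Hagen's contact graph — not merely the hyperbolicity of some coarse quasi--tree — is essential; the quasi--convexity hypothesis on $Y$ is used only to pass from the assumption on $q|_Y$ to the transfer estimate.
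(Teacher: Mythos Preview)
Your forward direction and the transfer estimate at the start of the converse match the paper's argument. For the second step of the converse, however, the paper takes a shorter route than you do. Rather than detouring through join subgroups, the paper uses the single observation that two hyperplanes at distance at least $3$ in the contact graph are automatically strongly separated (they neither cross nor share a common crossing hyperplane). Combined with the transfer estimate, this means that points on $\gamma$ at distance $K(C+3)$ apart have dual hyperplanes that are strongly separated; one thus obtains a sequence of pairwise strongly separated hyperplanes crossed by $\gamma$ with spacing less than $K(C+3)+1$, and Theorem~4.2 of \cite{charney-sultan} (the converse direction of the claim inside Theorem~\ref{thm:RAAGtoqtree}) gives directly that $\gamma$ is $N$--Morse with $N$ depending only on $K$ and $C$.

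Your route via join subgroups is not incorrect---the characterization you invoke at the end (uniformly bounded time in neighbourhoods of join cosets implies Morse) can indeed be extracted by combining the Behrstock--Charney argument in the claim of Theorem~\ref{thm:RAAGtoqtree} with Charney--Sultan's Theorem~4.2---but it is not a ready citation, and the ``main obstacle'' you flag is genuine extra work. The paper's observation that contact-graph distance $\geq 3$ already encodes strong separation is the cleaner bridge from the quasi--isometric embedding hypothesis to the Morse conclusion, and it bypasses join subgroups entirely in this direction.
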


Morse geodesics in graphical small cancellation groups were classified in \cite{ACGH2}. Classical small cancellation theory covers a variety of techniques in combinatorial group theory to ensure that a presentation yields a group which is large in some sense; or easily understandable, for instance with solvable word and conjugacy problems. An excellent introduction to the theory is the book of Lyndon--Schupp \cite{LS01}. Graphical small cancellation theory is a generalization introduced by Gromov \cite{Gromov03} in order to construct groups whose Cayley graphs contain certain prescribed subgraphs, in particular one can construct ``Gromov monster'' groups, those with a Cayley graph which coarsely contains expanders \cite{Arzh-Delz, Osajda}. These monster groups cannot be coarsely embedded into a Hilbert space, and they are the only known counterexamples to the Baum--Connes conjecture with coefficients \cite{HigLafSka02}.

\begin{btheorem}\label{bthm:smallcanc} Let $X$ be the Cayley graph of a classical $C'(1/6)$ small cancellation group. Then $\stabdim (X)\leq 2$ and $\Mcd(X)\leq 1$.
\end{btheorem}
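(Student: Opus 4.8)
The plan is to reduce the whole statement to a single capacity dimension bound, and then prove that bound from the structure of Morse geodesics in $C'(1/6)$ groups. By Corollary \ref{bcor:comparedims} we have $\stabdim(X)\le\Mcd(X)+1$, so it is enough to prove $\Mcd(X)\le 1$. By definition $\Mcd(X)$ is the supremum of the capacity dimensions of the spaces $\partial_s X^{(N)}_e$ of the metric Morse boundary, each carrying a visual metric; by the Boundary part of Theorem \ref{bthm:key} these visual metrics are unique up to quasi--symmetry, so each such capacity dimension is a genuine invariant. Thus the theorem reduces to showing that for every Morse gauge $N$ the compact metric space $\partial_s X^{(N)}_e$ has capacity dimension at most $1$; this also yields $\stabdim(X)\le 2$ via Corollary \ref{bcor:comparedims}, so there is no need to bound the asymptotic dimension of the strata separately.

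The second step is to make the strata $X^{(N)}_e$ concrete using the classification of Morse quasi--geodesics in $C'(1/6)$ small cancellation groups \cite{ACGH2}: roughly, a geodesic of $X$ is $N$--Morse precisely when it travels a bounded amount, at most some $L=L(N)$, along any single relator cycle. Hence, up to uniformly bounded Hausdorff error, $X^{(N)}_e$ is the set of points that can be joined to $e$ by such an $L$--bounded geodesic, and it is hyperbolic by the Hyperbolicity part of Theorem \ref{bthm:key}. I would then realise $X^{(N)}_e$, up to quasi--isometry, inside the subcomplex of the Cayley $2$--complex that it spans: this subcomplex is built from the relator $2$--cells that it meets --- each of uniformly bounded diameter, as relators are finite words --- glued along pieces according to the $C'(1/6)$ condition, under which two distinct cells meet in at most a single piece. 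The objective is a two--dimensional, combinatorially non-positively curved, hyperbolic model for $X^{(N)}_e$ whose combinatorial data depend only on the presentation and on $N$.

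Granting such a model, one bounds $\cdim{\partial_s X^{(N)}_e}$ by a direct covering argument of the kind used in the hyperbolic setting. For each scale $r$ one produces an open cover of $\partial_s X^{(N)}_e$ by the shadows of a family of subsets of the model that sits at distance $\sim r$ from the basepoint and is adapted to the relator/piece decomposition; the $C'(1/6)$ condition is used to keep the multiplicity of this cover at most $2$ while its Lebesgue number stays comparable to its mesh, which is exactly the assertion that the capacity dimension is at most $1$. The same reasoning inside the model gives $\mathrm{asdim}(X^{(N)}_e)\le 2$, matching Corollary \ref{bcor:comparedims}; note however that Buyalo's inequality only bounds $\mathrm{cdim}(\partial Z)$ by $\mathrm{asdim}(Z)$, so the sharper value $1$ must come from the covering argument itself. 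In the finitely presented (hence hyperbolic) case this specialises to the classical statement that a $C'(1/6)$ group with two--dimensional Cayley complex has boundary of capacity dimension at most $1$.

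The main obstacle is the second step: controlling, uniformly in $N$, the way the relator cells are glued inside $X^{(N)}_e$. The relator cells crossed by $N$--Morse geodesics are individually bounded but collectively unbounded in diameter, and it is this gluing pattern --- not the pieces --- that carries all of the interesting geometry; already the single--relator genus--two surface group has $X^{(N)}_e=X$ quasi--isometric to $\mathbb{H}^2$. One must show that the pattern retains enough two--dimensionality and hyperbolicity to run the covering argument, with constants depending only on the small cancellation data and on $N$; in the genuinely infinitely presented case this cannot be inherited from hyperbolicity of the ambient group, so the analysis has to be carried out internally to each stratum.
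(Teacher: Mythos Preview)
Your proposal is not a proof but a plan, and the plan has a genuine gap at precisely the point you identify as the ``main obstacle''. You never actually carry out the covering argument, and you give no indication of how the multiplicity--$2$ bound would be achieved for the shadows you describe. More importantly, your framing of the difficulty is based on a misconception: you write that ``the relator cells crossed by $N$--Morse geodesics are individually bounded but collectively unbounded in diameter'', but in fact, for a fixed $N$, the relators that can appear in any reduced diagram whose sides are $N'$--Morse geodesics have \emph{uniformly bounded length}. This is because, by Strebel's classification, any face $\Pi$ in such a diagram contributes a subarc of length at least $\tfrac{1}{6}\abs{\partial\Pi}$ to one of the geodesic sides, and that side carries a $(\lambda,c)$--quasi--geodesic detour (around the rest of $\partial\Pi$) at distance proportional to $\abs{\partial\Pi}$ from it; so if $\abs{\partial\Pi}$ is large enough, the side fails to be $N'$--Morse.

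This observation is exactly what the paper exploits, and it dissolves your obstacle entirely. Rather than building a bespoke two--dimensional model for $X^{(N)}_e$ and running a covering argument from scratch, the paper shows that $X^{(N)}_e$ embeds \emph{isometrically} into the Cayley graph $X_N$ of the finitely presented $C'(1/6)$ group obtained by keeping only the relators of length at most the bound above. That group is hyperbolic, has cohomological dimension at most $2$, and hence (via Bestvina--Mess) has $\mathrm{asdim}\le 2$ and boundary capacity dimension at most $1$. Everything then follows from the quasi--isometric embedding and the monotonicity of these dimensions. So the step you regard as the hard part --- controlling the gluing pattern of arbitrarily large relator cells inside the stratum --- simply does not arise: there are no large relator cells to control.
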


Note that this is optimal as fundamental groups of higher genus surfaces are hyperbolic with asymptotic dimension $2$ and admit $C'(\frac{1}{6})$ graphical small cancellation presentations.

Again we work with the spaces $X^{(N)}_e$. Each of these embeds quasi--isometrically into a finitely presented classical $C'(1/6)$ small cancellation group. These are hyperbolic with asymptotic dimension at most $2$ and the capacity dimension of their Gromov boundaries is at most $1$.

Our method in proving the above theorem allows for some interesting generalisations.

\begin{btheorem}\label{bthm:Gromovmonster} There exist graphical small cancellation groups with stable dimension at most $2$ admitting a Cayley graph which isometrically contains an expander.
\end{btheorem}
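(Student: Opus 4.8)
The plan is to run the argument of Theorem~\ref{bthm:smallcanc} on a carefully chosen graphical $C'(1/6)$ presentation. The resulting group $G$ will \emph{not} be hyperbolic --- its Cayley graph will contain an isometrically embedded expander --- but the strata $G^{(N)}_e$ will still be governed by classical small cancellation geometry, and this is what the stable dimension sees.

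First I would fix a graphical small cancellation presentation whose defining graph $\Theta=\bigsqcup_n\Theta_n$ is a disjoint union of finite graphs forming an expander family and whose labelling is $\mathrm{Gr}'(1/6)$, taking the labelling produced by Osajda's construction \cite{Osajda}, for which each $\Theta_n$ embeds \emph{isometrically} (not merely coarsely) into the Cayley graph of the resulting group $G$. This supplies the Cayley graph containing an expander, and it remains to show $\stabdim(G)\leq 2$.

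The heart of the matter is to prove that, for every Morse gauge $N$, the stratum $G^{(N)}_e$ admits a quasi--isometric embedding, with constants depending only on $N$, into a finitely presented classical $C'(1/6)$ group $G_N$. Here I would use the classification of Morse geodesics in graphical small cancellation groups from \cite{ACGH2}: there is a constant $\rho=\rho(N)$ such that every $N$--Morse geodesic in the Cayley graph of $G$ fellow--travels each translate of each component $\Theta_n$ only along a subpath of diameter at most $\rho$. Hence $G^{(N)}_e$ interacts with the defining graph only through sub--configurations of bounded size, of which there are finitely many combinatorial types; the piece of the presentation that is relevant to $N$--Morse geodesics is a finite set of relations satisfying the classical $C'(1/6)$ condition, and this defines $G_N$. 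The embedding $G^{(N)}_e\to G_N$ is obtained by reading $N$--Morse geodesic words inside $G_N$, and the verification that it is a quasi--isometric embedding reduces to showing that a van Kampen diagram over $G_N$ whose boundary word contains two $N$--Morse subpaths cannot involve the relations of $G_N$ so as to shorten those subpaths by more than a bounded factor --- which is precisely the geometric input of \cite{ACGH2}.

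Granting this, one finishes as in Theorem~\ref{bthm:smallcanc}: each $G_N$ is a finitely presented classical $C'(1/6)$ group, hence hyperbolic with $\mathrm{asdim}(G_N)\leq 2$, so by the universality property of Theorem~\ref{bthm:key} and the monotonicity of asymptotic dimension under quasi--isometric embeddings every stable subset of $G$ has asymptotic dimension at most $2$, i.e.\ $\stabdim(G)\leq 2$. I expect the middle step to be the real obstacle: in the classical setting all relators are uniformly short and one may literally restrict to the sub--presentation of short relators, whereas here the relators carried by the expander are long and ``fat'', so one must genuinely show that the large--scale geometry they contribute is invisible to $N$--Morse geodesics and then repackage the surviving data as a \emph{classical} finitely presented small cancellation presentation with constants uniform in $N$. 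A secondary point requiring care is checking that Osajda's labelling yields an isometric, rather than merely a coarse, embedding of the expander into the Cayley graph.
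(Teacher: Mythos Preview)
Your overall strategy is right and close to the paper's, but you make one step harder than it needs to be and omit one hypothesis that does real work.

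The paper does not attempt to produce a \emph{classical} $C'(1/6)$ presentation $G_N$. Instead it simply truncates the defining graph: set $\Gamma_N=\bigsqcup\setcon{\Gamma_i}{g(\Gamma_i)\leq g}$ for a threshold $g=g(N)$, and take $G_N=\fpres{S}{\Gamma_N}$. This is still a \emph{graphical} presentation, but now a finite one, and Theorem~\ref{thm:fpresscancasdim2} already covers that case (finite graphical $C'(1/6)$ groups are hyperbolic with $\mathrm{asdim}\leq 2$). So the ``repackaging as a classical presentation'' that you flag as the main obstacle is simply not needed; your anticipated difficulty disappears once you realise the target can stay graphical.

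The hypothesis you are missing is the girth--diameter inequality $g(\Gamma_i)\geq\delta\,\mathrm{diam}(\Gamma_i)$. This is exactly what Lemma~\ref{lem:highgirthnotMorse} exploits (via \cite[Proposition~5.10]{ACGH2}) to show that any face coming from a component of large girth would force a side of a Strebel triangle to fail the $N'$--Morse condition. From this one gets not merely a quasi--isometric but an \emph{isometric} embedding $X^{(N)}_e\hookrightarrow X_N$: if distances differed, a minimal diagram witnessing the shortcut would contain a face from $\Gamma\setminus\Gamma_N$, contradicting Morseness of the sides. Osajda's high--girth expanders satisfy the girth--diameter bound, which is why the construction applies; your appeal to \cite{ACGH2} for a generic ``fellow--travel bound $\rho(N)$'' is morally the same input, but without the girth--diameter condition it is not clear that bounding the overlap with each component lets you discard all but finitely many of them. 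Finally, the isometric embedding of the components (hence of the expander) into the Cayley graph is a general feature of $\mathrm{Gr}'(1/6)$ presentations and needs no special checking for Osajda's labelling.
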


The final goal is to show that, as for asymptotic dimension \cite{OsinRelhyp}, relatively hyperbolic group inherit finite stable dimension from their maximal parabolic subgroups.

\begin{btheorem}\label{bthm:relhyp} Let $G$ be a finitely generated group which is hyperbolic relative to $\mathcal H$. Then $\stabdim (G)<\infty$ if and only if $\stabdim (H)<\infty$ for all $H\in\mathcal{H}$.
\end{btheorem}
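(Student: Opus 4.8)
The plan is to prove each direction by controlling the stable subsets $G^{(N)}_e$ via the combinatorial geometry of relatively hyperbolic groups. For the easy direction, suppose $\stabdim(G)<\infty$. Each peripheral subgroup $H\in\mathcal H$ is undistorted in $G$ (maximal parabolics are quasi-convex in the relatively hyperbolic sense, hence quasi-isometrically embedded), so a Morse geodesic in $H$ maps to a Morse quasi-geodesic in $G$ with gauge depending only on the original gauge and the distortion. Thus the inclusion $H\hookrightarrow G$ induces, for each $N$, a quasi-isometric embedding $H^{(N)}_e\to G^{(N')}_e$; since asymptotic dimension does not increase under quasi-isometric embedding, $\stabdim(H)\le\stabdim(G)<\infty$. (Alternatively one observes $H\hookrightarrow_s G$ directly and invokes monotonicity of stable dimension under $\hookrightarrow_s$.)

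The substantive direction is the converse: assume $\stabdim(H)\le d$ for every $H\in\mathcal H$ and bound $\stabdim(G)$. By universality it suffices to bound $\operatorname{asdim} G^{(N)}_e$ uniformly-in-spirit, i.e.\ for each fixed $N$. The key structural input is the description of Morse geodesics in relatively hyperbolic groups (Drutu--Sapir, Osin): a geodesic $\gamma$ in $G$ is $N$-Morse if and only if the portions of $\gamma$ that penetrate peripheral cosets $gH$ are themselves $N''$-Morse inside those cosets (with $N''$ depending only on $N$), and moreover the ``deep'' penetration into any peripheral coset is bounded by a constant $D=D(N)$ unless the whole geodesic essentially lies in that coset. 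Consequently $G^{(N)}_e$ decomposes, up to finite Hausdorff error, into the union of the cusped/coned-off space $\widehat G$ (which is hyperbolic, so has finite asymptotic dimension) together with bounded-penetration ``collars'' of the $gH^{(N'')}_e$. I would then run the standard relative-hyperbolicity asymptotic-dimension machinery of Osin \cite{OsinRelhyp}: asymptotic dimension of $G$ is controlled by that of the coned-off Cayley graph together with a uniform bound on the asymptotic dimension of the peripheral pieces in a uniform way (the Hurewicz-type / finite-union theorems for $\operatorname{asdim}$). Applying this machinery to $G^{(N)}_e$ rather than to $G$, with the peripheral pieces replaced by $gH^{(N'')}_e$ (which have $\operatorname{asdim}\le d$ by hypothesis and universality) and the coned-off space replaced by the hyperbolic space $X^{(N)}_e$ maps into, yields $\operatorname{asdim} G^{(N)}_e \le f(d,\operatorname{asdim}\widehat G)$ for an explicit function $f$ independent of $N$; taking the supremum over $N$ gives $\stabdim(G)<\infty$.

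The main obstacle, and the place requiring real care, is establishing the uniform (in $N$) bounded-penetration statement and the attendant ``quasi-convexity of $G^{(N)}_e$ inside the hyperbolic skeleton'' — i.e.\ making precise the claim that an $N$-Morse geodesic either stays close to $X^{(N)}_e$ viewed as a hyperbolic space or is trapped in a single peripheral coset, with all constants depending on $N$ alone. This is exactly the delicate interaction between the Morse property and the tree-graded structure of asymptotic cones: one must rule out a Morse geodesic oscillating between infinitely many peripheral cosets with unbounded penetration, which would wreck the finite-union argument. I expect this to follow from the Drutu--Sapir characterisation of Morse geodesics via cut-points in asymptotic cones together with Osin's linear-divergence estimates for relatively hyperbolic groups, but assembling it with uniform constants — and then feeding the result into the relative asymptotic-dimension theorem in a way that is genuinely independent of $N$ — is the technical heart of the proof. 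A secondary, more routine obstacle is verifying that the peripheral restrictions $gH^{(N'')}_e$ appearing in the decomposition are mutually ``asymptotically disjoint'' with the uniform multiplicity control needed for the finite-union theorem; this should reduce to the bounded-coset-penetration property of relatively hyperbolic groups.
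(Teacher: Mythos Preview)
Your easy direction matches the paper's: both show each $H$ is a stable subset of $G$, using that $H$ is undistorted and that geodesics in $G$ between points of $H$ stay close to $H$ (\cite[Lemma 4.15]{drutu-sapir}), so $\stabdim(H)\le\stabdim(G)$.

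For the hard direction your route is genuinely different. You propose to run Osin's relative-asymptotic-dimension argument directly on the subset $G^{(N)}_e$, decomposing it into a hyperbolic skeleton plus peripheral Morse collars and invoking Hurewicz/finite-union theorems. The paper instead uses a product embedding: by \cite{Hume12}, $G$ quasi-isometrically embeds into $\hat G\times\mathcal T$, where $\mathcal T$ is tree-graded with pieces uniformly quasi-isometric to the $H_i$. The paper then shows that the projection $\phi$ to $\mathcal T$ sends $G^{(N)}_e$ into a tree-graded subspace whose pieces are uniformly quasi-isometric to $H^{(N')}_e$ (this is where the ``Morse restricts to Morse in each piece'' fact enters, via the characterisation of Morse geodesics in tree-graded spaces), and concludes using the product formula for $\mathrm{asdim}$ together with $\mathrm{asdim}(\hat G)<\infty$ \cite{Bell-Fujiwara}. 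Your approach is more hands-on and would require a metric version of Osin's theorem applied to a non-group subset; the paper's product embedding absorbs most of that work and sidesteps the uniformity headaches you correctly flag.

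One point needs correction. Your ``bounded penetration'' claim---that an $N$-Morse geodesic enters any peripheral coset to depth at most $D(N)$ unless it essentially lives there---is false as stated. Take $G=F_2\ast F_3$ hyperbolic relative to $F_3$: every geodesic is uniformly Morse, yet geodesics may alternate arbitrarily long $F_3$-segments with $F_2$-segments. What is true, and what both arguments actually use, is that the restriction of an $N$-Morse geodesic to each peripheral coset is $N''$-Morse inside that coset. Your decomposition should therefore read ``hyperbolic skeleton plus pieces quasi-isometric to $gH^{(N'')}_e$'', with no bound on their diameter; the relevant input is that these pieces have $\mathrm{asdim}\le d$ uniformly, not that they are bounded. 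With that fix your strategy is viable---the paper's product-embedding shortcut just makes the endgame cleaner.
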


Many classes of groups are already known to have finite stable dimension: virtually solvable groups---and more generally any unconstricted groups, for instance those satisfying a non-trivial law---have no infinite Morse rays \cite{drutu-sapir}, and any group with finite asymptotic dimension. As yet there is no example of an amenable (not virtually cyclic) group admitting a Morse ray. 
\medskip

Variations of the constructions in this paper are ideally suited to the study of relatively hyperbolic groups. These constructions will be the focus of a future paper \cite{CordesHume_relativehyp}; the main result of which states that given any finite collection of non-relatively hyperbolic groups $\mathcal{H}$, there are infinitely many $1$--ended groups which are hyperbolic relative to $\mathcal{H}$. In the specific case where each $H\in\mathcal{H}$ has finite stable dimension, one can find a family of such relatively hyperbolic groups with unbounded stable dimension.

\subsection*{Acknowledgements}

The authors would like to thank Jason Behrstock, Ruth Charney, Matthew Gentry Durham and Dominik Gruber for interesting conversations. We are also grateful to the referee for many comments which improved the clarity of the paper.

\section{Preliminaries}

\subsection{Morse geodesics and the Morse boundary}

\begin{definition}[Quasi--geodesics] A map $f:(X,d_X)\to (Y,d_Y)$ is a \textbf{quasi--isometric embedding} if there exist constants $K\geq 1$ and $C\geq 0$ such that
\[
 K^{-1}d_X(x,y) - C \leq d_Y(f(x),f(y)) \leq K d_X(x,y) + C
\]
holds for all $x,y\in X$. In this case we call $f$ a $(K,C)$-quasi--isometric embedding. If $X$ is a connected subset of $\R$ then we call $f$ a $(K,C)$-\textbf{quasi--geodesic}.
\end{definition}
When we describe a quasi--geodesic as a subset of a space, we are implicitly referring to the image of the map.

\begin{definition} [Morse geodesics] A geodesic $\gamma$ in a metric space is called \textbf{Morse} if there exists a function $N=N(K, C)$ such that for any $(K, C)$-quasi--geodesic $\sigma$ with endpoints on $\gamma$, we have that $\sigma \subset \mathcal{N}_N(\gamma)$, the $N$-neighborhood of $\gamma$. We call the function $N$ a \bf{Morse gauge} and say that $\gamma$ is $N$--Morse.
\end{definition}
	
\begin{definition}  \textbf{The Morse boundary}

Let $X$ be a proper geodesic space and fix a basepoint $p \in X$. The \emph{Morse boundary} of $X$, $\partial_M X$, is the set of all Morse geodesic rays in $X$ (with basepoint $p$) up to asymptotic equivalence. To topologise the boundary, first fix a Morse gauge $N$ and consider the subset of the Morse boundary that consists of all rays in $X$ with Morse gauge at most $N$:  \begin{equation*} \partial_M^N X_p= \{[\alpha] \mid \exists \beta \in [\alpha] \text{ that is an $N$--Morse geodesic ray with } \beta(0)=p\}. \end{equation*} We topologise this set with the compact-open topology. This topology is equivalent to one defined by a system of neighbourhoods, $\{V_n(\alpha) \mid n \in \N \}$, at a point $\alpha$ in $\partial_M^N X_p$. The sets $V_n( \alpha)$ are defined to be the set of geodesic rays $\gamma$ with basepoint $p$ and $d(\alpha(t), \gamma(t))< \delta_N$ for all $t<n$ where $\delta_N$ is a constant that depends only on $N$. 

Let $\mathcal M$ be the set of all Morse gauges. We put a partial ordering on $\mathcal M$ so that  for two Morse gauges $N, N' \in \mathcal M$, we say $N \leq N'$ if and only if $N(K,C) \leq N'(K,C)$ for all $K,C$. We define the Morse boundary of $X$ to be
 \begin{equation*} \partial_M X_p=\varinjlim_\mathcal{M} \partial^N_M X_p \end{equation*} with the induced direct limit topology, i.e., a set $U$ is open in $\partial_M X_p$ if and only if $U \cap \partial^N_M X_p$ is open for all $N$.   

\end{definition}
\begin{theorem}[\cite{Cordes15}] Given a proper geodesic space $X$, the Morse boundary, $\partial_M X=\varinjlim \partial_M^{N} X_p$, equipped with the direct limit topology, is 
\begin{enumerate}
\item basepoint independent;
\item a quasi--isometry invariant; and
\item homeomorphic to the visual boundary if $X$ is hyperbolic and to the contracting boundary if $X$ is $\mathrm{CAT}(0)$.
\end{enumerate}
\end{theorem}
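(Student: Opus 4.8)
The plan is to concentrate all the work in one stability lemma and then push it through the direct limit formally. The lemma I would aim for: \emph{in a proper geodesic space $X$, if $\gamma$ is an $N$--Morse $(K,C)$--quasi--geodesic ray with $\gamma(0)=p$ and $q\in X$, then there is a genuine geodesic ray $\gamma'$ with $\gamma'(0)=q$, asymptotic to $\gamma$, which is $N'$--Morse with $N'$ depending only on $(N,K,C,d(p,q))$, and with $d_{\mathrm{Haus}}(\gamma,\gamma')$ bounded by the same data.} Existence of $\gamma'$ is an Arzel\`a--Ascoli/diagonal argument on the geodesics $[q,\gamma(t_i)]$ for $t_i\to\infty$, which is where properness is used; that $\gamma'$ is Morse with a controlled gauge, and the Hausdorff bound, follow because $\gamma$ and $\gamma'$ are quasi--geodesics sharing an endpoint at infinity, so the Morse property of $\gamma$ traps $\gamma'$ in a uniformly bounded neighbourhood of $\gamma$ on every finite initial segment. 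I would also record the companion fact that two asymptotic $N$--Morse geodesic rays from a common basepoint are uniformly close with bound depending only on $N$; this is exactly what makes the sets $V_n(\alpha)$ with the gauge--dependent constant $\delta_N$ a legitimate neighbourhood basis.

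Granting the lemma, parts (1) and (2) are bookkeeping. For basepoint independence, apply the lemma with $K=1$, $C=0$: for each gauge $N$ it sends $[\alpha]\in\partial_M^N X_p$ to a well-defined $[\alpha']\in\partial_M^{N'} X_q$, compatibly with the inclusions $\partial_M^N X_p\subseteq\partial_M^{\tilde N}X_p$ for $N\le\tilde N$, hence it induces a map $\partial_M X_p\to\partial_M X_q$; the symmetric construction is its inverse, so it is a bijection. The Hausdorff control shows each stratum map carries $V_n(\alpha)$ into some $V_{n'}(\alpha')$, so it is continuous $\partial_M^N X_p\to\partial_M^{N'}X_q$, and since the direct limit topology is final the induced map on $\partial_M X_p$ is continuous; likewise for the inverse, giving a homeomorphism. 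For quasi--isometry invariance, a $(K,C)$--quasi--isometry $f\colon X\to Y$ sends an $N$--Morse geodesic ray from $p$ to a Morse $(K,C')$--quasi--geodesic ray from $f(p)$, which the lemma straightens to an $N'$--Morse geodesic ray with $N'$ depending only on $(N,K,C)$; as before this descends to a continuous map $\partial_M X_p\to\partial_M Y_{f(p)}$. A coarse inverse $\bar f$ gives the reverse map, and since $\bar f\circ f$ is bounded distance from $\mathrm{id}_X$, the composite of the two boundary maps equals the basepoint--change homeomorphism from part (1) (using uniqueness of asymptotic Morse rays); composing with its inverse shows both maps are homeomorphisms.

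For part (3): if $X$ is $\delta$--hyperbolic then the Morse lemma says every geodesic ray is $N_\delta$--Morse for a single gauge $N_\delta$, so $\partial_M X_p=\partial_M^{N_\delta}X_p$ and the direct limit is already attained; but $\partial_M^{N_\delta}X_p$ is by definition the set of geodesic rays from $p$ with the compact--open topology, which for a proper hyperbolic space is precisely the visual boundary with its usual topology. If $X$ is $\mathrm{CAT}(0)$, I would invoke the equivalence (Charney--Sultan, after Sultan) that a geodesic ray is contracting if and only if it is Morse, with the two families of gauges mutually cofinal; since the contracting boundary is built as the same kind of direct limit over contraction gauges with neighbourhood bases of the same shape, cofinality of the index sets plus the interchangeability of ``$\delta_N$--close'' and ``$\delta_b$--close'' after adjusting $n$ yields a homeomorphism onto $\partial_M X_p$. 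The main obstacle is the stability lemma itself, specifically showing that straightening a Morse quasi--geodesic ray to a geodesic ray degrades the gauge only in a way that depends on the gauge and the quasi--isometry data, and that \emph{all} the resulting fellow--travelling constants are uniform in the ray, so that they genuinely descend to statements about the neighbourhood bases $V_n$ and hence to continuity of the induced maps between direct limits.
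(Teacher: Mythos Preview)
The paper does not prove this theorem: it is stated in the Preliminaries section as a result cited from \cite{Cordes15} (the first author's earlier paper), with no proof given. There is therefore no ``paper's own proof'' to compare your proposal against.

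That said, your outline is a faithful sketch of the argument as it appears in \cite{Cordes15}, and the present paper implicitly relies on the same ingredients in its own later arguments. The stability lemma you isolate---straightening a Morse quasi--geodesic ray to a geodesic ray with controlled gauge via Arzel\`a--Ascoli---is exactly what is invoked here (e.g.\ \cite[Lemma~2.8]{Cordes15} is cited in the proof of Theorem~\ref{thm:seqandraybijection}, and Lemma~2.5 of \cite{charney-sultan} is used in Lemma~\ref{lem:straightentoNprime}). Your treatment of basepoint independence and quasi--isometry invariance via stratum--wise maps compatible with the direct system, and of part~(3) via the Morse lemma in the hyperbolic case and the Charney--Sultan contracting/Morse equivalence in the $\mathrm{CAT}(0)$ case, matches the structure of the original proof. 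The one point you flag as the ``main obstacle''---uniform control of the gauge and fellow--travelling constants under straightening---is indeed where the work lies, and is handled in \cite{Cordes15} by the sequence of lemmas you allude to; your proposal correctly identifies this as the crux rather than glossing over it.
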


For more details on the Morse boundary see \cite{Cordes15}.

\subsection{Boundaries of hyperbolic spaces}\label{sec:visbdry}

Here we will quickly recall the construction of a sequential boundary with a visual metric for a hyperbolic space, this can all be found in \cite[Chapter III.H]{bh} or \cite{GhysdlH}.

\begin{definition}\label{defn:hyp} \cite[Definition $1.20$]{bh} Let $X$ be a (not necessarily geodesic) metric space. We say $X$ is $\delta$\textbf{--hyperbolic} if for all $w,x,y,z$ we have
\[
 (x\cdot y)_w \geq \min\set{(x\cdot z)_w,(z\cdot y)_w} - \delta.
\]
\end{definition}

\begin{definition} Let $X$ be a $\delta$--hyperbolic metric space and let $x,y,z\in X$. The \textbf{Gromov product} of $x$ and $y$ with respect to $z$ is defined as
\[
 (x\cdot y)_z = \frac{1}{2}\left( d(z,x)+d(z,y)-d(x,y)\right).
\]
Let $(x_n)$ be a sequence in $X$. We say $(x_i)$ converges at infinity if $(x_i\cdot x_j)_e \to \infty$ as $i,j \to \infty$. Two convergent sequences $(x_n),(y_m)$ are said to be \textbf{equivalent} if $(x_i \cdot y_j) \to \infty$ as $i,j \to \infty$. We denote the equivalence class of $(x_n)$ by $\lim x_n$.

The \textbf{sequential boundary} of $X$, $\partial_s X$ is defined to be the set of convergent sequences considered up to equivalence.
\end{definition}
As all boundaries of hyperbolic spaces considered in this paper will be sequential, we drop the subscript $s$ from the notation. We may extend the Gromov product to $\partial  X$ in the following way:
\[
 (x\cdot y)_e = \sup \left(\liminf_{m,n\to\infty}\set{(x_n\cdot y_m)_e}\right).
\]
where the supremum is taken over all sequences $(x_i)$ and $(y_j)$ in $X$ such that $x= \lim x_i$ and $y=\lim y_j$.

This Gromov product satisfies the following key properties:

\begin{lemma} \label{lem:gromovproduct} Let $X$ be a $\delta$--hyperbolic metric space and fix $e \in X$. 
\begin{enumerate} 
\item \label{lem:equivmeansinfinite} $(x \cdot y)_e = \infty$ if and only if $x=y \in \partial  X$.
\item \label{lem:canpickconvergentseq} For all $x,y \in \partial  X\cup X$ there exist sequences $(x_n)$ and $(y_n)$ in $X$ such that $x = \lim x_n$, $y=\lim y_n$ and $(x\cdot y)_e =\lim_n (x_n \cdot y_n)_e$.
\item \label{lem:uptoreplacement} For all $x,y \in \partial  X$ and all sequences $(x_i')$ and $(y_j')$ in $X$ with $x = \lim x_i'$ and $y= \lim y_j'$, $$(x\cdot y)_e-2\delta \leq \liminf_{i,j }(x'_i \cdot y_j')_e \leq (x\cdot y)_e.$$
\item \label{lem:seqroughtriangle} For all $x, y ,z \in \partial  X\cup X$ we have $(x\cdot z)_e \geq \min\{(x\cdot y)_e, (z\cdot y)_e \} - 2\delta$. 
\end{enumerate} 
\end{lemma}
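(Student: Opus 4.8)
The plan is to derive all four statements from the four--point inequality (Definition \ref{defn:hyp}) together with a single observation: two sequences $(x_i)$, $(x_i')$ in $X$ that represent the same point of $\partial X$ are equivalent, i.e.\ $(x_i\cdot x_j')_e\to\infty$ as $i,j\to\infty$ (this is exactly the definition of representing the same boundary point). The recurring move is this: to compare $(a\cdot b)_e$ for terms of two sequences with $(a'\cdot b')_e$ for terms of two other sequences representing the same two boundary points, one inserts the equivalences $a\leftrightarrow a'$ and $b\leftrightarrow b'$ via the four--point inequality; each insertion costs one additive $\delta$, and because the inserted Gromov products tend to infinity they never affect the relevant $\liminf$. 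This is precisely why the error constant in (3) and (4) is $2\delta$.

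I would prove (3) first, since the remaining parts follow from it or from the same technique. The upper bound $\liminf_{i,j}(x_i'\cdot y_j')_e\leq (x\cdot y)_e$ is immediate: $(x\cdot y)_e$ is defined as a supremum over pairs of representing sequences, and $((x_i'),(y_j'))$ is one such pair. For the lower bound, fix $\varepsilon>0$ and pick representing sequences $(x_n)\to x$, $(y_m)\to y$ with $\liminf_{n,m}(x_n\cdot y_m)_e>(x\cdot y)_e-\varepsilon$ (for $(x\cdot y)_e=\infty$, replace the right--hand side by an arbitrary bound $M$). Two applications of the four--point inequality give, for all $i,j,n,m$,
\[
(x_i'\cdot y_j')_e \;\geq\; \min\set{(x_i'\cdot x_n)_e,\ (x_n\cdot y_m)_e,\ (y_m\cdot y_j')_e}-2\delta .
\]
Now fix $n=m$ equal to a single index large enough that $(x_n\cdot y_m)_e>(x\cdot y)_e-\varepsilon$ and, using that $(x_i')\sim(x_n)$ and $(y_j')\sim(y_m)$, that $(x_i'\cdot x_n)_e>(x\cdot y)_e-\varepsilon$ and $(y_m\cdot y_j')_e>(x\cdot y)_e-\varepsilon$ for all $i,j$ large; letting $i,j\to\infty$ and then $\varepsilon\to 0$ yields $\liminf_{i,j}(x_i'\cdot y_j')_e\geq(x\cdot y)_e-2\delta$. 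Part (1) is then quick: if $x\in X$ then $(x\cdot b)_e\leq d(e,x)$ for every $b\in X$ by the triangle inequality, so $(x\cdot y)_e<\infty$; hence $(x\cdot y)_e=\infty$ forces $x,y\in\partial X$, and (3) gives $(x_i'\cdot y_j')_e\to\infty$ for any representatives, i.e.\ $x=y$. Conversely, if $x=y\in\partial X$ a representing sequence is equivalent to itself, so $(x\cdot x)_e=\infty$. Part (4) uses the same insertion trick: apply the four--point inequality to terms of near--optimal representing sequences $(x_n)\to x$, $(z_m)\to z$, bridging between the near--optimal pairs for $(x\cdot y)_e$ and $(z\cdot y)_e$ by a representing sequence for $y$, take $\liminf$, and use $\liminf\min\geq\min\liminf$; points lying in $X$ only reduce the number of insertions, so $2\delta$ is the worst case (and the edge case $(x\cdot y)_e=(z\cdot y)_e=\infty$ is handled by part (1)). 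For part (2) I would take pairs of representing sequences whose double $\liminf$ approaches $(x\cdot y)_e$ and extract, by a diagonal argument using (3) to control the limit, a single pair $(x_n)\to x$, $(y_n)\to y$ along which $(x_n\cdot y_n)_e$ converges to $(x\cdot y)_e$.

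I expect the main obstacle to be bookkeeping rather than anything conceptual: keeping track of which index each $\liminf$ runs over, and ensuring in part (2) that the indices of the inserted comparison sequences can be chosen uniformly enough for the diagonalization to produce an honest limit equal to $(x\cdot y)_e$ (and not merely within $2\delta$ of it). Once the ``insert an equivalence, pay a $\delta$'' principle is in place the arguments are routine; these statements and their proofs are standard and can also be found in \cite[Ch.~III.H]{bh} and \cite{GhysdlH}.
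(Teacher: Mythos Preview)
The paper does not actually prove this lemma: it is stated in Section~\ref{sec:visbdry} as part of the background on boundaries of hyperbolic spaces, with the blanket attribution ``this can all be found in \cite[Chapter III.H]{bh} or \cite{GhysdlH}''. Your sketch is correct and is exactly the standard argument given in those references (see in particular \cite[III.H, Remark~3.17]{bh}), so there is nothing to compare.

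One small point of bookkeeping in your proof of (3): when you ``fix $n=m$ equal to a single index large enough'', you are implicitly using that $\liminf_{i,n}(x_i'\cdot x_n)_e=\infty$ implies that for any bound $M$ there is an $N$ with $(x_i'\cdot x_n)_e>M$ for \emph{all} $i,n\geq N$; this is what lets you freeze $n$ and still have $(x_i'\cdot x_n)_e$ large for all big $i$. You clearly have this in mind, but it is worth saying explicitly since it is the step where a careless reader might worry that fixing $n$ caps $(x_i'\cdot x_n)_e$ at $d(e,x_n)$.
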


Recall that a metric $d$ on $\partial  X$ is said to be \textbf{visual} (with parameter $\varepsilon>0$) if there exist $k_1,k_2>0$ such that $k_1\exp(-\varepsilon(x\cdot y)_e) \leq d(x,y)\leq k_2\exp(-\varepsilon(x\cdot y)_e)$, for all $x,y\in\partial X$.

Let $x, y \in \partial  X$. As a shorthand we define $\rho_\epsilon(x, y) := \exp\left(-\varepsilon (x\cdot y\right)_e)$.

\begin{theorem}\label{thm:GrProdmetric} \textrm{\cite[Section $7.3$]{GhysdlH}} Let $X$ be a $\delta$--hyperbolic space. If $\varepsilon'=\exp(2\delta\varepsilon)-1\leq \sqrt{2}-1$ then we can construct a visual metric $d$ on $\partial  X$ such that
\[
 (1-2\varepsilon')\rho_\varepsilon(x, x')\leq d(x,x') \leq \rho_\varepsilon(x,x').
\]
\end{theorem}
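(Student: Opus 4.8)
The plan is to manufacture $d$ from $\rho_\varepsilon$ by the classical chain metrization, keeping careful track of the multiplicative constant. Note first that $\rho_\varepsilon$ is symmetric and, by part \ref{lem:equivmeansinfinite} of Lemma \ref{lem:gromovproduct}, vanishes precisely on the diagonal of $\partial X$. Exponentiating the approximate triangle inequality for the Gromov product on the boundary (part \ref{lem:seqroughtriangle} of Lemma \ref{lem:gromovproduct}) gives, for all $x,y,z\in\partial X$,
\[
 \rho_\varepsilon(x,z)\;\leq\;\exp(2\delta\varepsilon)\,\max\{\rho_\varepsilon(x,y),\rho_\varepsilon(y,z)\}\;=\;(1+\varepsilon')\max\{\rho_\varepsilon(x,y),\rho_\varepsilon(y,z)\},
\]
so $\rho_\varepsilon$ satisfies a ``$(1+\varepsilon')$--ultrametric'' inequality.

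Now define $d(x,x')$ to be the infimum of $\sum_{i=0}^{n-1}\rho_\varepsilon(x_i,x_{i+1})$ over all finite chains $x=x_0,x_1,\dots,x_n=x'$ in $\partial X$. Symmetry of $\rho_\varepsilon$ and concatenation of chains make $d$ into a pseudometric, and the single--edge chain $x,x'$ shows $d\leq\rho_\varepsilon$ (so $d$ is finite). The substance of the theorem is the reverse estimate $d\geq(1-2\varepsilon')\rho_\varepsilon$, which I would isolate as the following claim, proved by induction on chain length: if $\varepsilon'\leq\sqrt2-1$ then every chain $x_0,\dots,x_n$ satisfies $\sum_{i=0}^{n-1}\rho_\varepsilon(x_i,x_{i+1})\geq(1-2\varepsilon')\rho_\varepsilon(x_0,x_n)$. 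The base case $n\leq1$ is immediate since $1-2\varepsilon'\leq1$. For $n\geq2$, write $S$ for the chain sum (we may assume $S>0$, else all the $x_i$ coincide and there is nothing to prove), let $k$ be the largest index with $\sum_{i<k}\rho_\varepsilon(x_i,x_{i+1})\leq S/2$, and split the chain as $x_0,\dots,x_k$, the single edge $x_kx_{k+1}$, and $x_{k+1},\dots,x_n$. By maximality of $k$ the tail $\sum_{i>k}\rho_\varepsilon(x_i,x_{i+1})$ is also $\leq S/2$, while $\rho_\varepsilon(x_k,x_{k+1})\leq S$. Applying the inductive hypothesis to the two sub--chains and the $(1+\varepsilon')$--ultrametric inequality to the triples $(x_0,x_k,x_n)$ and $(x_k,x_{k+1},x_n)$ yields
\[
 \rho_\varepsilon(x_0,x_n)\;\leq\;(1+\varepsilon')^2\,S\cdot\max\left\{1,\tfrac{1}{2(1-2\varepsilon')}\right\},
\]
and an elementary check — splitting on whether $\varepsilon'\geq1/4$, using $(1+\varepsilon')^2\leq2$ in that case and $(1+\varepsilon')^2(1-2\varepsilon')=1-3\varepsilon'^2-2\varepsilon'^3\leq1$ otherwise — bounds the right--hand side by $S/(1-2\varepsilon')$, closing the induction. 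Taking the infimum over chains gives $d\geq(1-2\varepsilon')\rho_\varepsilon$; in particular $d(x,x')>0$ whenever $x\neq x'$, so $d$ is a genuine metric. Finally, combining $(1-2\varepsilon')\rho_\varepsilon\leq d\leq\rho_\varepsilon$ with the definition $\rho_\varepsilon(x,x')=\exp(-\varepsilon(x\cdot x')_e)$ exhibits $d$ as a visual metric of parameter $\varepsilon$ (take $k_1=1-2\varepsilon'$ and $k_2=1$).

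The one genuinely non--routine step is the induction above, and within it the bookkeeping of the inductive step: one must choose the split point so that both sub--chains carry weight at most $S/2$, and then verify that the double application of the approximate ultrametric inequality — which costs a factor $(1+\varepsilon')^2$ — is absorbed by the target denominator $1-2\varepsilon'$. This is exactly where the hypothesis $\varepsilon'=\exp(2\delta\varepsilon)-1\leq\sqrt2-1$ is used, and it is the reason the statement restricts $\varepsilon$ to be small (in particular $\varepsilon'<1/2$, so $1-2\varepsilon'>0$). One could alternatively quote Frink's metrization lemma, but reproving it as above is what delivers the explicit constants.
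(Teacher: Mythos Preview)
The paper does not supply its own proof of this theorem; it is quoted from \cite[Section~7.3]{GhysdlH} and used as a black box. Your argument is correct and is precisely the classical chain--metrization proof given in Ghys--de la Harpe (itself a quantified form of Frink's lemma): define $d$ as the infimum of $\rho_\varepsilon$--lengths over chains, then use the $(1+\varepsilon')$--ultrametric inequality and the ``split at half the sum'' induction to recover $\rho_\varepsilon\leq d/(1-2\varepsilon')$. Your bookkeeping in the inductive step, including the case split at $\varepsilon'=1/4$ and the identity $(1+\varepsilon')^2(1-2\varepsilon')=1-3\varepsilon'^2-2\varepsilon'^3\leq 1$, is exactly what is needed, and your identification of the hypothesis $\varepsilon'\leq\sqrt2-1$ as the threshold for $(1+\varepsilon')^2\leq 2$ is on the mark.
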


Visual metrics on a hyperbolic space are all quasi--symmetric.

\begin{definition}\label{defn:quasisym} A homeomorphism $f:(X,d)\to (Y,d')$ is said to be quasi--symmetric if there exists a homeomorphism $\eta:\R\to\R$ such that for all distinct $x,y,z\in X$, 
\[
 \frac{\displaystyle d'(f(x),f(y))}{\displaystyle d'(f(x),f(z))} \leq \eta \left( \frac{\displaystyle d(x,y)}{\displaystyle d(x,z)}\right).
\]
\end{definition}

One natural invariant of the boundary of a hyperbolic space is its \textbf{capacity dimension}, introduced by Buyalo in \cite{Buycapdim}.

Let $\mathcal{U}$ be an open covering of a metric space $X$. Given $x \in X$, we let $$L(\mathcal{U},x)=\sup\setcon{d(x, X\char`\\U)}{U \in \mathcal{U}}$$ be the {\bf Lebesgue number of $\mathcal{U}$ at $x$ } and $L(\mathcal{U}) = \inf_{x \in X} L(\mathcal{U},x)$ the Lebesgue number of $\mathcal{U}$. The {\bf multiplicity of $\mathcal{U}$}, $m(\mathcal{U})$, is the maximal number of members of $\mathcal{U}$ with non-empty intersection.

\begin{definition}[\cite{BuyBook}] The \textbf{capacity dimension} of a metric space $X$ ($cdim(X)$) is the minimal integer $m$ with the following property:

There exists some $\delta \in (0,1)$ such that for every sufficiently small $r>0$ there is an open covering $\mathcal{U}$ of $X$ by sets of diameter at most $r$ with $L(\mathcal{U}) \geq \delta r$ and $m(\mathcal{U})\leq m+1$.
\end{definition}

By Corollary $4.2$ of \cite{Buycapdim} the capacity dimension of a metric space is a quasi--symmetry invariant.

\section{Metric Morse boundary}\label{sec:metricMorse}

The goal of this section is to prove Theorem \ref{bthm:key}.

Let $X$ be a geodesic metric space, let $e\in X$ and let $N$ be a Morse gauge. We define $X^{(N)}_e$ to be the set of all $y\in X$ such that there exists a $N$--Morse geodesic $[e,y]$ in $X$.

Theorem \hyperref[bthm:key]{\ref{bthm:key}.\ref{key:covering}} is trivial.

\begin{lemma}\label{lem:covering} Let $x\in X$. There exists some $N$ such that $x\in X^{(N)}_e$.
\end{lemma}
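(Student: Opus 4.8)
The plan is to take the point $x \in X$ and look at any geodesic segment joining $e$ to $x$; since $X$ is a geodesic metric space such a segment $\gamma = [e,x]$ exists. The claim amounts to showing that $\gamma$ is $N$-Morse for \emph{some} Morse gauge $N$, i.e., that there is a function $N \colon \N^2 \to \N$ (a priori depending on $\gamma$, hence on $x$) such that every $(K,C)$-quasi-geodesic with endpoints on $\gamma$ stays in the $N(K,C)$-neighbourhood of $\gamma$. The key observation is that $\gamma$ is a \emph{compact} set (a geodesic segment of finite length $d(e,x)$), and a quasi-geodesic with both endpoints on a bounded set is itself a bounded set, with diameter controlled by the quasi-geodesy constants and by $d(e,x)$.

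First I would make this bound explicit. Let $\sigma \colon [a,b] \to X$ be a $(K,C)$-quasi-geodesic with $\sigma(a), \sigma(b) \in \gamma$. Then $|b-a| \le K\big(d(\sigma(a),\sigma(b)) + C\big) \le K\big(d(e,x) + C\big)$, using that any two points of $\gamma$ are at distance at most $d(e,x)$. Hence for any $t \in [a,b]$, $d(\sigma(t), \sigma(a)) \le K|t-a| + C \le K^2(d(e,x)+C) + C$, so $\sigma(t)$ lies within $K^2(d(e,x)+C)+C$ of the point $\sigma(a) \in \gamma$. Therefore $\sigma$ is contained in the $N(K,C)$-neighbourhood of $\gamma$ where
\[
 N(K,C) := K^2\big(d(e,x)+C\big) + C.
\]
This is a genuine function $\N^2 \to \N$ (after rounding up), so $\gamma$ is an $N$-Morse geodesic from $e$ to $x$, which by definition puts $x \in X^{(N)}_e$. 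One can replace $d(e,x)$ by $\lceil d(e,x) \rceil$ if one insists on integer outputs; nothing changes.

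There is essentially no obstacle here --- this is the ``trivial'' direction the authors advertise, and the only thing to be slightly careful about is bookkeeping the constants so that the resulting $N$ really is a well-defined function on $\N^2$ (monotone in each variable, integer-valued) rather than just a pointwise bound. Note the gauge $N$ produced is wildly inefficient and grows with $d(e,x)$; that is fine, since Lemma~\ref{lem:covering} only asserts \emph{existence} of some gauge, and the uniformity and hyperbolicity statements (\hyperref[bthm:key]{\ref{bthm:key}.\ref{key:hyperbolic}}, \hyperref[bthm:key]{\ref{bthm:key}.\ref{key:stable}}) are the substantive parts handled separately.
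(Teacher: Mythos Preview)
Your proof is correct and follows the same idea as the paper: a finite geodesic segment is automatically Morse, with gauge depending on its length, because any quasi--geodesic with endpoints on it is a bounded set. The paper's version is terser (it sets $N(K,C)=Kd(e,x)+C$ and observes that quasi--geodesics stay in the ball of that radius about $e$); your version is slightly more careful in that you treat quasi--geodesics with \emph{arbitrary} endpoints on $\gamma$ rather than just the endpoints $e,x$, and you track the resulting constant $K^2(d(e,x)+C)+C$ honestly.
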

\begin{proof} Let $d=d(e,x)$ and define $N(K,C)=Kd+C$. Any geodesic from $e$ to $x$ is $N$--Morse, since any $(K,C)$-quasi--geodesic from $e$ to $x$ is contained in the closed ball of radius $Kd+C$ centred at $e$.
\end{proof}

Theorem \hyperref[bthm:key]{\ref{bthm:key}.\ref{key:PO}} follows immediately from the definition of the spaces $X^{(N)}_e$.

\subsection{Hyperbolicity and Stability (\hyperref[bthm:key]{\ref{bthm:key}.\ref{key:hyperbolic}} and \hyperref[bthm:key]{\ref{bthm:key}.\ref{key:stable}})}

Our first goal is the following proposition.

\begin{proposition}\label{prop:hypsubsets} $X^{(N)}_e$ is $8N(3,0)$--hyperbolic in the sense of Definition $\ref{defn:hyp}$.
\end{proposition}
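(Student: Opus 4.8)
The plan is to verify the four-point condition of Definition~\ref{defn:hyp} directly on $X^{(N)}_e$, using the fact that geodesics of $X$ joining points of $X^{(N)}_e$ are $N$--Morse in $X$, hence the triangles they span are uniformly thin. First I would fix $x,y,z,w\in X^{(N)}_e$ and choose, for each pair, an $N$--Morse geodesic of $X$ between them (which exists by definition of $X^{(N)}_e$); note these geodesics lie in $X$, not necessarily in $X^{(N)}_e$, but the Gromov product is computed with the metric of $X$ restricted to the four points, so that is harmless. The key classical input is that in an arbitrary geodesic space a triangle all of whose sides are $N$--Morse geodesics is $\delta_0$--thin with $\delta_0$ depending only on $N$: concatenating two sides of such a triangle gives a $(3,0)$--quasi--geodesic (each side being a genuine geodesic, the concatenation is $3$-Lipschitz and $3$-coarsely-expanding after the standard reparametrisation), so by the Morse property each side lies in the $N(3,0)$--neighbourhood of the union of the other two. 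A short standard argument then upgrades ``each side in the $N(3,0)$--neighbourhood of the union of the other two'' to genuine $\delta_0$--slimness with, say, $\delta_0 = 4N(3,0)$.

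Next I would translate thin triangles into the Gromov four-point inequality. This is the routine implication ``slim triangles $\Rightarrow$ hyperbolic in the Gromov-product sense'' (cf.\ \cite[Chapter III.H]{bh}), but one must be slightly careful because $X^{(N)}_e$ need not be geodesic: the comparison must be run using the $N$--Morse geodesics of the ambient space $X$ joining our four chosen points. Since the four points all lie in $X^{(N)}_e$, all six connecting geodesics are $N$--Morse, every triangle among $x,y,z,w$ is $\delta_0$--thin, and the distances between the four points are the same whether measured in $X$ or in $X^{(N)}_e$ (the metric is the restriction). Hence the standard computation yields $(x\cdot y)_w \geq \min\{(x\cdot z)_w,(z\cdot y)_w\} - 2\delta_0$, i.e.\ $X^{(N)}_e$ is $2\delta_0$--hyperbolic. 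Tracking constants, with $\delta_0 = 4N(3,0)$ this gives $8N(3,0)$--hyperbolicity, matching the statement; I would of course optimise the bookkeeping to hit exactly that constant.

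The main obstacle, and the point requiring genuine care rather than routine invocation, is the non-geodesic nature of $X^{(N)}_e$: one cannot simply cite ``thin triangles iff hyperbolic'' as stated for geodesic spaces. The resolution is to do everything with geodesics of $X$ (which are available between any two points of $X^{(N)}_e$ and are $N$--Morse there), and only at the very end restrict attention to the four chosen points, observing that the Gromov product only sees pairwise distances. A secondary point to get right is the claim that a concatenation of two geodesics is a $(3,0)$--quasi--geodesic after arclength reparametrisation — this is elementary (the triangle inequality gives the lower bound once one notes the concatenated path has length at most $d(x,y)+d(y,z) \le 2\,d(x,z) + \text{error}$, controlled because all three are geodesics) — together with the standard ``nested neighbourhoods'' argument converting the $N(3,0)$-fellow-travelling into a slimness constant. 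No further ingredients beyond the Morse lemma-style estimate and elementary hyperbolic-geometry bookkeeping are needed.
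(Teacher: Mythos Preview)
Your proposal has two genuine gaps. First, the definition of $X^{(N)}_e$ only guarantees an $N$--Morse geodesic from $e$ to each point; it says nothing about geodesics between two arbitrary points $x,y\in X^{(N)}_e$ with $x,y\neq e$. (Indeed the paper later needs Lemma~\ref{lem:thirdedgeMorse} just to get an $N'$--Morse geodesic between such pairs, with $N'>N$.) So you cannot assume all six sides of your four-point configuration are $N$--Morse. Second, and more seriously, the concatenation of two geodesic sides of a triangle is \emph{not} a $(3,0)$--quasi--geodesic in general: take $x=z$ and $y$ far away, and the concatenation $[x,y]\cup[y,z]$ goes out and back and is not a quasi--geodesic at all. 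Your parenthetical justification $d(x,y)+d(y,z)\le 2\,d(x,z)+\text{error}$ is simply false without further hypotheses.

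The paper avoids both issues by working only with triangles having $e$ as a vertex, so the two sides $\ell_1=[e,x]$ and $\ell_2=[e,y]$ are $N$--Morse by definition, while the third side $[x,y]$ need not be. The $(3,0)$--quasi--geodesic is obtained by a different construction (Lemma~\ref{lem:Morserayshyp}): on the geodesic $P=[x,y]$ take the point $z$ closest to $e$, and show that each half of $P$ concatenated with a geodesic $[z,e]$ is a $(3,0)$--quasi--geodesic; the minimality of $z$ is what makes the lower distance bound work. This yields $4N(3,0)$--slimness of triangles \emph{based at $e$} (Lemma~\ref{lem:deltaslim}), hence the three-point inequality $(x\cdot y)_e \geq \min\{(x\cdot z)_e,(z\cdot y)_e\}-4N(3,0)$ for all $x,y,z$. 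A classical remark of Gromov \cite[1.1B]{Gromov87} then upgrades the three-point inequality at a fixed basepoint to the full four-point inequality at the cost of doubling the constant, giving $8N(3,0)$.
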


We will prove this as a consequence of the next three lemmas, the latter two will also be useful in the rest of the section.

\begin{lemma}\label{lem:Morserayshyp} Let $(X,d)$ be a geodesic metric space and let $\ell_1,\ell_2$ be two geodesics in $X$ with $\ell_1(0)=\ell_2(0)=e$.  Let $x_i\in \ell_i$, let $P$ be a geodesic from $x_1$ to $x_2$ such that $d(e,P)$ is minimal at some $z\in P$ and let $\gamma$ be any geodesic from $z$ to $e$. Then the concatenations $x_1\to_P z\to_\gamma e$ and $x_2\to_P z\to_\gamma e$ are $(3,0)$-quasi--geodesics.  

If $\ell_1,\ell_2$ are $N$--Morse then $d(z,\ell_i)$ is uniformly bounded by $N(3,0)$.
\end{lemma}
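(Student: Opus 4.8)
The plan is to estimate the length of the concatenated path $x_i \to_P z \to_\gamma e$ and compare it to the distance $d(x_i, e)$, using that $z$ is the point of $P$ closest to $e$. Write $P = [x_1, x_2]$, split at $z$ into $[x_i, z]$ and $[z, x_2]$, and let $\gamma = [z,e]$. The path I care about is $\alpha_i := [x_i, z] \cup [z, e]$, which has length $d(x_i, z) + d(z, e)$. To show this is a $(3,0)$-quasi-geodesic it suffices (since sub-paths of geodesics are geodesics and $\alpha_i$ is a concatenation of two geodesics) to check that for the two extreme points, $\ell(\alpha_i) = d(x_i,z) + d(z,e) \le 3\, d(x_i, e)$; the intermediate-point estimates then follow because any sub-segment of $\alpha_i$ is itself either a geodesic sub-segment or such a two-piece concatenation with a shorter total.

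First I would bound $d(z,e)$. Since $\ell_i$ is a geodesic from $e$ with $x_i \in \ell_i$, we have $d(x_i, e) = d(e, \ell_i) \le$ nothing useful directly; instead use minimality of $z$: for the point $y_i \in \ell_i$ realizing... more simply, $d(z,e) \le d(P, e) $ was defined to be $d(z,e)$ itself, and since $x_i \in \ell_i$ with $\ell_i$ a geodesic issuing from $e$, the segment $[e, x_i] \subseteq \ell_i$ has its far endpoint $x_i$ on $P$, so $d(z, e) = d(P,e) \le d(x_i, e)$. Next, the triangle inequality gives $d(x_i, z) \le d(x_i, e) + d(e, z) \le 2 d(x_i, e)$. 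Hence $\ell(\alpha_i) = d(x_i, z) + d(z, e) \le 2 d(x_i, e) + d(x_i, e) = 3 d(x_i, e)$, and trivially $\ell(\alpha_i) \ge d(x_i, e)$, so $\alpha_i$ is a $(3,0)$-quasi-geodesic parametrized by arclength. The same argument applied to any sub-segment (either a piece of $[x_i,z]$, a piece of $[z,e]$, or a piece straddling $z$, in which case one endpoint plays the role of $x_i$ and the estimate only improves) shows the quasi-geodesic inequality holds along the whole path, so $\alpha_1$ and $\alpha_2$ are genuinely $(3,0)$-quasi-geodesics.

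For the final assertion, suppose $\ell_i$ is $N$-Morse. The path $\alpha_i$ is a $(3,0)$-quasi-geodesic with both endpoints ($x_i$ and $e$) on $\ell_i$, so by the Morse property $\alpha_i \subseteq \mathcal{N}_{N(3,0)}(\ell_i)$. Since $z \in \alpha_i$, this gives $d(z, \ell_i) \le N(3,0)$, which is the claimed uniform bound. I expect the only mildly delicate point to be the bookkeeping that the quasi-geodesic estimate propagates to all sub-paths and not merely the endpoints — but this is routine because the worst case is always captured by a two-segment path through $z$, and replacing an endpoint by an interior point of a geodesic segment only shortens the relevant distances. Everything else is two applications of the triangle inequality together with the defining minimality of $z$.
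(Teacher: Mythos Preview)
Your argument has the right core idea and lands on essentially the same computation as the paper, but the reduction ``it suffices to check the two extreme points'' is not valid as stated: for a concatenation of two geodesics, the endpoint inequality $\ell(\alpha)\le 3\,d(\text{endpoints})$ does \emph{not} in general imply the same bound for every sub-segment straddling the corner. What actually makes the sub-path estimate go through here is the minimality of $z$ on $P$, and that hypothesis must be invoked again at every scale, not just for the endpoints $x_i$ and $e$.

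Concretely, for $w'\in P[x_i,z]$ and $w\in\gamma=[z,e]$ you need $d(w',z)+d(z,w)\le 3\,d(w',w)$. The key step $d(z,w)\le d(w',w)$ does \emph{not} follow from ``$w'$ plays the role of $x_i$'' alone: you must also use that $w\in[z,e]$, so $d(w,e)=d(z,e)-d(z,w)$, and combine this with $d(w',e)\ge d(z,e)$ (minimality of $z$, since $w'\in P$) to obtain $d(w',w)\ge d(w',e)-d(w,e)\ge d(z,w)$. Once that is in hand, your triangle-inequality step gives $d(w',z)\le d(w',w)+d(w,z)\le 2\,d(w',w)$ and you are done. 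The paper's proof simply does this computation directly for arbitrary $w,w'$ from the start: it records $d(w,w')\ge s_1:=d(z,w)$ (from minimality as above) and $d(w,w')\ge s_2-s_1$ with $s_2:=d(w',z)$ (triangle inequality), and then observes that the larger of these two lower bounds is always at least $\tfrac{1}{3}(s_1+s_2)$. Your endpoint-first organisation is fine once you fill in this step explicitly, but the phrases ``the estimate only improves'' and ``only shortens the relevant distances'' hide exactly the place where the hypothesis on $z$ is used a second time.
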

\begin{proof}
Let $w\in \gamma$ with $d(w,z)=s_1$ and $w'\in P(x_1 \to z)$ with $d(w',z)=s_2$. It suffices to show that $d(w,w')\geq \frac{1}{3}(s_1+s_2)$.

Firstly, notice that $d(w,w')\geq s_1$, as $z$ is a closest point to $e$ on $P$. Secondly, $d(w,w')\geq s_2-s_1$ by the triangle inequality.

Combining these we see that if $2s_1\geq s_2$ then $d(w,w')\geq s_1\geq \frac{1}{3}(s_1+s_2)$, and if $2s_1\leq s_2$ then $d(w,w')\geq s_2-s_1\geq \frac{1}{3}(s_1+s_2)$.
\end{proof}

\begin{lemma}\label{lem:geodlinspeed} Let $\ell_1,\ell_2$ be two $N$--Morse geodesics with $\ell_1(0)=\ell_2(0)=e$. Suppose there exists some $t_0$ so that $d(\ell_1(t),\ell_2(t))> 4N(3,0)$ for all $t\geq t_0$. Fix $t_0$ with this property such that $d(\ell_1(t_0),\ell_2(t_0))\leq 6N(3,0)$.

Let $t_1,t_2\geq t_0$ and define $x_i=\ell_i(t_0)$, $y_i=\ell_i(t_i)$. Let $P$ be the path obtained from following $\ell_1$ from $y_1$ to $x_1$, taking any geodesic $\ell$ to $x_2$ then taking $\ell_2$ to $y_2$. The path $P$ is a $(1,12N(3,0))$ quasi--geodesic. In particular, we have
\[
 d=d(y_1,y_2)\geq d(y_1,x_1)+ d(x_1,x_2)+d(x_2,y_2) - 12N(3,0).
\]
\end{lemma}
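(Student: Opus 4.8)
The plan is to show that the concatenated path $P$ is a quasigeodesic by bounding, for any two points $p, q$ on $P$, the ratio between $d(p,q)$ and the arclength along $P$ between them. Since $P$ is a concatenation of three geodesic segments (a subsegment of $\ell_1$ from $y_1$ to $x_1$, a geodesic $\ell$ from $x_1$ to $x_2$, and a subsegment of $\ell_2$ from $x_2$ to $y_2$), arclength is automatically an upper bound for $d(p,q)$ with multiplicative constant $1$, so the real content is the lower bound $d(p,q) \geq \ell_P(p,q) - 12N(3,0)$. The additive error should come from the fact that $\ell$ has length at most $6N(3,0)$ by the choice of $t_0$; the cases where both $p,q$ lie on the same geodesic segment are immediate, so the work is in the cases where $p$ and $q$ lie on different segments.

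First I would handle the case $p \in \ell_1[y_1, x_1]$ and $q \in \ell_2[x_2, y_2]$ (the generic case). Here I want to see that the concatenation of $\ell_1$ from $p$ to $e$ and $\ell_2$ from $e$ to $q$ stays close to $P$. The key geometric input is Lemma \ref{lem:Morserayshyp}: taking a geodesic $R$ from $p$ to $q$ with closest point $z$ to $e$, the two ``half'' paths $p \to_R z \to e$ and $q \to_R z \to e$ are $(3,0)$-quasigeodesics, so since $\ell_1, \ell_2$ are $N$-Morse, $z$ lies within $N(3,0)$ of both $\ell_1$ and $\ell_2$; hence $z$ lies within roughly $N(3,0)$ of a point on $\ell_1$ near parameter value... and here one uses that beyond $t_0$ the two rays have diverged past $4N(3,0)$, which forces $z$ to be close to $e$, in fact within about $t_0$-controlled distance, and more precisely one deduces $z$ is within $O(N(3,0))$ of $x_1$ and $x_2$. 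This pins down $d(p,q) \geq d(p, z) + d(z, q) - O(N(3,0)) \geq d(p,x_1) + d(x_2, q) - O(N(3,0))$, and since the middle segment $\ell$ has length $\leq 6N(3,0)$, one recovers $d(p,q) \geq \ell_P(p,q) - 12N(3,0)$ after bookkeeping. The remaining mixed cases ($p$ on $\ell_1[y_1,x_1]$ and $q$ on $\ell$, or $p$ on $\ell$ and $q$ on $\ell_2[x_2,y_2]$) are easier: one absorbs the short segment $\ell$ into the error term and reduces to the triangle inequality together with a single application of the Morse property.

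The displayed inequality then follows by applying the quasigeodesic bound to $p = y_1$ and $q = y_2$, noting that the arclength of $P$ between its endpoints is exactly $d(y_1,x_1) + d(x_1,x_2) + d(x_2,y_2)$ since each piece is a geodesic.

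I expect the main obstacle to be the case analysis for $z$ in Lemma \ref{lem:Morserayshyp}: one must argue that because the two rays are permanently separated by more than $4N(3,0)$ beyond time $t_0$, a point $z$ that is simultaneously within $N(3,0)$ of $\ell_1$ and within $N(3,0)$ of $\ell_2$ cannot correspond to a large parameter value on either ray (otherwise the two nearby ray points would be within $2N(3,0) < 4N(3,0)$ of each other, and using that $\ell_1,\ell_2$ emanate from the same point $e$ one forces the parameters to be $\leq t_0$ up to bounded error), and then to convert ``small parameter'' into ``close to $x_i$'' using the divergence at $t_0$ and the bound $d(x_1,x_2) \leq 6N(3,0)$. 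Keeping all the additive constants under $12N(3,0)$ is the finicky part, but the structure is forced by the choices already made.
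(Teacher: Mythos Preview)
Your approach is essentially the same as the paper's: both use Lemma~\ref{lem:Morserayshyp} to locate, on any geodesic between points of $\ell_1$ and $\ell_2$, a point $z$ that is $N(3,0)$--close to each ray, then invoke the divergence hypothesis to force the nearby parameters $s,s'$ to be at most $t_0$, and finish with a short case analysis over the three segments of $P$.

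One small correction: your intermediate claim that ``$z$ is within $O(N(3,0))$ of $x_1$ and $x_2$'' is not true in general --- the parameter $s$ with $d(z,\ell_1(s))\leq N(3,0)$ satisfies $s\leq t_0$, but $s$ could be much smaller than $t_0$, so $d(z,x_1)$ can be as large as roughly $t_0$. Fortunately you do not need this. What you actually use is $d(p,z)\geq d(p,x_1)-N(3,0)$, and this follows directly from $s\leq t_0$: since $p=\ell_1(t)$ with $t\geq t_0$, one has $d(p,z)\geq d(p,\ell_1(s))-N(3,0)=(t-s)-N(3,0)\geq (t-t_0)-N(3,0)=d(p,x_1)-N(3,0)$. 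With that fix, your chain $d(p,q)=d(p,z)+d(z,q)\geq d(p,x_1)+d(x_2,q)-2N(3,0)$ goes through, and adding the middle segment of length $\leq 6N(3,0)$ gives the bound. The paper instead shows that the geodesic $[p,q]$ actually passes within $3N(3,0)$ of each $x_i$ (using that the whole geodesic stays in the $N(3,0)$--neighbourhood of $\ell_1\cup\ell_2$ and analysing the last/first times it is near each ray); this is a slightly stronger geometric statement but leads to the same constant.
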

\begin{proof} For brevity set $N=N(3,0)$. By Lemma \ref{lem:Morserayshyp} every geodesic from $y_1$ to $y_2$ is contained in the $N$ neighbourhood of $\ell_1\cup \ell_2$. Fix such a geodesic $\gamma:[0,d]\to X$ with $\gamma(0)=y_1$, $\gamma(d)=y_2$. There exists a point $z\in \gamma$ such that $d(z,\ell_1(s)),d(z,\ell_2(s'))\leq N$. Hence
\[ \max_{y=s,s'}d(\ell_1(y),\ell_2(y)) \leq d(\ell_1(s),\ell_2(s')) +  \abs{s-s'} \leq 2d(\ell_1(s),\ell_2(s')) \leq 4N,
\]
so $s\leq t_0$. Similarly, $s'\leq t_0$.
 
If we choose $r_1\in[0,d]$ maximal and $r_2\in[0,d]$ minimal such that $d(\gamma(r_i),\ell_i)\leq N$ we see that $d(\gamma(r_i),e)\leq t_0+N$. By assumption $\gamma([0,r_2])$ and $\gamma([r_1,d])$ are contained in the $N$ neighbourhoods of $\ell_1,\ell_2$ respectively. Moreover, there exists some $z_1\in \gamma([0,r_2]$, $z_2\in \gamma([r_1,d])$ with $d(z_i,e)=t_0+N$. Set $x_i=\ell_i(t_0)$. We claim that $d(x_i,z_i)<3N$. To verify this, let $v_i$ be the closest point on $\ell_i$ to $z_i$. Now $d(v_i,z_i)<N$ so $d(e,v_i)\in (t_0,t_0+2N)$, thus $d(x_i,z_i)\leq d(x_i,v_i)+d(v_i,z_i)<3N$.

Hence, $d(\gamma,\ell_i(t_0))\leq 3N$. We now verify that the path $P$ is a $(1,12N)$-quasi--geodesic. Let $a,b\in P$. Firstly, suppose $a\in\ell_1[x_1,y_1]$ and $b\in\ell_2[x_2,y_2]$. Then any geodesic $\gamma'$ from $a$ to $b$ contains points $u_i$ such that $d(u_i,\ell_i(t_0))\leq 3N$ by the above argument. It follows that
\[
 \abs{P[a,b]}= d(a,x_1)+d(x_1,x_2)+d(x_2,b) \leq d(a,u_1)+d(u_1,u_2) + d(u_2,b) + 12N \leq d(a,b)+12N.
\]
Secondly, suppose $a\in \ell_1[x_1,y_1]$ and $b\in \ell$. By the above, every geodesic from $a$ to $x_2$ contains a point $u_1$ with $d(u_1,x_1)\leq 3N$. Now
\[
 d(a,x_1) + d(x_1,b) +d(b,x_2) \leq d(a,u_1)+d(u_1,x_2) + 6N \leq d(a,b)+d(b,x_2) \leq d(a,b)+12N.
\]
\end{proof}

Recall that a geodesic triangle $\gamma_1\cup\gamma_2\cup\gamma_3$ is said to be \textbf{slim} if there is a constant $D$ such that each side of the triangle is contained in the union of the $D$--neighbourhoods of the other two sides. In this case we say the triangle is $D$--slim.

\begin{lemma} \label{lem:deltaslim} Let $\ell_1$ and $\ell_2$ be $N$--Morse geodesics such that $\ell_1(0)=\ell(0)=e$ and let $x_1=\ell_1(v_1),x_2=\ell_2(v_2)$ be points on $\ell_1$ and $\ell_2$ respectively. Let $\gamma\colon [0,s] \to x$ be a geodesic between $x_1$ and $x_2$. Then the geodesic triangle $\ell_1([0,v_1]) \cup \gamma \cup \ell_2([0, v_2])$ is $4N(3,0)$-slim.
\end{lemma}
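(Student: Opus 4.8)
The plan is to reduce the triangle to the two ``broken'' $(3,0)$--quasigeodesics furnished by Lemma~\ref{lem:Morserayshyp}, each of which is pinned to one of the $\ell_i$ by the Morse condition. Write $N:=N(3,0)$. Fix a point $z\in\gamma$ at which $d(e,\cdot)$ restricted to $\gamma$ attains its minimum, and a geodesic $\sigma$ from $e$ to $z$. Applying Lemma~\ref{lem:Morserayshyp} with $P=\gamma$, the concatenations $R_1\colon x_1\to_\gamma z\to_\sigma e$ and $R_2\colon x_2\to_\gamma z\to_\sigma e$ are $(3,0)$--quasigeodesics, and $d(z,\ell_i)\leq N$. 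Since the endpoints $e=\ell_i(0)$ and $x_i=\ell_i(v_i)$ of $R_i$ both lie on $\ell_i$, the Morse condition gives $R_i\subseteq\mathcal{N}_N(\ell_i)$. Note also that $D:=d(e,z)\leq\min\{v_1,v_2\}$ because $x_1,x_2\in\gamma$; this is what will let me keep nearest--point projections inside the subsegments $\ell_i([0,v_i])$.

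Next I treat the side $\gamma=\gamma([x_1,z])\cup\gamma([z,x_2])$, which is covered by $R_1\cup R_2\subseteq\mathcal{N}_N(\ell_1)\cup\mathcal{N}_N(\ell_2)$; it remains to push the nearest points into $\ell_i([0,v_i])$. Let $p\in\gamma([x_1,z])$ with nearest point $\ell_1(t)$ on $\ell_1$; the only issue is $t>v_1$. If $d(z,x_1)\leq 2N$ then $\gamma([x_1,z])$ has length $\leq2N$ and $p\in\mathcal{N}_{2N}(x_1)$ with $x_1\in\ell_1([0,v_1])$; otherwise, following $\gamma$ from $p$ to $z$ the nearest--point projection to $\ell_1$ varies coarsely continuously from a parameter $>v_1$ down to a parameter $\leq v_1$, so at some intermediate $\gamma(r')$ it is within $2N$ of the value $v_1$, whence $d(\gamma(r'),x_1)\leq3N$; since $\gamma$ is a geodesic issuing from $x_1$ this forces $d(p,x_1)\leq d(\gamma(r'),x_1)\leq3N$. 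Either way $p\in\mathcal{N}_{4N}(\ell_1([0,v_1]))$, and symmetrically for $\gamma([z,x_2])$ and $\ell_2([0,v_2])$.

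For the side $\ell_1([0,v_1])$ (the side $\ell_2([0,v_2])$ being symmetric), take $q=\ell_1(t)$, $t\in[0,v_1]$, and split according to whether $t\leq D$ or $t\geq D$. If $t\leq D$: a point $w$ on $\sigma$ at distance $s$ from $e$ lies on both $R_1$ and $R_2$, so its projections to $\ell_1$ and to $\ell_2$ have parameter within $N$ of $s=d(e,w)$, giving $d(w,\ell_i(s))\leq2N$ and hence $d(\ell_1(s),\ell_2(s))\leq4N$ for all $s\in[0,D]$; thus $d(q,\ell_2([0,v_2]))\leq d(\ell_1(t),\ell_2(t))\leq4N$ since $t\leq D\leq v_2$. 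If $t\geq D$: the geodesic $\gamma([x_1,z])$ (the $\gamma$--part of $R_1$) runs from $x_1=\ell_1(v_1)$ to $z$, so its projection to $\ell_1$ sweeps continuously from a parameter near $v_1$ to a parameter near $D$ and takes a value within $2N$ of $t$, placing $q$ within $3N$ of $\gamma$. The degenerate configuration $d(x_1,x_2)=O(N)$ is trivial. Collecting the three estimates shows each side lies in the $4N(3,0)$--neighbourhood of the union of the other two.

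The substantive point — and the place I would be most careful — is controlling the nearest--point projections of the \emph{geodesics} $\gamma$ and $\sigma$ (which join the ``wrong'' pairs of endpoints) onto $\ell_1$ and $\ell_2$: one must exclude an overshoot past the relevant endpoint, and the Morse property enters precisely through the fact that a geodesic issuing from $x_i$ cannot return close to $x_i$, together with the coarse intermediate value property of projections onto geodesics. Keeping every error term at exactly $4N(3,0)$ is then bookkeeping, once the near--colinear cases (where $z$ is within $2N$ of $x_1$ or $x_2$, so part of $\gamma$ is short) are disposed of separately.
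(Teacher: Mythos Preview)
Your strategy is the same as the paper's: pick $z\in\gamma$ closest to $e$, join it to $e$ by a geodesic, and use Lemma~\ref{lem:Morserayshyp} to obtain the two $(3,0)$--quasi--geodesics $R_1,R_2$ (the paper calls them $\phi_1,\phi_2$). The divergence is in how the comparison between $R_i$ and $\ell_i([0,v_i])$ is made. The paper invokes \cite[Lemma~2.1]{Cordes15}, which gives a \emph{two--sided} Hausdorff bound $d_H(\phi_i,\ell_i([0,v_i]))<2N(3,0)$. From this the slimness is immediate: any point of $\ell_1([0,v_1])$ is within $2N(3,0)$ of $\phi_1=\gamma([x_1,z])\cup\sigma$, hence either within $2N(3,0)$ of $\gamma$ or, via $\sigma\subset\phi_2$, within $4N(3,0)$ of $\ell_2([0,v_2])$; and symmetrically. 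The containment $\gamma\subset\mathcal N_{2N(3,0)}(\ell_1([0,v_1])\cup\ell_2([0,v_2]))$ is likewise direct, with no overshoot issue, because the Hausdorff bound is against the subsegment.

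You instead use only the one--sided inclusion $R_i\subset\mathcal N_{N}(\ell_i)$ coming straight from the Morse definition, and then recover the missing direction by an intermediate--value argument on nearest--point projections. This is the content of \cite[Lemma~2.1]{Cordes15}, so you are essentially re--proving that lemma inline. Two points deserve tightening. First, ``the nearest--point projection to $\ell_1$ varies coarsely continuously'' is not automatic in a general geodesic space: nearest--point projection onto a geodesic need not be well--defined or Lipschitz. What \emph{is} continuous is a scalar surrogate such as $p'\mapsto d(e,p')-d(x_1,p')$, and the argument should be phrased in those terms. Second, the coarse IVT incurs an additive error comparable to the jump size (of order $2N+1$), so your claimed constants ($3N$ here, $4N$ there) are optimistic; the argument as written yields slimness with a constant of the form $cN(3,0)$ for some modest $c$, but not obviously $c=4$. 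If the exact value $4N(3,0)$ matters downstream (it feeds into the $8N(3,0)$--hyperbolicity of Proposition~\ref{prop:hypsubsets}), cite the Hausdorff lemma; otherwise your approach is fine once the projection step is made rigorous.
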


\begin{proof} Choose $\gamma(x)$ so that it is a nearest point on $\gamma$ to $e$ and let $\eta$ be a geodesic connecting $\gamma(x)$ and $e$. By Lemma \ref{lem:Morserayshyp}, the concatenation $\phi_1=\gamma([0, x]) \cup \eta$ and $\phi_2=\bar{\eta} \cup \gamma([x,s])$ are $(3,0)$-quasi--geodesics, where $\bar{\eta}$ is the geodesic $\eta$ with its orientation reversed. 

Using Lemma 2.1 from \cite{Cordes15}, we know that the Hausdorff distance between $\phi_i$ and $\ell_i$ is less than $2N(3,0)$ for $i=1,2$. Thus for every $t \in [0,v_1]$ either $d(\ell_1(t), \gamma)< 4N(3,0)$ or $d(\ell_1(t), \ell_2([0,v_2]))<4N(3,0)$. So $\ell_1 \subset \mathcal{N}_{4N(3,0)}(\ell_2 \cup \gamma)$. The final containment follows identically.
\end{proof}

\begin{proof}[Proof of Proposition $\ref{prop:hypsubsets}$] Since each geodesic triangle with vertices $e,x,y\in X^{(N)}_e$ is $4N(3,0)$--slim, \cite[III.H.1.22]{bh} states that for all $x,y,z\in X^{(N)}_e$, $(x \cdot y)_e \geq \min\set{(x\cdot z)_e,(z\cdot y)_e} - 4N(3,0)$. Then by \cite[$1.1B$]{Gromov87}, $X^{(N)}_e$ is $8N(3,0)$--hyperbolic in the sense of Definition \ref{defn:hyp}.
\end{proof}

Stability follows by applying the argument in \cite[Lemma $2.3$]{Cordes15}.

\begin{lemma} \label{lem:thirdedgeMorse} Let $X$ be a geodesic metric space with basepoint $e$. For every $N$ there exists some $N'$ such that given any $a,b \in X^{(N)}_e$, there is an $N'$--Morse geodesic from $a$ to $b$. 
\end{lemma}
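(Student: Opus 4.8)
The plan is to produce a candidate geodesic from $a$ to $b$ by the same ``tripod'' construction used in Lemmas \ref{lem:Morserayshyp}--\ref{lem:deltaslim}, namely: take $N$--Morse geodesics $\ell_1 = [e,a]$ and $\ell_2 = [e,b]$, pick a geodesic $P$ from $a$ to $b$ realizing the minimal distance to $e$ at a point $z \in P$, and a geodesic $\gamma = [z,e]$; then $z$ lies within $N(3,0)$ of both $\ell_1$ and $\ell_2$, so the ``inner'' portions of $P$ near $z$ are controlled, and the outer portions run (up to bounded Hausdorff distance, via Lemma 2.1 of \cite{Cordes15}) parallel to $\ell_1$ and $\ell_2$. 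The upshot is that $P$ itself is Hausdorff-close to a concatenation of sub-segments of $\ell_1$, $\ell_2$ and the short geodesic $\gamma$, with all constants depending only on $N$.

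\medskip

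Concretely, I would argue as follows. First, by Lemma \ref{lem:deltaslim} the triangle $\ell_1 \cup P \cup \ell_2$ is $4N(3,0)$--slim, so every point of $P$ is within $4N(3,0)$ of $\ell_1 \cup \ell_2$. Next, let $\sigma$ be an arbitrary $(K,C)$--quasi--geodesic with endpoints on $P$; I want to bound its distance from $P$ by a function of $N$, $K$, $C$ alone. The key observation is that $P$ is (coarsely) contained in a union of three $N$--Morse geodesics emanating from $e$ — namely $\ell_1$, $\ell_2$, and $\gamma$ (the latter being Morse because it is a geodesic from $e$ to $z$, and $z$ is within bounded distance of $\ell_1$, so one can replace $\gamma$ by a subsegment of $\ell_1$ up to bounded error — or more simply absorb $\gamma$ into the slimness estimate since $|\gamma| \le d(e,z) \le d(e,a)$ is finite but we need a bound independent of $a$; this is the subtle point, see below). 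Given $\sigma$ from $p \in P$ to $q \in P$, each of $p,q$ is $4N(3,0)$--close to a point on $\ell_1$ or $\ell_2$; pushing the endpoints onto $\ell_1 \cup \ell_2$ changes $\sigma$ into a quasi--geodesic with slightly worse but still controlled constants, and one then applies the Morse property of $\ell_1$ and $\ell_2$ (together with the uniform bound $d(z,\ell_i) \le N(3,0)$) to conclude $\sigma$ stays within a bounded neighbourhood of $\ell_1 \cup \ell_2 \cup \gamma$, hence of $P$.

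\medskip

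The main obstacle — and the place where I expect the argument genuinely to need care — is handling the segment $\gamma = [z,e]$ uniformly. A priori $\gamma$ has length $d(e,z)$, which is not bounded in terms of $N$; so I cannot simply say ``$P$ is within bounded Hausdorff distance of a bounded-length set.'' The resolution is that $z$ is within $N(3,0)$ of $\ell_1$ (Lemma \ref{lem:Morserayshyp}), so $\gamma$ is within bounded Hausdorff distance of an initial subsegment of $\ell_1$; thus $P$ is within a uniformly bounded neighbourhood of $\ell_1 \cup \ell_2$ entirely. Then a quasi--geodesic with endpoints on $P$ has endpoints within bounded distance of $\ell_1 \cup \ell_2$, and I can split $\sigma$ at its last point near $\ell_1$ and first point near $\ell_2$ (à la the proof of Lemma \ref{lem:geodlinspeed}), apply the $N$--Morse property to each half against $\ell_1$ and $\ell_2$ respectively, and reassemble. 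Tracking how the quasi--geodesic constants degrade under ``projecting endpoints onto $\ell_i$'' and under concatenation gives an explicit $N' = N'(N)$; I would not grind through the exact formula, but it will be a fixed polynomial-type expression in $N(3,0)$ and the input constants $K,C$, evaluated along the lines of the estimates already appearing in Lemmas \ref{lem:Morserayshyp}--\ref{lem:deltaslim}. Finally, since this holds for every pair $a,b \in X^{(N)}_e$ with the same $N'$, the lemma follows.
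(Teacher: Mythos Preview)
Your overall strategy—building the tripod from Lemmas \ref{lem:Morserayshyp} and \ref{lem:deltaslim} and then reducing the Morseness of $P=[a,b]$ to that of $\ell_1$ and $\ell_2$—is the right one, and is essentially what the paper invokes by deferring to \cite[Lemma 2.3]{Cordes15}. There is, however, a genuine gap in the step where you handle an arbitrary $(K,C)$-quasi--geodesic $\sigma$ with endpoints on $P$.

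You propose to split $\sigma$ at ``its last point near $\ell_1$ and first point near $\ell_2$'' and then apply the Morse property of each $\ell_i$ to the corresponding half. But you have not shown that these two splitting points coincide, or even that the latter precedes the former along $\sigma$: a priori there could be a middle segment $\sigma[r_1,r_2]$ lying outside fixed neighbourhoods of both $\ell_1$ and $\ell_2$, and nothing you have written bounds its length or its distance from $P$. In Lemma \ref{lem:geodlinspeed} the analogous splitting succeeds precisely because the path being split is a \emph{geodesic}, whose containment in a bounded neighbourhood of $\ell_1\cup\ell_2$ was already established by Lemma \ref{lem:Morserayshyp}. For an arbitrary quasi--geodesic $\sigma$ you have no such a priori control—indeed, that containment is exactly what you are trying to prove, so the appeal to Lemma \ref{lem:geodlinspeed} is circular.

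The fix is to run the Lemma \ref{lem:Morserayshyp} argument on $\sigma$ itself rather than only on $P$: choose $z'\in\sigma$ minimising $d(e,\sigma)$ and let $\gamma'=[z',e]$. The same elementary estimate (with the constants $(3,0)$ replaced by constants depending on $K,C$) shows that each concatenation $\sigma[p,z']\cup\gamma'$ and $\sigma[z',q]\cup\gamma'$ is a $(K',C')$-quasi--geodesic with both endpoints within $4N(3,0)$ of one of the $\ell_i$. Now the Morse property of $\ell_i$ applies legitimately to each half, giving $\sigma\subset\mathcal{N}_{D}(\ell_1\cup\ell_2)$ for some $D=D(N,K,C)$. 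The passage from this to $\sigma\subset\mathcal{N}_{D'}(P)$ then follows from the Hausdorff-distance estimates of \cite[Lemma 2.1]{Cordes15} that you already cite, together with the observation that $z'$ is forced to lie near both $\ell_1$ and $\ell_2$ and hence near $z$.
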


The definition of stability given in the introduction is not exactly the one stated in Durham--Taylor. We now recall their definition and compare the two statements.

\begin{definition}[Stability \cite{Durham-Taylor}] Let $f \colon Y \to X$ be a quasi--isometric embedding between geodesic metric spaces. We say $Y$ is \textbf{stable} in $X$ if, for all $K,C \geq 0$ there exists an $R=R(K,C) \geq 0$ so that if $\gamma \colon [a,b]\to X$ and $\gamma' \colon [a',b']\to X$ are $(K, C)$-quasi--geodesics with $\gamma(a)=\gamma'(a') \in f(Y)$ and $\gamma(b)=\gamma'(b') \in f(Y)$, then the Hausdorff distance between $\gamma$ and $\gamma'$ is less than $R$.
\end{definition}

To obtain an equivalent statement to the definition of stability we are using we must relax the condition that $Y$ is a geodesic metric space and instead assume only that it is \textbf{quasi--geodesic}: there exist $K\geq 1$, $C\geq 0$ such that every pair of points in $Y$ can be connected by a $(K,C)$-quasi--geodesic.

\begin{lemma} \label{lem:stable-equiv} A quasi-geodesic space $Y$ is stable in a geodesic space $X$ in the sense of Durham--Taylor if and only if $f(Y)$ is $N$--stable as a subspace of $X$ for some Morse gauge $N$.
\end{lemma}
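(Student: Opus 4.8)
The plan is to prove both implications of Lemma~\ref{lem:stable-equiv} by translating between ``the Hausdorff distance between any two quasi-geodesics with common endpoints on $f(Y)$ is uniformly bounded'' and ``every pair of points of $f(Y)$ is joined by an $N$-Morse geodesic''. The key technical input is that these two conditions both encode a uniform form of quasi-geodesic stability, so the real content is bookkeeping of constants together with two standard facts: (i) in any geodesic metric space a geodesic segment is itself a $(1,0)$-quasi-geodesic, and (ii) a reparametrisation / concatenation argument lets one promote control of \emph{one} quasi-geodesic into control of \emph{all} of them.

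First I would prove the forward direction. Assume $Y$ is a quasi-geodesic space and $f\colon Y\to X$ is stable in the sense of Durham--Taylor, with stability function $R(K,C)$ and with $f$ a $(\lambda,\mu)$-quasi-isometric embedding; say every pair of points of $Y$ is joined by a $(K_0,C_0)$-quasi-geodesic. Fix $a,b\in f(Y)$ and let $\gamma$ be a geodesic in $X$ from $a$ to $b$, which is a $(1,0)$-quasi-geodesic. Pulling a $(K_0,C_0)$-quasi-geodesic in $Y$ through $f$ gives a quasi-geodesic $\sigma$ in $X$ from $a$ to $b$ with constants depending only on $\lambda,\mu,K_0,C_0$; call them $(K_1,C_1)$. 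Now given any $(K,C)$-quasi-geodesic $\tau$ in $X$ with endpoints $a,b$, Durham--Taylor stability applied to the pair $(\sigma,\tau)$ gives $d_{\mathrm{Haus}}(\sigma,\tau)\le R(K_1,C_1)+R(K,C)$ — wait, more cleanly: applied to $(\sigma,\tau)$ directly it gives a bound depending on $\max\{(K_1,C_1),(K,C)\}$, and applied to $(\sigma,\gamma)$ it gives $d_{\mathrm{Haus}}(\sigma,\gamma)\le R(\max\{(K_1,C_1),(1,0)\})$. Combining, $\tau$ lies in the $R(\max\{(K_1,C_1),(K,C)\})+d_{\mathrm{Haus}}(\sigma,\gamma)$-neighbourhood of $\gamma$, a bound depending only on $K,C$ and the fixed data. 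Defining $N(K,C)$ to be this quantity shows $\gamma$ is $N$-Morse, so $f(Y)$ is $N$-stable (quasi-convexity of $f(Y)$ being immediate since $f$ is a quasi-isometric embedding of a quasi-geodesic space).

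For the reverse direction, assume $f(Y)$ is $N$-stable as a subspace, so $f$ is a quasi-isometric embedding, $f(Y)$ is quasi-convex, and any two points of $f(Y)$ are joined by an $N$-Morse geodesic. Given $(K,C)$-quasi-geodesics $\gamma,\gamma'$ in $X$ with common endpoints $p,q\in f(Y)$, pick an $N$-Morse geodesic $[p,q]$. By the Morse property, $\gamma$ and $\gamma'$ both lie in the $N(K,C)$-neighbourhood of $[p,q]$. For the reverse inclusion — that $[p,q]$ lies in a bounded neighbourhood of $\gamma$ — I would use the standard fact that a point of an $N$-Morse geodesic at parameter $t$ is within a bounded distance (depending on $N$, $K$, $C$) of the image of any $(K,C)$-quasi-geodesic with the same endpoints; this follows, e.g., from the argument of \cite[Lemma 2.1]{Cordes15} comparing the quasi-geodesic's ``closest approach'' structure to the geodesic, or directly by a divergence estimate. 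Hence $d_{\mathrm{Haus}}([p,q],\gamma)$ and $d_{\mathrm{Haus}}([p,q],\gamma')$ are both bounded by some $R_0(K,C)$, and the triangle inequality for Hausdorff distance gives $d_{\mathrm{Haus}}(\gamma,\gamma')\le 2R_0(K,C)=:R(K,C)$. This is exactly Durham--Taylor stability.

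The main obstacle is the ``reverse neighbourhood'' estimate in the second part: the Morse condition a priori only says quasi-geodesics stay near the Morse geodesic, not conversely, so one needs the genuinely symmetric statement that an $N$-Morse geodesic and a $(K,C)$-quasi-geodesic with shared endpoints have Hausdorff distance bounded in terms of $N,K,C$. I expect to handle this by citing or re-running the argument already used in the excerpt (it is essentially \cite[Lemma 2.1]{Cordes15}, invoked in the proof of Lemma~\ref{lem:deltaslim}): a closest-point projection / subdivision argument shows that if $x=[p,q](t)$ were far from $\gamma$, one could build a short detour quasi-geodesic whose existence contradicts the Morse bound. Everything else is routine tracking of quasi-isometry and quasi-geodesicity constants, using that $f$ is a quasi-isometric embedding of a quasi-geodesic space to convert between $Y$ and $f(Y)$.
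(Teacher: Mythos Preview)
Your proposal is correct and takes essentially the same approach as the paper's proof. In the forward direction the paper is slightly more direct: since a geodesic is already a $(K,C)$-quasi-geodesic for every $K\ge 1$, $C\ge 0$, Durham--Taylor stability applied to the pair $(\gamma,\tau)$ immediately shows $\gamma$ is $R$-Morse, so your auxiliary $\sigma$ is only needed to verify quasi-convexity; in the reverse direction the paper, like you, uses (without spelling it out) the standard fact that an $N$-Morse geodesic and a $(K,C)$-quasi-geodesic with the same endpoints have Hausdorff distance bounded in terms of $N(K,C)$.
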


\begin{proof} Let $Y$ be a Durham--Taylor stable in $X$. Let $a,b \in f(Y)$ and let $\gamma$ be a geodesic between $a$ and $b$. By definition of Durham--Taylor stable, we know that $\gamma$ is $R$--Morse. Also, since $Y$ is quasi--isometrically embedded we know that there exists a $(K',C')$-quasi--geodesic between $a$ and $b$ in $X$ whose image lies in $Y \subset X$, where $K',C'$ are independent of the choice of $a,b$. Therefore we can also conclude that $Y$ is $R$-quasi--convex. Thus $f(Y)$ is $R$--stable as a subspace of $X$.
\medskip

Now assume that $Y$ is $N$--stable in $X$ and define $f:Y\to X$ to be the natural injection. By definition, $f$ is an isometric embedding and, since it is quasi--convex and $X$ is geodesic, $Y$ is quasi--geodesic. To see this, let $y,y'\in Y$ and let $\gamma$ be a geodesic connecting them in $X$. For each suitable $i\in\N$ set $x_i$ to be the point on $\gamma$ satisfying $d_X(y,x_i)=i$. Since $Y$ is quasi--convex, there exists a constant $B$ and points $y_i\in Y$ such that $d_X(x_i,y_i)\leq B$. The points $y=y_0,y_1,y_2,\dots, y_n,y'$ define a $(K,C)$-quasi--geodesic (where $K$ and $C$ depend only on $B$) from $y$ to $y'$ in $Y$.

Let $a, b \in Y$. Then by definition of $N$--stable we know that we can join $a$ and $b$ by a $N$--Morse geodesic $\gamma$ in $X$. Any two $(K,C)$-quasi--geodesics $\sigma_1, \sigma_2$ with endpoints $a$ and $b$ will be in the $N(K,C)$--neighbourhood of $\gamma$. Thus the $\sigma_i$ will be within Hausdorff distance $2N(K,C)$ of $\gamma$ and therefore in the $4N(K,C)$--neighbourhood of each other. Hence, $Y$ is Durham--Taylor stable.
\end{proof}

Since $f(Y)$ is quasi--isometric to $Y$, the two definitions of stability yield the same collection of stable subspaces up to quasi--isometry.

\subsection{Universality (\hyperref[bthm:key]{\ref{bthm:key}.\ref{key:universal}})}

Let $Y\subset X$ be $N$--stable and let $y\in Y$. By construction $Y\subseteq X^{(N)}_y$ so universality follows from the next lemma which we express in a far more general context as it will be useful later.

\begin{lemma} \label{lem:straightentoNprime} Let $x \in X^{(N)}_e$ and let $q\colon X \to Y$ be a $(K,C)$-quasi--isometry. Then $q(x) \in Y^{(N')}_{q(e)}$ for some $N'$ depending only on $N, K$ and $C$.
\end{lemma}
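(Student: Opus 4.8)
The plan is to show that the image under $q$ of an $N$--Morse geodesic $[e,x]$ is a quasi--geodesic in $Y$, straighten it to an honest geodesic $[q(e),q(x)]$, and control the Morse gauge of this new geodesic using a pullback argument together with the Morse property upstairs.

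First I would set $\gamma = [e,x]$, an $N$--Morse geodesic in $X$, and observe that $q\circ\gamma$ is a $(K,C')$--quasi--geodesic in $Y$, where $C'$ depends only on $K$ and $C$; choose any geodesic $\delta = [q(e),q(x)]$ in $Y$. We want a bound $N'$, depending only on $N,K,C$, such that every $(L,D)$--quasi--geodesic $\sigma$ in $Y$ with endpoints $q(e),q(x)$ lies in the $N'(L,D)$--neighbourhood of $\delta$. The strategy is to pull $\sigma$ back into $X$: fix a quasi--inverse $\bar q\colon Y\to X$ (so $\bar q q$ and $q\bar q$ are within bounded distance of the respective identities, with constants controlled by $K,C$), and consider $\bar q\circ\sigma$. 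This is an $(L',D')$--quasi--geodesic in $X$ with $L',D'$ depending only on $L,D,K,C$, and after adjusting its endpoints by a bounded amount it is a quasi--geodesic with endpoints $e,x$. By the $N$--Morse property of $\gamma$, it lies in the $N(L',D')$--neighbourhood of $\gamma$. Applying $q$, we get that $q\bar q\sigma$ — hence $\sigma$ itself, up to the bounded error $q\bar q\simeq\mathrm{id}_Y$ — lies in the $(KN(L',D') + \text{const})$--neighbourhood of $q\gamma$. Finally, since $q\gamma$ is a quasi--geodesic in $Y$ with the same endpoints as $\delta$, and since $\delta$ and $q\gamma$ have bounded Hausdorff distance once we know $q\gamma$ stays near $\delta$ — or more directly, one applies the fact that $Y^{(N)}_{q(e)}$-type estimates only require controlling neighbourhoods of \emph{some} geodesic, and by the standard stability of quasi--geodesics argument in the relevant region — we conclude $\sigma$ is in a uniformly bounded neighbourhood of $\delta$. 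Collecting all constants gives the function $N' = N'(N,K,C)$ and shows $q(x)\in Y^{(N')}_{q(e)}$.

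The step I expect to be the main obstacle is the final comparison between $q\gamma$ and the genuine geodesic $\delta$: a priori we only know $\sigma$ is close to $q\gamma$, not to $\delta$, and $q\gamma$ need not be within bounded Hausdorff distance of $\delta$ in an arbitrary geodesic space unless we already know the relevant region is hyperbolic or that $q\gamma$ is Morse. The clean way around this is to prove directly that $q\gamma$ is itself $N''$--Morse for some $N''=N''(N,K,C)$ by the pullback argument above (taking $\sigma$ to be an arbitrary quasi--geodesic with endpoints on $q\gamma$ and pulling back), and then to invoke that a geodesic with the same endpoints as an $N''$--Morse quasi--geodesic is $N'$--Morse for a controlled $N'$ — this last implication is exactly the kind of statement packaged in Lemma~\ref{lem:thirdedgeMorse} and the Morse-stability lemmas cited from \cite{Cordes15} (e.g. Lemma~2.1 there), so it can be quoted rather than reproved.

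One should be slightly careful that the quasi--inverse $\bar q$ and the bounded fudge factors on endpoints (replacing $\bar q\sigma(a)$ by $e$ and $\bar q\sigma(b)$ by $x$, which moves points a bounded distance and changes the quasi--geodesic constants only in a controlled way) genuinely depend only on $K,C$ and not on $\sigma$; this is routine but worth stating explicitly since the whole point is that $N'$ is uniform. With that bookkeeping in place, the lemma — and hence Theorem~\hyperref[bthm:key]{\ref{bthm:key}.\ref{key:universal}}, taking $q$ to be the identity restricted to an $N$--stable subset — follows.
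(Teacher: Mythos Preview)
Your argument is correct, and the underlying idea is the same as the paper's: show that the image of the $N$--Morse geodesic $[e,x]$ under $q$ is, up to bounded Hausdorff distance, an $N'$--Morse geodesic in $Y$ from $q(e)$ to $q(x)$. The difference is purely one of packaging. The paper's proof is a single sentence citing Lemma~2.5 of \cite{charney-sultan}, which states exactly that the quasi--isometric image of a Morse geodesic lies at bounded Hausdorff distance from a Morse geodesic with controlled gauge. You instead reprove that lemma by hand via the pullback-through-a-quasi-inverse argument, and then invoke the Morse-stability results of \cite{Cordes15} for the final comparison between $q\gamma$ and $\delta$. Both routes land in the same place; yours is more self-contained but longer, while the paper's is shorter at the cost of an external citation. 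Your identification of the ``main obstacle'' --- passing from $\sigma$ close to $q\gamma$ to $\sigma$ close to $\delta$ --- is exactly the content of the cited lemmas, and your proposed resolution (show $q\gamma$ is Morse first, then use bounded Hausdorff distance) is the standard one.
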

\begin{proof} Let $\ell$ be a $N$--Morse geodesic joining $e$ and $x$. We know by Lemma 2.5 \cite{charney-sultan} that $q(\ell)$ is bounded Hausdorff distance from an $N'$--Morse geodesic $\alpha \colon [0,d] \to Y$ with $\alpha(0)=q(e)$ and  $\alpha(d)=q(x)$.
\end{proof}
Here a quasi--isometry is essential. Every space $X$ quasi--isometrically embeds into $X\times\R^2$ but the latter has no Morse geodesic rays.

\subsection{Boundaries of stable subsets}

Now applying the results in Section \ref{sec:visbdry} we obtain a sequential boundary $\partial  X^{(N)}_e$ for each $X^{(N)}_e$ equipped with a visual metric $d_{(N)}$ (chosen up to quasi--symmetry) with visibility parameter $\varepsilon_{(N)}$. Given $N'\geq N$ the natural inclusion of $X^{(N)}_e$ into $X^{(N')}_e$ defines an injective map $\partial X^{(N)}_e \into \partial X^{(N')}_e$.

Notice that the Gromov product of $x,y\in\partial  X^{(N)}_e$ depends on $N$. For this reason we make the following definition.

\begin{definition} Let $x,y\in\partial  X^{(N)}_e$. We define the $N$\textbf{--Gromov product} of $x$ and $y$, $\Ngp{x}{y}$ to be the supremum of $\liminf_{m,n\to\infty}\set{(x_n\cdot y_m)_e}$ over all pairs of sequences $(x_n),(y_m)\subseteq X^{(N)}_e$ with $\lim x_n=x$ and $\lim y_m=y$.
\end{definition}

These Gromov products differ by at most an additive error.

\begin{lemma}\label{lem:compareGrProd} Let $x, y \in\partial  X^{(N)}_e$ and suppose $N\leq N'$. Then
\[
\Ngp{x}{y} \leq \Npgp{x}{y} \leq \Ngp{x}{y} + 32N'(3,0).
\]
\end{lemma}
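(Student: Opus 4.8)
The plan is to prove the two inequalities separately. The left-hand inequality $\Ngp{x}{y} \leq \Npgp{x}{y}$ is essentially immediate: by Theorem~\hyperref[bthm:key]{\ref{bthm:key}.\ref{key:PO}} we have $X^{(N)}_e\subseteq X^{(N')}_e$, so every pair of sequences $(x_n),(y_m)\subseteq X^{(N)}_e$ with $\lim x_n=x$ and $\lim y_m=y$ is also a pair of admissible sequences for computing $\Npgp{x}{y}$. Hence the supremum defining $\Npgp{x}{y}$ is taken over a larger set, and the inequality follows directly from the definition of the $N$--Gromov product.

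For the right-hand inequality the key point is that the Gromov product $(x_n\cdot y_m)_e$ appearing in both definitions is computed using the metric $d$ of the ambient space $X$ (equivalently, of $X^{(N')}_e$, since $X^{(N)}_e$ carries the induced metric), so the two suprema differ only because they range over different classes of approximating sequences and because "convergence at infinity" and "equivalence of sequences" are defined relative to different hyperbolicity constants. First I would fix sequences $(x_n),(y_m)\subseteq X^{(N')}_e$ with $\lim x_n=x$, $\lim y_m=y$ that nearly realise $\Npgp{x}{y}$. The issue is that these sequences need not lie in $X^{(N)}_e$, so I cannot use them verbatim to bound $\Ngp{x}{y}$ from below. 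To fix this, I would go back to the original approximating sequences in $X^{(N)}_e$: since $x,y\in\partial X^{(N)}_e$, choose $(x'_i),(y'_j)\subseteq X^{(N)}_e$ with $\lim x'_i=x$, $\lim y'_j=y$ inside $X^{(N)}_e$. These also converge to $x,y$ inside $X^{(N')}_e$ (passing to a larger hyperbolicity constant does not destroy convergence at infinity or change which sequences are equivalent to the same point, since $X^{(N)}_e$ is a subspace with the restricted metric). Now I apply Lemma~\ref{lem:gromovproduct}\eqref{lem:uptoreplacement} twice: once in $X^{(N')}_e$, which is $\delta'$--hyperbolic with $\delta'=8N'(3,0)$, to compare $\liminf_{i,j}(x'_i\cdot y'_j)_e$ with $\Npgp{x}{y}$ up to an additive $2\delta'=16N'(3,0)$; and once in $X^{(N)}_e$, which is $\delta$--hyperbolic with $\delta=8N(3,0)\leq 8N'(3,0)$, to compare the same $\liminf$ with $\Ngp{x}{y}$ up to an additive $2\delta\leq 16N'(3,0)$. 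Chaining these two comparisons through the common quantity $\liminf_{i,j}(x'_i\cdot y'_j)_e$ yields $\Npgp{x}{y}\leq \Ngp{x}{y}+32N'(3,0)$.

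The main obstacle, and the point that needs care, is the bookkeeping about which hyperbolicity constant governs each application of Lemma~\ref{lem:gromovproduct}\eqref{lem:uptoreplacement}, and making sure that a single pair of approximating sequences in $X^{(N)}_e$ can legitimately be used to estimate \emph{both} $N$--Gromov products — i.e.\ that convergence at infinity and the limit point are unaffected by enlarging the ambient hyperbolic space from $X^{(N)}_e$ to $X^{(N')}_e$. This is where Proposition~\ref{prop:hypsubsets} (giving the explicit constants $8N(3,0)$ and $8N'(3,0)$) is used, together with the monotonicity $N(3,0)\leq N'(3,0)$, to collapse the two error terms $2\delta$ and $2\delta'$ into the single bound $32N'(3,0)=2\delta'+2\delta'$; using the cruder bound $2\delta'$ in place of $2\delta$ for the first comparison is harmless and keeps the statement clean. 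Everything else is a routine unwinding of the definition of the $N$--Gromov product as a supremum of liminfs.
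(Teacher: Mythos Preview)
Your proposal is correct and follows essentially the same approach as the paper: pass to approximating sequences in $X^{(N)}_e$, observe they are also admissible in $X^{(N')}_e$, and invoke Lemma~\ref{lem:gromovproduct}\eqref{lem:uptoreplacement}. The only difference is that the paper first uses Lemma~\ref{lem:gromovproduct}\eqref{lem:canpickconvergentseq} in $X^{(N)}_e$ to choose sequences with $\Ngp{x}{y}=\lim_n (x_n\cdot y_n)_e$ exactly, so that a single application of Lemma~\ref{lem:gromovproduct}\eqref{lem:uptoreplacement} in $X^{(N')}_e$ suffices; your two-application chaining is slightly less sharp (indeed the upper bound in Lemma~\ref{lem:gromovproduct}\eqref{lem:uptoreplacement} carries no error, so your second $2\delta$ term is unnecessary) but still yields the stated bound.
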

\begin{proof} Note that the convergence of a sequence does not depend on $N$, so the first inequality follows because convergent sequences in $X^{(N)}_e$ are contained, and converge, in $X^{(N')}_e$.

By Lemma \hyperref[lem:gromovproduct]{\ref{lem:gromovproduct}(\ref{lem:canpickconvergentseq})} we know that there exist sequences $(x_n), (y_n)$ such that $x=\lim_n x_n$ and $y=\lim_n y_n$ and $\Ngp{x}{y}=\lim_n(x_n \cdot y_n)_e$. 

Since $N \leq N'$ we note that $(x_n)$ and $(y_n)$ are in $X^{(N')}_e$ and by Lemma \hyperref[lem:gromovproduct]{\ref{lem:gromovproduct}(\ref{lem:uptoreplacement})} we know $$\Npgp{x}{y}-32N'(3,0) \leq \liminf(x_n \cdot y_n)=\Ngp{x}{y},$$ where the final equality follows from our choice of $(x_n)$ and $(y_n)$.
\end{proof}

From this we can deduce Theorem \hyperref[bthm:key]{\ref{bthm:key}.\ref{key:boundary}}.

\begin{proposition}\label{prop:quasisym} Let $X$ be a geodesic metric space and let $N,N'$ be Morse gauges such that $N\leq N'$.

The injection $\iota:\partial  X^{(N)}_e\to \partial  X^{(N')}_e$ induced by the inclusion $X^{(N)}_e\subseteq X^{(N')}_e$ is quasi--symmetric onto its image.
\end{proposition}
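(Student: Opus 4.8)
The plan is to read off the quasi--symmetry inequality from the comparison of $N$-- and $N'$--Gromov products in Lemma \ref{lem:compareGrProd} via the two--sided visual metric estimates of Section \ref{sec:visbdry}, and then to appeal to the standard fact that an injection satisfying such an inequality is automatically a quasi--symmetric embedding. Fix a triple of distinct points $x,y,z\in\partial X^{(N)}_e$; by Lemma \hyperref[lem:gromovproduct]{\ref{lem:gromovproduct}(\ref{lem:equivmeansinfinite})} distinctness makes every Gromov product below finite. Write $\mu=\varepsilon_{(N)}$ and $\mu'=\varepsilon_{(N')}$ for the visibility parameters and let $k_1,k_2$ (resp.\ $k_1',k_2'$) be the positive constants of the chosen visual metric $d_{(N)}$ (resp.\ $d_{(N')}$), so that $k_1\exp(-\mu\,(a\cdot_N b)_e)\le d_{(N)}(a,b)\le k_2\exp(-\mu\,(a\cdot_N b)_e)$ and similarly for $d_{(N')}$ with $\mu'$; note that, by definition, the $N'$--Gromov product of the images $\iota x,\iota y$ computed in $X^{(N')}_e$ is exactly $\Npgp{x}{y}$.

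First I would estimate the ratio of $d_{(N')}$--distances from above, using the visual upper bound for the numerator, the visual lower bound for the denominator, and then Lemma \ref{lem:compareGrProd} (which gives $\Npgp{x}{y}\ge\Ngp{x}{y}$ and $\Npgp{x}{z}\le\Ngp{x}{z}+D$ with $D=32N'(3,0)$):
\[
\frac{d_{(N')}(\iota x,\iota y)}{d_{(N')}(\iota x,\iota z)}\ \le\ \frac{k_2'}{k_1'}\exp\!\bigl(-\mu'(\Npgp{x}{y}-\Npgp{x}{z})\bigr)\ \le\ \frac{k_2'}{k_1'}\,e^{\mu' D}\exp\!\bigl(-\mu'(\Ngp{x}{y}-\Ngp{x}{z})\bigr).
\]
The visual estimates for $d_{(N)}$ give $\exp(-\mu(\Ngp{x}{y}-\Ngp{x}{z}))\le (k_2/k_1)\,d_{(N)}(x,y)/d_{(N)}(x,z)$; raising both sides to the positive power $\mu'/\mu$ (legitimate since $t\mapsto t^{\mu'/\mu}$ is increasing on $[0,\infty)$) and substituting yields
\[
\frac{d_{(N')}(\iota x,\iota y)}{d_{(N')}(\iota x,\iota z)}\ \le\ \frac{k_2'}{k_1'}\,e^{\mu' D}\Bigl(\frac{k_2}{k_1}\Bigr)^{\mu'/\mu}\left(\frac{d_{(N)}(x,y)}{d_{(N)}(x,z)}\right)^{\mu'/\mu}=:\eta\!\left(\frac{d_{(N)}(x,y)}{d_{(N)}(x,z)}\right),
\]
where $\eta(t)=Ct^{\mu'/\mu}$ for a constant $C>0$. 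Since $C>0$ and $\mu'/\mu>0$, $\eta$ is a homeomorphism of $[0,\infty)$ (and extends to one of $\R$ if one insists on the literal convention of Definition \ref{defn:quasisym}), so $\iota$ satisfies the quasi--symmetry inequality with control function $\eta$.

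To finish, I would note that an injection satisfying this inequality for all triples of distinct points is automatically a homeomorphism onto its image, with quasi--symmetric inverse: the visual metrics here are bounded, so $\iota$ and $\iota^{-1}$ are continuous, and the inequality is self--improving to a control function for $\iota^{-1}$ (cf.\ \cite{BonkSchramm}); this gives the statement. Finally, since any two visual metrics on a hyperbolic space are quasi--symmetrically equivalent, the conclusion does not depend on the representatives $d_{(N)},d_{(N')}$ chosen. I do not expect a genuine obstacle here---the whole argument rests on Lemma \ref{lem:compareGrProd}, which is already in hand. The one point that must be tracked carefully is that the visibility parameters $\varepsilon_{(N)},\varepsilon_{(N')}$ are in general different (they scale with the hyperbolicity constants $8N(3,0)$ and $8N'(3,0)$ of Proposition \ref{prop:hypsubsets}), which is why the control function is a genuine power $t\mapsto Ct^{\varepsilon_{(N')}/\varepsilon_{(N)}}$ rather than a linear one; distinctness of $x,y,z$ is used precisely to keep the Gromov products---hence the metrics---from degenerating.
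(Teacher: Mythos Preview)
Your proof is correct and follows essentially the same route as the paper: both arguments feed Lemma~\ref{lem:compareGrProd} into the visual-metric estimates to obtain a power-type control function $\eta(t)=Ct^{\varepsilon_{(N')}/\varepsilon_{(N)}}$. The only cosmetic differences are that the paper works directly with the pseudo-metrics $\rho_\varepsilon$ (absorbing the visual constants $k_i$ via Theorem~\ref{thm:GrProdmetric}) and establishes injectivity and the homeomorphism property up front from Lemma~\ref{lem:geodlinspeed} and the metric bounds, whereas you package the homeomorphism conclusion into the standard fact that an injection satisfying the quasi--symmetry inequality is automatically a quasi--symmetric embedding.
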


\begin{proof} Firstly it is clear from Lemma \ref{lem:geodlinspeed} that $\iota$ is injective. The bounds on the metric in Theorem \ref{thm:GrProdmetric} prove that it is a homeomorphism onto its image.

Fix $N,N'$, and choose $\varepsilon,\varepsilon'$ using Theorem \ref{thm:GrProdmetric} such that we obtain metrics $d_\varepsilon$ on $\partial X^{(N)}_e$ and $d_{\varepsilon'}$ on $\partial X^{(N')}_e$.

For the comparison it is sufficient to use $\rho_\varepsilon$ and $\rho_{\varepsilon'}$ via the inequality proved in Theorem \ref{thm:GrProdmetric}.

Fix $x,y,z$ distinct. Now
\[
 \frac{\displaystyle \rho_{\varepsilon'}(\iota(x),\iota(y))}{\displaystyle \rho_{\varepsilon'}(\iota(x),\iota(z))} = \exp(-\varepsilon'(\Npgp{x}{y}-\Npgp{x}{z})),
\]
while $\rho_\varepsilon(x,y)/\rho_\varepsilon(x,z) = \exp(-\varepsilon(\Ngp{x}{y}-\Ngp{x}{z}))$. By Lemma \ref{lem:compareGrProd} we know
\[
 \exp(-\varepsilon'(\Npgp{x}{y}-\Npgp{x}{z})) \leq \exp(-\varepsilon(\Ngp{x}{y}-\Ngp{x}{z}))^{\varepsilon'/\varepsilon}\cdot\exp(32N'(3,0)\varepsilon').
\]
Setting $\eta(r)=r^{\varepsilon'/\varepsilon}\cdot\exp(-32N'(3,0)\varepsilon')$ we see that this injection is quasi--symmetric.
\end{proof}

One immediate consequence of this is that our boundaries generalise hyperbolic boundaries (Theorem \hyperref[bthm:key]{\ref{bthm:key}.\ref{key:Gromov}}).

\begin{theorem}\label{thm:comparehypbdry} Let $X$ be a hyperbolic geodesic metric  space and let $\partial_\infty(X)$ be the usual Gromov boundary of $X$ equipped with a visual metric. For all Morse gauges $N$, there is a map $\partial  X^{(N)}_e\to \partial_\infty(X)$ which is a quasi--symmetry onto its image. As geodesics in a hyperbolic space are uniformly Morse, for any sufficiently large $N$, this map is a quasi--symmetry.
\end{theorem}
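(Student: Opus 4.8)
The plan is to reduce everything to the case of a sufficiently large Morse gauge, where $X^{(N)}_e$ is all of $X$, and then transport the statement down to arbitrary $N$ using Proposition~\ref{prop:quasisym}. First I would fix a hyperbolicity constant $\delta$ for $X$ and invoke the Morse lemma to obtain a single Morse gauge $N_\delta$, depending only on $\delta$, such that every geodesic in $X$ is $N_\delta$--Morse. For any $N$ with $N\geq N_\delta$ and any $x\in X$, a geodesic $[e,x]$ is $N_\delta$--Morse, hence $N$--Morse, so $x\in X^{(N)}_e$; together with the trivial reverse inclusion this gives $X^{(N)}_e=X$ as metric spaces. Hence $\partial X^{(N)}_e$ is literally the sequential boundary of $X$, and the Gromov product used to define $d_{(N)}$ coincides with the Gromov product of $X$, so the visual metric $d_{(N)}$ produced by Theorem~\ref{thm:GrProdmetric} is a visual metric on $\partial_\infty(X)$. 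Since any two visual metrics on the boundary of a hyperbolic space are quasi--symmetric, the identity map $\partial X^{(N)}_e\to\partial_\infty(X)$ is then a quasi--symmetry; this handles all $N\geq N_\delta$ and in particular proves the ``moreover'' clause.

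For an arbitrary Morse gauge $N$, I would set $N'=\max(N,N_\delta)$, the pointwise maximum, which is again a function $\N^2\to\N$ and satisfies $N\leq N'$ and $N'\geq N_\delta$. By Proposition~\ref{prop:quasisym} the inclusion $X^{(N)}_e\subseteq X^{(N')}_e$ induces a map $\partial X^{(N)}_e\to\partial X^{(N')}_e$ that is a quasi--symmetry onto its image, while the previous paragraph identifies $\partial X^{(N')}_e$ with $\partial_\infty(X)$ via a quasi--symmetry. Composing these, and using that a composition of quasi--symmetric embeddings is again a quasi--symmetric embedding, produces the required map $\partial X^{(N)}_e\to\partial_\infty(X)$ which is a quasi--symmetry onto its image.

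Once the Morse lemma and Proposition~\ref{prop:quasisym} are in hand the argument is essentially bookkeeping, and I do not expect a genuine obstacle. The two points that warrant care are: passing to the pointwise maximum $N'=\max(N,N_\delta)$ rather than simply comparing $N$ with $N_\delta$, since $N$ and $N_\delta$ need not be comparable in the partial order on Morse gauges; and checking that when $X^{(N')}_e=X$ the intrinsic Gromov product of $X^{(N')}_e$ agrees with that of $X$, so that $d_{(N')}$ really is a visual metric on $\partial_\infty(X)$ and the quoted quasi--symmetry of visual metrics applies. The latter is immediate because the two spaces carry the same metric, but it is precisely the place where hyperbolicity of the ambient space is used.
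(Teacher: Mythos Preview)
Your proposal is correct and follows essentially the same route as the paper: establish the result for sufficiently large $N$ by showing $X^{(N)}_e=X$ so that the $N$--Gromov product agrees with the ordinary one, and then reduce the general case via Proposition~\ref{prop:quasisym}. Your handling of the partial order on Morse gauges (taking the pointwise maximum $N'=\max(N,N_\delta)$) is a little more careful than the paper's phrasing, but the underlying argument is the same.
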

\begin{proof}
It follows immediately from Lemma \ref{lem:deltaslim} that a geodesic metric space $X$ is hyperbolic if and only if there exists some $e\in X$ and a constant $C$ such that every $(3,0)$-quasi--geodesic from $e$ to any $x\in X$ is contained in the $C$-neighbourhood of some geodesic from $e$ to $x$.

In particular, if $N(3,0)\geq C$ then the Gromov product and the $N$--Gromov product coincide, so the map $\partial_M^N(X)\to \partial_\infty(X)$ is a quasi--symmetry. The general case then follows from Proposition \ref{prop:quasisym}.
\end{proof}

If $X$ is proper then $\partial^N_M X_e$ is equal as a set to $\partial  X^{(N)}_e$ and the topology defined by the metric is exactly the convergence topology on the Morse boundary presented in \cite{Cordes15}. Note that in \cite{Cordes15} the Morse boundary is only defined for proper geodesic spaces.

 \begin{theorem} \label{thm:seqandraybijection} If $X$ is a proper geodesic metric space then there is a homeomorphism between $\partial  X^{(N)}_e$ and $\partial_M^N X_e$.
\end{theorem}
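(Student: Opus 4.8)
The plan is to exhibit an explicit bijection between $\partial X^{(N)}_e$ (sequences in $X^{(N)}_e$ that converge at infinity, up to equivalence) and $\partial_M^N X_e$ (asymptotic-equivalence classes of $N$--Morse geodesic rays based at $e$), and then to check it is a homeomorphism for the respective topologies. First I would build the map from rays to sequences: given an $N$--Morse geodesic ray $\alpha$ with $\alpha(0)=e$, the sequence $(\alpha(n))_n$ lies in $X^{(N)}_e$ and, since $\alpha$ is a geodesic, $(\alpha(n)\cdot\alpha(m))_e=\min\{n,m\}\to\infty$, so it converges at infinity; asymptotic rays give equivalent sequences (their points stay within bounded distance, so the Gromov products still diverge), giving a well-defined map $\Phi:\partial_M^N X_e\to\partial X^{(N)}_e$.

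For surjectivity and injectivity I would use properness. Given $x=\lim x_n$ with $(x_n)\subseteq X^{(N)}_e$, pick $N$--Morse geodesics $[e,x_n]$; by properness (Arzelà--Ascoli, as in \cite{Cordes15}) a subsequence converges uniformly on compacts to a geodesic ray $\alpha$ based at $e$, and $\alpha$ is again $N$--Morse because the Morse condition is closed under such limits (any $(K,C)$-quasi-geodesic with endpoints on $\alpha$ has endpoints within $\epsilon$ of points on some $[e,x_n]$ for large $n$, hence stays in a slightly larger neighborhood). One then checks $\Phi([\alpha])=[x]$ using that $(\alpha(k)\cdot x_n)_e$ is large for $k$ fixed and $n$ large, by Lemma~\ref{lem:deltaslim} applied to the Morse geodesics $[e,\alpha(k)]\subseteq\alpha$ and $[e,x_n]$. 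Injectivity: if $\Phi([\alpha])=\Phi([\beta])$ then $(\alpha(n)\cdot\beta(m))_e\to\infty$, and by the slim-triangle estimate for the $4N(3,0)$-slim triangle on $e,\alpha(n),\beta(m)$ this forces $\alpha$ and $\beta$ to fellow-travel uniformly, so $[\alpha]=[\beta]$.

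Finally I would match the topologies. On $\partial X^{(N)}_e$ the topology is induced by a visual metric, i.e. two boundary points are close iff their $N$--Gromov product is large; on $\partial_M^N X_e$ the basic neighborhoods $V_n(\alpha)$ consist of rays $4N(3,0)$-close to $\alpha$ up to time $n$ (the $\delta_N$ of the definition). The translation between "large Gromov product of the limiting sequences" and "the geodesic rays stay $\delta_N$-close for a long time" is exactly the standard comparison in a $\delta$-hyperbolic space between the Gromov-product topology and the compact-open (cone) topology on the boundary, and here the hyperbolicity constant $8N(3,0)$ of $X^{(N)}_e$ is controlled purely by $N$ (Proposition~\ref{prop:hypsubsets}), so the constant $\delta_N$ in the definition of $V_n(\alpha)$ can be taken to be a function of $N$ alone, consistently with \cite{Cordes15}. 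Concretely: $\alpha(k)$ stays within $C\cdot\delta$ of $\beta$ for $k\le n$ iff $(\alpha(n)\cdot\beta(n))_e\gtrsim n$, with the implied constants depending only on $\delta=8N(3,0)$; feeding this through the visual-metric bounds of Theorem~\ref{thm:GrProdmetric} shows $\Phi$ and $\Phi^{-1}$ are continuous.

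The main obstacle I expect is the topology-matching step rather than the bijection: one must be careful that the sequential/visual-metric topology on $\partial X^{(N)}_e$ really agrees with the system of neighborhoods $V_n(\alpha)$ used to define $\partial_M^N X_e$ in \cite{Cordes15}, including pinning down that the constant $\delta_N$ there depends only on $N$ (which is what Proposition~\ref{prop:hypsubsets} supplies). Everything else — well-definedness, surjectivity via Arzelà--Ascoli, injectivity via slim triangles — is routine given properness and the results already established in this section, so I would state the theorem and give the argument at the level of detail above, citing \cite[Chapter III.H]{bh} for the standard equivalence of the two descriptions of the boundary topology of a hyperbolic space.
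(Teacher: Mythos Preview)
Your proposal is correct and follows essentially the same approach as the paper: define the map $\Phi$ by $\alpha\mapsto(\alpha(n))_n$, build the inverse via Arzel\`a--Ascoli applied to $N$--Morse segments $[e,x_n]$ (with the limit ray $N$--Morse by \cite[Lemma~2.8]{Cordes15}), check well-definedness/injectivity via the slim-triangle estimates coming from Proposition~\ref{prop:hypsubsets}, and then reduce the topology comparison to the standard equivalence of the Gromov-product and cone topologies on the boundary of a $\delta$--hyperbolic space, citing \cite[III.H]{bh}. The only cosmetic differences are that the paper invokes \cite[Lemma~9.51]{Kapovich-Drutu} for the explicit fellow-travelling constant and pins the topology step to \cite[III.H.3.17]{bh}, whereas you sketch these directly from Lemma~\ref{lem:deltaslim} and Proposition~\ref{prop:hypsubsets}.
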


\begin{proof} Firstly we define a bijection.

For every geodesic ray $\ell \colon \mathbb{R}_{\geq 0} \to X$ with $\ell(0)=e$ we can associate a sequence $(\ell(n))_{n \in \mathbb{N}}$.  It is clear that $(\ell(i) \cdot \ell(j)) \to \infty$ as $i,j \to \infty$ and if we have two equivalent rays $\ell_1$ and $\ell_2$ then $(\ell_1(i) \cdot \ell_2(j))_e \to \infty$ as $i,j \to \infty$.

Let $(x_i)$ be a sequence which converges at infinity. For every $i$ let $\alpha_i$ be an $N$--Morse geodesic from $e$ to $x_i$. Since $X$ is proper we know that there exists a subsequence $(\alpha_{k(i)})$ of $(\alpha_i)$ which converges uniformly to a geodesic ray $\ell$ with basepoint $e$. By \cite[Lemma $2.8$]{Cordes15} $\ell$ is $N$--Morse. Now assume $(x_i)$ and $(y_j)$ are equivalent sequences converging at infinity and let $(\alpha_k)$ and $(\beta_l)$ be the subsequences of geodesics that converge to the $N$--Morse geodesic rays $\ell_1$ and $\ell_2$ respectively. 

Since  $(\alpha_k)$ and $(\beta_l)$ converge uniformly on compact sets, we know that given an $\epsilon> 0$  and $s \in [0, \infty)$ there exists a $K\in \N$ such that the Hausdorff distances between $\ell_1([0,s]$ and $\alpha_k([0,s])$ and between $\ell_2([0,s]$ and $\alpha_l([0,s])$ are less than $\epsilon$ for all $k,l\geq K$. We can choose a $K$ which also satisfies $s< \Ngp{x_K}{y_K}=D$.  Since $X^{(N)}_e$ is $4N(3,0)$--hyperbolic we know that the Hausdorff distance between the geodesic segments $\alpha_k([e, D])$ and $\beta_l([e, D])$ is less than $28N(3,0)$ \cite[Lemma 9.51]{Kapovich-Drutu}. Thus we deduce that for any $s \in [0, \infty)$, the Hausdorff distance between $\ell_1([0,s])$ and $\ell_2([0,s])$ is less than $28N(3,0)+2 \epsilon$. Letting $\epsilon \to 0$ we see that the Hausdorff distance between $\ell_1$ and $\ell_2$ is at most $28N(3,0)$, so $\ell_1$ and $\ell_2$ are equivalent.

To see that this bijection is a homeomorphism it suffices to show that, given a sequence $(\ell_n)$ in $\partial_M^N X$, then $\ell_n \to \ell$ in the sense of the topology on the Morse boundary defined in \cite{Cordes15} if and only if $\Ngp{\ell_n}{\ell} \to \infty$ as $n \to \infty$. 

The proof of this follows from \cite[III.H.3.17]{bh} using Lemma \ref{lem:gromovproduct} for the estimates.
\end{proof}

From this we immediately deduce \hyperref[bthm:key]{\ref{bthm:key}.\ref{key:Mboundary}}.

\subsection{Invariance of the metric Morse boundary}

It only remains to prove  Theorem \hyperref[bthm:key]{\ref{bthm:key}.\ref{key:quasiisom}}. Our method will allow us to prove the stronger result stated in the introduction as Theorem \hyperref[bthm:key]{\ref{bthm:key}.\ref{key:quasiisom}'}, which we prove at the end of the section.

Given a geodesic metric space $X$ and a basepoint $e$ we have associated a collection of (quasi--symmetry classes of) metric spaces $(\partial  X^{(N)}_e,d_{(N)})_N$ indexed by Morse gauges $N$. We call this collection of spaces the \textbf{metric Morse boundary} of $X$ based at $e$.

We now place an equivalence relation on collections of metric spaces $(Z^{(N)},d_{(N)})_N$ indexed by Morse gauges.

We say $(Z^{(N)},d_{(N)})_N$ is \textbf{subsumed} by  $(W^{(N)},d'_{(N)})_N$ if, for every $N$ there exists some $N'$ and an injection $\iota:Z^{(N)}\to W^{(N')}$ which is quasi--symmetric onto its image.

Two collections are \textbf{equivalent} if they subsume each other.

\begin{proposition}\label{prop:baseptinv} Let $X$ be a geodesic metric space and let $e,f\in X$. For every Morse gauge $N$ there exists a Morse gauge $N'$ such that there is an injection $\iota:\partial X^{(N)}_e\to \partial X^{(N')}_f$ which is quasi--symmetric onto its image. In particular, change of basepoint is an equivalence of metric Morse boundaries.
\end{proposition}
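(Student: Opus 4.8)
The plan is to realise $\iota$ as the ``change the basepoint on representing sequences'' map, after first arranging a nesting $X^{(N)}_e\subseteq X^{(N_1)}_f\subseteq X^{(N_2)}_e$ of strata, and then to deduce the quasi--symmetry estimate from Lemma~\ref{lem:compareGrProd} by detouring through the largest of these three. Throughout, the one elementary fact that does all the basepoint--bookkeeping is that $(a\cdot b)_e$ and $(a\cdot b)_f$ differ by at most $D:=d(e,f)$ for all $a,b\in X$.

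\emph{Nesting of strata.} By Lemma~\ref{lem:covering} we have $f\in X^{(N_f)}_e$ and $e\in X^{(N_f)}_f$ for $N_f(K,C)=KD+C$. Setting $M=\max\{N,N_f\}$, Lemma~\ref{lem:thirdedgeMorse} furnishes $N_1=N_1(M)$ such that any two points of $X^{(M)}_e$ are joined by an $N_1$--Morse geodesic; applied to $f$ and to an arbitrary point of $X^{(N)}_e\subseteq X^{(M)}_e$ this gives $X^{(N)}_e\subseteq X^{(N_1)}_f$. Running the same argument with the roles of $e$ and $f$ exchanged produces $N_2$, depending only on $N$ and $D$, with $X^{(N_1)}_f\subseteq X^{(N_2)}_e$; enlarging $N_2$ to $\max\{N,N_2\}$ if necessary we may also assume $N\le N_2$, so that $X^{(N)}_e\subseteq X^{(N_1)}_f\subseteq X^{(N_2)}_e$.

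\emph{The map and the key estimate.} Since convergence at infinity and equivalence of sequences in $X$ do not depend on which of $e,f$ is used, the assignment $\lim_e x_n\mapsto\lim_f x_n$ is a well--defined injection $\iota\colon\partial X^{(N)}_e\to\partial X^{(N_1)}_f$. I would then establish the additive estimate
\[
 \Ngp{\xi}{\eta}-D\ \le\ \left(\iota\xi\cdot_{N_1}\iota\eta\right)_f\ \le\ \Ngp{\xi}{\eta}+32\,N_2(3,0)+D
\]
for all $\xi,\eta\in\partial X^{(N)}_e$. The left inequality is immediate: sequences nearly realising $\Ngp{\xi}{\eta}$ lie in $X^{(N)}_e\subseteq X^{(N_1)}_f$, converge there to $\iota\xi,\iota\eta$, and their Gromov products move by at most $D$. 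For the right inequality, choose sequences $(x_n),(y_m)$ in $X^{(N_1)}_f$ nearly realising $\left(\iota\xi\cdot_{N_1}\iota\eta\right)_f$; by the nesting they lie in $X^{(N_2)}_e$, and a short argument using basepoint--independence of the equivalence relation (comparing $(x_n)$ with a sequence in $X^{(N)}_e$ converging to $\xi$) shows that in $X^{(N_2)}_e$ they converge to the images of $\xi,\eta$ under the inclusion $X^{(N)}_e\subseteq X^{(N_2)}_e$. Hence the $N_2$--Gromov product based at $e$ of those images is at least $\left(\iota\xi\cdot_{N_1}\iota\eta\right)_f-D$, and Lemma~\ref{lem:compareGrProd}, applicable since $N\le N_2$, bounds that $N_2$--Gromov product above by $\Ngp{\xi}{\eta}+32N_2(3,0)$.

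\emph{Conclusion and the main obstacle.} With the additive estimate in hand, the computation in the proof of Proposition~\ref{prop:quasisym} goes through verbatim: choosing visual parameters on $\partial X^{(N)}_e$ and $\partial X^{(N_1)}_f$ via Theorem~\ref{thm:GrProdmetric} and comparing the relevant ratios of the quantities $\rho_\varepsilon$ shows that $\iota$ is a quasi--symmetry onto its image, with control function of the form $r\mapsto c\,r^{\varepsilon'/\varepsilon}$; in particular $\iota$ is injective and a homeomorphism onto its image. Running the entire construction with $e$ and $f$ interchanged then shows the two metric Morse boundaries subsume one another, which is exactly the asserted equivalence. The delicate point is the second half of the key estimate: the sequences realising the $f$--based Gromov product of $\iota\xi,\iota\eta$ need not lie in $X^{(N)}_e$, so they cannot be fed directly into $\Ngp{\xi}{\eta}$; the detour through $X^{(N_2)}_e$ — where they do live, and from which Lemma~\ref{lem:compareGrProd} lets one climb back down to the stratum indexed by $N$ — is the crux, and correctly identifying the limit of each sequence that appears along the way is where the bookkeeping has to be carried out with care.
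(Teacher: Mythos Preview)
Your proposal is correct and follows essentially the same route as the paper: establish a nesting $X^{(N)}_e\subseteq X^{(N')}_f\subseteq X^{(N'')}_e$, deduce an additive comparison between the $N$--Gromov product at $e$ and the $N'$--Gromov product at $f$, and then feed this into the computation of Proposition~\ref{prop:quasisym}. Your write-up is in fact more careful than the paper's in two places: you obtain the nesting via Lemma~\ref{lem:thirdedgeMorse} (the paper's pointer to Lemma~\ref{lem:straightentoNprime} is slightly off, since that lemma concerns quasi--isometries rather than basepoint change), and you spell out the detour through $X^{(N_2)}_e$ needed for the upper bound, which the paper compresses into the single phrase ``so by Lemma~\ref{lem:compareGrProd}''.
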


\begin{proof} By universality (see Lemma \ref{lem:straightentoNprime}) for every $N$, $X^{(N)}_e$ is a subset of $X^{(N')}_f$ for some $N'$ which depends only on $N$ and $d(e,f)$. Likewise, $X^{(N')}_f\subseteq X^{(N'')}_e$ for some $N''$, so by Lemma \ref{lem:compareGrProd} there exists some constant $D$ such that for all $x,y\in\partial X^{(N)}_e$,
\[
 (x\cdot_{N'} y)_f - D \leq (x\cdot_{N} y)_e \leq (x\cdot_{N'} y)_f + D.
 \]
 The fact that the map $\iota\colon \partial X^{(N)}_e \to \partial X^{(N')}_f$ is quasi--symmetric onto its image now follows immediately from the proof of Proposition \ref{prop:quasisym}.
 \end{proof}

The rest of the section is devoted to the most important theorem. Our goal is to show that a quasi--isometry of geodesic spaces $q \colon X\to Y$ defines a collection of quasi--symmetries (cf. Definition $\ref{defn:quasisym}$) $\partial^{(N)} q \colon \partial X^{(N)} \to \partial Y^{(N')}$.

\begin{theorem} \label{thm:qitoquasisym} Let $q \colon X\to Y$ be a $(K,C)$ quasi--isometry between geodesic metric spaces and let $N$ be a Morse gauge. There is some $N'(K,C,N)$ such that the injection $\partial q_N \colon \partial X^{(N)}_e\to \partial Y^{(N')}_{q(e)}$ is a quasi--symmetry onto its image. In particular, a quasi--isometry of spaces defines an equivalence of metric Morse boundaries.
\end{theorem}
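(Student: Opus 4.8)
The plan is to follow the template set by Propositions \ref{prop:quasisym} and \ref{prop:baseptinv}: construct the boundary map, show it distorts Gromov products in a controlled way, and feed this into the visual metric estimate of Theorem \ref{thm:GrProdmetric}. First I would use Lemma \ref{lem:straightentoNprime} to fix $N'=N'(K,C,N)$ with $q(X^{(N)}_e)\subseteq Y^{(N')}_{q(e)}$, so that $q$ restricts to a $(K,C)$-quasi--isometric embedding of the $8N(3,0)$-hyperbolic space $X^{(N)}_e$ into the $8N'(3,0)$-hyperbolic space $Y^{(N')}_{q(e)}$ (Proposition \ref{prop:hypsubsets}). For $x=\lim x_n\in\partial X^{(N)}_e$ I would set $\partial q_N(x)=\lim q(x_n)$; this is well defined and injective because a quasi--isometric embedding of hyperbolic spaces distorts Gromov products by at most the multiplicative factor $K$ and an additive constant, so $(q(x_i)\cdot q(x_j))_{q(e)}\to\infty$, equivalent sequences have equivalent images, and distinct boundary points have finite $N$--Gromov product (Lemma \ref{lem:gromovproduct}(\ref{lem:equivmeansinfinite})), hence finite image $N'$--Gromov product.

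The heart of the matter is the quantitative comparison of $\Npgpq{\partial q_N x}{\partial q_N y}$ with $\Ngp{x}{y}$. The soft two--sided bound
\[
 \tfrac1K\Ngp{x}{y}-C_0\ \le\ \Npgpq{\partial q_N x}{\partial q_N y}\ \le\ K\Ngp{x}{y}+C_0
\]
follows from the slim--triangle description of the Gromov product: by Lemma \ref{lem:deltaslim}, applied in $X^{(N)}_e$ and in $Y^{(N')}_{q(e)}$, the Gromov product of two points is, up to a uniform additive constant, the distance from the basepoint to a geodesic joining them; by Lemmas \ref{lem:thirdedgeMorse} and \ref{lem:straightentoNprime}, $q$ sends such a geodesic in $X$ to a quasi--geodesic lying uniformly close to a uniformly Morse geodesic in $Y$, and distances are distorted only by the quasi--isometry constants; one passes to the boundary using Lemma \ref{lem:gromovproduct}(\ref{lem:canpickconvergentseq})--(\ref{lem:uptoreplacement}). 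Already this bound, via Theorem \ref{thm:GrProdmetric}, shows that $\partial q_N$ is a homeomorphism onto its image. But it is \emph{not} strong enough to give quasi--symmetry, because $K$ and $\tfrac1K$ enter asymmetrically. What I would prove instead is a comparison of \emph{differences}: there is $C_1=C_1(K,C,N)$ such that for all $x,y,z\in\partial X^{(N)}_e$ with $\Ngp{x}{y}\ge\Ngp{x}{z}$,
\[
 \tfrac1K\bigl(\Ngp{x}{y}-\Ngp{x}{z}\bigr)-C_1\ \le\ \Npgpq{\partial q_N x}{\partial q_N y}-\Npgpq{\partial q_N x}{\partial q_N z}\ \le\ K\bigl(\Ngp{x}{y}-\Ngp{x}{z}\bigr)+C_1 .
\]

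To prove the difference estimate I would use the ``tripod'' picture. Approximating $x,y,z$ by $N$--Morse geodesics (or by sequences) issuing from $e$, hyperbolicity (Lemma \ref{lem:gromovproduct}) forces the $x$-- and $z$--directions to fellow travel for time $\approx\Ngp{x}{z}$, and the $x$-- and $y$--directions to continue fellow travelling for a further time $\approx\Ngp{x}{y}-\Ngp{x}{z}$ before separating. Under $q$, the image of this last common segment runs, up to bounded Hausdorff error, along a Morse geodesic toward $\partial q_N x$ between the points where the geodesics toward $\partial q_N z$ and toward $\partial q_N y$ branch off; its length is therefore $\Npgpq{\partial q_N x}{\partial q_N y}-\Npgpq{\partial q_N x}{\partial q_N z}$ up to a uniform additive constant, while $q$ distorts its length by at most the factor $K$ and an additive $C$. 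I would establish this first for finite configurations inside $X^{(N)}_e$ using the $4N(3,0)$-slimness of its triangles (Lemma \ref{lem:deltaslim}) together with Lemma \ref{lem:straightentoNprime}, then pass to the boundary via Lemma \ref{lem:gromovproduct}(\ref{lem:uptoreplacement}). This is where essentially all the work lies: making the tripod argument coherent at the level of sequences in the possibly non--proper hyperbolic spaces $X^{(N)}_e$ and $Y^{(N')}_{q(e)}$; the crude bound alone cannot see this coherence, which is exactly what distinguishes genuine quasi--symmetry from mere continuity.

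Finally I would substitute the difference estimate into the visual metric bounds of Theorem \ref{thm:GrProdmetric}, as in the proof of Proposition \ref{prop:quasisym}. Fixing visual parameters $\varepsilon$ for $\partial X^{(N)}_e$ and $\varepsilon'$ for $\partial Y^{(N')}_{q(e)}$ and writing $r=d_{(N)}(x,y)/d_{(N)}(x,z)$, the difference estimate yields
\[
 \frac{d_{(N')}(\partial q_N x,\partial q_N y)}{d_{(N')}(\partial q_N x,\partial q_N z)}\ \le\ C_2\,\max\bigl\{\,r^{\varepsilon'/(K\varepsilon)},\ r^{K\varepsilon'/\varepsilon}\,\bigr\},
\]
and $r\mapsto C_2\max\{r^{\varepsilon'/(K\varepsilon)},r^{K\varepsilon'/\varepsilon}\}$ is a homeomorphism of $[0,\infty)$, so $\partial q_N$ is $\eta$--quasi--symmetric onto its image. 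The final assertion follows by applying the theorem to $q$ and to a quasi--inverse $\bar q\colon Y\to X$ and invoking basepoint invariance (Proposition \ref{prop:baseptinv}), which identifies the metric Morse boundary of $X$ at $\bar q q(e)$ with that at $e$: this provides quasi--symmetric embeddings in both directions, i.e. an equivalence of metric Morse boundaries.
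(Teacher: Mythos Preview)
Your proposal is correct and follows essentially the same route as the paper: both arguments reduce quasi--symmetry to the ``difference of Gromov products'' estimate
\[
K^{-1}\bigl|\Ngp{x}{y}-\Ngp{x}{z}\bigr|-A'\ \le\ \bigl|\Npgpq{qx}{qy}-\Npgpq{qx}{qz}\bigr|\ \le\ K\bigl|\Ngp{x}{y}-\Ngp{x}{z}\bigr|+A',
\]
and then substitute into the visual metric bounds of Theorem \ref{thm:GrProdmetric} exactly as in Proposition \ref{prop:quasisym}. The only difference is in how that estimate is obtained: the paper isolates it as Proposition \ref{prop:adaptqidiffofGrPr}, obtained by observing that the proof of \cite[Proposition 5.15(2)]{GhysdlH} (equivalently \cite[Proposition 5.5]{BonkSchramm}) uses only slimness of triangles with one vertex at the basepoint, which is supplied by Lemma \ref{lem:deltaslim}, and then passing to the boundary via Lemma \ref{lem:gromovproduct}(\ref{lem:uptoreplacement}); you instead propose to reprove this estimate by hand via the tripod picture. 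Your tripod argument is exactly the geometric content of the cited Ghys--de la Harpe proposition, so you are reinventing rather than citing. Either way the passage to the boundary and the final visual--metric computation are identical, and your $\max$ formulation of $\eta$ is a clean way to combine the two cases the paper treats separately.
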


The starting point is to adapt the following proposition to our setting.

\begin{proposition}\label{prop:qidiffofGrPr} \cite[Proposition 5.15(2)]{GhysdlH}
Given $K\geq 1$, $C,\delta\geq 0$, there is a constant $A=A(K,C,\delta)$ such that the following holds.

Let $X,Y$ be two geodesic $\delta$--hyperbolic metric spaces and let $q:X\to Y$ be a $(K,C)$-quasi--isometric embedding. If $w,x,y,z\in X$, then
\[
 K^{-1}\abs{(x\cdot y)_w-(x\cdot z)_w} - A \leq \abs{(q(x)\cdot q(y))_{q(w)}-(q(x)\cdot q(z))_{q(w)}} 
\]
\[
 \leq K\abs{(x\cdot y)_w-(x\cdot z)_w} + A.
\]
\end{proposition}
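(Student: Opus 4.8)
The plan is to reduce this statement about \emph{differences} of Gromov products to a comparison of a single pair of point-to-point distances, one in $X$ and one in $Y$. This reduction is exactly what produces the clean multiplicative constant $K$: naively subtracting the one-sided estimate $K^{-1}(x\cdot y)_w - A' \leq (q(x)\cdot q(y))_{q(w)} \leq K(x\cdot y)_w + A'$ (which itself follows from the Morse-lemma argument below) would yield the mismatched expression $K(x\cdot y)_w - K^{-1}(x\cdot z)_w$, not $K\abs{(x\cdot y)_w-(x\cdot z)_w}$. Throughout I write $u\asymp v$ to mean $\abs{u-v}\leq A$ for a constant $A=A(K,C,\delta)$ whose value is irrelevant and may grow from line to line.

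First I would give a purely geometric interpretation of $(x\cdot y)_w-(x\cdot z)_w$ inside a single $\delta$--hyperbolic space. Fix geodesic triangles on $\{w,x,y\}$ and on $\{w,x,z\}$ sharing the side $[w,x]$, and let $c=c(w,x,y)$ and $c'=c(w,x,z)$ be their centers, i.e.\ points lying within $O(\delta)$ of all three respective sides (these exist and are coarsely unique by $\delta$--thinness, \cite[III.H]{bh}). The comparison--tripod picture gives $d(w,c)\asymp (x\cdot y)_w$ and $d(w,c')\asymp (x\cdot z)_w$, while both $c,c'$ lie within $O(\delta)$ of the common side $[w,x]$. Projecting $c,c'$ onto $[w,x]$ and comparing their distances from $w$ along this single geodesic yields
\[
 d(c,c') \asymp \bigl| (x\cdot y)_w - (x\cdot z)_w \bigr|.
\]
The identical statement holds in $Y$ for the centers $\bar c,\bar c'$ of the triangles on $\{q(w),q(x),q(y)\}$ and $\{q(w),q(x),q(z)\}$.

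The second ingredient is that a quasi--isometric embedding coarsely preserves triangle centers, and this is the step I expect to be the main obstacle, since it requires both the stability of quasi--geodesics and the coarse well--definedness of the center. Here I would invoke the Morse lemma in hyperbolic spaces: as $q$ is a $(K,C)$--quasi--isometric embedding, the image of each geodesic side of a triangle in $X$ is a $(K,C)$--quasi--geodesic in $Y$, hence lies within Hausdorff distance $H=H(K,C,\delta)$ of the corresponding geodesic side of the triangle in $Y$ \cite[III.H]{bh}. Consequently $q(c)$ lies within $O(H)$ of all three sides of the triangle on $\{q(w),q(x),q(y)\}$, so coarse uniqueness of the center gives $d\bigl(q(c),\,\bar c\bigr)\leq D(K,C,\delta)$, and likewise $d(q(c'),\bar c')\leq D$; everything else is bookkeeping.

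Finally I would chain the estimates. The first step (applied in $Y$) gives $\abs{(q(x)\cdot q(y))_{q(w)} - (q(x)\cdot q(z))_{q(w)}}\asymp d(\bar c,\bar c')$; center--preservation gives $d(\bar c,\bar c')\asymp d(q(c),q(c'))$; since $c,c'\in X$ and $q$ is a $(K,C)$--quasi--isometric embedding, $K^{-1}d(c,c')-C\leq d(q(c),q(c'))\leq Kd(c,c')+C$; and the first step (applied in $X$) gives $d(c,c')\asymp \abs{(x\cdot y)_w-(x\cdot z)_w}$. The only multiplicative distortion enters through the single displayed quasi--isometry inequality for $d(q(c),q(c'))$, so absorbing all additive errors into one constant $A=A(K,C,\delta)$ produces exactly
\[
 K^{-1}\bigl|(x\cdot y)_w - (x\cdot z)_w\bigr| - A \leq \bigl|(q(x)\cdot q(y))_{q(w)} - (q(x)\cdot q(z))_{q(w)}\bigr| \leq K\bigl|(x\cdot y)_w - (x\cdot z)_w\bigr| + A,
\]
as claimed.
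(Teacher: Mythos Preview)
The paper does not give its own proof of this proposition; it simply cites \cite[Proposition 5.15(2)]{GhysdlH} and remarks that the coarse surjectivity hypothesis in the original is not used. So there is no paper proof to compare against.

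Your argument is correct and is essentially the standard one. The key observation---that the difference of Gromov products $(x\cdot y)_w-(x\cdot z)_w$ is, up to an additive $O(\delta)$, the distance between the two triangle centers $c,c'$ along the common side $[w,x]$---is exactly what allows the multiplicative constant to come out as $K$ rather than $K^2$: you reduce the problem to a single application of the quasi--isometry inequality on $d(c,c')$. The Morse lemma then handles the transfer of centers, and the chaining is routine. One small point worth making explicit in a full write-up is the ``coarse uniqueness of the center'': a point within bounded distance of all three sides of a $\delta$--slim triangle lies within a controlled distance of the internal point, which is what lets you pass from ``$q(c)$ is near all three sides of the image triangle'' to ``$q(c)$ is near $\bar c$''. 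You assert this, and it is standard, but it is the one step where a careless reader might not immediately see why the bound depends only on $(K,C,\delta)$.
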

The original statement includes the hypothesis that $q$ is coarsely onto but this is not used in the proof.

\medskip
We would like to apply this to the restriction of $q$ to each $X^{(N)}_e$. Lemma \ref{lem:straightentoNprime} allows us to restrict the image of the quasi--isometry to a stable subset $Y^{(N')}_{q(e)}$.

This still leaves one obstruction. The spaces $X^{(N)}_e, Y^{(N')}_{q(e)}$ are not geodesic. It easily follows from Lemma \ref{lem:geodlinspeed} that every pair of points in $X^{(N)}_e$ can be connected by a $(1,12N(3,0))$-quasi--geodesic $q\subset X^{(N)}_e$.

However, the proof of \ref{prop:qidiffofGrPr} only relies on the fact that geodesic triangles in $X$ with vertices $w,x,y\in X^{(N)}_e$ are uniformly slim, but this follows directly from Lemma \ref{lem:deltaslim} in the case where $w=e$. Hence we have

\begin{proposition}\label{prop:adaptqidiffofGrPr} 
Given a Morse gauge $N$ and constants $K\geq 1$, $C,\delta\geq 0$, there is a constant $A=A(N,K,C,\delta)$ such that the following holds.

Let $X,Y$ be two geodesic metric spaces and let $q\colon X\to Y$ be a $(K,C)$-quasi--isometry. Let $x,y,z\in X^{(N)}_e$, then there exists some Morse gauge $N'$ such that $q(x),q(y),q(z)\in Y^{(N')}_{q(e)}$ and
\[
 K^{-1}\abs{(x\cdot y)_e-(x\cdot z)_e} - A \leq \abs{(q(x)\cdot q(y))_{q(e)}-(q(x)\cdot q(z))_{q(e)}} 
\]
\[
 \leq K\abs{(x\cdot y)_e-(x\cdot z)_e} + A.
\]
In particular, there exists some $A'$ such that for any $x,y,z\in \partial X^{(N)}_e$ we have
\[
 K^{-1}\abs{\Ngp{x}{y}-\Ngp{x}{z}} - A' \leq \abs{\Npgpq{q(x)}{q(y)}-\Npgpq{q(x)}{q(z)}} 
\]
\[
 \leq K\abs{\Ngp{x}{y}-\Ngp{x}{z}} + A'.
\]
\end{proposition}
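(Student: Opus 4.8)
The plan is to transfer Proposition \ref{prop:qidiffofGrPr} from the genuinely geodesic hyperbolic setting to the spaces $X^{(N)}_e$ and $Y^{(N')}_{q(e)}$, which are hyperbolic (Proposition \ref{prop:hypsubsets}) but only quasi--geodesic. First I would record the ingredients already assembled: by Lemma \ref{lem:straightentoNprime} there is a Morse gauge $N'=N'(N,K,C)$ with $q(X^{(N)}_e)\subseteq Y^{(N')}_{q(e)}$; by Proposition \ref{prop:hypsubsets} both $X^{(N)}_e$ and $Y^{(N')}_{q(e)}$ are $\delta$--hyperbolic with $\delta$ a function of $N$ (resp.\ $N'$), so the slimness constant $\delta$ in the statement can be taken to depend only on $N,K,C$; and by Lemma \ref{lem:deltaslim} every geodesic triangle in the ambient space $X$ with one vertex at $e$ and the other two in $X^{(N)}_e$ is $4N(3,0)$--slim. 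The point of the last observation is that, while $X^{(N)}_e$ is not geodesic, the only place the geodesic hypothesis enters the proof of Proposition \ref{prop:qidiffofGrPr} (as given in \cite{GhysdlH}, Proposition 5.15) is through the uniform slimness of triangles based at the basepoint $w$; since we always take $w=e$, Lemma \ref{lem:deltaslim} supplies exactly what is needed.

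Concretely, I would argue as follows. Fix $x,y,z\in X^{(N)}_e$ and let $N'$ be as above, so $q(x),q(y),q(z)\in Y^{(N')}_{q(e)}$. Connect $e$ to each of $x,y,z$ by $N$--Morse geodesics in $X$ and observe that their $q$--images are, up to bounded Hausdorff distance depending only on $K,C$, quasi--geodesics that $4N'(3,0)$--fellow--travel honest $N'$--Morse geodesics in $Y$ from $q(e)$ to $q(x),q(y),q(z)$ (Lemma 2.5 of \cite{charney-sultan}, already invoked in the proof of Lemma \ref{lem:straightentoNprime}). The Gromov product $(x\cdot y)_e$ (computed in $X$, equivalently in $X^{(N)}_e$) is, up to an additive constant depending on the slimness constant, the distance from $e$ to a geodesic $[x,y]$; the analogous statement holds on the $Y$ side with constants depending on $N'$. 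Running the standard estimate from \cite{GhysdlH} with these substitutes—comparing the distance from $e$ to the ``tripod center'' of $\{x,y,z\}$ with the distance from $q(e)$ to the tripod center of $\{q(x),q(y),q(z)\}$, using that $q$ distorts distances by factor $K$ up to $+C$—yields the first displayed double inequality with $A=A(N,K,C,\delta)$, where $\delta$ has already been absorbed into the dependence on $N$.

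For the boundary statement, I would pass to sequences: given $x,y,z\in\partial X^{(N)}_e$, use Lemma \ref{lem:gromovproduct}(\ref{lem:canpickconvergentseq}) to pick sequences $(x_n),(y_n),(z_n)$ in $X^{(N)}_e$ realizing the $N$--Gromov products in the limit, and apply the interior estimate to the triples $(x_n,y_n,z_n)$. Because $q$ is continuous on bounded sets and maps convergent sequences to convergent sequences (the image of an $N$--Morse geodesic ray stays near an $N'$--Morse ray), $(q(x_n)),(q(y_n)),(q(z_n))$ converge to $\partial q_N(x),\partial q_N(y),\partial q_N(z)$ in $\partial Y^{(N')}_{q(e)}$, and by Lemma \ref{lem:gromovproduct}(\ref{lem:uptoreplacement}) the $\liminf$ of their pairwise Gromov products computes the $N'$--Gromov product of the limits up to an additive $2\delta_{N'}$. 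Combining, the additive error $A'$ is $A$ plus a multiple of $\max\{\delta_N,\delta_{N'}\}$, hence again a function of $N,K,C$ only.

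The main obstacle, and the only genuinely new content beyond bookkeeping, is making precise that the proof of Proposition \ref{prop:qidiffofGrPr} survives when the source and target are merely quasi--geodesic: one must verify that every geodesic segment invoked in \cite{GhysdlH}'s argument has one endpoint at the basepoint (so that Lemma \ref{lem:deltaslim} applies) or can be replaced by a uniform quasi--geodesic lying inside $X^{(N)}_e$ (available from Lemma \ref{lem:geodlinspeed}), and that in either case the requisite thin--triangle and Gromov--product comparisons still hold with constants depending only on $N$. I expect this to be a careful but routine inspection rather than a serious difficulty, and the cleanest write--up simply states that the proof of Proposition \ref{prop:qidiffofGrPr} goes through verbatim once ``$\delta$--hyperbolic geodesic space'' is replaced by ``space in which triangles based at $e$ are uniformly slim,'' citing Lemma \ref{lem:deltaslim}.
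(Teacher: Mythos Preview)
Your proposal is correct and follows essentially the same approach as the paper: reduce to Proposition~\ref{prop:qidiffofGrPr} by observing that its proof only requires uniform slimness of triangles based at $e$ (supplied by Lemma~\ref{lem:deltaslim}), use Lemma~\ref{lem:straightentoNprime} to obtain $N'$, and for the boundary statement pass to sequences and invoke Lemma~\ref{lem:gromovproduct}(\ref{lem:uptoreplacement}) to control the additive error. The paper's write-up is terser but the content is the same; your identification of the ``main obstacle'' and its resolution via Lemma~\ref{lem:deltaslim} is exactly the point the paper makes in the paragraph preceding the proposition.
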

\begin{proof} 
The first pair of inequalities follow from \ref{prop:qidiffofGrPr} and the discussion above. For the final part choose sequences $(x_n),(y_n),(z_n)\subset X^{(N)}_e$ such that $\lim(x_n)=x$, $\lim(y_n)=y$, and $\lim(z_n)=z$. Using the first part we see that
\[
 K^{-1}\abs{(x_n\cdot y_n)_e-(x_n\cdot z_n)_e} - A \leq \abs{(q(x_n)\cdot q(y_n))_{q(e)}-(q(x_n)\cdot q(z_n))_{q(e)}} 
\]
\[
 \leq K\abs{(x_n\cdot y_n)_e-(x_n\cdot z_n)_e} + A.
\]
Next, observe that by Lemma \ref{lem:gromovproduct}(\ref{lem:uptoreplacement})  $$\liminf_{n,m}{(x_n\cdot y_m)_e}\in [(x\cdot y)_e-2\delta, (x\cdot y)_e] \quad \text{and} \quad \liminf_{n,m}{(x_n\cdot z_m)_e}\in [(x\cdot z)_e-2\delta, (x\cdot z)_e],$$ so the above bounds pass to the limit inferior, with $A$ replaced by $A+4K\delta$.
Since $Y_{q(e)}^{(N')}$ is $8N'(3,0)$--hyperbolic,
\[
 \abs{\Npgpq{q(x)}{q(y)} - \Npgpq{q(x)}{q(z)}}-16N'(3,0)
\]
\[
 \leq \liminf\abs{(q(x_n)\cdot q(y_n))_{q(e)}-(q(x_n)\cdot q(z_n))_{q(e)}} 
\leq \abs{\Npgpq{q(x)}{q(y)} - \Npgpq{q(x)}{q(z)}}.
\]
This completes the proof.
\end{proof}

\begin{remark} By Lemma \ref{lem:geodlinspeed} the spaces $X^{(N)}_e$ are $(1,12N(3,0))$-quasi--geodesic, so Proposition \ref{prop:adaptqidiffofGrPr} also follows from \cite[Proposition $5.5$]{BonkSchramm}.
\end{remark}

\begin{proof}[Proof of Theorem $\ref{thm:qitoquasisym}$]
We will work with appropriate pseudo--metrics $\rho_\varepsilon$ on $\partial  X^{(N)}_e$ and $\rho_{\varepsilon'}$ on $\partial  Y^{(N')}_{q(e)}$.

Fix $x,y,z\in \partial  X^{(N)}_e$ distinct. Recall that
$\rho_{\varepsilon}(x,y)=\exp(-\varepsilon\Ngp{x}{y})$, so
\[
 \frac{\displaystyle \rho_\varepsilon(x,y)}{\displaystyle \rho_\varepsilon(x,z)} = \exp(-\varepsilon(\Ngp{x}{y}-\Ngp{x}{z})).
\]
In the first case we assume $\Ngp{x}{y}-\Ngp{x}{z}\geq 0$. Now applying Proposition \ref{prop:adaptqidiffofGrPr} we see that
\[
 \frac{\displaystyle \rho_\varepsilon(x,y)}{\displaystyle \rho_\varepsilon(x,z)} \leq \exp\left(-\varepsilon (K^{-1}(\Npgpq{q(x)}{q(y)}-\Npgpq{q(x)}{q(z)})-A')\right) 
\]
\[
 \leq \exp(A'\varepsilon)\exp\left(-\varepsilon' (\Npgpq{q(x)}{q(y)}-\Npgpq{q(x)}{q(z)})\right)^{K^{-1}\varepsilon/\varepsilon'}.
\]
Hence $\partial^{(N)} q$ is a quasi--symmetry onto its image.

If, however, $\Ngp{x}{y}-\Ngp{x}{z}< 0$ then we use the other bound in Proposition \ref{prop:adaptqidiffofGrPr} and conclude similarly.
\end{proof}
\begin{remark} \label{rmk:itoquasisymm} The only time that a quasi--isometry is required in the the proof of Theorem \ref{thm:qitoquasisym} is in Lemma \ref{lem:straightentoNprime}. The proof of Theorem \ref{thm:qitoquasisym} follows if for each Morse gauge $N$ there exist Morse gauges $N',N''$ and quasi--isometric embeddings $f^{(N)} \colon X_e^{(N)} \to Y_{f(e)}^{(N')}$ and $g_{(N)} \colon Y_p^{(N)} \to X_{g(p)}^{(N'')}$. 
 \end{remark}

\begin{proof}[Proof of Theorem ${\ref{bthm:key}.\ref{key:quasiisom}'}$] One implication follows from the remark above. 

The other implication is an immediate consequence of \cite[Theorems $7.4$ and $8.2$]{BonkSchramm}. In the paper, they assume that the quasi--symmetry is onto, but this is only used in the proof to show that the quasi--isometric embedding they construct is coarsely onto.
\end{proof}

\section{Stable and Morse capacity dimensions}
We assign two notions of dimension to a geodesic metric space.

The first controls the behaviour of stable subsets, the second controls terms in the metric Morse boundary. We first recall the definition of asymptotic dimension.

\begin{definition} A metric space $X$ has  \textbf{asymptotic dimension at most} $n$ (${asdim}(X)\leq n$), if for every $R>0$ there exists a cover of $X$ by uniformly bounded sets such that every metric $R$--ball in $X$ intersects at most $n+1$ elements of the cover. We say $X$ has {\bf asymptotic dimension $n$} if ${asdim}(X) \leq n$ but ${asdim}(X) \nleq n-1$.
\end{definition}

\begin{definition}\label{def:stableasdim} Let $X$ be a geodesic metric space. The \textbf{stable asymptotic dimension} of $X$ ($\stabdim(X)$) is the supremum of the asymptotic dimensions of all stable subsets of $X$.
\end{definition}

By universality (Theorem \hyperref[bthm:key]{\ref{bthm:key}.\ref{key:stable}}) this is equal to the supremum of the asymptotic dimensions of the sets $X^{(N)}_e$.

\begin{definition}\label{def:Morsecapdim} Let $X$ be a geodesic metric space. The \textbf{Morse boundary capacity dimension} of $X$ ($\Mcd(X)$) is defined to be the supremum of the capacity dimensions of the spaces $\partial X^{(N)}_e$ considered with their visual metrics.
\end{definition}

The following four results follow immediately from work in the previous section.

\begin{proposition} Let $X$ be a geodesic metric space. If $N\leq N'$ then ${cdim}(\partial  X^{(N)}_e)\leq {cdim}(\partial  X^{(N')}_e)$.
\end{proposition}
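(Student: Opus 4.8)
The plan is to reduce this to the fact, recorded in the previous section, that the inclusion $X^{(N)}_e \subseteq X^{(N')}_e$ induces an injection $\iota \colon \partial X^{(N)}_e \to \partial X^{(N')}_e$ which is a quasi--symmetry onto its image (Proposition \ref{prop:quasisym} / Theorem \hyperref[bthm:key]{\ref{bthm:key}.\ref{key:boundary}}), together with the fact quoted from Buyalo that capacity dimension is a quasi--symmetry invariant (Corollary $4.2$ of \cite{Buycapdim}). The key point is that capacity dimension is also monotone under passage to subspaces, so a quasi--symmetric embedding of $\partial X^{(N)}_e$ onto a subset of $\partial X^{(N')}_e$ cannot increase capacity dimension.

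First I would observe that $\iota(\partial X^{(N)}_e)$, being a subspace of $\partial X^{(N')}_e$ with the restricted metric, satisfies ${cdim}(\iota(\partial X^{(N)}_e)) \leq {cdim}(\partial X^{(N')}_e)$: any open covering $\mathcal{U}$ of $\partial X^{(N')}_e$ witnessing the capacity dimension bound (with parameters $\delta$, diameters at most $r$, Lebesgue number at least $\delta r$, multiplicity at most $m+1$) restricts to an open covering of $\iota(\partial X^{(N)}_e)$ with the same multiplicity bound, diameters no larger, and Lebesgue number that does not decrease, since for $x$ in the subspace and $U \in \mathcal{U}$ the distance to the complement of $U$ within the subspace is at least the distance to the complement of $U$ in the whole space. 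Hence the same $\delta$ works and the capacity dimension of the subspace is at most $m = {cdim}(\partial X^{(N')}_e)$.

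Next I would invoke that $\iota$ is a quasi--symmetry onto its image (Proposition \ref{prop:quasisym}) and that capacity dimension is a quasi--symmetry invariant, to conclude
\[
 {cdim}(\partial X^{(N)}_e) = {cdim}(\iota(\partial X^{(N)}_e)) \leq {cdim}(\partial X^{(N')}_e),
\]
which is exactly the claim. I would also note the degenerate cases are harmless: if $\partial X^{(N)}_e = \emptyset$ then by convention its capacity dimension is $-1$ and the inequality is trivial.

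I do not expect a serious obstacle here; the statement is essentially a formal corollary of the quasi--symmetry of $\iota$ and the two cited properties of capacity dimension. The only mild care needed is the monotonicity of capacity dimension under subspaces, which is the short covering argument sketched above, and making sure the quoted invariance result of Buyalo is applied to the (separable, bounded -- after rescaling the visual metric) metric spaces in question, which it is, since visual metrics on boundaries of hyperbolic spaces are of this kind.
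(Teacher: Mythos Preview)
Your proof is correct and follows essentially the same approach as the paper: both invoke Proposition~\ref{prop:quasisym} to get a quasi--symmetry onto the image, then use quasi--symmetry invariance of capacity dimension together with monotonicity under subspaces. Your treatment is in fact slightly more explicit than the paper's, since you spell out the covering argument for monotonicity of $\mathrm{cdim}$ under passing to subspaces, whereas the paper alludes to this step more briefly.
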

\begin{proof} By Proposition \ref{prop:quasisym} there is a map $\iota(N,N'):\partial  X^{(N)}_e\to\partial  X^{(N')}_e$ which is a quasi--symmetry onto its image.

Any quasi--symmetry of $\partial  X^{(N')}_e$ induces a quasi--symmetry on the image of $\iota(N,N')$, so $\mathrm{cdim}(\partial  X^{(N')}_e)\geq \mathrm{cdim}(\textup{im}\iota)=\mathrm{cdim}(\partial  X^{(N)}_e)$.
\end{proof}

\begin{theorem}\label{thm:cdimbpinv} The Morse boundary capacity dimension is basepoint independent. 
\end{theorem}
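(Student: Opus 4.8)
The plan is to reduce the statement to Proposition \ref{prop:baseptinv}, which is the basepoint-independence result already proved for the metric Morse boundary as a collection of quasi--symmetry classes of metric spaces. First I would recall the definition: $\Mcd(X)$ is the supremum over all Morse gauges $N$ of $\mathrm{cdim}(\partial X^{(N)}_e)$, where each boundary carries its visual metric, chosen up to quasi--symmetry. Since capacity dimension is a quasi--symmetry invariant (Corollary $4.2$ of \cite{Buycapdim}, quoted at the end of Section \ref{sec:visbdry}), the quantity $\mathrm{cdim}(\partial X^{(N)}_e)$ does not depend on the particular visual metric chosen, so $\Mcd(X)$ is well defined once a basepoint is fixed. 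It remains to show it is independent of that choice.

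Fix basepoints $e,f\in X$. By Proposition \ref{prop:baseptinv}, for every Morse gauge $N$ there is a Morse gauge $N'$ and an injection $\iota:\partial X^{(N)}_e\to\partial X^{(N')}_f$ which is a quasi--symmetry onto its image. I would then invoke the elementary fact, already used in the proof of the preceding proposition, that a subset of a metric space carrying the restricted metric has capacity dimension at most that of the ambient space, and that capacity dimension is preserved under quasi--symmetry; hence
\[
 \mathrm{cdim}\bigl(\partial X^{(N)}_e\bigr) = \mathrm{cdim}\bigl(\mathrm{im}\,\iota\bigr) \leq \mathrm{cdim}\bigl(\partial X^{(N')}_f\bigr) \leq \Mcd(X;f),
\]
where I write $\Mcd(X;f)$ for the Morse boundary capacity dimension computed with basepoint $f$. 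Taking the supremum over all $N$ gives $\Mcd(X;e)\leq \Mcd(X;f)$. By symmetry (swapping the roles of $e$ and $f$), we also get $\Mcd(X;f)\leq \Mcd(X;e)$, and therefore the two are equal.

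There is essentially no obstacle here: the content was already extracted in Proposition \ref{prop:baseptinv}, and the only additional ingredients are the quasi--symmetry invariance of capacity dimension and its monotonicity under passing to subspaces, both of which are standard. The one point worth stating carefully is that the inequality $\mathrm{cdim}(\mathrm{im}\,\iota)\leq \mathrm{cdim}(\partial X^{(N')}_f)$ uses that an open cover of $\partial X^{(N')}_f$ with the required Lebesgue number and multiplicity bounds restricts to such a cover of any subspace with the same (or better) parameters — exactly the argument appearing in the proof of the proposition immediately above this theorem. Thus the proof is a short two--step argument: well-definedness via the quasi--symmetry invariance of $\mathrm{cdim}$, then basepoint independence via Proposition \ref{prop:baseptinv} applied in both directions.
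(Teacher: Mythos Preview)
Your proposal is correct and is exactly the argument the paper intends: the theorem is stated without proof, as one of four results that ``follow immediately from work in the previous section,'' and you have correctly identified Proposition~\ref{prop:baseptinv} together with the quasi--symmetry invariance and subspace monotonicity of capacity dimension as the ingredients that make it immediate.
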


\begin{theorem}\label{thm:cdimqiinv} The stable dimension and Morse boundary capacity dimension are quasi--isometry invariants of geodesic metric spaces.
\end{theorem}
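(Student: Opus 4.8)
The plan is to deduce Theorem~\ref{thm:cdimqiinv} directly from the invariance results established in the previous section, treating the two dimensions separately since each is defined as a supremum over a slightly different family of objects. The underlying principle is that both quantities are, by construction, invariants of the appropriate equivalence class of structures attached to $X$ (the collection of stable subsets up to quasi--isometry for $\stabdim$, and the metric Morse boundary up to quasi--symmetric equivalence for $\Mcd$), and that a quasi--isometry induces such an equivalence by the work above.

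First I would handle $\stabdim$. Let $q\colon X\to Y$ be a $(K,C)$--quasi--isometry. By the universality reformulation noted after Definition~\ref{def:stableasdim}, $\stabdim(X)=\sup_N \mathrm{asdim}(X^{(N)}_e)$ and likewise for $Y$. By Lemma~\ref{lem:straightentoNprime}, for each $N$ there is an $N'=N'(N,K,C)$ with $q(X^{(N)}_e)\subseteq Y^{(N')}_{q(e)}$, and since $q$ is a quasi--isometry its restriction is a quasi--isometric embedding of $X^{(N)}_e$ into $Y^{(N')}_{q(e)}$; hence $\mathrm{asdim}(X^{(N)}_e)\le\mathrm{asdim}(Y^{(N')}_{q(e)})\le\stabdim(Y)$, because asymptotic dimension is monotone under quasi--isometric embeddings. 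Taking the supremum over $N$ gives $\stabdim(X)\le\stabdim(Y)$, and the reverse inequality follows by applying the same argument to a quasi--inverse of $q$. Basepoint independence is not an issue here because we have already built the supremum over all stable subsets, but in any case it follows from Proposition~\ref{prop:baseptinv} in the same way.

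Next I would handle $\Mcd$. By Definition~\ref{def:Morsecapdim}, $\Mcd(X)=\sup_N \mathrm{cdim}(\partial X^{(N)}_e)$. Given the quasi--isometry $q$, Theorem~\ref{thm:qitoquasisym} produces, for each $N$, a Morse gauge $N'(N,K,C)$ and an injection $\partial q_N\colon\partial X^{(N)}_e\to\partial Y^{(N')}_{q(e)}$ which is a quasi--symmetry onto its image. Since capacity dimension is a quasi--symmetry invariant (Corollary~$4.2$ of~\cite{Buycapdim}, as recalled in the preliminaries) and is monotone under taking subspaces that are quasi--symmetrically embedded images --- equivalently, any quasi--symmetry of $\partial Y^{(N')}_{q(e)}$ restricts to one of the image of $\partial q_N$, exactly as in the proof of the preceding Proposition --- we get $\mathrm{cdim}(\partial X^{(N)}_e)=\mathrm{cdim}(\mathrm{im}\,\partial q_N)\le\mathrm{cdim}(\partial Y^{(N')}_{q(e)})\le\Mcd(Y)$. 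Supremizing over $N$ and symmetrizing via a quasi--inverse completes the proof; basepoint independence is Theorem~\ref{thm:cdimbpinv}.

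The only point needing a little care --- and the step I expect to be the mildest obstacle --- is the monotonicity of capacity dimension under quasi--symmetric embeddings onto the image: strictly speaking Buyalo's invariance is stated for quasi--symmetries between whole spaces, so one must observe (as is already done for the partial order in $N$ in the Proposition just before Theorem~\ref{thm:cdimbpinv}) that a good open cover of the target pulls back, or that a quasi--symmetry of the target restricts to a quasi--symmetry of the embedded copy, so that $\mathrm{cdim}$ of a subspace that is quasi--symmetrically equivalent to $\partial X^{(N)}_e$ agrees with $\mathrm{cdim}(\partial X^{(N)}_e)$. Everything else is a formal supremum argument combined with the already--proven invariance statements, so the theorem follows essentially immediately.
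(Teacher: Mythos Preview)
Your proposal is correct and is precisely the argument the paper has in mind: the theorem is stated without proof, prefaced by ``The following four results follow immediately from work in the previous section,'' and the immediate deduction is exactly the combination of Lemma~\ref{lem:straightentoNprime} (for $\stabdim$) and Theorem~\ref{thm:qitoquasisym} (for $\Mcd$) that you spell out. Your discussion of the monotonicity of capacity dimension under quasi--symmetric embeddings onto the image is the same step the paper uses in the Proposition immediately preceding Theorem~\ref{thm:cdimbpinv}, so there is no divergence in approach.
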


\begin{theorem}\label{thm:capdimhyp} Let $X$ be a hyperbolic geodesic metric space. Then $\stabdim(X)$ is equal to the asymptotic dimension of $X$ and $\Mcd(X)$ is equal to the capacity dimension of the Gromov boundary of $X$.
\end{theorem}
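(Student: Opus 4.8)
The plan is to deduce this directly from Theorem~\ref{thm:comparehypbdry} together with the monotonicity statements already established. First I would record the one structural fact that drives everything: since $X$ is $\delta$--hyperbolic, there is a single Morse gauge $N_0$, depending only on $\delta$, such that \emph{every} geodesic of $X$ is $N_0$--Morse. (This is the Morse lemma; alternatively it is the content of the characterisation of hyperbolicity noted at the start of the proof of Theorem~\ref{thm:comparehypbdry}.) Consequently $X^{(N)}_e = X$ for every $N \geq N_0$, while for $N \leq N_0$ one has $X^{(N)}_e \subseteq X^{(N_0)}_e = X$.

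For the statement about stable dimension, I would use that asymptotic dimension is monotone under passing to subspaces, so $\mathrm{asdim}(X^{(N)}_e) \leq \mathrm{asdim}(X)$ for every Morse gauge $N$, with equality as soon as $N \geq N_0$. Since $\stabdim(X)$ is, by universality (Theorem~\hyperref[bthm:key]{\ref{bthm:key}.\ref{key:universal}}), the supremum of $\mathrm{asdim}(X^{(N)}_e)$ over all $N$, this supremum equals $\mathrm{asdim}(X)$.

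For the statement about Morse boundary capacity dimension I would argue in the same spirit. By Theorem~\ref{thm:comparehypbdry}, for each $N \geq N_0$ the boundary $\partial X^{(N)}_e$ with its visual metric is quasi--symmetric to the Gromov boundary $\partial_\infty X$ equipped with a visual metric; as capacity dimension is a quasi--symmetry invariant (Corollary $4.2$ of \cite{Buycapdim}), and as any two visual metrics on $\partial_\infty X$ are quasi--symmetric, this gives $\mathrm{cdim}(\partial X^{(N)}_e) = \mathrm{cdim}(\partial_\infty X)$ for all such $N$. For $N \leq N_0$ the preceding proposition (monotonicity of $\mathrm{cdim}(\partial X^{(N)}_e)$ in $N$) yields $\mathrm{cdim}(\partial X^{(N)}_e) \leq \mathrm{cdim}(\partial X^{(N_0)}_e) = \mathrm{cdim}(\partial_\infty X)$. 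Taking the supremum over all $N$ gives $\Mcd(X) = \mathrm{cdim}(\partial_\infty X)$.

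I do not expect a genuine obstacle here: the only point requiring a moment's care is the existence of the uniform Morse gauge $N_0$ and the resulting stabilisation $X^{(N)}_e = X$ for $N \geq N_0$. Once that is in place the result is immediate from monotonicity of asymptotic dimension under subspaces, monotonicity of $\mathrm{cdim}(\partial X^{(N)}_e)$ in $N$, and the quasi--symmetry invariance of capacity dimension; no new estimates are needed.
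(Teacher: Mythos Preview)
Your argument is correct and is exactly what the paper intends: the theorem is listed among results that ``follow immediately from work in the previous section'', and the work in question is precisely Theorem~\ref{thm:comparehypbdry} together with the stabilisation $X^{(N)}_e = X$ for $N$ large (Theorem~\hyperref[bthm:key]{\ref{bthm:key}.\ref{key:Gromov}}).

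One small point of presentation: your case split into ``$N \geq N_0$'' and ``$N \leq N_0$'' does not exhaust all Morse gauges, since the partial order on gauges is not total. For the stable dimension this causes no trouble, because you already note $X^{(N)}_e \subseteq X$ for \emph{every} $N$. For the capacity dimension you can either observe that any $N$ satisfies $N \leq \max(N,N_0)$ with $\max(N,N_0) \geq N_0$ and then apply your monotonicity proposition, or simply invoke Theorem~\ref{thm:comparehypbdry} directly: it already asserts that for \emph{all} $N$ the map $\partial X^{(N)}_e \to \partial_\infty X$ is a quasi--symmetry onto its image, giving $\mathrm{cdim}(\partial X^{(N)}_e) \leq \mathrm{cdim}(\partial_\infty X)$ without any comparison to $N_0$.
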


\begin{theorem}\label{thm:hypembsubsets}
Let $X$ be a geodesic metric space. Then $\Mcd(X)$ is equal to the supremum of the capacity dimensions of the sequential boundaries of all stable subsets $Y$ of $X$.
\end{theorem}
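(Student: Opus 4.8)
The plan is to show the two supremums coincide by a double inequality, using universality and the hyperbolicity/stability structure already established.

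First I would observe that one inequality is essentially tautological. Each $X^{(N)}_e$ is itself a stable subset of $X$ (by Theorem \hyperref[bthm:key]{\ref{bthm:key}.\ref{key:stable}}, it is $N'$--stable), so the family $\{X^{(N)}_e\}$ is a subfamily of the family of all stable subsets $Y$. Hence $\Mcd(X)$, which by Definition \ref{def:Morsecapdim} is the supremum of $\mathrm{cdim}(\partial X^{(N)}_e)$, is at most the supremum of $\mathrm{cdim}(\partial_s Y)$ over all stable $Y$. (Here one uses that each $X^{(N)}_e$ is hyperbolic, by Theorem \hyperref[bthm:key]{\ref{bthm:key}.\ref{key:hyperbolic}}, so that $\partial_s X^{(N)}_e$ makes sense and carries a visual metric unique up to quasi--symmetry; and $\mathrm{cdim}$ is a quasi--symmetry invariant by \cite{Buycapdim}, so the visual metric can be chosen arbitrarily.)

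For the reverse inequality, let $Y$ be any stable subset of $X$. Since $Y$ is stable it is in particular a quasi--convex subspace on which every pair of points is joined by a uniformly Morse geodesic, so $Y$ is hyperbolic and $\partial_s Y$ carries a visual metric. By universality (Theorem \hyperref[bthm:key]{\ref{bthm:key}.\ref{key:universal}}), $Y$ is a quasi--convex subset of some $X^{(N)}_e$. A quasi--convex subset of a hyperbolic space is quasi--isometrically embedded, and the inclusion $Y\hookrightarrow X^{(N)}_e$ therefore induces a topological embedding $\partial_s Y\to \partial_s X^{(N)}_e$ which is a quasi--symmetry onto its image: indeed, the inclusion of a quasi--convex subspace coarsely preserves Gromov products (up to an additive constant depending only on the quasi--convexity and hyperbolicity constants), so by the same computation as in the proof of Proposition \ref{prop:quasisym} — comparing the ratios $\rho_\varepsilon(x,y)/\rho_\varepsilon(x,z)$ on $\partial_s Y$ with those on $\partial_s X^{(N)}_e$ — the induced boundary map is quasi--symmetric onto its image. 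Since $\mathrm{cdim}$ is monotone under quasi--symmetric embeddings onto subspaces (any quasi--symmetry of the ambient boundary restricts to one on the image, as in the proof of the proposition immediately preceding the statement), we get $\mathrm{cdim}(\partial_s Y)=\mathrm{cdim}(\mathrm{im})\leq \mathrm{cdim}(\partial_s X^{(N)}_e)\leq \Mcd(X)$. Taking the supremum over all stable $Y$ gives the reverse inequality.

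The main obstacle I expect is the claim that the inclusion of a quasi--convex subset of a hyperbolic space induces a quasi--symmetric embedding of sequential boundaries with their visual metrics. This is the one point not literally stated earlier, though it is standard: it requires checking that for a quasi--convex $Y\subseteq Z$ the Gromov product $(a\cdot b)_e$ computed in $Y$ differs from the one computed in $Z$ by a bounded additive amount (for $a,b$ on $\partial_s Y$ and $e\in Y$), which follows because geodesics between points of $Y$ stay within a bounded neighbourhood of $Y$ and conversely $Y$ sits in a bounded neighbourhood of those geodesics; then the visual-metric comparison of Theorem \ref{thm:GrProdmetric} together with the algebra in Proposition \ref{prop:quasisym} finishes it. Everything else is bookkeeping with the quasi--symmetry invariance of capacity dimension.
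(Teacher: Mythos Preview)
Your proof is correct and follows essentially the same approach as the paper's: one inequality comes from the stability of the $X^{(N)}_e$, and the reverse from embedding an arbitrary stable $Y$ into some $X^{(N)}$ and comparing Gromov products to get a quasi--symmetric boundary embedding, then using monotonicity of capacity dimension. The only minor difference is packaging: the paper takes $y\in Y$ and uses $Y\subseteq X^{(N)}_y$ directly from the definition of $N$--stable, whereas you route through universality and a basepoint $e$; and since $Y$ carries the subspace metric from $X$, the interior Gromov products in $Y$ and in $X^{(N)}_y$ are literally equal, so the quasi--convexity discussion in your ``obstacle'' paragraph is more than is needed --- the paper just invokes Lemma~\ref{lem:gromovproduct}(\ref{lem:uptoreplacement}) to pass to the boundary.
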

\begin{proof}
One inequality is clear as each $X^{(N)}_e$ is stable by Lemma \ref{lem:deltaslim}.

Let $Y\subseteq X$ be stable and let $y\in Y$. By assumption $Y\subseteq X^{(N)}_y$ and by Lemma \hyperref[lem:gromovproduct]{\ref{lem:gromovproduct}(\ref{lem:uptoreplacement})} the Gromov product of two points $x,y\in\partial  Y$ in $Y$ differs from that in $X^{(N)}_y$ by a uniform constant. Applying the proof of Proposition \ref{prop:quasisym} $\partial  Y$ quasi--symmetrically embeds in $\partial  X^{(N)}_y$. 

Therefore $\mathrm{cdim}(\partial  Y)\leq \mathrm{cdim}(\partial  X^{(N)}_y)\leq \Mcd(X)$.
\end{proof}

\begin{remark} Since the conformal dimension (introduced by Pansu in \cite{PansuConf89}) is also a quasi--symmetry invariant, one could also define a conformal dimension of the Morse boundary in the same manner and derive the properties listed above.
\end{remark}

We now look at methods of controlling the stable asymptotic dimension and the Morse boundary capacity dimension. The first bound is an immediate application of \cite[Proposition $3.6$]{MackSis}.

\begin{proposition}\label{prop:upbdMorsedim}
Let $X$ be a geodesic metric space. Then $\Mcd(X)\leq\stabdim(X)$.
\end{proposition}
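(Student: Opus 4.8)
The plan is to apply the hyperbolic-setting bound of Mackay--Sisto \cite[Proposition $3.6$]{MackSis} separately to each stratum $X^{(N)}_e$ and then take the supremum over $N$. First I would recall that by Proposition \ref{prop:hypsubsets}, each $X^{(N)}_e$ is a $\delta_N$--hyperbolic geodesic metric space with $\delta_N = 8N(3,0)$ depending only on $N$; here ``geodesic'' should be read in the mild sense that, by Lemma \ref{lem:geodlinspeed}, any two points of $X^{(N)}_e$ are joined by a $(1,12N(3,0))$--quasi--geodesic lying inside $X^{(N)}_e$, which is enough to run the argument of \cite{MackSis} (alternatively one passes to a genuinely geodesic, roughly isometric model without changing the boundary up to quasi--symmetry). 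The cited result of Mackay--Sisto states that for a hyperbolic space $Z$ the capacity dimension of $\partial Z$, equipped with a visual metric, is bounded above by $\mathrm{asdim}(Z)$.

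Next I would apply this to $Z = X^{(N)}_e$ with the visual metric $d_{(N)}$ fixed in Section \ref{sec:metricMorse}, obtaining
\[
\mathrm{cdim}\!\left(\partial X^{(N)}_e, d_{(N)}\right) \leq \mathrm{asdim}\!\left(X^{(N)}_e\right).
\]
By Definition \ref{def:Morsecapdim}, $\Mcd(X)$ is the supremum of the left-hand side over all Morse gauges $N$, and by the remark following Definition \ref{def:stableasdim} (which uses the universality clause \hyperref[bthm:key]{\ref{bthm:key}.\ref{key:universal}}), $\stabdim(X)$ is the supremum of the right-hand side over all $N$. Taking suprema over $N$ on both sides therefore yields $\Mcd(X) \leq \stabdim(X)$, which is the claim.

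The main point requiring care — and the only genuine obstacle — is checking that \cite[Proposition $3.6$]{MackSis} truly applies to $X^{(N)}_e$: its hypotheses are stated for geodesic hyperbolic spaces, whereas our strata are only quasi--geodesic. I would address this by noting that capacity dimension of the boundary is a quasi--symmetry invariant (Corollary $4.2$ of \cite{Buycapdim}), that the boundary and its visual metric are unchanged up to quasi--symmetry under a roughly isometric (indeed a quasi--isometric, by Theorem \ref{thm:qitoquasisym}) change of model, and that asymptotic dimension is likewise a quasi--isometry invariant; so one may replace $X^{(N)}_e$ by any geodesic space quasi--isometric to it — for instance a Rips-type graph on a net — without affecting either side of the inequality. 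Everything else is a formal passage to the supremum, so no further calculation is needed.
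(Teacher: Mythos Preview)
Your proposal is correct and follows exactly the paper's approach: the paper simply says the bound ``is an immediate application of \cite[Proposition $3.6$]{MackSis}'', and you have spelled out that application stratum by stratum together with the passage to the supremum. Your extra paragraph on the quasi--geodesic hypothesis is a legitimate point the paper glosses over, and your fix via a Rips-type geodesic model (using quasi--isometry invariance of both $\mathrm{asdim}$ and, on the boundary side, quasi--symmetry invariance of $\mathrm{cdim}$) is the standard and correct way to handle it.
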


\begin{proposition}\label{prop:lwbdMorsedim} Let $X$ be a geodesic metric space. Then $\Mcd(X)+1\geq \stabdim(X)$.
\end{proposition}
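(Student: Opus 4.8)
The goal is to show $\stabdim(X) \leq \Mcd(X) + 1$. By universality (Theorem \hyperref[bthm:key]{\ref{bthm:key}.\ref{key:universal}}) and Definition \ref{def:stableasdim}, it suffices to bound $\mathrm{asdim}(X^{(N)}_e) \leq \mathrm{cdim}(\partial X^{(N)}_e) + 1$ for each Morse gauge $N$. Each $X^{(N)}_e$ is a $\delta_N$--hyperbolic space with $\delta_N = 8N(3,0)$ by Proposition \ref{prop:hypsubsets}, and its sequential boundary $\partial X^{(N)}_e$ carries a visual metric by Theorem \ref{thm:GrProdmetric}. So the statement reduces to the purely hyperbolic fact: for a hyperbolic space $Z$, one has $\mathrm{asdim}(Z) \leq \mathrm{cdim}(\partial Z) + 1$. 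This is precisely \cite[Theorem $1.1$]{Buycapdim} of Buyalo, whose hypotheses — a hyperbolic geodesic space (or a geodesic space hyperbolic in the four-point sense) with visual boundary — are met here.

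First I would record that $X^{(N)}_e$ is a geodesic-like space in the sense needed: strictly it is only $(1, 12N(3,0))$--quasi--geodesic (Lemma \ref{lem:geodlinspeed}), not geodesic, but this is harmless. One can either replace $X^{(N)}_e$ by a genuinely geodesic hyperbolic space within bounded Hausdorff distance (for instance a Rips-type graph on the points of $X^{(N)}_e$, or by noting $X^{(N)}_e$ is quasi--isometric to such a graph) — quasi--isometries preserve both asymptotic dimension and the quasi--symmetry class of the visual boundary — or simply invoke the fact that Buyalo's inequality is stated for visual hyperbolic spaces and passes through quasi--isometry. Then apply Buyalo's bound to this geodesic model $Z$: $\mathrm{asdim}(Z) \leq \mathrm{cdim}(\partial_\infty Z) + 1$. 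Since $\partial_\infty Z$ is quasi--symmetric to $\partial X^{(N)}_e$ with its visual metric, and capacity dimension is a quasi--symmetry invariant (Corollary $4.2$ of \cite{Buycapdim}), we get $\mathrm{asdim}(X^{(N)}_e) \leq \mathrm{cdim}(\partial X^{(N)}_e) + 1 \leq \Mcd(X) + 1$. Taking the supremum over $N$ yields $\stabdim(X) \leq \Mcd(X) + 1$.

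The one genuine subtlety — and the only place real care is needed — is the edge case when $\partial X^{(N)}_e$ is empty or a single point: by the paper's convention $\mathrm{cdim}(\emptyset) = -1$, and one should check that $\mathrm{asdim}(X^{(N)}_e)$ is then $0$, so that the inequality $\mathrm{asdim} \leq \mathrm{cdim} + 1$ still holds (indeed, a hyperbolic space with empty boundary is bounded, and with one-point boundary is quasi--isometric to a ray, so has asymptotic dimension at most $1$; Buyalo's theorem handles both once we adopt the convention). Beyond this, the whole proposition is essentially a citation: it is the dual of Proposition \ref{prop:upbdMorsedim} (which used \cite[Proposition $3.6$]{MackSis}) and together they give Corollary \ref{bcor:comparedims}. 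I do not expect any obstacle of substance; the work is entirely in assembling the already-proved hyperbolicity, the visual metric, quasi--symmetry invariance of capacity dimension, and Buyalo's inequality.

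\begin{proof}
By Theorem \hyperref[bthm:key]{\ref{bthm:key}.\ref{key:universal}} it is enough to prove that $\mathrm{asdim}(X^{(N)}_e)\leq \mathrm{cdim}(\partial X^{(N)}_e)+1$ for every Morse gauge $N$, where $\partial X^{(N)}_e$ carries a visual metric. By Proposition \ref{prop:hypsubsets} the space $X^{(N)}_e$ is $8N(3,0)$--hyperbolic, and by Lemma \ref{lem:geodlinspeed} it is $(1,12N(3,0))$--quasi--geodesic; hence it is quasi--isometric to a geodesic hyperbolic space $Z$ whose Gromov boundary $\partial_\infty Z$, equipped with a visual metric, is quasi--symmetric to $\partial X^{(N)}_e$. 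If $\partial_\infty Z$ is empty then $Z$ is bounded and the inequality holds with the convention $\mathrm{cdim}(\emptyset)=-1$; if $\partial_\infty Z$ is a single point then $Z$ is quasi--isometric to a ray and $\mathrm{asdim}(Z)\leq 1$. In all remaining cases \cite[Theorem $1.1$]{Buycapdim} gives $\mathrm{asdim}(Z)\leq\mathrm{cdim}(\partial_\infty Z)+1$. Since asymptotic dimension is a quasi--isometry invariant and capacity dimension is a quasi--symmetry invariant \cite[Corollary $4.2$]{Buycapdim}, we conclude
\[
\mathrm{asdim}(X^{(N)}_e)=\mathrm{asdim}(Z)\leq\mathrm{cdim}(\partial_\infty Z)+1=\mathrm{cdim}(\partial X^{(N)}_e)+1\leq\Mcd(X)+1.
\]
Taking the supremum over all Morse gauges $N$ yields $\stabdim(X)\leq\Mcd(X)+1$.
\end{proof}
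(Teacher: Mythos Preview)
Your proposal is correct and follows essentially the same approach as the paper: both reduce to applying \cite[Theorem $1.1$]{Buycapdim} to each $X^{(N)}_e$ to obtain $\mathrm{asdim}(X^{(N)}_e)\leq\mathrm{cdim}(\partial X^{(N)}_e)+1$, then take the supremum over $N$. The paper's proof is a single sentence (``Each $\partial X^{(N)}_e$ is visual by construction, so by \cite[Theorem $1.1$]{Buycapdim}\ldots''); you are in fact more scrupulous than the paper about the quasi--geodesic versus geodesic hypothesis and the degenerate boundary cases.
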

\begin{proof} Each $\partial X^{(N)}_e$ is visual by construction, so by \cite[Theorem $1.1$]{Buycapdim}, $\mathrm{cdim}(\partial X^{(N)}_e) + 1\geq \mathrm{asdim}(X^{(N)}_e)$.
\end{proof}

Corollary \ref{bcor:comparedims} follows immediately from the above two propositions.

\section{Right--angled Artin groups} \label{sec:RAAG}

The goal of this section is to prove Theorem \ref{thm:RAAG}.

Let $\Gamma$ be a finite, simplicial graph with vertex set $V$. The {\bf right--angled Artin group} associated to $\Gamma$ is the group $A_\Gamma$ with presentation $$A_\Gamma = \fpres{V}{\setcon{v_iv_jv_i^{-1}v_j^{-1}}{v_i, v_j \in V\text{ are connected by an edge in } \Gamma}}.$$

One useful cube complex associated to a right--angled Artin group $A_\Gamma$ is the {\bf Salvetti complex} $S_\Gamma$. To form the Salvetti complex, start with a wedge of $\abs{V}=n$ oriented circles and label the edges with the verticies. Call the basepoint $x_0$. For each edge in $\Gamma$, take a square and glue the edges of the square along the commutation relation of the two generators corresponding to the endpoints of the edge, i.e., glue in a $2$--torus. For each triangle in $\Gamma$, attach a $3$--torus with faces that correspond to the tori of the three edges. We continue by attaching a $k$--torus for each set of $k$ mutually commuting generators in the same way. (Equivalently, we glue in a $k$--torus for each complete graph with $k$ vertices in $\Gamma$.) It is easy to verify that $S_\Gamma$ has fundamental group $A_\Gamma$ and the link of $x_0$ is flag. Thus, the universal cover, $\tilde{S}_\Gamma$, is a $\mathrm{CAT}(0)$ cube complex. 

A {\bf hyperplane} or {\bf wall} in a $\mathrm{CAT}(0)$ cube complex is an equivalence class of midplanes of cubes where the equivalence relation is generated by the rule that two midplanes are related if they share a face. We say two hyperplanes $H_1$, $H_2$ in a $\mathrm{CAT}(0)$ cube complex are {\bf strongly separated} if $H_1 \cap H_2 = \emptyset$ and no hyperplane intersects both $H_1$ and $H_2$. 

If $\Gamma_1$ and $\Gamma_2$ are two graphs, their {\bf join} is the graph obtained by connecting every vertex of $\Gamma_1$ to every vertex of $\Gamma_2$ by an edge. A {\bf join subgroup} of a right--angled Artin group $A_\Gamma$ is subgroup induced by a join subgraph of $\Gamma$.

Another useful tool we will use is the contact graph, $\mathcal{CG}$, which was defined by Hagen in \cite{hagen:2014aa}. The vertices of the contact graph are the set $\mathcal{W}$ of hyperplanes of a $\mathrm{CAT}(0)$ cube complex. We connect two hyperplanes if they have the {\bf contact relation}: distinct hyperplanes $H_1, H_2 \in \mathcal{W}$ contact if they have dual 1-cubes $c_1, c_2$ that have a common $0$-cube or if $H_1$ and $H_2$ cross. In the same paper, Hagen proves that $\mathcal{CG}$ is quasi--isometric to an $\mathbb{R}$-tree.

Consider the $1$--skeleton of $\tilde{S}_\Gamma$, $\tilde{S}_\Gamma^{(1)}$, with the path metric $d_{(1)}$. If we choose a vertex $e$ in $\tilde{S}_\Gamma^{(1)}$ as a basepoint, then $\tilde{S}_\Gamma^{(1)}$ is a Cayley graph of $A_\Gamma$ where the vertices are labeled by elements of $A_\Gamma$ and edges by elements in the standard generating set of $A_\Gamma$ (i.e., the vertices of $\Gamma$). For a generator $v$, let $e_v$ denote the edge from the basepoint $e$ to the vertex $v$. Any edge in $\tilde{S}_\Gamma$ determines a unique wall that is dual to that edge (the wall containing the midpoint of the edge). We denote the wall dual to $e_v$ by $H_v$. 

For a cube in  $\tilde{S}_\Gamma$, all of the parallel edges are labelled by the same generator $v$. Thus it follows that all of the edges crossing a wall $H$ have the same label $v$, we call this a wall of {\bf type $v$}. Since $A_\Gamma$ acts transitively on edges labeled $v$, a wall is of type $v$ if and only if it is a translate of the standard wall $H_v$. 

\begin{theorem}\label{thm:RAAGtoqtree} Let $A_\Gamma$ be a right--angled Artin group. If $\Gamma$ is a single vertex then $A_\Gamma$ is stably equivalent to a line, if $\Gamma$ is a complete graph with at least $2$ vertices then $A_\Gamma$ is stably equivalent to a point. In all other cases $A_\Gamma$ is stably equivalent to a regular $3$-valent tree.

As a result, $\stabdim(A_\Gamma)\leq 1$, with equality unless $A_\Gamma$ is abelian of rank at least $2$.
\end{theorem}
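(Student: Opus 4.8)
The plan is to realise every stratum $X^{(N)}_e$ of a Cayley graph $X=\tilde S_\Gamma^{(1)}$ of $A_\Gamma$ as a quasi--convex subset of Hagen's contact graph $\mathcal{CG}_\Gamma$ --- which is quasi--isometric to an $\R$--tree \cite{hagen:2014aa} --- and then to transport tree--likeness back to all stable subsets via universality. The basic object is a coarsely Lipschitz map $q\colon\tilde S_\Gamma^{(1)}\to\mathcal{CG}_\Gamma$ sending a vertex $g$ to a hyperplane dual to the final edge of a chosen geodesic $[e,g]$; since a combinatorial geodesic of $\tilde S_\Gamma$ crosses a chain of hyperplanes whose consecutive terms contact, such a geodesic maps to an edge--path in $\mathcal{CG}_\Gamma$, so $q$ is coarsely Lipschitz (and coarsely well defined).

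The technical heart, which I would isolate as a lemma, is that \emph{for every Morse gauge $N$ the restriction $q|_{X^{(N)}_e}$ is a quasi--isometric embedding, with constants depending only on $N$.} The upper bound is coarse Lipschitzness. For the lower bound, take $a,b\in X^{(N)}_e$ and an $N'$--Morse combinatorial geodesic $\gamma=[a,b]$ (Lemma \ref{lem:thirdedgeMorse}). Translating ``Morse'' into cube--complex language through the Charney--Sultan characterisation of contracting geodesics \cite{charney-sultan}, $\gamma$ crosses a sequence $H_1,\dots,H_m$ of pairwise strongly separated hyperplanes occurring along $\gamma$ with gaps bounded in terms of $N'$, so $m\geq d(a,b)/g(N')$. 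Strongly separated hyperplanes have contact--distance at least $3$, and because the $H_i$ are crossed in order by a single geodesic they form a nested chain, which in the quasi--tree $\mathcal{CG}_\Gamma$ is a quasi--geodesic; hence $d_{\mathcal{CG}_\Gamma}(H_1,H_m)\gtrsim m$, and since $q(a),q(b)$ lie boundedly close to $H_1,H_m$ we obtain $d_{\mathcal{CG}_\Gamma}(q(a),q(b))\gtrsim d(a,b)/g(N')$. I expect this to be the main obstacle: one has to combine the Morse/contracting dictionary for cube complexes with enough geometry of $\mathcal{CG}_\Gamma$ to see that ordered strongly separated hyperplanes genuinely span a quasi--geodesic there, quantitatively in $N$.

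Granting the lemma, each $X^{(N)}_e$ quasi--isometrically embeds into a tree $T$; using that $\gamma=[a,b]$ is $N'$--Morse, $q(\gamma)$ is a uniform quasi--geodesic in $T$, so by the Morse lemma in the $0$--hyperbolic space $T$ the geodesic $[q(a),q(b)]$ lies uniformly close to $q(X^{(N)}_e)$, making the image quasi--convex and hence $X^{(N)}_e$ quasi--isometric to a subtree. As $X^{(N)}_e$ is a set of vertices of the locally finite graph $\tilde S_\Gamma^{(1)}$ its closed balls are finite, and one checks that its branching is controlled by the number of strongly separated hyperplanes crossed per unit length, which is bounded given $N$; so $X^{(N)}_e$ is in fact quasi--isometric to a proper tree of valence bounded in terms of $N$, and therefore quasi--isometrically embeds into the regular trivalent tree $T_3$. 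By universality (Theorem \hyperref[bthm:key]{\ref{bthm:key}.\ref{key:universal}}) every stable subset of $A_\Gamma$ is quasi--convex in some $X^{(N)}_e$, hence quasi--isometric to a proper tree, and in particular $\stabdim(A_\Gamma)\leq1$.

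Finally I would read off the stable equivalence class. If $\Gamma$ is a single vertex then $A_\Gamma\cong\Z$ is hyperbolic with $X^{(N)}_e=A_\Gamma$ for all large $N$, so $A_\Gamma\sim_s\R$ and $\stabdim=1$. If $A_\Gamma$ is a nontrivial direct product --- equivalently $\Gamma$ is a nontrivial join, which includes the case that $\Gamma$ is complete on $\geq2$ vertices, where $A_\Gamma\cong\Z^n$ with $n\geq2$ --- then it has no Morse geodesic rays: a detour of size linear in $d(e,x)$ into a complementary factor shows that for each fixed $N$ only a bounded neighbourhood of $e$ lies in $X^{(N)}_e$, so every $X^{(N)}_e$ is bounded and $A_\Gamma\sim_s\{\mathrm{pt}\}$, $\stabdim=0$. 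In the remaining case ($\Gamma$ not a join and with $\geq2$ vertices) $A_\Gamma$ contains a stable free subgroup of rank $2$ --- for instance one generated by a high power of a Morse element and a suitable conjugate, in the spirit of \cite{koberda:2014} --- which by the bound $\stabdim(A_\Gamma)\leq1$ just established must be quasi--isometric to $T_3$; thus $T_3\hookrightarrow_s A_\Gamma$ by universality, while $A_\Gamma\hookrightarrow_s T_3$ follows from the previous paragraph since every geodesic of a tree is Morse with a single universal gauge, so $(T_3)^{(N')}=T_3$ for all large $N'$. Hence $A_\Gamma\sim_s T_3$ with $\stabdim=1$. Combining the three cases gives the classification, with $\stabdim(A_\Gamma)=1$ unless $A_\Gamma$ is a nontrivial direct product.
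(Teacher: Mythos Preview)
Your approach is the paper's: define $q$ to the contact graph via the last hyperplane crossed, prove $q|_{X^{(N)}_e}$ is a quasi--isometric embedding using a chain of strongly separated hyperplanes with bounded gaps along Morse geodesics, deduce each stratum is quasi--isometric to a proper tree, then invoke universality. Two comments on details.

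First, your assertion that strongly separated hyperplanes have contact--distance $\geq 3$ is false as stated --- in $F_2$ every pair of hyperplanes is strongly separated, yet adjacent edges give osculating hyperplanes at contact--distance $1$. The paper's mechanism is different: it uses \cite[Lemma~3.3]{behrstock-charney} to bound the gap between consecutive $F_i$ by showing the intervening word lies in a product of three join subgroups (on which a Morse geodesic can only spend bounded time), then upgrades consecutive to \emph{pairwise} strong separation via \cite[Theorem~2.5]{behrstock-charney}, and finally argues that no two non--consecutive $F_i,F_j$ can contact or share a common contact (else an intermediate $F_t$ would cross that common hyperplane), giving $d_{\mathcal{CG}}\geq k/2$. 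You correctly flag this step as the main obstacle; the fix is exactly this pairwise--separation argument rather than a distance--$3$ claim.

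Second, your case analysis at the end actually goes further than the paper's proof, which simply disposes of the free and abelian cases in one line and otherwise only establishes the embedding into a proper tree. Your trichotomy single--vertex\,/\,join\,/\,non--join is finer than the theorem's single--vertex\,/\,complete\,/\,other; your version is in fact the correct one, since a RAAG over a non--complete join (e.g.\ $F_2\times\Z$) also has no Morse rays and is stably equivalent to a point rather than to $T_3$.
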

If $A_\Gamma$ is free or abelian, then we are done. Assume $A_\Gamma$ is not free and not abelian.

We will use the $d_{(1)}$ metric instead of the $\mathrm{CAT}(0)$ metric, i.e., all paths will be edge paths in $\tilde{S}_\Gamma^{(1)}$. Since the $\mathrm{CAT}(0)$ metric is bounded above and below by linear functions of the $d_{(1)}$ metric, depending only on the number of generators in $A_\Gamma$, the result will follow.

Let $x \in \tilde{S}_\Gamma^{(N)}$. By definition that means there exits an $N$--Morse geodesic $\ell \colon [a,b] \to \tilde{S}_\Gamma^{(1)}$  such that $\ell(a)=e$ and $\ell(b)=x$. Associated to $\ell$ is a sequence $\{H_1, H_2, \ldots, H_m\}$ of hyperplanes dual to each edge in $\ell$. We define a map $q \colon \tilde{S}_\Gamma^{(N)} \to \mathcal{CG}$ by $q(x)=H_m$. This map is well defined up to one edge in $\mathcal{CG}$, so we are left to show that this a quasi--isometric embedding.

We first show that if $\ell \colon [a,b] \to \tilde{S}_\Gamma$ is an $N$--Morse geodesic, then $\ell$ quasi--isometrically embeds into $\mathcal{CG}$. Let $x, y \in \ell$. From the definition of the contact graph we see that $d_{ \mathcal{CG}}(q(x),q(y)) \leq d(x,y)$. 

To prove an inequality in the other direction we appeal to the following claim about Morse geodesics in $\tilde{S}_\Gamma$: Let $\ell$ be an $N$--Morse geodesic segment. Then there exists a $r \geq 0$ (depending only on $N$) such that $\ell$ crosses a sequence of strongly separated hyperplanes $\{F_i\}_{i=1}^s$ such that $d(F_i \cap \ell, F_{i+1}\cap \ell)<r$ for all $i$. We note this claim is a stronger version of the more general claim in Theorem 4.2 in \cite{charney-sultan}.

Proof of claim: Fix a minimal word for $\ell$ and let $\mathcal{H}$ be the sequence of walls crossed by the corresponding edge path. We build a subsequence, $\mathcal{F} \subset \mathcal{H}$, inductively starting with the hyperplane dual to the first edge of $\ell$, $F_1=H_{v_1}$, and given $F_k=g_iH_{v_{i+1}},$ we choose $F_{k+1}=g_jH_{v_{j+1}}$ to be the next wall in $\mathcal{H}$ strongly separated from $F_k$. Following the proof of Lemma 3.3 in \cite{behrstock-charney} we see that  ${g_i}^{-1}g_j$ lies in the product of three join subgroups. By construction, consecutive hyperplanes in $\mathcal{F}$ are strongly separated. By Theorem 2.5 of \cite{behrstock-charney} we know that $\mathcal{F}$ is in fact a sequence of pairwise strongly separated hyperplanes. We also know that since $\ell$ is $N$--Morse that the distance $\ell$ spends in a join subgroup is bounded above by a constant $r'>0$ depending only $N$ (because join subgroups create geodesically convex product spaces in $\tilde{S}_\Gamma$). We set $r=3r'$ and the claim follows.
\medskip

With the claim proved we now prove the other inequality. Let $x,y \in \ell$ and let $\{F_i\}_{i=1}^k$ be the hyperplanes in $\mathcal{F}$ between $x$ and $y$. We first observe that  $\frac{d(x,y)-2r}{r} < k$. Next observe that no two non-consecutive hyperplanes in $\{F_i\}$ can have the contact relation, and if two non-consecutive hyperplanes $F_i, F_j$ share a contact relation with a third hyperplane $H$ then $F_t$ crosses $H$ for $i<t<j$. But, if $j>i+1$ it would violate the fact that the sequence $\{F_i\}_{i=1}^s$ are strongly separating hyperplanes. Thus $d_{\mathcal{CG}}(q(x),q(y)) \geq \frac{k}{2}$. We put the two inequalities together to get $\frac{1}{2r}\cdot d(x,y)-1 \leq d_{\mathcal{CG}}(q(x),q(y))$. 

We know by Lemma \ref{lem:deltaslim} that given two points $x,y \in \tilde{S}_\Gamma^{(N)}$, any geodesic $\ell'$ from $x$ to $y$ is $N'$--Morse where $N'$ depends only on $N$. Thus we can follow the same argument as above and deduce that  $\frac{1}{2r'}\cdot d(x,y)-1 \leq d_{\mathcal{CG}}(q(x),q(y))\leq d(x,y)$ for some $r'$ depending on $N'$. Setting $K=2\max\{r, r'\}$ we conclude that $q \colon \tilde{S}_\Gamma^{(N)} \to \mathcal{CG}$ is a $(K, 1)$-quasi--isometric embedding.

It follows that the image of $\tilde{S}_\Gamma^{(N)}$ under $q$ is quasi--convex and the number of points contained in the intersection of this image with a ball of radius $R$ is bounded from above by a function which is at most exponential in $R$. It follows that the convex hull of $\tilde{S}_\Gamma^{(N)}$ is quasi--isometric to a proper simplicial tree. Thus $\tilde{S}_\Gamma^{(N)}$ is quasi--isometric to the same simplicial tree.

\begin{corollary}\label{cor:RAAGCdim0} Let $A_\Gamma$ be a non-abelian right--angled Artin group. For each Morse gauge $N$, the boundary $\partial (A_\Gamma)^{(N)}_e$ is quasi--symmetric to a Cantor set and possibly a finite number of isolated points. Consequently, $\Mcd(A(\Gamma))=0$.
\end{corollary}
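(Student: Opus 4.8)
The plan is to reduce the statement to a fact about boundaries of trees, and then to analyse those. By Theorem~\ref{thm:RAAGtoqtree}, for every Morse gauge $N$ the stratum $(A_\Gamma)^{(N)}_e$ is quasi--isometric to a proper simplicial tree $T_N$; by Theorem~\ref{thm:qitoquasisym} (that is, Theorem~\hyperref[bthm:key]{\ref{bthm:key}.\ref{key:quasiisom}}) this quasi--isometry induces a quasi--symmetry between $\partial (A_\Gamma)^{(N)}_e$, with its visual metric, and the Gromov boundary $\partial_\infty T_N$ of $T_N$ equipped with a visual metric. So it suffices to describe $\partial_\infty T_N$ up to quasi--symmetry, uniformly in $N$.

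For the dimension statement I would argue as follows. Since $T_N$ is $0$--hyperbolic, the extended Gromov product on $\partial_\infty T_N$ satisfies $(\xi\cdot\zeta)_{e}\ge\min\{(\xi\cdot\eta)_{e},(\eta\cdot\zeta)_{e}\}$ with no additive defect, so any visual metric on $\partial_\infty T_N$ is uniformly disconnected and hence quasi--symmetric to a genuine ultrametric. Every compact ultrametric space has capacity dimension $0$: the open balls of any fixed small radius $r$ are pairwise disjoint, clopen, of diameter at most $r$, and have Lebesgue number at least $r$, so they form an open cover of multiplicity $1$. As capacity dimension is a quasi--symmetry invariant (\cite[Corollary~4.2]{Buycapdim}), $\mathrm{cdim}(\partial (A_\Gamma)^{(N)}_e)=0$ for every $N$, whence $\Mcd(A_\Gamma)=0$.

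For the topological identification I would use Brouwer's characterisation of the Cantor set as the unique non-empty compact metrisable perfect totally disconnected space. The space $\partial_\infty T_N$ is compact, metrisable and totally disconnected, so it is a closed subspace of a Cantor set, and it remains to classify which such subspaces occur. Here I would exploit the finer output of the proof of Theorem~\ref{thm:RAAGtoqtree}: an $N$--Morse geodesic in $\tilde{S}_\Gamma$ crosses a sequence of strongly separated hyperplanes with uniformly bounded gaps, and---since $A_\Gamma$ is non--abelian---one can always branch off transversally at such a hyperplane. This should show that $T_N$ is bushy outside a bounded subtree, so the only isolated ends are those carried by the (finitely many, by properness) non--bushy rays emanating from that bounded core; a bushy tree has perfect boundary, yielding a decomposition of $\partial_\infty T_N$ into a Cantor set together with at most finitely many isolated points, and the uniformity of the branching data then upgrades Brouwer's homeomorphism to a quasi--symmetry with the standard Cantor set. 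I expect this last point---controlling the global shape of $T_N$ well enough both to bound the number of isolated ends and to obtain a quasi--symmetry rather than a mere homeomorphism (which needs, e.g., uniformly bounded valence)---to be the main obstacle; a convenient way to carry it out is to argue directly inside $\mathcal{CG}_\Gamma$, with the image $q((A_\Gamma)^{(N)}_e)$ and its strongly separated hyperplane sequences in place of abstract ends.
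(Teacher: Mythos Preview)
Your dimension argument is correct and in fact cleaner than the paper's. The paper proceeds in two stages: it first asserts the decomposition of $\partial(A_\Gamma)^{(N)}_e$ into a Cantor piece $\mathcal C$ plus finitely many isolated points, then cites \cite{BuyLeb} (local self--similarity $\Rightarrow$ $\mathrm{cdim}(\mathcal C)=0$) and patches in the isolated points by hand, augmenting a good cover of $\mathcal C$ with small balls around the $x_i$. Your route---the visual metric on $\partial T_N$ is an ultrametric because $T_N$ is $0$--hyperbolic, and the disjoint open $r$--balls already witness $\mathrm{cdim}=0$---avoids the external citation, the case split, and any need to know the topology of the boundary in advance. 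One small correction: Theorem~\ref{thm:qitoquasisym} as stated takes a quasi--isometry between ambient geodesic spaces and passes to their Morse strata, whereas here you want to carry the boundary along a quasi--isometry from the single stratum $(A_\Gamma)^{(N)}_e$ to a tree; the reference you want is the underlying estimate Proposition~\ref{prop:adaptqidiffofGrPr} (equivalently \cite[Proposition~5.5]{BonkSchramm}).

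On the quasi--symmetric identification with a Cantor set plus finitely many isolated points you are more scrupulous than the paper, which simply asserts it as a consequence of $T_N$ being proper. Your caution is justified: properness alone bounds neither the number of isolated ends (a half--line with a ray attached at each integer is a proper tree with infinitely many isolated boundary points) nor guarantees that Brouwer's homeomorphism upgrades to a quasi--symmetry (for which one wants, e.g., bounded valence or uniform perfectness). The branching argument you outline---using the strongly separated hyperplane sequences from Theorem~\ref{thm:RAAGtoqtree} together with non--abelianness of $A_\Gamma$ to show $T_N$ is uniformly bushy outside a bounded core---is exactly what is needed to fill this in.
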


\begin{proof} By the preceding theorem, we know $A_\Gamma^{(N)}$ is quasi--isometric to a proper tree and thus $\partial (A_\Gamma)^{(N)}_e$ must be a Cantor space, $\mathcal{C}$, possibly with a finite number of isolated points $\{x_i\}_{i=1}^n$ added. Let $r'=\min_{x_i}\{\inf_{y \in \mathcal{C}}\{d(x_i, y)\} \}$. This number will be positive, since there are only finitely many $x_i$. Since Cantor spaces are locally self similar, their capacity dimension is equal to their topological dimension (which is 0) \cite{BuyLeb}. So, we know there exists some $\delta \in (0,1)$ such that for every sufficiently small $r >0$ there is an open cover $\mathcal{U}$ of $\mathcal{C}$ of sets with diameter less than $r$ with $m(\mathcal{U})=0$ and $L(\mathcal{U}) \geq \delta r$. Without loss we may assume $r < r'$, so we may cover the $x_i$ by balls of radius $r/2$ and add those balls to $\mathcal{U}$.  By definition of $r'$ we know our new cover has multiplicity $0$, so $\Mcd(A_\Gamma^{(N)})=0$. The result follows. 
\end{proof}

\begin{corollary} Any stable subgroup of a right--angled Artin group is virtually free. \end{corollary}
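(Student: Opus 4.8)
The plan is to combine the structural result of this section with a classical rigidity statement. By Theorem \hyperref[bthm:key]{\ref{bthm:key}.\ref{key:universal}} (universality), a stable subgroup $H\le A_\Gamma$ is, as a subspace of a Cayley graph $X$ of $A_\Gamma$, a quasi--convex subset of some stratum $X^{(N)}_e$. By Theorem \ref{thm:RAAGtoqtree} each $X^{(N)}_e$ is quasi--isometric to a proper simplicial tree, and a quasi--convex subset of such a space has convex hull a subtree and is therefore itself quasi--isometric to a proper tree — this is exactly the reduction carried out at the end of the proof of Theorem \ref{thm:RAAGtoqtree}. Hence the subspace $H\subseteq X$ is quasi--isometric to a tree. (One could equally bypass universality and simply re-run the contact--graph argument of Theorem \ref{thm:RAAGtoqtree} for $H$ directly, since geodesics between elements of $H$ are uniformly Morse; the universality route is cleaner.)

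It remains to pass from this statement about $H$ as a subset of $X$ to one about $H$ as an abstract finitely generated group. Since stability of $H$ includes quasi--convexity in $X$, the argument in the second half of the proof of Lemma \ref{lem:stable-equiv} shows that $H$ is a quasi--geodesic space on which the inclusion $H\into X$ is a quasi--isometric embedding; in the group setting quasi--convexity moreover forces $H$ to be generated by the finite set $H\cap B_X(e,R)$ for a suitable $R$, so $H$ is finitely generated and the inclusion of any of its Cayley graphs into $X$ is a quasi--isometry onto its image. Therefore $H$ is a finitely generated group quasi--isometric to a tree, and by \cite[Corollary 7.19]{GhysdlH} it is virtually free, recovering (a slight weakening of) the theorem of Koberda--Mangahas--Taylor \cite{koberda:2014}.

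The only point requiring any care — and it is routine bookkeeping — is the second paragraph: verifying that the notion of stable subgroup in force here genuinely produces a finitely generated, undistorted subgroup, so that ``quasi--isometric to a tree'' is a meaningful statement about $H$ on its own terms rather than about a possibly disconnected subset of $X$. Beyond that, the proof is a one--line composition of Theorem \ref{thm:RAAGtoqtree}, universality, and the Ghys--de la Harpe rigidity of trees; there is no further right--angled Artin group specific content to supply.
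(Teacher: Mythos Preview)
Your proposal is correct and follows essentially the same route as the paper: universality (Theorem~\hyperref[bthm:key]{\ref{bthm:key}.\ref{key:universal}}) together with Theorem~\ref{thm:RAAGtoqtree} shows a stable subgroup is quasi--isometric to a proper tree, and then \cite[Corollary 7.19]{GhysdlH} finishes. If anything you are more careful than the paper, which simply asserts that the stable subgroup is quasi--isometric to a proper tree without spelling out the passage from ``quasi--convex subset of $X^{(N)}_e$'' to ``finitely generated group quasi--isometric to a tree''; your second paragraph makes this explicit.
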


\begin{proof}  Since each $A^{(N)}_\Gamma$ is quasi--isometric to a proper tree, we know by universality of stable subsets (Theorem \hyperref[bthm:key]{\ref{bthm:key}.\ref{key:universal}}) that any stable subgroup of a right--angled Artin group is quasi--isometric to a proper tree. Groups which are quasi--isometric to proper trees are virtually free \cite[Corollary 7.19]{GhysdlH}.
\end{proof}

\begin{proposition} Let $q:\tilde{S}_\Gamma \to \mathcal{CG}$ be the map defined in Theorem \ref{thm:RAAGtoqtree}. Suppose $Y$ is a quasi--convex subset of $\tilde{S}_\Gamma$ and $q|_Y$ is a $(K,C)$-quasi--isometric embedding. Then $Y$ is $N$--stable where $N$ depends on $K, C$. 
\end{proposition}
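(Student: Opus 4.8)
The plan is to verify the definition of $N$--stability directly: $Y$ is quasi--convex by hypothesis, so what remains is to produce, for any $a,b\in Y$, a geodesic of $\tilde{S}_\Gamma$ from $a$ to $b$ which is $N$--Morse, with $N$ depending only on $K$, $C$ (and the quasi--convexity constant $B$ of $Y$, and $\Gamma$). First I would dispose of two degenerate cases. If $A_\Gamma$ is free, then $\tilde{S}_\Gamma$ is a tree and every geodesic is $N_0$--Morse for a universal gauge $N_0$. If $A_\Gamma$ is a nontrivial direct product (equivalently $\Gamma$ is a join), then $\mathcal{CG}$ has diameter at most $2$, so the hypothesis that $q|_Y$ is a $(K,C)$--quasi--isometric embedding forces $\mathrm{diam}(Y)$ to be bounded in terms of $K,C$, and a bounded set is trivially stable with a gauge determined by its diameter. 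So assume from now on that $A_\Gamma$ is neither free nor a direct product, so that $\tilde{S}_\Gamma$ contains strongly separated hyperplanes.

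The argument then rests on two structural facts, both essentially contained in the proof of Theorem \ref{thm:RAAGtoqtree}. The first is a converse to the Claim proved there: a geodesic $\gamma$ of $\tilde{S}_\Gamma$ is $N$--Morse, with $N$ controlled by $r$, as soon as $\gamma$ crosses a sequence of pairwise strongly separated hyperplanes with consecutive gaps (distances measured along $\gamma$) at most $r$; this follows from \cite{charney-sultan} together with \cite{behrstock-charney}. The second is that $q$ is coarsely Lipschitz on all of $\tilde{S}_\Gamma$, and that for any geodesic $\gamma$ and any $t$ the hyperplane $q(\gamma(t))$ lies at $\mathcal{CG}$--distance at most a universal constant from the hyperplane dual to the edge of $\gamma$ crossed at time $t$: one may choose the geodesic $[e,\gamma(t)]$ used to define $q(\gamma(t))$ so that its last edge is incident to $\gamma(t)$, and any such edge is in contact with the edges of $\gamma$ through $\gamma(t)$.

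Now fix $a,b\in Y$ and a geodesic $\gamma=[a,b]$. Running the greedy procedure from the proof of Theorem \ref{thm:RAAGtoqtree} on $\gamma$ produces hyperplanes $F_1,\dots,F_k$ crossed by $\gamma$ which, by \cite[Theorem 2.5]{behrstock-charney}, are pairwise strongly separated; by the first structural fact it suffices to bound $d(F_i\cap\gamma,F_{i+1}\cap\gamma)$, and the initial and terminal segments of $\gamma$, by a constant depending only on $K,C,B$. Let $\gamma|_{[s,t]}$ be such a subsegment. By construction it meets no hyperplane strongly separated from $F_i$, so by the Behrstock--Charney structure theory the hyperplanes $\gamma$ crosses on $[s,t]$ lie in a single product region, a translate of a product of at most three join subgroups; the hyperplanes of such a region have pairwise $\mathcal{CG}$--distance at most $2$, since a hyperplane of type in one join factor crosses every hyperplane of type in another. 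By the second structural fact, $q(\gamma|_{[s,t]})$ therefore has bounded $\mathcal{CG}$--diameter. Since $Y$ is $B$--quasi--convex, $\gamma(s)$ and $\gamma(t)$ lie within $B$ of points $a',b'\in Y$; by coarse Lipschitzness of $q$, the points $q(a'),q(b')$ lie within bounded $\mathcal{CG}$--distance of $q(\gamma|_{[s,t]})$, so $d_{\mathcal{CG}}(q(a'),q(b'))$ is bounded, whence $d(a',b')\le K(d_{\mathcal{CG}}(q(a'),q(b'))+C)$ is bounded because $q|_Y$ is a $(K,C)$--quasi--isometric embedding, and finally $t-s=d(\gamma(s),\gamma(t))\le d(a',b')+2B$ is bounded in terms of $K,C,B$ alone. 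This is the required gap bound, so $\gamma$ is $N$--Morse with $N=N(K,C,B)$.

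The main obstacle I expect is making the two structural facts of the second paragraph fully precise. For the first, one must extract from \cite{charney-sultan} and \cite{behrstock-charney} the statement that crossing a bounded--gap, pairwise strongly separated sequence of hyperplanes implies the Morse property \emph{with a uniform gauge} --- close to, but not literally, Theorem $4.2$ of \cite{charney-sultan}. For the key geometric input --- that a geodesic subsegment meeting no hyperplane strongly separated from a fixed $F_i$ lies in a single product region whose hyperplanes collapse in $\mathcal{CG}$ --- one has to combine the ``product of three join subgroups'' estimate from the proof of Theorem \ref{thm:RAAGtoqtree} with the fact that such a product is again a translate of a standard subcomplex, so that its hyperplanes genuinely exhibit the join crossing pattern that bounds them in $\mathcal{CG}$. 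Checking that $q(\gamma(t))$ is $\mathcal{CG}$--close to the hyperplane crossed by $\gamma$ at time $t$ is a smaller point, but it is what lets the collapse of the product region be seen by $q$ and hence by $q|_Y$.
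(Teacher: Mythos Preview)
Your overall strategy---produce a bounded--gap sequence of pairwise strongly separated hyperplanes along $[a,b]$ and invoke \cite[Theorem~4.2]{charney-sultan}---matches the paper's, but you take a noticeably longer route to the gap bound. The paper avoids the greedy procedure and the join--subgroup analysis entirely by making one clean observation: two hyperplanes at $\mathcal{CG}$--distance at least $3$ are automatically strongly separated (no hyperplane can be in contact with both). Since $q$ restricted to the geodesic $[x,y]$ is a $(K,C)$--quasi--isometric embedding (after absorbing the quasi--convexity constant into $C$), any two points on $[x,y]$ at $\tilde{S}_\Gamma$--distance $K(C+3)$ map to hyperplanes at $\mathcal{CG}$--distance $\geq 3$; sampling $[x,y]$ at this scale immediately gives the desired strongly separated sequence with gap $< K(C+3)+1$.

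Your argument instead runs the greedy construction from Theorem~\ref{thm:RAAGtoqtree} and then must bound the gap via the Behrstock--Charney product--of--joins description, the $\mathcal{CG}$--collapse of such a region, the quasi--convexity detour back to $Y$, and finally the quasi--isometric embedding of $q|_Y$. Each step is plausible, but the one you yourself flag---that the hyperplanes crossed between $F_i$ and $F_{i+1}$ lie in a single product region of uniformly bounded $\mathcal{CG}$--diameter---is not literally what \cite[Lemma~3.3]{behrstock-charney} gives (it concerns the group element $g_i^{-1}g_j$, not the hyperplanes along an arbitrary geodesic for it, and a product of three join subgroups need not itself be a join subcomplex). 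This can be patched, but it is exactly the work the paper's ``distance $\geq 3$ implies strongly separated'' shortcut eliminates. What your approach would buy, if carried through, is a more structural explanation of \emph{why} the gaps are small; the paper's buys brevity and avoids any appeal to the internal geometry of product regions.
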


\begin{proof} Let $Y$ be a quasi--convex subset of $\tilde{S}_\Gamma$ and $q|_Y$ is a $(K,C)$-quasi--isometric embedding. Since $Y$ is quasi-convex, at the cost of increasing $C$, we can assume that all geodesics in $\tilde{S}_\Gamma$ between points in $x,y \in  Y$ are also also $(K,C)$-quasi--isometrically embedded in $\mathcal{CG}$. Let $x,y \in  Y$ and let $\mathcal{H}$ be the sequence of hyperplanes associated to the geodesic segment $[x,y]$ by $q$.  We know that if $a,b \in Y$ and  $d(a,b)=K(C+3)$ then $d_{\mathcal{CG}}(q(a),q(b))\geq 3$. Two hyperplanes which are at least distance 3 in the contact graph strongly separated. Thus we can choose a subsequence of $\mathcal{F} \subset \mathcal{H}$ such that consecutive hyperplanes $F_i, F_{i+1} \in \mathcal{F}$ are strongly separated and $d(F_i \cap [x,y], F_{i+1}\cap [x,y])< K(C+3)+1$. By Theorem 4.2 in \cite{charney-sultan} $[x,y]$ is $N$--Morse where $N$ depends only on $K(C+3)+1$. Thus $Y$ is stable.
\end{proof}

With this proposition we have a classification of quasi--convex subspaces of the universal cover of the Salvetti complex, $\tilde{S}_\Gamma$.

\begin{corollary} A quasi--convex subspace $Y$ of $\tilde{S}_\Gamma$ is stable if and only if  $q|_Y$ is a quasi--isometric embedding. \end{corollary}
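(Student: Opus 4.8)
The plan is to obtain the corollary as a direct packaging of the two preceding results: the Proposition just proved, and the quasi--isometric embedding constructed inside the proof of Theorem~\ref{thm:RAAGtoqtree}. No genuinely new argument is required.

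For the ``if'' direction, suppose $Y$ is a quasi--convex subspace of $\tilde{S}_\Gamma$ for which $q|_Y$ is a $(K,C)$--quasi--isometric embedding. First I would simply invoke the preceding Proposition, which applies verbatim and produces a Morse gauge $N$, depending only on $K$ and $C$, such that $Y$ is $N$--stable; in particular $Y$ is stable.

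For the ``only if'' direction, suppose $Y$ is stable, so it is $N$--stable for some Morse gauge $N$. I would fix a vertex basepoint $y_0\in Y$; by the definition of $N$--stability every $y\in Y$ is joined to $y_0$ by an $N$--Morse geodesic, so $Y\subseteq \tilde{S}_\Gamma^{(N)}$, the stratum based at $y_0$. The proof of Theorem~\ref{thm:RAAGtoqtree} established that $q$ restricted to $\tilde{S}_\Gamma^{(N)}$ is a $(K,1)$--quasi--isometric embedding into $\mathcal{CG}$, with $K$ depending only on $N$ (via the constant $r$ produced by the strongly separated hyperplane claim). Restricting this map further to $Y$ preserves the inequalities, so $q|_Y$ is a quasi--isometric embedding (well defined up to one edge in $\mathcal{CG}$, as always).

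There is no real obstacle here; the only point that needs a word of care is that the stratum in Theorem~\ref{thm:RAAGtoqtree} is taken relative to a chosen vertex, and one should place that vertex inside $Y$ so that $Y$ is actually contained in the stratum. This is harmless since $Y$ is nonempty and, by Lemma~\ref{lem:deltaslim} together with the argument of Theorem~\ref{thm:RAAGtoqtree}, the quasi--isometry estimate for $q$ on $\tilde{S}_\Gamma^{(N)}$ does not depend on the choice of basepoint beyond a bounded additive error. Combining the two directions gives the stated equivalence.
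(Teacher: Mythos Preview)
Your proposal is correct and matches the paper's approach: the corollary is stated without proof, as an immediate combination of the preceding Proposition (the ``if'' direction) and the quasi--isometric embedding of each stratum $\tilde{S}_\Gamma^{(N)}$ into $\mathcal{CG}$ from Theorem~\ref{thm:RAAGtoqtree} (the ``only if'' direction). Your handling of the basepoint issue is fine; alternatively one could invoke Lemma~\ref{lem:straightentoNprime} to pass from the stratum based at $y_0\in Y$ to a stratum based at $e$, but as you note the quasi--isometry estimate in Theorem~\ref{thm:RAAGtoqtree} is in any case proved for arbitrary pairs $x,y$ in the stratum via Lemma~\ref{lem:thirdedgeMorse}, so it does not depend on the basepoint.
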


\section{Teichm\"uller Space and Mapping Class Groups} \label{sec:Mod}

The goal of this section is to prove Theorem \ref{thm:Mod}.

Let $\Sigma$ be a surface of finite type of genus $g \geq 0$ and $p \geq 0$ punctures such that $3g-3+p \geq 1$. The {\bf curve graph}, $\mathcal{C}(\Sigma)$, is a graph whose vertices are isotopy classes of essential simple closed curves on $S$ and two curves are joined by an edge if they can be realized disjointly on $S$. A celebrated theorem of Masur and Minsky shows that $\mathcal{C}(\Sigma)$ is a $\delta$--hyperbolic space \cite{Masur:1999aa}.

A \emph{complete clean marking} on $\Sigma$ is a set $\mu=\{(\alpha_1, \beta_1), \ldots, (\alpha_m, \beta_m)\}$ where $\{\alpha_1, \ldots, \alpha_m\}$ is a pants decomposition of $\Sigma$ (i.e., a simplex in $\mathcal{C}(\Sigma)$), and each $\beta_i$ is (an isotopy class of) a simple closed curve disjoint from $\alpha_j$ for $i \neq j$ and intersects $\alpha_i$ once (twice) if the surface filled by $\alpha_i$ and $\beta_i$ is a once-punctured torus (four-times-punctured sphere). We call $\set{\alpha_i}$ the {\bf base} of $\mu$ and for every $i$, $\beta_i$ is called the {\bf transverse curve to $\alpha_i$ in $\mu$}.  The marking graph, $\mathcal{M}(\Sigma)$, is a graph whose vertices are (complete clean) markings and two markings $\mu_1, \mu_2 \in \mathrm{V}(\mathcal{M}(\Sigma))$ are joined by an edge if they differ by an elementary move. There is a coarsely well defined map $proj_S \colon \mathcal{M}(\Sigma) \to \mathcal{C}(\Sigma)$, which is defined by mapping the base of a marking $\mu$ to the (1-skeleton of the) simplex it defines in $\mathcal{C}(\Sigma)$. For more information see \cite{Masur:2000aa}.


\begin{theorem} \label{thm:ModTeichstabequiv} Let $S$ be a surface of finite type. Then $\Mod$ and $\teich$ are stably equivalent. In particular, $\stabdim(\Mod)=\stabdim(\teich)$.
\end{theorem}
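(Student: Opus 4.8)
The plan is to produce, for every Morse gauge $N$, quasi--isometric embeddings in both directions between a Morse stratum of $\Mod$ and one of $\teich$; once this is done, the definition of $\hookrightarrow_s$ (cf.\ Remark \ref{rmk:itoquasisymm}) gives $\Mod\sim_s\teich$. First I would replace a Cayley graph of $\Mod$ by the marking graph $\mathcal{M}(\Sigma)$, which is $\Mod$--equivariantly quasi--isometric to $\Mod$ \cite{Masur:2000aa}, so that by Corollary \ref{bcor:stableinvarience} the problem is unchanged. Fix $\sigma_0\in\teich$ and consider the two coarsely Lipschitz maps relating the models: the orbit map $\Phi\colon\mathcal{M}(\Sigma)\to\teich$ (send a marking to a point of $\teich$ at which its base curves are uniformly short with bounded twisting) and the shortest--marking map $\Psi\colon\teich\to\mathcal{M}(\Sigma)$, which are coarse inverses, $\Psi$ being coarsely Lipschitz on each thick part of $\teich$. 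Both models also project coarsely Lipschitzly to the curve graph $\mathcal{C}(\Sigma)$ --- via the base--of--marking map $\pi_{\mathcal M}$ and the systole map $\pi_{\mathcal T}$ --- and the composites $\pi_{\mathcal T}\circ\Phi$ and $\pi_{\mathcal M}$ agree up to bounded error, as do $\pi_{\mathcal M}\circ\Psi$ and $\pi_{\mathcal T}$.

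The crucial input is that for every $N$ the restrictions $\pi_{\mathcal M}|_{\mathcal{M}(\Sigma)^{(N)}_e}$ and $\pi_{\mathcal T}|_{\teich^{(N)}_{\sigma_0}}$ are quasi--isometric embeddings with constants depending only on $N$, and that $\teich^{(N)}_{\sigma_0}$ lies in a thick part whose thickness depends only on $N$. The point is that an $N$--Morse geodesic is uniformly cobounded: every proper subsurface projection taken along it is bounded in terms of $N$, because otherwise the Behrstock inequality together with Minsky's product--region structure would supply a $(K,C)$--quasi--geodesic detouring arbitrarily far through that subsurface, contradicting the Morse condition. Given coboundedness, the Masur--Minsky distance formula \cite{Masur:2000aa} shows such a geodesic maps to a quasi--geodesic of $\mathcal{C}(\Sigma)$; the same reasoning with Rafi's combinatorial formula for the Teichm\"uller metric and Minsky's product--regions theorem (the $\varepsilon$--thin part of $\teich$ coarsely splits as a sup--metric product and hence contains quasi--flats) shows that an $N$--Morse geodesic of $\teich$ stays $\varepsilon(N)$--thick and maps to a quasi--geodesic of $\mathcal{C}(\Sigma)$.

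The rest is formal. A coarsely Lipschitz map that becomes a quasi--isometric embedding after post--composition with a coarsely Lipschitz map was already a quasi--isometric embedding on its domain; applying this to $\pi_{\mathcal T}\circ\Phi$ and $\pi_{\mathcal M}$ shows $\Phi|_{\mathcal{M}(\Sigma)^{(N)}_e}$ is a quasi--isometric embedding, and likewise $\Psi|_{\teich^{(N)}_{\sigma_0}}$ is one (this stratum lies in a fixed thick part, where $\Psi$ is coarsely Lipschitz). A quasi--isometric embedding sends the $N$--Morse geodesics spanning $\mathcal{M}(\Sigma)^{(N)}_e$ to uniformly Morse quasi--geodesics of $\teich$ (Lemma~2.5 of \cite{charney-sultan}, cf.\ Lemma \ref{lem:straightentoNprime}), whose endpoints lie on a common $N'$--Morse geodesic by the argument of Lemma \ref{lem:thirdedgeMorse}; hence $\Phi$ maps $\mathcal{M}(\Sigma)^{(N)}_e$ into $\teich^{(N')}_{\Phi(e)}$, and a basepoint change (Proposition \ref{prop:baseptinv}) turns this into a quasi--isometric embedding $\mathcal{M}(\Sigma)^{(N)}_e\to\teich^{(N'')}_{\sigma_0}$; symmetrically for $\Psi$. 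This gives $\Mod\hookrightarrow_s\teich$ and $\teich\hookrightarrow_s\Mod$, so $\Mod\sim_s\teich$; and since stably equivalent spaces have the same collection of stable subsets up to quasi--isometry while asymptotic dimension is a quasi--isometry invariant, $\stabdim(\Mod)=\stabdim(\teich)$.

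The main obstacle is the content of the second paragraph: establishing that Morse geodesics in both $\mathcal{M}(\Sigma)$ and $\teich$ are cobounded --- and in the Teichm\"uller case confined to a thick part --- with the constants depending only on the Morse gauge. This needs genuine surface--topological input: subsurface projections and the Behrstock inequality, the Masur--Minsky and Rafi distance formulae, and Minsky's product regions theorem. The manipulations with the strata $X^{(N)}_e$ themselves are purely formal consequences of Section \ref{sec:metricMorse}.
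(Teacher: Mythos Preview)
Your approach matches the paper's: pass to the marking graph, build quasi--isometric embeddings between Morse strata in both directions, and invoke Remark~\ref{rmk:itoquasisymm}; the paper simply black--boxes these embeddings as Lemmas~4.9 and~4.10 of \cite{Cordes15}, while you sketch their content via coboundedness and the distance formulae. One small correction: Lemma~2.5 of \cite{charney-sultan} and Lemma~\ref{lem:straightentoNprime} require a global quasi--isometry, not merely a quasi--isometric embedding of a stratum, so they do not on their own justify that $\Phi$ sends $N$--Morse geodesics of $\mathcal{M}(\Sigma)$ to Morse quasi--geodesics of $\teich$---but you do not actually need them, since your coboundedness argument already shows that the Teichm\"uller geodesic from $\Phi(e)$ to $\Phi(x)$ inherits bounded subsurface projections, hence is thick, hence Morse.
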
 

\begin{proof} First note that by a theorem of Masur and Minsky \cite{Masur:2000aa} $\Mod$ is quasi--isometric to $\mathcal{M}(\Sigma)$ and therefore they are stably equivalent, so we will use $\mathcal{M}(\Sigma)$ for the remainder of the proof. Recall that by Lemma \ref{lem:thirdedgeMorse}, any geodesic between points in $\mathcal{M}(\Sigma)^{(N)}$ is $N'$--Morse where $N'$ depends only on $N$. By Lemma 4.9 in \cite{Cordes15} we know that $\mathcal{M}(\Sigma)^{(N)}$ is quasi--isometrically embedded in $\mathcal{T}(\Sigma)^{(N'')}$ where $N''$ and the quasi--isometry constants depend only on $N$. Using the same logic and Lemma 4.10 in \cite{Cordes15}, we conclude that $\mathcal{T}(\Sigma)^{(N)}$ quasi--isometrically embeds into $\mathcal{M}(\Sigma)^{(N''')}$ for some $N'''$. We now invoke Remark \ref{rmk:itoquasisymm} to finish the proof.
\end{proof}

\begin{theorem} Let $\Sigma$ be a surface of finite type of genus $g$ with $p$ punctures. Then 
\begin{align*} \Mcd(\Mod)=\Mcd(\teich) &\leq 4g+p-4 \text{ if } p>0 \\
	 &\leq 4g-5 \text{ if } p=0.
 \end{align*}
\end{theorem}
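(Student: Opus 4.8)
The plan is to transport the full metric Morse boundary of $\Mod$ into the Gromov boundary of the curve graph $\mathcal C(\Sigma)$ and extract the dimension bound there. Since $\Mod$ is quasi-isometric to the marking graph $\mathcal M(\Sigma)$ and stably equivalent to $\teich$ (Theorem~\ref{thm:ModTeichstabequiv}), and since $\Mcd$ depends only on the quasi-symmetry class of the metric Morse boundary — hence is a stable-equivalence invariant, by the sharpening of Theorem~\hyperref[bthm:key]{\ref{bthm:key}.\ref{key:quasiisom}} established at the end of Section~\ref{sec:metricMorse} — it is enough to bound $\Mcd(\mathcal M(\Sigma))=\sup_N\mathrm{cdim}\big(\partial \mathcal M(\Sigma)^{(N)}\big)$.

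First I would fix a Morse gauge $N$ and use the quasi-isometric embedding $\mathcal M(\Sigma)^{(N)}\hookrightarrow \mathcal C(\Sigma)$ furnished by Theorem~\ref{thm:Mod}. Both spaces are Gromov hyperbolic: the source by Proposition~\ref{prop:hypsubsets} (it is a $(1,12N(3,0))$-quasi-geodesic space by Lemma~\ref{lem:geodlinspeed}), the target by Masur--Minsky~\cite{Masur:1999aa}. A quasi-isometric embedding of hyperbolic spaces induces a quasi-symmetric embedding of sequential boundaries onto its image; concretely this is the Gromov-product comparison of Proposition~\ref{prop:adaptqidiffofGrPr} — whose quasi-isometry need not be onto, cf.\ Remark~\ref{rmk:itoquasisymm} — combined with the metric estimate in the proof of Proposition~\ref{prop:quasisym}, now applied with the genuinely hyperbolic $\mathcal C(\Sigma)$ as the target so that its entire boundary is used; alternatively one may quote \cite[Proposition~5.5]{BonkSchramm}. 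This produces, for each $N$, a quasi-symmetric embedding $\partial \mathcal M(\Sigma)^{(N)}\hookrightarrow \partial_\infty\mathcal C(\Sigma)$ onto its image, where $\partial_\infty\mathcal C(\Sigma)$ is given a visual metric.

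Next I would use that capacity dimension is a quasi-symmetry invariant (\cite[Corollary~4.2]{Buycapdim}) and is monotone under restriction to a subspace — restricting a good cover to a subspace leaves diameters and multiplicity non-increased and the Lebesgue number non-decreased, exactly as in the proof of Theorem~\ref{thm:hypembsubsets} — to deduce $\mathrm{cdim}(\partial \mathcal M(\Sigma)^{(N)})\le \mathrm{cdim}(\partial_\infty\mathcal C(\Sigma))$ for every $N$, and therefore
\[
 \Mcd(\Mod)=\Mcd(\teich)=\Mcd(\mathcal M(\Sigma))\le \mathrm{cdim}\big(\partial_\infty\mathcal C(\Sigma)\big).
\]
To finish I would bound $\mathrm{cdim}(\partial_\infty\mathcal C(\Sigma))$ by $4g+p-4$ for $p>0$ and by $4g-5$ for $p=0$. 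Bestvina--Bromberg~\cite{Bestvina-Bromberg} bound $\mathrm{asdim}\,\mathcal C(\Sigma)$ by $4g+p-3$ (resp.\ $4g-4$) — the estimate already invoked for the stable-dimension bound in Theorem~\ref{thm:Mod} — and their argument does so by quasi-isometrically embedding $\mathcal C(\Sigma)$ into a finite product of quasi-trees with that number of factors. Since $\mathcal C(\Sigma)$ is a visual hyperbolic space, Buyalo's identification of $\mathrm{cdim}(\partial_\infty Z)+1$ with the least number of trees into whose product $Z$ quasi-isometrically embeds~\cite{Buycapdim} (the direction converse to the one quoted in the proof of Proposition~\ref{prop:lwbdMorsedim}) forces $\mathrm{cdim}(\partial_\infty\mathcal C(\Sigma))+1$ to be at most $4g+p-3$ (resp.\ $4g-4$), which is the required bound.

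The hard part will be this final step. One must check that the Bestvina--Bromberg estimate is genuinely witnessed by a quasi-isometric embedding into a product of \emph{exactly} $4g+p-3$ (resp.\ $4g-4$) quasi-trees, not merely by the numerical inequality $\mathrm{asdim}\,\mathcal C(\Sigma)\le 4g+p-3$, and one must apply the correct direction of Buyalo's tree-product theorem to the visual, non-proper hyperbolic space $\mathcal C(\Sigma)$. A conceivable alternative bypassing the tree-product machinery would be to prove that $\partial_\infty\mathcal C(\Sigma)$, the space of ending laminations, is locally self-similar: then by Buyalo--Lebedeva~\cite{BuyLeb} its capacity dimension equals its topological dimension, which one would feed in from the known computation of the latter. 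That route replaces one nontrivial input by two, so the argument above seems preferable.
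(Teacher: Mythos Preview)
Your argument is essentially the paper's up to the final numerical step: reduce to $\mathcal M(\Sigma)$, quasi-isometrically embed each $\mathcal M(\Sigma)^{(N)}$ into $\mathcal C(\Sigma)$, pass to a quasi-symmetric embedding of boundaries, and use quasi-symmetry invariance and monotonicity of capacity dimension to get $\Mcd(\mathcal M(\Sigma))\le \mathrm{cdim}(\partial_\infty\mathcal C(\Sigma))$. The paper then simply quotes Bestvina--Bromberg~\cite{Bestvina-Bromberg} for the bound $\mathrm{cdim}(\partial_\infty\mathcal C(\Sigma))\le 4g+p-4$ (resp.\ $4g-5$), which is exactly what they prove: their method is to bound the capacity dimension of $\partial_\infty\mathcal C(\Sigma)$ directly and then deduce the asymptotic-dimension bound via Buyalo's inequality, not the other way round.

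Your detour in the last step therefore runs the Bestvina--Bromberg argument backwards and introduces a genuine gap. The ``identification'' you invoke --- that $\mathrm{cdim}(\partial_\infty Z)+1$ equals the minimal number of tree factors needed to quasi-isometrically embed $Z$ --- is not a theorem in the direction you need: the Buyalo--Dranishnikov--Schroeder type results produce tree-product embeddings \emph{from} capacity-dimension bounds, not conversely, and a product of $n\ge 2$ trees is not hyperbolic, so there is no boundary comparison available. If instead you only use the numerical inequality $\mathrm{asdim}\,\mathcal C(\Sigma)\le 4g+p-3$ together with the general bound $\mathrm{cdim}(\partial_\infty X)\le \mathrm{asdim}(X)$ from \cite{MackSis}, you lose one dimension and obtain only $\mathrm{cdim}(\partial_\infty\mathcal C(\Sigma))\le 4g+p-3$, which is off by one from the statement. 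The fix is simply to cite \cite{Bestvina-Bromberg} for the capacity-dimension bound itself, as the paper does.
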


\begin{proof} Again we will use $\mathcal{M}(\Sigma)$ instead of $\Mod$. Recall that by Lemma \ref{lem:deltaslim} that any geodesic between points in $\mathcal{M}(\Sigma)^{(N)}$ is $N'$--Morse where $N'$ depends on $N$. As above, Lemma 4.9 in \cite{Cordes15} gives us $\mathcal{M}(\Sigma)^{(N)}$ is quasi--isometrically embedded in $\mathcal{C}(\Sigma)$. Thus by Theorem \ref{thm:qitoquasisym} we conclude that $\cdim{\partial  \mathcal{M}(\Sigma)^{(N)}} \leq \cdim{\partial  \mathcal{C}(\Sigma)}$. Since this is true for all Morse gauges $N$, we conclude that $\Mcd(\mathcal{M}(\Sigma)) \leq \cdim{\partial  \mathcal{C}(\Sigma)}$. Given a surface $\Sigma$ of genus $g$ with $p$ punctures, Bestvina--Bromberg show that $\cdim{\partial  \mathcal{C}(\Sigma)} \leq 4g+p-4$ if $p>0$ and $\cdim{\partial  \mathcal{C}(\Sigma)} \leq 4g-5$ if $p=0$ \cite{Bestvina-Bromberg}. The conclusion follows.

The $\teich$ case follows from \ref{thm:ModTeichstabequiv}.
\end{proof}

\begin{corollary} For every $n$ there is a surface $\Sigma$ of finite type such that $\Mod$ admits a stable subset quasi--isometric to $\mathbb{H}^n$.
\end{corollary}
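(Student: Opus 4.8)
The plan is to obtain the corollary as an immediate consequence of the stable equivalence $\Mod \sim_s \teich$ proved in Theorem \ref{thm:ModTeichstabequiv}, combined with the construction of Leininger--Schleimer. First I would invoke \cite{leininger:2014aa}: for the given $n$ there is a surface $\Sigma$ of finite type such that $\teich$ contains a stable subset $Y$ quasi--isometric to $\mathbb{H}^n$ --- this is exactly the form in which their result is recorded in the introduction. When citing it, the one point to confirm is that their quasi--isometrically embedded copy of $\mathbb{H}^n$ is genuinely \emph{stable} (equivalently, Durham--Taylor stable by Lemma \ref{lem:stable-equiv}); this holds because it projects to a quasi--isometrically embedded subset of the curve graph $\mathcal{C}(\Sigma)$.

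For this particular $\Sigma$, Theorem \ref{thm:ModTeichstabequiv} gives $\Mod \sim_s \teich$. As recorded just after the definition of stable equivalence, two geodesic metric spaces are stably equivalent precisely when they have the same collection of stable subsets up to quasi--isometry. Applying this to $Y\subseteq\teich$ yields a stable subset of $\Mod$ quasi--isometric to $Y$, and hence to $\mathbb{H}^n$, which is the assertion.

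If one prefers the stable subset of $\Mod$ exhibited explicitly through the strata, the argument runs as follows: by universality (Theorem \ref{bthm:key}) $Y$ is quasi--convex in some $\teich^{(N)}_e$; the stable subsumption $\teich\hookrightarrow_s\Mod$ coming from Theorem \ref{thm:ModTeichstabequiv} (via the quasi--isometric embeddings used in its proof and Remark \ref{rmk:itoquasisymm}) provides a quasi--isometric embedding $\iota\colon\teich^{(N)}_e\to\Mod^{(N')}_{e'}$; then $\iota(Y)$ is quasi--convex in the hyperbolic space $\Mod^{(N')}_{e'}$ and quasi--isometric to $\mathbb{H}^n$, and it is stable in $\Mod$ since $\Mod^{(N')}_{e'}$ is itself stable in $\Mod$ (Theorem \ref{bthm:key}), so geodesics of $\Mod$ between points of $\iota(Y)$ are uniformly Morse and remain close to $\iota(Y)$. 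I do not anticipate any serious obstacle: all the content sits in Theorem \ref{thm:ModTeichstabequiv} and in \cite{leininger:2014aa}. The only steps meriting care are this last transfer of quasi--convexity (from $\iota(Y)$ inside $\Mod^{(N')}_{e'}$, and $\Mod^{(N')}_{e'}$ inside $\Mod$, to $\iota(Y)$ inside $\Mod$) and, as noted above, checking that the cited result genuinely supplies a stable subset rather than merely a quasi--isometrically embedded $\mathbb{H}^n$.
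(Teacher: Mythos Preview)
Your proposal is correct and follows essentially the same route as the paper: the paper's proof reads ``This follows from \cite{leininger:2014aa}, \cite[Proposition~4.3]{Cordes15} and Theorem~\ref{thm:ModTeichstabequiv}.'' The additional citation to \cite[Proposition~4.3]{Cordes15} is precisely what you anticipated when you flagged the need to check that the Leininger--Schleimer copy of $\mathbb{H}^n$ is genuinely stable in $\teich$ rather than merely quasi--isometrically embedded; that proposition supplies this verification, so your instinct there was on target.
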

\begin{proof}
This follows from \cite{leininger:2014aa}, \cite[Proposition $4.3$]{Cordes15} and Theorem $\ref{thm:ModTeichstabequiv}$.
\end{proof}

\section{Small cancellation groups}\label{sec:smallcanc}

The goal of this section is to prove Theorems \ref{bthm:smallcanc} and \ref{bthm:Gromovmonster}

We will need a few preliminaries on small cancellation groups.

\begin{definition}\label{defn:piece}
Let $\Gamma$ be a directed edge--labelled graph. A \textbf{piece} is a labelled path $p$ admitting two distinct label--preserving graph homomorphisms $p\to\Gamma$.

Let $\lambda>0$. The graph $\Gamma$ satisfies the $C'(\lambda)$\textbf{ graphical small cancellation condition} if any piece $p$ contained in a simple closed path $C$ in $\Gamma$ satisfies $\abs{p}<\lambda\abs{C}$.
\end{definition}

\begin{definition}\label{defn:diagram} A \textbf{diagram} $D$ is a finite, simply connected 2-complex with a fixed embedding into the plane such that the image of every $1$--cell, called an \textbf{edge}, has an orientation and a label from a fixed set $S$. A \textbf{disc diagram} is a diagram homeomorphic to the $2$--disc. The \textbf{boundary} of a disc diagram $D$, denoted $\partial D$, is its topological boundary inside $\R^2$.

A \textbf{disc component} of a diagram $D$ is a subdiagram homeomorphic to the $2$--disc whose boundary is contained in $\partial D$.
\end{definition}

\begin{definition}\label{defn:Gammadiagram}
Let $\Gamma$ be a finite directed graph whose edges are labelled by a finite set $S$. A $\Gamma$--diagram is a diagram with directed edges labelled by elements of $S$ in which the boundary of every $2$-cell, called a \textbf{face} is isomorphic (as a directed graph with labelled edges) to a simple cycle in $\Gamma$.

A $\Gamma$--diagram is \textbf{reduced} if no vertex is the end vertex of two different edges with the same direction and label.
\end{definition}

Any simple closed path $C$ in a reduced $\Gamma$--diagram defines an element (called the \textbf{label} of $C$) of the free group $F(S\sqcup S^{-1})$ by choosing a starting vertex on $C$ and an orientation and writing the generator $s$ when an edge labelled by $s$ is traversed with its direction and $s^{-1}$ if it is traversed against its orientation. Note that the label of $C$ is unique up to inversion and a choice of cyclically reduced conjugate.

We define the group $G(\Gamma)$ to be the quotient of $F(S\sqcup S^{-1})$ by the normal subgroup generated by all words which can be obtained as the labels of simple closed paths in $\Gamma$.

A subpath $P$ of a diagram $D$ is said to be an \textbf{interior arc} if every interior vertex of $P$ has degree $2$ and $P$ intersects $\partial D$ in at most $2$ points (which, if such intersections exist, are necessarily endpoints of $P$).

\begin{lemma}[{\cite[Lemma~2.13]{Gruber-graphicalC(6)C(7)}}]\label{lem:graphical_diagrams}
Let $\Gamma$ be a $C'(\frac{1}{6})$--labelled graph, and let $w$ be a word in the free monoid generated by $S\sqcup S^{-1}$ which represents the identity in $G(\Gamma)$. Then, there exists an $\Gamma$--diagram with boundary word $w$ in which every interior arc is a piece.
\end{lemma}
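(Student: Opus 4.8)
The plan is to run the classical van Kampen argument in the graphical setting of Definition~\ref{defn:Gammadiagram}. Since $w$ represents the identity in $G(\Gamma)$, and $G(\Gamma)$ is by definition the quotient of $F(S\sqcup S^{-1})$ by the normal closure of the labels of simple closed paths in $\Gamma$, van Kampen's lemma provides \emph{some} diagram $D_0$ (in the sense of Definition~\ref{defn:diagram}) with boundary word $w$ whose faces are labelled by simple cycles of $\Gamma$; that is, $D_0$ is a $\Gamma$--diagram. The entire content of the lemma is therefore to improve $D_0$.

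The first improvement I would make is to \emph{fold}: as long as $D_0$ is not reduced, some vertex is the terminal vertex of two distinct edges carrying the same label and direction, and I identify those two edges (and their initial vertices). Since $\Gamma$ itself is reduced as a labelled graph — a consequence of the $C'(\tfrac{1}{6})$ condition, as two coterminal edges of $\Gamma$ with equal label and direction would produce a length-one piece inside a cycle bounding them — each face boundary still lifts to a simple cycle of $\Gamma$ after the identification, so the result is again a $\Gamma$--diagram with the same boundary word; and the number of edges strictly decreases, so after finitely many folds I reach a reduced $\Gamma$--diagram $D$ with boundary word $w$ (discarding spurs along the way so that $\partial D$ is reduced).

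It remains to check that every interior arc of a reduced $\Gamma$--diagram is a piece. Let $P$ be an interior arc. By definition each interior vertex of $P$ has degree $2$ and does not lie on $\partial D$, so every edge of $P$ is an interior edge of $D$ and hence bounds exactly two faces; walking along $P$ and using the degree-$2$ condition at its interior vertices, the pair of faces on either side is constant along $P$, so there are faces $\Pi_1,\Pi_2$ with $P\subseteq\partial\Pi_1\cap\partial\Pi_2$. Since each $\partial\Pi_i$ is isomorphic to a simple cycle of $\Gamma$ it is an embedded circle in $D$, so no edge has the same face on both sides; thus $\Pi_1\neq\Pi_2$. The isomorphisms $\partial\Pi_i\xrightarrow{\sim}C_i\subseteq\Gamma$ restrict to label-preserving graph homomorphisms $\psi_1,\psi_2\colon P\to\Gamma$. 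If $\psi_1=\psi_2$ then, reading the boundary words of $\Pi_1$ and $\Pi_2$ from a common edge of $P$ — in opposite directions, since $\Pi_1$ and $\Pi_2$ lie on opposite sides of $P$ in the plane — reducedness of $D$ forces the two traversals to agree edge by edge all the way around, so $C_1=C_2$ and $\Pi_1,\Pi_2$ form a mirror pair; but a reduced $\Gamma$--diagram contains no mirror pair, a contradiction. Hence $\psi_1\neq\psi_2$, so $P$ admits two distinct label-preserving homomorphisms into $\Gamma$ and is a piece in the sense of Definition~\ref{defn:piece}.

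The main obstacle is the folding step and its interface with the last paragraph: one must verify carefully that folding terminates, that it preserves the property of being a $\Gamma$--diagram (this is exactly where the reduced labelling of $\Gamma$, hence the small-cancellation hypothesis, enters), and that Gruber's combinatorial notion of ``reduced'' — no two coterminal edges with the same label and direction — is strong enough to exclude mirror pairs of faces across an interior arc, which is the precise input needed to pass from ``shared arc'' to ``piece''. Note that beyond guaranteeing that $\Gamma$ is reduced, the constant $\tfrac{1}{6}$ plays no role here; it is the later $C(6)$-type analysis of these diagrams that will exploit it.
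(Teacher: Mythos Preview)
The paper does not supply its own proof here: the lemma is quoted from \cite{Gruber-graphicalC(6)C(7)} and stated without argument, so there is no in-paper proof to compare against.

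Your argument has a genuine gap. The central claim of your last paragraph---that in a $\Gamma$-diagram which is reduced in the sense of Definition~\ref{defn:Gammadiagram} every interior arc is a piece---is false. Let $\Gamma$ consist of two simple cycles $C_1,C_2$ of length $100$ sharing a common subpath $P_0$ of length $10$, and give every edge of $\Gamma$ a distinct label. Since each label occurs exactly once, no labelled path admits two distinct homomorphisms to $\Gamma$; hence there are no pieces at all and $C'(\tfrac16)$ holds vacuously. The two-face diagram $D$ with $\partial\Pi_i\cong C_i$ glued along $P_0$ is reduced (every edge carries a distinct label), yet its interior arc $P_0$ is not a piece. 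Your step ``reducedness of $D$ forces the two traversals to agree edge by edge all the way around'' is precisely where this breaks: at an endpoint $v$ of $P_0$ the continuations $f_1\in\partial\Pi_1$ and $f_2\in\partial\Pi_2$ carry \emph{different} labels because $D$ is reduced, so nothing forces $\psi_1(f_1)=\psi_2(f_2)$ in $\Gamma$, and indeed $C_1\neq C_2$. The inference ``$\psi_1=\psi_2$ on $P$ implies a mirror pair'' is special to the classical case where $\Gamma$ is a disjoint union of cycles; in the genuinely graphical setting distinct simple cycles of $\Gamma$ can overlap along an arc. What does work is a minimality argument: if an interior arc $P$ is not a piece then the two adjacent face boundaries lift compatibly to $\Gamma$, so one may delete $P$, merge the faces, and refill the resulting region by simple cycles of $\Gamma$, producing a $\Gamma$-diagram with strictly fewer edges and the same boundary word; a diagram minimising the number of edges therefore has the required property. (Separately, your folding step does not explain why identifying two edges that are non-adjacent in the cyclic order at a vertex preserves the planar embedding, and your claim that $C'(\tfrac16)$ forces $\Gamma$ itself to be reduced is also unjustified---a length-one piece in a cycle of length $>6$ is permitted---but these issues are secondary to the one above.)
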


\begin{theorem}[Strebel's classification \cite{Str90}]\label{thm:Strebel} Let $\Gamma$ be a directed graph whose edges are labelled by a fixed set $S$ which satisfies the $C'(1/6)$ graphical small cancellation condition. Let $X$ be the Cayley graph of $G(\Gamma)$ with respect to the generating set $S\sqcup S^{-1}$.

If $B$ is a geodesic bigon in $X$ then there is a reduced $\Gamma$--diagram $D_B$ with boundary $B$ and any such diagram satisfies the following: every disc component of $D_B$ is either a single face or of type $I_1$ below.

If $T$ is a geodesic triangle in $X$ then there is a reduced $\Gamma$--diagram $D_T$ with boundary $T$ and any such diagram satisfies the following: every disc component of $D_T$ is either a single face or of one of the types in the figure below. Moreover, at most one of these disc components is not of type $I_1$.
\end{theorem}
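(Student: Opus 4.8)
Here is the plan. Strebel's classification is a combinatorial-curvature statement, so the proof proceeds in two stages: produce a reduced diagram, then balance the combinatorial Gauss--Bonnet identity on it.

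First I would produce the diagram. If $B=\sigma\cup\sigma'$ is a geodesic bigon, its boundary label is a product of two geodesic words representing the same element of $G(\Gamma)$, hence a word representing the identity; likewise for a geodesic triangle $T$. Applying Lemma~\ref{lem:graphical_diagrams} yields a reduced $\Gamma$--diagram $D$ with the prescribed boundary word in which every interior arc is a piece. After deleting spurs and suppressing valence--$2$ interior vertices we may assume $D$ is drawn so that its underlying complex is a tree of disc components glued along arcs (or single vertices); since the statement is about disc components it suffices to analyse one such component, whose boundary is subdivided into at most two (resp.\ three) geodesic subpaths coming from the sides of $B$ (resp.\ $T$) together with arcs shared with neighbouring components.

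For the core argument I would invoke combinatorial Gauss--Bonnet: fixing an angle assignment for which every interior vertex of $D$ carries zero curvature, one has $\sum_{f}\kappa(f)+\sum_{v\in\partial D}\kappa(v)=2\pi$ for a disc diagram. The $C'(1/6)$ hypothesis feeds in through two facts. First, the boundary of an \emph{interior} face $f$ of $D$ is a concatenation of interior arcs, each a piece of length $<\tfrac16|\partial f|$, so there are at least seven of them (the $C(7)$ condition); with the standard angle assignment this forces $\kappa(f)\le 0$, strictly negative unless $f$ has very few long sides. Second, since the sides of $B$ (resp.\ $T$) are geodesics, if a face $f$ meets a single side along a subpath $\gamma$ then $|\gamma|=d(\partial\gamma)\le|\partial f|-|\gamma|$, so $|\gamma|\le\tfrac12|\partial f|$ (otherwise the complementary subpath of $\partial f$, which consists of pieces, would give a strictly shorter path between the endpoints of $\gamma$); this bounds the positive curvature that can be accumulated along the boundary. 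The only genuinely positive contributions then come from the two (resp.\ three) ``corner'' vertices of the polygon, each at most $\pi$.

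It remains a finite case analysis to see which diagrams satisfy $\sum\kappa(f)+\sum\kappa(v)=2\pi$ under these constraints. For a bigon the total positive curvature available ($2\pi$, split among two corners) is too small to permit any interior face, or any face touching the boundary along more than one arc; this forces the disc component to be a single face or a linear ``ladder'' $f_1,\dots,f_k$ in which $\partial f_j\cap\partial f_{j+1}$ is one interior arc and the rest of each $\partial f_j$ splits between $\sigma$ and $\sigma'$ --- this is type $I_1$. For a triangle the extra corner permits exactly one ``defect'', so at most one disc component may fail to be a ladder, and enumerating the local configurations compatible with the curvature bound yields Strebel's finite list of admissible shapes. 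I expect the last step to be the main obstacle: the bookkeeping that upgrades the Gauss--Bonnet inequality to the precise list is delicate, requiring one to simultaneously exclude interior faces in the bigon case, exclude faces meeting the boundary in two or more arcs except in the single permitted triangle configuration, and dispose of degeneracies (spurs, valence--$2$ vertices, disc components sharing only a vertex). This is exactly the analysis carried out by Strebel; for the full argument we refer to \cite{Str90}.
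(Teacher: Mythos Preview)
The paper does not supply its own proof of this theorem: it is quoted directly from Strebel's appendix \cite{Str90}, and the only content the paper adds afterward is to record the two numerical consequences (a face meets a geodesic side in at most $\tfrac12|\partial\Pi|$, and meets any other face in less than $\tfrac16|\partial\Pi|$). Your outline is a correct sketch of the standard argument --- produce the reduced diagram via Lemma~\ref{lem:graphical_diagrams}, observe that $C'(1/6)$ forces every interior face to have at least seven interior arcs, bound face/geodesic overlaps by $\tfrac12|\partial f|$ from the geodesic hypothesis, and run a combinatorial Gauss--Bonnet count to pin down the finitely many admissible shapes --- and you, like the paper, ultimately defer the delicate case analysis to \cite{Str90}. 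There is therefore nothing substantive to compare; your proposal is an appropriate expansion of the bare citation.
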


\begin{figure*}[h!]
\begin{center}
\begin{tikzpicture}[yscale=1,xscale=1, 
vertex/.style={draw,fill,circle,inner sep=0.3mm}]

\draw[thin] (0,3) .. controls (-0.5,2.33) and (-0.5,1.66) .. (0,1) .. controls (0.5,1.66) and (0.5,2.33) .. (0,3);

\path (0,1) node[below] {I${}_1$};

\begin{scope}
\clip (0,3) .. controls (-0.5,2.33) and (-0.5,1.66) .. (0,1) .. controls (0.5,1.66) and (0.5,2.33) .. (0,3);

\draw[very thin]
				(-2,2.75) -- (2,2.75)  
				(-2,2.5) -- (2,2.5)
				(-2,1.5) -- (2,1.5)
				(-2,1.25) -- (2,1.25);
\end{scope}

\draw[xshift=4.5cm, thin] (-0.3,3) .. controls (-0.5,2.33) and (-0.5,1.66) .. (0,1) .. controls (0.5,1.66) and (0.5,2.33) .. (0.3,3) -- (-0.3,3);

\path (4.5,1) node[below] {I${}_2$};

\begin{scope}[xshift=4.5cm]
\clip (-0.3,3) .. controls (-0.5,2.33) and (-0.5,1.66) .. (0,1) .. controls (0.5,1.66) and (0.5,2.33) .. (0.3,3) -- (-0.3,3);

\draw[very thin]
				(-2,2.75) -- (2,2.75)  
				(-2,2.5) -- (2,2.5)
				(-2,1.5) -- (2,1.5)
				(-2,1.25) -- (2,1.25);

\end{scope}

\draw[xshift=9cm, thin]    (0,3) -- (-1,1) -- (1,1) -- (0,3);

\path (9,1) node[below] {I${}_3$};

\begin{scope}[xshift=9cm]
\clip (0,3) -- (-1,1) -- (1,1) -- (0,3);

\draw[very thin]    (-1,1.9) -- (0,0.9)
					(-1,1.4) -- (0,0.4)
					(-1,1.2) -- (0,0.2)
					(1,1.9) -- (0,0.9)
					(1,1.4) -- (0,0.4)
					(1,1.2) -- (0,0.2);

\end{scope}

\draw[thin]    (0,0) -- (-1,-2) -- (1,-2) -- (0,0);

\path (0,-2) node[below] {II};

\begin{scope}
\clip (0,0) -- (-1,-2) -- (1,-2) -- (0,0);

\draw[very thin]    (-1,-1.2) -- (0,-2.2)
					(-1,-1.6) -- (0,-2.6)
					(-1,-1.8) -- (0,-2.8)
					(1,-1.2) -- (0,-2.2)
					(1,-1.6) -- (0,-2.6)
					(1,-1.8) -- (0,-2.8)
					(-1,-0.8) -- (1,-0.8)
					(-1,-0.5) -- (1,-0.5)
					(-1,-0.3) -- (1,-0.3);
					
\end{scope}

\draw[xshift=3cm, thin]    (0,0) -- (-1,-2) -- (1,-2) -- (0,0);

\path (3,-2) node[below] {III};

\begin{scope}[xshift=3cm]
\clip (0,0) -- (-1,-2) -- (1,-2) -- (0,0);

\draw[very thin]    (-1,-1) -- (0,-2)
					(-1,-1.6) -- (0,-2.6)
					(-1,-1.8) -- (0,-2.8)
					(1,-1) -- (0,-2)
					(1,-1.6) -- (0,-2.6)
					(1,-1.8) -- (0,-2.8)
					(-1,-0.8) -- (1,-0.8)
					(-1,-0.5) -- (1,-0.5)
					(-1,-0.3) -- (1,-0.3);
\end{scope}

\draw[xshift=6cm, thin]    (0,0) -- (-1,-2) -- (1,-2) -- (0,0);

\path (6,-2) node[below] {IV};

\begin{scope}[xshift=6cm]
\clip (0,0) -- (-1,-2) -- (1,-2) -- (0,0);

\draw[very thin]    (-1,-1.2) -- (0,-2.2)
					(-1,-1.6) -- (0,-2.6)
					(-1,-1.8) -- (0,-2.8)
					(1,-1.2) -- (0,-2.2)
					(1,-1.6) -- (0,-2.6)
					(1,-1.8) -- (0,-2.8)
					(-1,-0.8) -- (1,-0.8)
					(-1,-0.5) -- (1,-0.5)
					(-1,-0.3) -- (1,-0.3);
					
	\begin{scope}
		\clip (-1,-1.2) -- (0,-2.2) -- (1,-1.2) -- (1,-0.8) -- (-1,-0.8) -- (-1,-1.2);
		
		\draw[very thin]	(0,-1.4) -- (-1,-2)
							(0,-1.4) -- (1,-2)
							(0,-1.4) -- (0,0);
	\end{scope}
					
\end{scope}

\draw[xshift=9cm, thin]    (0,0) -- (-1,-2) -- (1,-2) -- (0,0);

\path (9,-2) node[below] {V};

\begin{scope}[xshift=9cm]
\clip (0,0) -- (-1,-2) -- (1,-2) -- (0,0);

\draw[very thin]    (-1,-1.2) -- (0,-2.2)
					(-1,-1.6) -- (0,-2.6)
					(-1,-1.8) -- (0,-2.8)
					(1,-1.2) -- (0,-2.2)
					(1,-1.6) -- (0,-2.6)
					(1,-1.8) -- (0,-2.8)
					(-1,-0.8) -- (1,-0.8)
					(-1,-0.5) -- (1,-0.5)
					(-1,-0.3) -- (1,-0.3)
					
					(0,-1.6) -- (0,-2)
					(0,-1.6) -- (-1.6,0)
					(0,-1.6) -- (1.6,0);
\end{scope}

\end{tikzpicture}
\caption{Minimal disc diagrams whose boundary is a locally geodesic triangle}
\end{center}
\end{figure*}
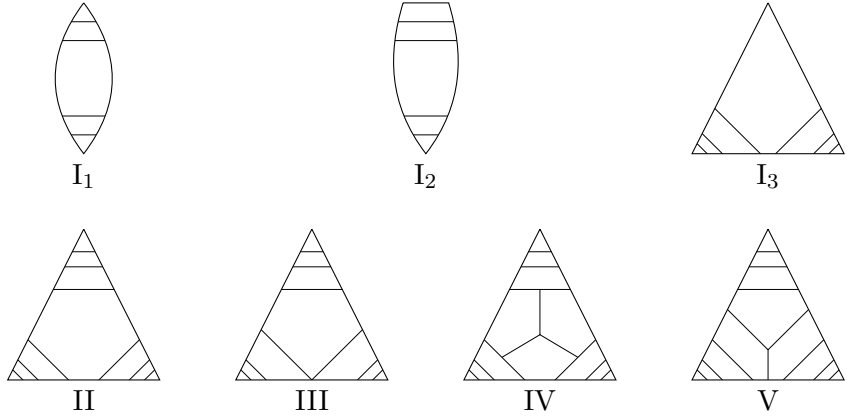
In particular, in the above diagrams, the intersection of the boundary of a face $\Pi$ with a geodesic has length at most $\abs{\partial\Pi}/2$ and the intersection of the boundaries of any two distinct faces $\Pi,\Pi'$ has length strictly less than $\abs{\partial \Pi}/6$.
\medskip

With these basics established we now focus on proving Theorems \ref{bthm:smallcanc} and \ref{bthm:Gromovmonster}.

\begin{theorem}\label{thm:fpresscancasdim2} Let $X$ be the Cayley graph of a $C'(\frac{1}{6})$ graphical small cancellation presentation $G=\fpres{S}{\Gamma}$ where $\Gamma$ is finite. Then $\stabdim(X)-1=\Mcd(X)\leq 1$.
\end{theorem}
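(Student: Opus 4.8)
The plan is to reduce the statement to the corresponding statement for a finitely presented classical $C'(1/6)$ small cancellation group, exactly as in the proof of Theorem \ref{bthm:smallcanc}, and then to invoke the known asymptotic dimension and capacity dimension bounds in the hyperbolic setting. First I would fix a Morse gauge $N$ and show that $X^{(N)}_e$ quasi-isometrically embeds into the Cayley graph of some finitely presented classical $C'(1/6)$ small cancellation quotient. The key point is that the subspace $X^{(N)}_e$ only ``sees'' geodesics which are $N$-Morse, and by Strebel's classification (Theorem \ref{thm:Strebel}) together with Lemma \ref{lem:graphical_diagrams}, the $\Gamma$-diagrams that can appear as fillings of geodesic bigons and triangles with vertices in $X^{(N)}_e$ involve only boundedly many faces, hence only boundedly many relators, where the bound depends only on $N$. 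More precisely: an $N$-Morse geodesic $[e,x]$ in $X$ cannot cross a face $\Pi$ along an arc of length close to $|\partial\Pi|$ unless $|\partial\Pi|$ is bounded in terms of $N$ (otherwise one builds a long detour quasi-geodesic around the relator, contradicting the Morse property). So the relators relevant to the geometry of $X^{(N)}_e$ have length bounded by some $L = L(N)$, and since $\Gamma$ is finite only finitely many relators of length $\le L$ occur. Let $\Gamma_N \subseteq \Gamma$ be the (finite) subgraph accounting for these, and let $G_N = G(\Gamma_N)$ be the corresponding finitely presented classical $C'(1/6)$ group with Cayley graph $X_N$.

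Next I would verify that the natural map $X^{(N)}_e \to X_N$ (induced by the quotient $F(S\sqcup S^{-1}) \twoheadrightarrow G_N$) is a quasi-isometric embedding with constants depending only on $N$. For the lower bound on distances one uses that any word representing the identity in $G$ but traversing an $N$-Morse geodesic region can be filled by a diagram (Lemma \ref{lem:graphical_diagrams}) whose faces all have bounded boundary length, hence already represents the identity in $G_N$; combined with Strebel's classification this controls how much the metric can contract. Having established the quasi-isometric embedding, Lemma \ref{lem:straightentoNprime} (or rather its proof, and the universality condition \hyperref[bthm:key]{\ref{bthm:key}.\ref{key:universal}}) shows that the image lies in some stable subset $(X_N)^{(N'')}_{e}$ and is quasi-convex there; since $X_N$ is hyperbolic (classical $C'(1/6)$ groups are hyperbolic), quasi-convex subsets of $X_N$ have asymptotic dimension at most $\mathrm{asdim}(X_N)$ and boundary capacity dimension at most $\mathrm{cdim}(\partial X_N)$. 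Finitely presented classical $C'(1/6)$ small cancellation groups are known to have $\mathrm{asdim} \le 2$ and their Gromov boundaries have capacity dimension $\le 1$ — this is the input the proof of Theorem \ref{bthm:smallcanc} cites. Therefore $\mathrm{asdim}(X^{(N)}_e) \le 2$ and $\mathrm{cdim}(\partial X^{(N)}_e) \le 1$ for every $N$, giving $\stabdim(X) \le 2$ and $\Mcd(X) \le 1$. The equation $\stabdim(X) - 1 = \Mcd(X)$ then follows from Corollary \ref{bcor:comparedims} once one exhibits a single stable subset of dimension $2$, e.g.\ a $C'(1/6)$ presentation of a closed hyperbolic surface group sitting inside $X$ (as noted after Theorem \ref{bthm:smallcanc}); alternatively, if $X$ itself is $\le 1$-dimensional the identity holds trivially, so really one argues $\Mcd(X) \ge \stabdim(X) - 1$ from Proposition \ref{prop:lwbdMorsedim} and $\Mcd(X) \le \stabdim(X) - 1$ whenever $\stabdim(X) = 2$ using that a $2$-dimensional stable subset is quasi-isometric to a hyperbolic space with $1$-dimensional boundary.

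The main obstacle I anticipate is the diagrammatic argument bounding the size of relators that can interact with an $N$-Morse geodesic: one must show carefully, using Strebel's classification of minimal diagrams over geodesic bigons and triangles, that a face with very long boundary forces the existence of a quasi-geodesic (a path running along the ``short side'' of the face, or around a chain of large faces) that escapes every bounded neighbourhood of the geodesic, violating the $N$-Morse condition with quasi-geodesic constants depending only on $N$. Making the dependence $L = L(N)$ explicit and uniform — and checking that the induced map to $X_N$ is genuinely a quasi-isometric embedding on $X^{(N)}_e$ rather than merely on individual geodesics — is where the technical work lies. Everything downstream (passing to stable subsets, invoking $\mathrm{asdim} \le 2$ and $\mathrm{cdim} \le 1$ for classical $C'(1/6)$ groups, and the comparison of dimensions via Corollary \ref{bcor:comparedims}) is then essentially formal.
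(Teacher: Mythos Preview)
You have misidentified which theorem this is. Theorem~\ref{thm:fpresscancasdim2} is the \emph{base case}, not the reduction step: since $\Gamma$ is assumed finite, the group $G=G(\Gamma)$ is finitely presented and therefore hyperbolic by Ollivier's theorem. Once $X$ is hyperbolic, Theorem~\hyperref[bthm:key]{\ref{bthm:key}.\ref{key:Gromov}} (equivalently Theorem~\ref{thm:capdimhyp}) gives $X=X^{(N)}_e$ for all large $N$, so $\stabdim(X)=\mathrm{asdim}(X)$ and $\Mcd(X)=\mathrm{cdim}(\partial X)$, and the bounds follow from the cohomological dimension estimate of Gruber together with Bestvina--Mess. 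That is the entire content of the paper's proof; there is no diagrammatic argument, no truncation $\Gamma_N\subseteq\Gamma$, and no embedding into an auxiliary Cayley graph.

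The elaborate machinery you describe --- bounding relator lengths seen by an $N$--Morse geodesic, passing to a finite subgraph $\Gamma_N$, and embedding $X^{(N)}_e$ into $X_N$ --- is the content of Theorem~\ref{thm:scancstabletofinpres}, which handles the case where $\Gamma$ is an \emph{infinite} disjoint union of finite graphs with a girth condition. In that setting one genuinely needs to reduce to the finite-$\Gamma$ case, and Theorem~\ref{thm:fpresscancasdim2} is the target of that reduction, not its source. Your sentence ``since $\Gamma$ is finite only finitely many relators of length $\le L$ occur'' reveals the confusion: when $\Gamma$ is finite this restriction is vacuous and the map you build is essentially the identity. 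Separately, you repeatedly write ``classical $C'(1/6)$'' where the hypothesis is \emph{graphical} $C'(1/6)$; and your proposed justification of the equality $\stabdim(X)-1=\Mcd(X)$ via exhibiting a surface subgroup is neither correct in general nor what the paper does --- the equality is a direct consequence of the identity $\mathrm{asdim}(G)=\mathrm{cdim}(\partial G)+1$ for hyperbolic groups, which is implicit in the Bestvina--Mess reference.
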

\begin{proof} By \cite[Theorem $2.16$]{Gruber-graphicalC(6)C(7)} the group $G$ has cohomological dimension at most $2$, and is hyperbolic by \cite{Ollivier}. The result then follows by \cite{BestMess}.
\end{proof}

The \textbf{girth} of a graph $\Gamma$ is the length of its shortest simple cycle of positive length; we denote it by $g(\Gamma)$.

\begin{theorem}\label{thm:scancstabletofinpres} Let $X$ be the Cayley graph of a $C'(\frac{1}{6})$ graphical small cancellation presentation $\fpres{S}{\Gamma}$ where $\Gamma = \bigsqcup \Gamma_i$, each of the graphs $\Gamma_i$ is finite and there exists some $\delta>0$ such that $g(\Gamma_i)\geq \delta\textrm{diam}(\Gamma_i)$.

For each Morse gauge $N$ there is a finite subgraph $\Gamma_N\subset \Gamma$ and an isometric embedding $X^{(N)}_e\into X_N$ where $X_N$ is the Cayley graph of the presentation $\fpres{S}{\Gamma_N}$.
\end{theorem}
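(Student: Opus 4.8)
The plan is to realise $X^{(N)}_e$ inside $X_N$ by lifting the defining Morse geodesics along the natural covering map $X_N\to X$, where $\Gamma_N$ is assembled from the (finitely many) components of $\Gamma$ that a Morse geodesic of gauge comparable to $N$ is allowed to ``see''. First I would fix, using Lemma~\ref{lem:thirdedgeMorse}, a gauge $N'\ge N$ such that for all $x,y\in X^{(N)}_e$ there is an $N'$--Morse geodesic from $x$ to $y$; note that any $N$--Morse geodesic from $e$ is then also $N'$--Morse. I will produce a constant $R=R(N')$ and set $I_N=\{\,i : g(\Gamma_i)\le R\,\}$ and $\Gamma_N=\bigsqcup_{i\in I_N}\Gamma_i$, a subgraph of $\Gamma$. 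This is a \emph{finite} graph: over the finite alphabet $S\sqcup S^{-1}$ there are only finitely many cyclically reduced words of length $\le R$, and the $C'(1/6)$ condition forbids a given such word from labelling a simple cycle in two distinct components (the whole cycle would be a piece of itself, contradicting $|p|<|C|/6$), so only finitely many $\Gamma_i$ have girth $\le R$. The hypothesis $g(\Gamma_i)\ge\delta\,\diam(\Gamma_i)$ enters here only to let one phrase $I_N$ equivalently via diameters, since the boundary of a face labelled by a simple cycle of $\Gamma_i$ has length $\ge g(\Gamma_i)\ge\delta\,\diam(\Gamma_i)$.

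\textbf{The main step.} The heart of the argument is the claim: for $x,y\in X^{(N)}_e$, if $\gamma_x,\gamma_y$ are $N$--Morse geodesics from $e$ to $x$, $y$ and $\sigma$ is an $N'$--Morse geodesic from $x$ to $y$, then the word $w$ read along $\gamma_x\cdot\sigma\cdot\overline{\gamma_y}$ --- which represents the identity in $G(\Gamma)$ --- already represents the identity in $G(\Gamma_N)$. To prove it I would take, by Lemma~\ref{lem:graphical_diagrams}, a reduced $\Gamma$--diagram $D$ for $w$ of minimal area in which every interior arc is a piece; since the three sides of $\partial D$ are geodesics, Strebel's classification (Theorem~\ref{thm:Strebel}) describes the disc components of $D$: each is of type $I_1$ or of one of the listed types, with at most one not of type $I_1$. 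I then show every face $\Pi$ of $D$ is labelled by a simple cycle in some $\Gamma_i$ with $g(\Gamma_i)\le R$, so that $w$ is a product of conjugates of $\Gamma_N$--relators and $w=1$ in $G(\Gamma_N)$. For a face in a type $I_1$ component --- a corridor between two of the sides, i.e.\ between sub--geodesics $\alpha$ and $\beta$ sharing their endpoints --- both $\alpha$ and $\beta$ are $N'$--Morse geodesics, so each lies in the $N'(1,0)$--neighbourhood of the other, i.e.\ the corridor has bounded ``width''; the $C'(1/6)$ inequalities (the arc of $\partial\Pi$ on each geodesic side has length $\le|\partial\Pi|/2$, each interior arc has length $<|\partial\Pi|/6$) force a face of boundary length $\ell$ to push $\alpha$ and $\beta$ a definite fraction of $\ell$ apart, so $\ell$ is bounded in terms of $N'(1,0)$. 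For the at most one component not of type $I_1$ (the ``corner'' configurations $I_2$, $I_3$, \textrm{II}--\textrm{V}), I would use that the triangle $\gamma_x\cup\sigma\cup\gamma_y$ is uniformly slim (Lemma~\ref{lem:deltaslim}) to bound the diameter of that component, hence the boundary lengths of its faces. Taking $R$ to be the maximum of these bounds proves the claim.

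\textbf{Deducing the embedding.} Given the claim, the rest is routine. The inclusion $\Gamma_N\subseteq\Gamma$ gives a surjection $G(\Gamma_N)\twoheadrightarrow G(\Gamma)$ and hence a label--preserving, $1$--Lipschitz graph morphism $\pi\colon X_N\to X$ with $\pi(\tilde e)=e$; every edge--path in $X$ from $e$ lifts uniquely to an edge--path in $X_N$ from $\tilde e$. For $x\in X^{(N)}_e$ choose an $N$--Morse geodesic from $e$ to $x$ and let $\tilde x$ be the endpoint of its lift; set $\widetilde{X^{(N)}_e}=\{\tilde x : x\in X^{(N)}_e\}$. Applying the claim with $x=y$ and $\sigma$ constant shows $\tilde x$ is independent of the chosen geodesic, so $\pi$ restricts to a bijection $\widetilde{X^{(N)}_e}\to X^{(N)}_e$; applying it in general shows the lift of $\gamma_x\cdot\sigma$ terminates at $\tilde y$, whence $d_{X_N}(\tilde x,\tilde y)\le|\sigma|=d_X(x,y)$, while the reverse inequality holds because $\pi$ is $1$--Lipschitz. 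Thus $x\mapsto\tilde x$ is an isometric embedding $X^{(N)}_e\into X_N$.

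\textbf{Main obstacle.} I expect the corridor and slimness estimates of the middle step to be the hard part: turning ``an $N$--Morse geodesic cannot fellow--travel a long relator'' into the quantitative bound on face sizes in the minimal diagram. Two points need care --- handling the exceptional non--$I_1$ component, and the fact that $X$ is only a graph (so planarity of $D$ does not by itself control distances in $X$, and one must argue within the relator cycles, whose subpaths of length $\le|\partial\Pi|/2$ are geodesics). This is exactly where the $C'(1/6)$ condition, the girth--diameter comparison, and Strebel's classification all get used.
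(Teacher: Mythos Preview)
Your overall strategy --- fix $N'$ via Lemma~\ref{lem:thirdedgeMorse}, define $\Gamma_N$ by a girth threshold, show every face in a Strebel diagram for the $N'$--Morse triangle comes from $\Gamma_N$, then lift geodesics along the quotient $X_N\to X$ --- is exactly the paper's, and your deduction of the isometric embedding from the ``claim'' is correct. Your word-counting argument for finiteness of $\Gamma_N$ is also fine.

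The gap is in your main step, and you have misidentified where the hypothesis $g(\Gamma_i)\ge\delta\,\diam(\Gamma_i)$ is used: it is not needed for finiteness, but it \emph{is} essential for ruling out large faces, and your fellow--travelling and slimness arguments do not invoke it. The assertion that a face of boundary length $\ell$ ``pushes $\alpha$ and $\beta$ a definite fraction of $\ell$ apart'' holds for classical $C'(1/6)$ (each $\Gamma_i$ a single cycle, so subpaths of length $\le\ell/2$ are geodesic in $X$) but not in the graphical setting: the isometrically embedded copy of $\Gamma_i$ in $X$ may contain many shortcuts, so a point on the $\alpha$--side of $\partial\Pi$ can be close in $X$ to $\beta$ even when $\ell$ and $g(\Gamma_i)$ are large. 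Bounding the $I_1$ corridor width by $N'(1,0)$ therefore gives no bound on $g(\Gamma_i)$, and the same objection applies to your slimness argument for the non--$I_1$ component: slimness bounds how far the three sides are from one another, not the size of the faces they enclose.

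The paper handles all faces at once (no $I_1$/non--$I_1$ split) via Lemma~\ref{lem:highgirthnotMorse}, which is where the girth--diameter hypothesis actually enters. Strebel's classification gives an arc $\gamma_1\subset\partial\Pi$ of length $\ge|\partial\Pi|/6$ lying on one of the $N'$--Morse sides; the complementary path around $\partial\Pi$ is a concatenation of at most five geodesics, hence of length $\le 5\,\diam(\Gamma_i)\le(5/\delta)g(\Gamma_i)$. Using this and appealing to \cite[Proposition~5.10]{ACGH2}, the lemma produces a $(\lambda,c)$--quasi-geodesic inside the embedded $\Gamma_i$, with the same endpoints as $\gamma_1$, that escapes the $g(\Gamma_i)/12$--neighbourhood of $\gamma_1$ --- with $\lambda,c$ depending only on $\delta$. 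Choosing the threshold so that $g(\Gamma_i)/12>N'(\lambda,c)$ then contradicts the Morse property of $\gamma_1$. The ingredient missing from your sketch is precisely this: one needs a detour that is a quasi-geodesic with \emph{uniform} constants independent of the face, not merely the observation that the geodesic sides of the triangle are Hausdorff-close.
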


Without loss of generality we assume that $(\abs{\Gamma_i})_i$ is a non--decreasing sequence. It follows that $g(\Gamma_i)\to\infty$ as $i\to\infty$.

Before proceeding with the proof we present one lemma.

\begin{lemma}\label{lem:highgirthnotMorse} Let $\Gamma$ be a finite graph and suppose $g(\Gamma)\geq \delta\textrm{diam}(\Gamma)$ for some $\delta>0$.

There exist constants $\lambda,c,\epsilon$ depending only on $\delta$ such that: for any simple cycle $C$ in $\Gamma$ which is a concatenation of at most six geodesics $\gamma_1,\gamma_2,\dots,\gamma_l$ with $\abs{\gamma_1}\geq \frac{1}{6}\abs{C}$ there is a $(\lambda,c)$-quasi--geodesic $q\subset \Gamma$ connecting the endpoints of $\gamma_1$ with the property that $q\not\subset \mathcal{N}_{g(\Gamma)/12}(\gamma_1)$.
\end{lemma}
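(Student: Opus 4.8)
The plan is to take $q$ to be (a tightening of) the complementary arc $P:=\gamma_2\cdots\gamma_l$, which runs from one endpoint $v_0$ of $\gamma_1$ to the other $v_1$. First I would record the numerology. Since each $\gamma_i$ is a geodesic, $\abs{\gamma_1}\leq\operatorname{diam}(\Gamma)\leq g(\Gamma)/\delta$; since $C$ is a simple cycle, $\abs C\geq g(\Gamma)$; with $\abs{\gamma_1}\geq\abs C/6$ this gives $g(\Gamma)\leq\abs C\leq 6\operatorname{diam}(\Gamma)\leq 6g(\Gamma)/\delta$, and since $\abs{\gamma_1}=d(v_0,v_1)\leq\abs P$ also $\abs{\gamma_1}\leq\abs C/2$ and $\abs C/2\leq\abs P\leq 5\abs C/6$. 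I would isolate the basic girth estimate used throughout: for $x,y$ on a simple cycle $C'$ of $\Gamma$, if $A$ is one of the two arcs of $C'$ from $x$ to $y$ then $d(x,y)\geq\min(\abs A,g(\Gamma)-\abs A)$, with equality $d(x,y)=\abs A$ exactly when $A$ is a geodesic — indeed if $d(x,y)<\abs A$ then a geodesic $[x,y]$ and $A$ are distinct paths with common endpoints, so their union contains a cycle, of length $\leq d(x,y)+\abs A$. In particular every sub-arc of $C$ of length $<g(\Gamma)/2$ is a geodesic.

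In the \emph{thin regime} $\abs C<\tfrac{11}{6}g(\Gamma)$ I would simply take $q=P$. Let $m\in P$ be the point of $C$ diametrically opposite the midpoint of $\gamma_1$; one checks that every point of $\gamma_1$ lies at $C$-arc-distance in $[\abs C/2-\abs{\gamma_1}/2,\abs C/2]$ from $m$, and feeding this into the girth estimate (distinguishing arc-distance $\geq g(\Gamma)/2$ from $<g(\Gamma)/2$) gives $d(m,\gamma_1)>g(\Gamma)/12$. The same estimate applied to the two $C$-arcs cut off by an arbitrary pair $a,b\in P$ yields $\abs{P[a,b]}\leq\lambda_0\,d(a,b)$ for a universal $\lambda_0$, so $P$ is a $(\lambda_0,0)$-quasi-geodesic containing the point $m\notin\mathcal N_{g(\Gamma)/12}(\gamma_1)$, as required.

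In the \emph{thick regime} $\abs C\geq\tfrac{11}{6}g(\Gamma)$ (so $\delta<36/11$) the arc $P$ may fail badly to be a quasi-geodesic — a sub-arc of $P$ of length $\sim\abs C$ can join two points at distance $1$ — so a tightening is unavoidable. I would first show $P\not\subseteq\mathcal N_{g(\Gamma)/12}(\gamma_1)$: otherwise, choosing for each $t$ a nearest point $\pi(t)\in\gamma_1$ to $P(t)$ with consecutive choices within $g(\Gamma)/6+1$ of one another (possible since $d(P(t),P(t+1))=1$ and both lie in $\mathcal N_{g(\Gamma)/12}(\gamma_1)$), the $\gamma_1$-coordinate of $\pi$ must double back, because $P$ has length $\geq\abs C/2$ while $\pi$ only has to traverse $\gamma_1$-distance $\abs{\gamma_1}\leq\abs C/2$; two points of $P$ straddling a turnaround, together with their short connections to $\gamma_1$, then bound a loop of length $<g(\Gamma)$, contradicting the girth. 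Fixing $w^\ast\in P$ with $d(w^\ast,\gamma_1)>g(\Gamma)/12$, I would form $q$ by replacing $P[v_1,w^\ast]$ and $P[w^\ast,v_0]$ by geodesics (smoothing any backtrack at $w^\ast$ by a short detour), so that $w^\ast\in q$ and hence $q\not\subseteq\mathcal N_{g(\Gamma)/12}(\gamma_1)$, and then verify that $q$ is a $(\lambda,c)$-quasi-geodesic, using $\abs C\leq 6g(\Gamma)/\delta$ and the girth estimate to keep $\lambda,c$ (and the escape fraction $\epsilon=1/12$) depending only on $\delta$.

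The step I expect to be the real obstacle is this thick regime: both the projection/backtracking argument that forces $P$ to leave the $g(\Gamma)/12$-neighbourhood, and — more seriously — verifying that the tightened path is a quasi-geodesic with constants independent of $g(\Gamma)$. The danger is that a geodesic piece of $q$ of length $\sim g(\Gamma)/\delta$ might run close to another piece, producing a long $q$-detour between two nearby points; ruling this out, once more by turning the near-crossing into a short cycle via the girth hypothesis, is the technical heart of the proof.
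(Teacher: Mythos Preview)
Your approach is genuinely different from the paper's. The paper does not construct $q$ at all: it invokes \cite[Proposition~5.10]{ACGH2} as a black box, which from a path $P$ of length $\le\kappa\,\mathrm{diam}(\Gamma)$ between the endpoints of $\gamma_1$ and a point $x\in\gamma_1$ with $d(x,P)\ge\alpha\,\mathrm{diam}(\Gamma)$ manufactures the required $(\lambda,c)$--quasi--geodesic $q$ escaping $\mathcal N_{\epsilon g(\Gamma)}(\gamma_1)$. The entire content of the paper's proof is to check these hypotheses: $P=\gamma_l^{-1}\cdots\gamma_2^{-1}$ has length $\le 5\,\mathrm{diam}(\Gamma)$, and a short girth argument (take $x_1,x_2\in\gamma_1$ with $d(x_1,x_2)=g(\Gamma)/6$, join each to $P$ by a geodesic of length $<g(\Gamma)/12$, obtain a simple cycle of length $<g(\Gamma)$) shows some $x\in\gamma_1$ satisfies $d(x,P)\ge g(\Gamma)/12\ge(\delta/12)\,\mathrm{diam}(\Gamma)$. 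Note carefully the direction: the paper proves $\gamma_1\not\subset\mathcal N_{g(\Gamma)/12}(P)$, not that $P$ exits a neighbourhood of $\gamma_1$; the latter, and the construction of $q$, are entirely inside the cited proposition. Your thin regime, by contrast, is a clean self-contained argument and looks correct.

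The thick regime has a real gap. Your backtracking claim --- ``the $\gamma_1$-coordinate of $\pi$ must double back, because $P$ has length $\ge|C|/2$ while $\pi$ only has to traverse $\gamma_1$-distance $|\gamma_1|\le|C|/2$'' --- does not follow: a coarsely Lipschitz map from a long interval to a short one can be monotone and merely slow, with no turnaround at all. Even if you instead pigeonhole to find $s<t$ with $\pi(s)\approx\pi(t)$, you must simultaneously control $|P[s,t]|$ to get a closed walk of length $<g(\Gamma)$, and the length disparity gives you no such control. A repair is possible (sample $P$ at spacing $\sim g(\Gamma)/3$, project, and argue each short loop forces the sampled arc of $P$ to coincide with a path through $\gamma_1$, eventually contradicting simplicity of $C$), but completing this and then the admitted ``technical heart'' --- showing the tightened path is a quasi--geodesic with constants depending only on $\delta$ --- is essentially reproving \cite[Proposition~5.10]{ACGH2}. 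That is a legitimate route to a self-contained proof, but it is substantially more work than what the paper does.
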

\begin{proof}
Our goal is to apply \cite[Proposition $5.10$]{ACGH2}. To do this we need to ensure that there is a path $P$ connecting the endpoints of $\gamma_1$ of length at most $\kappa\textrm{diam}(\Gamma)$ such that there exists a point $x\in \gamma_1$ satisfying $d(x,P)\geq \alpha \textrm{diam}(\Gamma)$ for some $\alpha,\kappa$ depending only on $\delta$.

By assumption the path $P=\gamma_l^{-1}\circ \dots \circ \gamma_2^{-1}$ connecting the endpoints of $\gamma_1$ on $C$ has length at most $5\textrm{diam}(\Gamma)$, since it is the concatenation of at most $5$ geodesics.

Suppose for a contradiction that every point $x\in\gamma_1$ satisfies $d(x,P)<\frac{1}{12}g(\Gamma)$.
Let $x_1,x_2$ be points on $\gamma_1$ satisfying $d(x_1,x_2)=\frac{1}{6}g(\Gamma)$ and let $Q_i$ be a geodesic from $x_i$ to $P$ of length $<\frac{1}{12}g(\Gamma)$. Now, $Q_1\cap Q_2=\emptyset$, so there is a simple cycle in $\Gamma$ contained in $Q_1\cup Q_2$ plus the subgeodesic of $\gamma_1$ connecting $x_1$ to $x_2$, and the subpath of $P$ connecting the endpoints of $Q_1$ and $Q_2$. Such a simple cycle has length $< \frac{1}{12}g(\Gamma) + \frac{1}{6}g(\Gamma) + \frac{1}{12}g(\Gamma) + \frac{1}{3}g(\Gamma) < g(\Gamma)$ which contradicts the assumption on the girth of $\Gamma$.
\end{proof}

\begin{proof}[Proof of Theorem $\ref{thm:scancstabletofinpres}$]
Fix some Morse gauge $N$, and choose $N'$ so that $X^{(N)}_e$ is $N'$--stable using Lemma \ref{lem:thirdedgeMorse}. Choose $g$ so that $\varepsilon g>N'(\lambda,c)$ where $\varepsilon,\lambda,c$ are the constants from Lemma \ref{lem:highgirthnotMorse}. Set $\Gamma_N=\bigsqcup\setcon{\Gamma_i}{g(\Gamma_i)\leq g}$. It is clear that $\Gamma_N$ is a finite graph.

For ease of notation we denote the $S$--word metric in $X$ by $d$ and the $S$--word metric in $\fpres{S}{\Gamma(N)}$ by $\overline{d}$.

Consider the map $\overline{\cdot}:X^{(N)}_e\to \fpres{S}{\Gamma_N}$, defined so that $\overline{x}$ is the endpoint of the path $\overline{\gamma_x}$ in $\fpres{S}{\Gamma_N}$ starting at $\overline{e}=1_{\fpres{S}{\Gamma_N}}$ which is equal to $\gamma_x$ as an oriented $S$--labelled path.

It is clear that $d(e,x)=\overline{d}(\overline{e},\overline{x})$ for all $x\in X^{(N)}_e$ and that $d(x,y)\leq \overline{d}(\overline{x},\overline{y})$ for all $x,y\in X^{(N)}_e$.

Suppose for a contradiction that there exist points $x,y\in X^{(N)}_e$ such that $d(x,y)< \overline{d}(\overline{x},\overline{y})$ and let $D$ be a diagram in $X$ with vertices $e,x,y$ whose geodesics sides are all $N'$--Morse.

Now $D$ is not a diagram in $\fpres{S}{\Gamma_N}$ so there is some face $\Pi$ in $D$ which is a translate of a cycle in $\Gamma\setminus\Gamma_N$. By Strebel's classification \ref{thm:Strebel}, $\partial\Pi$ can be written as a concatenation of at most $6$ geodesics $\gamma_1,\dots,\gamma_l$, in such a way that $\gamma_1$ has length at least $\frac{1}{6}\abs{\partial\Pi}$ and is contained in the boundary of $D$.

By Lemma \ref{lem:highgirthnotMorse} $\gamma_1$ is not $N'$--Morse which contradicts that fact that $X^{(N)}_e$ is $N'$--stable.
\end{proof}

Theorems \ref{bthm:smallcanc} and \ref{bthm:Gromovmonster} follow from Theorems \ref{thm:fpresscancasdim2} and \ref{thm:scancstabletofinpres}, together with Osajda's small cancellation labelling of high girth expanders \cite{Osajda}. It follows immediately from \cite{Humexp} that there are $2^{\aleph_0}$ different quasi--isometry classes of finitely generated groups with stable dimension at most $2$ with Cayley graphs which quasi--isometrically contain an expander.

\section{Relatively hyperbolic groups}\label{sec:relhyp}

In this section we prove Theorem \ref{bthm:relhyp}.

\begin{definition}\label{defn:relhyp} Let $G$ be a finitely generated group and let $\mathcal{H}$ be a finite collection of subgroups of $G$. A \textbf{coned--off graph} $\hat{\Gamma}$ of $G$ with respect to $\mathcal{H}$ is a graph obtained from a Cayley graph $\Gamma$ of $G$ by attaching an additional vertex $v_{gH}$ for every left coset of each $H\in\mathcal{H}$ and adding an edge $(v_{gH},g')$ whenever $g'\in gH$.

We say $G$ is \textbf{hyperbolic relative to} $\mathcal{H}$ if the following two conditions are satisfied:
\begin{itemize}
\item A coned--off graph of $G$ is hyperbolic.
\item (Bounded Coset Penetration Property)  Let $\alpha,\beta$ be geodesics in $\hat\Gamma$ with the same endpoints and let $H\in \mathcal{H}$.  Then there exists a constant $c$ such that:
		\begin{enumerate}
		\item if $\alpha\cap gH\neq \emptyset$ but $\beta\cap gH=\emptyset$ for some $g\in G$, then the $\Gamma$-distance between the vertex at which $\alpha$ enters $gH$ and the vertex at which $\alpha$ exits $gH$ is at most $c$, and
		\item If $\alpha\cap gH\neq\emptyset$ and $\beta\cap gH\neq\emptyset$, and $\alpha$ (resp. $\beta$) first enters $gH$ at $\alpha_1$ (resp. $\beta_1$) and last exits $gH_i$ at $\alpha_2$ (resp. $\beta_2$), then $\alpha_j$ and $\beta_j$ are at a $\Gamma$-distance of at most $c$ from each other, for $j=1,2$.
		\end{enumerate}
\end{itemize} 
\end{definition}

\begin{theorem}\label{thm:peripheralsstable} \cite[Lemma 4.15]{drutu-sapir} Let $G$ be hyperbolic relative to $\mathcal{H}$. Fix Cayley graphs $X_G$ of $G$ and $X_H$ of $H$ for each $H\in \mathcal{H}$.
For every Morse gauge $N$ there exists a Morse gauge $N'$ such that

If $x,y\in H$ can be connected by an $N$--Morse geodesic in $X_H$, then they can be connected by an $N'$--Morse geodesic in $X_G$.
\end{theorem}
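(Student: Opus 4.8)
The statement to prove is Theorem~\ref{thm:peripheralsstable}: if $x,y\in H$ can be joined by an $N$--Morse geodesic in a Cayley graph $X_H$ of a peripheral subgroup $H$, then they can be joined by an $N'$--Morse geodesic in a Cayley graph $X_G$ of $G$, with $N'$ depending only on $N$. The natural approach is to pass through the coned--off graph $\hat\Gamma$ and exploit the Bounded Coset Penetration (BCP) property together with the fact that $\hat\Gamma$ is hyperbolic. The idea is that an $N$--Morse geodesic $\gamma_H$ in $X_H$ between $x$ and $y$ projects (as the same word, read in $X_G$) to a path $\gamma$ in $X_G$ that is quasi--geodesic, and moreover is quasi--geodesic in $\hat\Gamma$ once we cone off $H$; in fact its image in $\hat\Gamma$ has bounded diameter (it all sits in a single cone vertex neighbourhood), so $\gamma$ is a $(\lambda,c)$--quasigeodesic in $X_G$ that penetrates only the coset $1\cdot H$. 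One then takes $\gamma'$ to be an actual geodesic in $X_G$ from $x$ to $y$ and shows $\gamma'$ is $N'$--Morse by a two--step argument: first that $\gamma'$ is uniformly close to $\gamma$ (hence $\gamma_H$), then that any $(K,C)$--quasigeodesic $\sigma$ in $X_G$ with endpoints on $\gamma'$ stays uniformly close to $\gamma'$.

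\textbf{Key steps, in order.}
First I would set up the dictionary between $X_G$, $X_H$ (with its induced generating set inside $G$), and the coned--off graph $\hat\Gamma$, recording that inclusion $X_H\to X_G$ is a quasi--isometric embedding onto its coset (this is standard for relatively hyperbolic groups, via BCP) so that $\gamma$, the image of $\gamma_H$ in $X_G$, is a $(\lambda_0,c_0)$--quasigeodesic with constants depending only on the ambient data. Second, given any $(K,C)$--quasigeodesic $\sigma$ in $X_G$ from $x$ to $y$, I would look at its image $\hat\sigma$ in $\hat\Gamma$: since $\hat\Gamma$ is $\delta$--hyperbolic and $\hat\gamma$, $\hat\sigma$ are quasigeodesics with the same endpoints, they fellow--travel in $\hat\Gamma$; but $\hat\gamma$ has image of diameter $\le 1$ (the whole of $\gamma_H$ is adjacent to the single cone point $v_H$), so $\hat\sigma$ stays within bounded distance of $v_H$, i.e. every vertex of $\sigma$ is within bounded $\hat\Gamma$--distance of $v_H$. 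Third, I would upgrade this: BCP (part 1, applied to $\sigma$ versus the trivial geodesic, or part 2 comparing against $\gamma$) forces $\sigma$ to actually travel through the coset $H$ with bounded excursions, and translating back to $X_G$ shows every point of $\sigma$ is within a constant $R_0=R_0(N,K,C,G,\mathcal H)$ of $H$. Fourth, using that $\gamma_H$ is $N$--Morse \emph{inside} $X_H$: a subpath of $\sigma$ confined near $H$ projects (via the coarse closest--point projection $X_G\to H$, Lipschitz by the quasi--isometric embedding) to a quasigeodesic in $X_H$ with endpoints on $\gamma_H$, hence stays in the $N(\cdot)$--neighbourhood of $\gamma_H$, hence $\sigma$ stays uniformly near $\gamma_H$ and thus near $\gamma$. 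Finally, apply this with $\sigma=\gamma'$ a genuine $X_G$--geodesic to conclude $\gamma'$ is uniformly Hausdorff--close to $\gamma_H$; then rerun the confinement argument for an arbitrary $(K,C)$--quasigeodesic $\sigma$ with endpoints on $\gamma'$ to get $\sigma\subset\mathcal N_{N'(K,C)}(\gamma')$, reading off $N'$ from the explicit constants accumulated above.

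\textbf{Main obstacle.}
The delicate point is the third step: converting ``$\sigma$ stays $\hat\Gamma$--close to the cone point $v_H$'' into ``$\sigma$ stays $X_G$--close to the coset $H$.'' Bounded $\hat\Gamma$--distance to $v_H$ does not by itself bound the $X_G$--distance to $H$, since a path could enter and leave other cosets cheaply in $\hat\Gamma$; this is exactly where one needs the BCP property (and the finiteness of $\mathcal H$, so there are only finitely many relevant coset types), carefully applied to compare $\sigma$ with $\gamma$ and control the $\Gamma$--length of every excursion of $\sigma$ into any coset other than $H$, and the entry/exit points of its excursion into $H$ itself. Keeping the dependence of all constants solely on $N$ (and the fixed data of $G$, $\mathcal H$, the Cayley graphs), rather than on $x,y$, is the bookkeeping that makes the statement nontrivial; everything else is the standard Morse--lemma machinery in a hyperbolic coned--off graph combined with the given $N$--Morse hypothesis in $X_H$.
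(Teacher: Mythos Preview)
Your overall architecture matches the paper's: take an $X_G$--geodesic $\gamma$ from $x$ to $y$, show any $(K,C)$--quasi--geodesic $q$ with the same endpoints is confined to a uniform neighbourhood of $H$, project both to $X_H$ via closest points, use undistortion to see the projections are quasi--geodesics in $X_H$, and then invoke the $N$--Morse hypothesis there to bound the Hausdorff distance. The paper does exactly this in the proof of Theorem~\ref{thm:upbdRelhyp} (the lower--bound paragraph), and your steps~4--5 are essentially identical to that argument.

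The divergence, and the gap, is in how you obtain the confinement. In your step~2 you assert that the image $\hat\sigma$ of a $(K,C)$--quasi--geodesic $\sigma\subset X_G$ is a quasi--geodesic in $\hat\Gamma$, and then use hyperbolicity of $\hat\Gamma$ to get fellow--travelling. This is false in general: the map $X_G\to\hat\Gamma$ is Lipschitz but collapses peripheral cosets, so $\hat\sigma$ is merely a Lipschitz path between two points at $\hat\Gamma$--distance $\leq 2$; nothing prevents it from wandering far in $\hat\Gamma$. (What \emph{is} true, via Osin's work, is that a suitable ``de--saturation'' of $\sigma$---replacing maximal peripheral excursions by cone edges---is a quasi--geodesic in $\hat\Gamma$; but this is a nontrivial statement, not what you wrote, and it is precisely the content you would still need to feed into BCP in your step~3.) So the obstacle you flag in step~3 is real, but it already bites in step~2.

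The paper bypasses all of this: the citation attached to the theorem, \cite[Lemma~4.15]{drutu-sapir}, is exactly the confinement statement---any $(K,C)$--quasi--geodesic in $X_G$ with endpoints in a peripheral coset $gH$ lies in a $B(K,C)$--neighbourhood of $gH$---proved there via the tree--graded structure of asymptotic cones rather than via BCP. The paper simply invokes it to get constants $A,B$ with $\gamma\subset\mathcal N_A(H)$ and $q\subset\mathcal N_B(H)$, and then runs the projection argument. If you want to keep your coned--off/BCP route, you must either cite an equivalent confinement lemma (e.g.\ from Osin's memoir) or actually prove that de--saturated quasi--geodesics are $\hat\Gamma$--quasi--geodesics; as written, the proposal does not.
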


We now proceed with the proof of Theorem \ref{bthm:relhyp}.

\begin{theorem}\label{thm:upbdRelhyp} Let $G$ be a group which is hyperbolic relative to $\mathcal{H}$. If $\stabdim(H)<\infty$ for all $H\in\mathcal{H}$ then $\max_{H\in\mathcal{H}}\stabdim(H)\leq \stabdim(G)<\infty$.
\end{theorem}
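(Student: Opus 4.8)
The argument splits into a lower bound $\max_{H\in\mathcal{H}}\stabdim(H)\le\stabdim(G)$, which needs no finiteness hypothesis, and an upper bound $\stabdim(G)<\infty$. For the lower bound, fix $H\in\mathcal{H}$. Peripheral subgroups of relatively hyperbolic groups are undistorted, so the inclusion $X_H\to X_G$ taking the basepoint of $X_H$ to that of $X_G$ is a quasi--isometric embedding. Given a Morse gauge $N$, Theorem \ref{thm:peripheralsstable} supplies a Morse gauge $N'=N'(N)$ such that this inclusion sends $X_H^{(N)}$ into $X_G^{(N')}$; the restriction of a quasi--isometric embedding to a subspace is again a quasi--isometric embedding for the induced metrics, and $\mathrm{asdim}$ is monotone under such maps, so $\mathrm{asdim}(X_H^{(N)})\le\mathrm{asdim}(X_G^{(N')})\le\stabdim(G)$. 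Taking the supremum over $N$, and then over $H$, proves the inequality.

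For the upper bound set $D:=\max_{H\in\mathcal{H}}\stabdim(H)$, finite by hypothesis; I will bound $\mathrm{asdim}(X_G^{(N)})$ by a quantity independent of $N$. The crucial input is the classification of Morse geodesics in relatively hyperbolic groups \cites{drutu-sapir,OsinRelhyp}: there is a Morse gauge $N'$ and quasigeodesic constants $\lambda\ge1,\ c\ge0$, all depending only on $N$, so that every $N$--Morse geodesic of $X_G$ projects to a $(\lambda,c)$--quasigeodesic of the coned--off graph $\hat{\Gamma}$, and each of its maximal excursions into a peripheral coset $gH$ is uniformly Hausdorff--close to an $N'$--Morse geodesic of $gH$. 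Combining this with the bounded coset penetration property and the fact that left translations of $X_H$ are isometries preserving Morse gauges, one deduces that for every coset $gH$ the subspace $gH\cap X_G^{(N)}$ is uniformly quasi--isometric to a subset of a stable subset of $X_H$ whose Morse gauge depends only on $N$; in particular $\mathrm{asdim}(gH\cap X_G^{(N)})\le D$, and — there being only finitely many peripheral subgroups — the covers witnessing this can be chosen uniformly over all cosets.

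Now I would run Osin's proof of the inheritance of finite asymptotic dimension \cite{OsinRelhyp}, relativised to $X_G^{(N)}$. The projection $\pi\colon X_G\to\hat{\Gamma}$ restricts to a coarsely Lipschitz map $q\colon X_G^{(N)}\to\hat{\Gamma}$, and the $q$--preimage of a metric ball of $\hat{\Gamma}$ is, up to uniformly bounded neighbourhoods, the union of a bounded piece of the ``transitional part'' of $X_G^{(N)}$ with sets of the form $gH\cap X_G^{(N)}$; hence for each radius the family of such preimages has asymptotic dimension at most $D$ uniformly. Since each peripheral coset is collapsed to bounded diameter in $\hat{\Gamma}$, the coned--off graph has finite asymptotic dimension without any hypothesis on the peripherals (this is part of the analysis behind \cite{OsinRelhyp}); feeding $\mathrm{asdim}(\hat{\Gamma})<\infty$ and the uniform bound $D$ on the fibres into the Hurewicz--type mapping theorem for asymptotic dimension gives $\mathrm{asdim}(X_G^{(N)})\le\mathrm{asdim}(\hat{\Gamma})+D$. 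As the right--hand side does not depend on $N$, we conclude $\stabdim(G)=\sup_N\mathrm{asdim}(X_G^{(N)})\le\mathrm{asdim}(\hat{\Gamma})+D<\infty$.

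I expect the main obstacle to lie in the bookkeeping of the middle step: one must verify that the Morse gauge and quasigeodesic constants controlling a peripheral excursion of an $N$--Morse geodesic depend only on $N$ and not on the coset, so that the fibres of $q$ genuinely form a uniform family, and one must control the accumulation of infinitely many coset--fibres inside a single $\pi$--preimage of a ball — equivalently, check that the combination theorem for asymptotic dimension applies to the tree--of--spaces decomposition of $X_G^{(N)}$ whose vertex spaces are uniformly bounded--dimensional stable subsets of the peripherals and whose skeleton embeds in $\hat{\Gamma}$. Establishing cleanly that $\mathrm{asdim}(\hat{\Gamma})<\infty$ for an arbitrary finitely generated relatively hyperbolic group (which, unlike $\mathrm{asdim}(G)$, cannot see the asymptotic dimension of the peripherals) is the other point that must be pinned down.
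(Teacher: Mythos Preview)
Your lower bound argument is essentially the same as the paper's, indeed slightly cleaner: you invoke Theorem~\ref{thm:peripheralsstable} directly to push $X_H^{(N)}$ into $X_G^{(N')}$, whereas the paper reproves that implication in the course of showing each $H$ is a stable subset of $X_G$.

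For the upper bound your route diverges from the paper's. The paper does not run an Osin--style Hurewicz argument through the projection $X_G^{(N)}\to\hat{\Gamma}$; instead it uses the embedding of \cite{Hume12} of $G$ into the product $\hat{G}\times\mathcal{T}$, where $\mathcal{T}$ is tree--graded with pieces uniformly quasi--isometric to the peripherals. The key claim in the paper is exactly the one you isolate---that peripheral excursions of an $N$--Morse geodesic in $G$ are $N''$--Morse in the peripheral for a uniform $N''$---and from it the paper deduces that $\phi(G^{(N)}_e)$ lands in a tree--graded subspace $\mathcal{T}^{(N')}$ whose pieces are uniformly quasi--isometric to $H^{(N')}_e$. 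The product formula plus the finite asymptotic dimension of $\hat{G}$ (via \cite{Bell-Fujiwara}) then finishes immediately. What the product embedding buys is precisely the bookkeeping you flag as the main obstacle: the tree--graded structure of $\mathcal{T}$ packages the ``combination of infinitely many coset--fibres'' for free, so one never has to verify a Hurewicz hypothesis on preimages of balls or assemble a tree--of--spaces combination theorem by hand. Your approach should work once those details are filled in, but the paper's route is shorter because it outsources the combinatorics to the structure of $\mathcal{T}$. Note also that the finiteness of $\mathrm{asdim}(\hat{\Gamma})$ is not quite ``part of the analysis behind \cite{OsinRelhyp}''; the paper appeals to \cite{Bell-Fujiwara} for this.
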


In the case where $asdim(H)<\infty$ for all $H\in\mathcal{H}$, $asdim(G)<\infty$ by \cite{OsinRelhyp}, so the upper bound follows from the simple observation $\stabdim(G)\leq asdim(G)$.

\begin{proof} We begin by proving the lower bound on $\stabdim(G)$. 
Let $x,y\in H$, let $\gamma'$ be an $N$--Morse geodesic from $x$ to $y$ in $X_H$ and let $\gamma$ be any geodesic from $x$ to $y$ in $X_G$. We will show that $\gamma$ is $N'$--Morse, where $N'$ does not depend on $x,y$. Denote the vertices of $\gamma$ by $x_0=x,x_1,\dots,x_d=y$ in order. Let $q$ be any $(K,C)$-quasi--geodesic in $G$ with endpoints $x,y$ and denote the vertices of $q$ by $y_0=x,y_1,\dots,y_l=y$ in order.

By \cite[Lemma 4.15]{drutu-sapir} there exist constants $A,B=B(K,C)$ such that $\gamma\subset N_A(H)$ and $q\subset N_B(H)$. For each vertex $x_i\in\gamma$ (respectively $y_i\in q$) let $h_i\in H$ satisfy $d(x_i,h_i)=d(x_i,H)$ (respectively $h'_i$ satisfy $d(y_i,h'_i)=d(y_i,H)$).

Now as each $X_H$ is undistorted in $X_G$, $h_0,h_1,\dots,h_d$ is a $(\lambda,c)$-quasi--geodesic in $H$ and $h'_0,h'_1,\dots,h'_l$ is a $(\lambda',c')$-quasi--geodesic in $H$. Here $\lambda,\lambda',c,c'$ depend on $K,C,G,H$ but not on $x,y$.

As there is an $N$--Morse geodesic from $x$ to $y$ in $X_H$ we conclude that there exists a $D$ such that $\set{h_0,\dots,h_d}\subset N_D(\set{h'_0,\dots,h'_l})$ and $\set{h'_0,\dots,h'_l}\subset N_D(\set{h_0,\dots,h_d})$. Thus $q\subset N_{A+B+D}(\gamma)$ as required.

Therefore each $H$ is a stable subset of $X_G$.
\medskip

The goal for the upper bound is to determine a uniform bound on the asymptotic dimension of $G^{(N)}_e$.

We recall that by \cite{Hume12} $G$ quasi--isometrically embeds into the product of the coned-off graph $\hat{G}$ and a tree-graded space $\mathcal{T}$ with pieces uniformly quasi--isometric to $H_i$.

Let $\phi$ denote the map from $G$ to $\mathcal{T}$. We claim that for each $N$ there exists some $N'$ such that 
$\phi(G^{(N)}_e)\subseteq \mathcal{T}^{(N')}_{\phi(e)}$.

Given the claim, it is easy to see that $\phi(G^{(N)}_e)$ is contained in a subset $\mathcal{T}^{(N)}$ of $\mathcal{T}$ which is tree-graded and the pieces are uniformly quasi--isometric to $H^{(N')}_e$. By assumption, the asymptotic dimension of the $H^{(N')}_e$ is bounded independent of $N'$.

Thus the spaces $\mathcal{T}^{(N)}$ have uniformly bounded asymptotic dimension. Since the coned-off graph $\hat{G}$ has finite asymptotic dimension \cite{Bell-Fujiwara} we have a uniform bound on the asymptotic dimension of the $G^{(N)}_e$, as required.

\medskip
To prove the claim it suffices to show that if $y\in G^{(N)}_e$ and a geodesic $g=[\phi(e),\phi(y)]$ spends sufficiently long in a piece $P$, then the restriction $\gamma$ of this geodesic to the piece is $N''$--Morse for some uniform $N''$. This follows from the simple fact that a geodesic $\gamma$ in a tree--graded space $X$ is Morse if and only if there is some Morse gauge $N$ such that the intersection of $\gamma$ with each piece is $N$--Morse, in which case $\gamma$ is $N'$--Morse, where $N'$ depends only on $N$ and $X$.

As the natural map from $P$ to $G$ is a quasi--isometric embedding, $\gamma$ defines a $(K,C)$-quasi--geodesic $q_\gamma$ where the constants $K$ and $C$ depend on $G$ but not on $P$ or $\gamma$.

The piece $P$ corresponds to a coset $gH_i$, and if $\gamma$ is sufficiently long then any geodesic $[e,y]$ spends a long time within a fixed neighbourhood of $gH_i$ \cite[Theorem $4.1$]{drutu-sapir}. In particular, $[e,y]$ passes within a fixed distance of each of the endpoints of $q_\gamma$ by BCP. By assumption there is some is $N$--Morse geodesic $[e,y]$, and therefore $q_\gamma$ (and any geodesic with the same endpoints) is contained within a uniformly bounded neighbourhood of $[e,y]$. Thus, such geodesics are $N''$--Morse for some $N''$ which does not depend on $\gamma$. This concludes the proof of the claim.
\end{proof}

{
\begin{bibdiv}
\begin{biblist}

\bib{Aougab-Durham-Taylor}{article}{
    AUTHOR={Aougab, Tarik},
    AUTHOR = {Durham, Matthew Gentry},
    AUTHOR= {Taylor, Samuel J.},
	JOURNAL={Contents of a preprint announced in arXiv:1609.06698} 
}

\bib{ACGH1}{article}{
    AUTHOR = {Arzhantseva, Goulnara N.},
    AUTHOR = {Cashen, Christopher H.},
    AUTHOR = {Gruber, Dominik},
    AUTHOR = {Hume, David},
     TITLE = {Characterization of Morse quasi--geodesics via superlinear divergence and sublinear contraction},
	Eprint={1601.01879},
	Journal= {ArXiv e-prints},
    year={2016},
}

\bib{ACGH2}{article}{
    AUTHOR = {Arzhantseva, Goulnara N.},
    AUTHOR = {Cashen, Christopher H.},
    AUTHOR = {Gruber, Dominik},
    AUTHOR = {Hume, David},
     TITLE = {Contracting geodesics in infinitely presented graphical small cancellation groups},
	Eprint={1602.03767},
	Journal= {ArXiv e-prints},
    year={2016},
}

\bib{Arzh-Delz}{article}{
	Author= {Arzhantseva, Goulnara},
	Author= {Delzant, Thomas},
	Title= {Examples of Random Groups},
	Eprint = {http://www.mat.univie.ac.at/~arjantseva/Abs/random.pdf}
}

\bib{behrstock-charney}{article}{,
    AUTHOR = {Behrstock, Jason},
    Author= {Charney, Ruth},
     TITLE = {Divergence and quasimorphisms of right--angled {A}rtin groups},
   JOURNAL = {Math. Ann.},
  FJOURNAL = {Mathematische Annalen},
    VOLUME = {352},
      YEAR = {2012},
    NUMBER = {2},
     PAGES = {339--356},
      ISSN = {0025-5831},
     CODEN = {MAANA},
   MRCLASS = {20F36 (20F65)},
       DOI = {10.1007/s00208-011-0641-8},
       URL = {http://dx.doi.org/10.1007/s00208-011-0641-8},
}

\bib{BHS}{article}{
	AUTHOR={Behrstock, Jason},
	AUTHOR={Hagen, Mark},
	AUTHOR={Sisto, Alessandro},
	Eprint={1512.06071},
	Journal= {ArXiv e-prints},
	Title= {Asymptotic dimension and small-cancellation for hierarchically hyperbolic spaces and groups},
	Year= {2015},
	}

\bib{Bell-Fujiwara}{article}{
	author = {Bell, Gregory},
	author = {Fujiwara, Koji},
	title = {The asymptotic dimension of a curve graph is finite},
   JOURNAL = {J. Lond. Math. Soc. (2)},
  FJOURNAL = {Journal of the London Mathematical Society. Second Series},
    VOLUME = {77},
      YEAR = {2008},
    NUMBER = {1},
     PAGES = {33--50},
	}

\bib{BestvinaQList}{article}{
	Author = {Bestvina, Mladen},
	Title= {Questions in Geometric Group Theory},
	Eprint={http://www.math.utah.edu/~bestvina/eprints/questions-updated.pdf}}

\bib{Bestvina-Bromberg}{article}{
    AUTHOR = {Bestvina, Mladen},
    AUTHOR = {Bromberg, Ken},
	Eprint = {1508.04832},
	Journal = {ArXiv e-prints},
	Title = {On the asymptotic dimension of the curve complex},
	Year = {2015}
	}

\bib{BBF}{article}{
    AUTHOR = {Bestvina, Mladen},
    AUTHOR = {Bromberg, Ken},
    AUTHOR={Fujiwara, Koji},
     TITLE = {Constructing group actions on quasi-trees and applications to
              mapping class groups},
   JOURNAL = {Publ. Math. Inst. Hautes \'Etudes Sci.},
  FJOURNAL = {Publications Math\'ematiques. Institut de Hautes \'Etudes
              Scientifiques},
    VOLUME = {122},
      YEAR = {2015},
     PAGES = {1--64},
      ISSN = {0073-8301},
       DOI = {10.1007/s10240-014-0067-4},
       URL = {http://dx.doi.org/10.1007/s10240-014-0067-4},
}

\bib{BestMess}{article}{
   author = {Bestvina, Mladen},
   author = {Mess, Geoffrey},
   Fjournal = {Journal of the American Mathematical Society},
  Journal = {J. Amer. Math. Soc.},
  Number = {3},
	Pages = {469--481},
    title = {The boundary of negatively curved groups},
    Volume = {4},
	Year = {1991},
}
\bib{BonkSchramm}{article}{
    AUTHOR = {Bonk, Mario},
    AUTHOR={Schramm, Oded},
     TITLE = {Embeddings of {G}romov hyperbolic spaces},
   JOURNAL = {Geom. Funct. Anal.},
  FJOURNAL = {Geometric and Functional Analysis},
    VOLUME = {10},
      YEAR = {2000},
    NUMBER = {2},
     PAGES = {266--306},
      ISSN = {1016-443X},
     CODEN = {GFANFB},
   MRCLASS = {53C23 (54E40 57M07)},
  MRNUMBER = {1771428},
MRREVIEWER = {Michel Coornaert},
}

\bib{bh}{book}{
	Address = {Berlin},
	Author = {Bridson, Martin R.},
	author = {Haefliger, Andr{\'e}},
	Pages = {xxii+643},
	Publisher = {Springer-Verlag},
	Series = {Grundlehren der Mathematischen Wissenschaften [Fundamental Principles of Mathematical Sciences]},
	Title = {Metric spaces of non-positive curvature},
	Volume = {319},
	Year = {1999},
	}

\bib{Buycapdim}{article}{
    AUTHOR = {Buyalo, Sergei V.},
     TITLE = {Asymptotic dimension of a hyperbolic space and the capacity
              dimension of its boundary at infinity},
   JOURNAL = {Algebra i Analiz},
    VOLUME = {17},
      YEAR = {2005},
    NUMBER = {2},
     PAGES = {70--95},
}

\bib{BuyLeb}{article}{
    AUTHOR = {Buyalo, Sergei V.},
    AUTHOR = {Lebedeva, Nina D.},
     TITLE = {Dimensions of locally and asymptotically self-similar spaces},
   JOURNAL = {Algebra i Analiz},
    VOLUME = {19},
      YEAR = {2007},
    NUMBER = {1},
     PAGES = {60--92},
     }
     
\bib{BuyBook}{book}{
    AUTHOR = {Buyalo, Sergei},
    AUTHOR= {Schroeder, Viktor},
     TITLE = {Elements of asymptotic geometry},
    SERIES = {EMS Monographs in Mathematics},
 PUBLISHER = {European Mathematical Society (EMS), Z\"urich},
      YEAR = {2007},
     PAGES = {xii+200},
}

\bib{Cashen16}{article}{
    AUTHOR = {Cashen, Christopher H.},
    TITLE = {Quasi-isometries need not induce homeomorphisms of contracting boundaries with the {G}romov product topology},
   JOURNAL = {Anal. Geom. Metr. Spaces},
  FJOURNAL = {Analysis and Geometry in Metric Spaces},
    VOLUME = {4},
      YEAR = {2016},
     PAGES = {278--281},  
}

\bib{charney-sultan}{article}{,
    AUTHOR = {Charney, Ruth},
    Author=  {Sultan, Harold},
     TITLE = {Contracting boundaries of {$\rm CAT(0)$} spaces},
   JOURNAL = {J. Topol.},
  FJOURNAL = {Journal of Topology},
    VOLUME = {8},
      YEAR = {2015},
    NUMBER = {1},
     PAGES = {93--117},
   MRCLASS = {20F65},
       URL = {http://doi.org/10.1112/jtopol/jtu017},
}
	
\bib{Cordes15}{article}{
	Author = {{Cordes}, Matthew},
	Journal = {To appear in Groups, Geometry, and Dynamics},
	Title = {{Morse boundaries of proper geodesic metric spaces}},
	}

\bib{CordesHume_relativehyp}{article}{
	Author = {{Cordes}, Matthew},
	Author = {{Hume}, David},
	Eprint = {1609.05154},
	Journal = {ArXiv e-prints},
	Title = {{Relatively hyperbolic groups with fixed peripherals}},
	Year = {2016},
	}

\bib{CroKle}{article}{
	Author = {Croke, Christopher B.},
	Author = {Kleiner, Bruce},
	Coden = {TPLGAF},
	Fjournal = {Topology. An International Journal of Mathematics},
	Issn = {0040-9383},
	Journal = {Topology},
	Mrreviewer = {Joseph E. Borzellino},
	Number = {3},
	Pages = {549--556},
	Title = {Spaces with nonpositive curvature and their ideal boundaries},
	Url = {http://dx.doi.org/10.1016/S0040-9383(99)00016-6},
	Volume = {39},
	Year = {2000},
	}

\bib{Dowdall:2014ab}{article}{
	Author = {Dowdall, Spencer},
	Author= {{Kent IV}, Richard P.},
	Author= {Leininger, Christopher J.},
	Fjournal = {Groups, Geometry, and Dynamics},
	Issn = {1661-7207},
	Journal = {Groups Geom. Dyn.},
	Number = {4},
	Pages = {1247--1282},
	Title = {Pseudo-{A}nosov subgroups of fibered 3-manifold groups},
	Volume = {8},
	Year = {2014},
}

\bib{Kapovich-Drutu}{book}{
	author={Dru{\c{t}}u, Cornelia},
	Author = {Kapovich, Michael},
	Title = {Lectures on Geometric Group Theory},
	Publisher = {preprint},
}
	
\bib{Drutu-Mozes-Sapir}{article} {
    AUTHOR = {Dru{\c{t}}u, Cornelia},
    AUTHOR={Mozes, Shahar},
    AUTHOR= {Sapir, Mark},
     TITLE = {Divergence in lattices in semisimple {L}ie groups and graphs
              of groups},
   JOURNAL = {Trans. Amer. Math. Soc.},
  FJOURNAL = {Transactions of the American Mathematical Society},
    VOLUME = {362},
      YEAR = {2010},
    NUMBER = {5},
     PAGES = {2451--2505},
      ISSN = {0002-9947},
       DOI = {10.1090/S0002-9947-09-04882-X},
       URL = {http://dx.doi.org/10.1090/S0002-9947-09-04882-X},
}

\bib{drutu-sapir}{article}{
	author={Dru{\c{t}}u, Cornelia},
	author={Sapir, Mark},
	title={Tree-graded spaces and asymptotic cones of groups},
	Journal={Topology},
	volume={44(5)},
	pages={959--1058} ,
	year={2005},
	note={With an appendix by Denis Osin and Mark Sapir},
	}

\bib{Durham-Taylor}{article}{
    AUTHOR = {Durham, Matthew Gentry},
    AUTHOR= {Taylor, Samuel J.},
     TITLE = {Convex cocompactness and stability in mapping class groups},
   JOURNAL = {Algebr. Geom. Topol.},
  FJOURNAL = {Algebraic \& Geometric Topology},
    VOLUME = {15},
      YEAR = {2015},
    NUMBER = {5},
     PAGES = {2839--2859},
      ISSN = {1472-2747},
   MRCLASS = {20F65 (30F60 57M07)},
  MRNUMBER = {3426695},
}

\bib{Farb-Mosher}{article}{
	Author = {Farb, Benson},
	Author= {Mosher, Lee},
	Fjournal = {Geometry and Topology},
	Journal = {Geom. Topol.},
	Mrclass = {20F65 (20F67 57M07 57S25)},
	Pages = {91--152 (electronic)},
	Title = {Convex cocompact subgroups of mapping class groups},
	Volume = {6},
	Year = {2002},}

\bib{GhysdlH}{incollection}{
    AUTHOR = {Ghys, {\'E}tienne},
    author = {de la Harpe, Pierre},
     TITLE = {quasi--isom\'etries et quasi-g\'eod\'esiques},
 BOOKTITLE = {Sur les groupes hyperboliques d'apr\`es {M}ikhael {G}romov
              ({B}ern, 1988)},
    SERIES = {Progr. Math.},
    VOLUME = {83},
     PAGES = {79--102},
 PUBLISHER = {Birkh\"auser Boston, Boston, MA},
      YEAR = {1990},
}

\bib{Gromov87}{incollection}{,
	Author = {Gromov, Misha},
	Booktitle = {Essays in group theory},
	Pages = {75--263},
	Publisher = {Springer, New York},
	Series = {Math. Sci. Res. Inst. Publ.},
	Title = {Hyperbolic groups},
	Url = {http://dx.doi.org/10.1007/978-1-4613-9586-7_3},
	Volume = {8},
	Year = {1987},
	}
	
\bib{Gromov03}{article}{,
    AUTHOR = {Gromov, Misha},
     TITLE = {Random walk in random groups},
   JOURNAL = {Geom. Funct. Anal.},
  FJOURNAL = {Geometric and Functional Analysis},
    VOLUME = {13},
      YEAR = {2003},
    NUMBER = {1},
     PAGES = {73--146},
      ISSN = {1016-443X},
     CODEN = {GFANFB},
   MRCLASS = {20F65 (20F67 20P05 60G50)},
  MRNUMBER = {1978492},
MRREVIEWER = {Thomas Delzant},
       DOI = {10.1007/s000390300002},
       URL = {http://dx.doi.org/10.1007/s000390300002},
}

\bib{Gruber-graphicalC(6)C(7)}{article}{,
	author = {Gruber, Dominik},
	title = {Groups with graphical {$C(6)$} and {$C(7)$} small cancellation presentations},
 journal              = {Trans. Amer. Math. Soc.},
 number               = {3},
 pages                = {2051--2078},
 volume               = {367},
 year                 = {2015},
	}

\bib{Gruber_personal}{article}{,
	author = {Gruber, Dominik},
	title = {Personal communication},
	}

\bib{hagen:2014aa}{article}{,
	Author = {Hagen, Mark F.},
	Doi = {10.1112/jtopol/jtt027},
	Fjournal = {Journal of Topology},
	Issn = {1753-8416},
	Journal = {J. Topol.},
	Mrclass = {20F65 (20F67)},
	Number = {2},
	Pages = {385--418},
	Title = {Weak hyperbolicity of cube complexes and quasi-arboreal groups},
	Volume = {7},
	Year = {2014},
	}

\bib{HigLafSka02}{article}{,
    AUTHOR = {Higson, Nigel},
    Author= {Lafforgue, Vincent},
    Author= {Skandalis, Georges},
     TITLE = {Counterexamples to the {B}aum-{C}onnes conjecture},
   JOURNAL = {Geom. Funct. Anal.},
  FJOURNAL = {Geometric and Functional Analysis},
    VOLUME = {12},
      YEAR = {2002},
    NUMBER = {2},
     PAGES = {330--354},
      ISSN = {1016-443X},
     CODEN = {GFANFB},
}

\bib{Hume12}{article}{
    AUTHOR = {Hume, David},
     TITLE = {{Embedding Mapping Class Groups into a finite product of trees}},
      YEAR = {2012},
    Eprint = {1207.2132},
   Journal = {To appear in Groups Geom. Dyn. ArXiv e-prints},
}

\bib{Humexp}{article}{
	Author = {Hume, David},
	Eprint = {1410.0246},
	Year = {2014},
	Journal = {To appear in Fund. Math. ArXiv e-prints},
	Title = {A continuum of expanders},
	}

\bib{Kent:2009aa}{article}{
	Author = {{Kent IV}, Richard P.},
	Author={Leininger, Christopher J.},
	Author= {Schleimer, Saul},
	Coden = {JRMAA8},
	Doi = {10.1515/CRELLE.2009.087},
	Fjournal = {Journal f\"ur die Reine und Angewandte Mathematik. [Crelle's Journal]},
	Issn = {0075-4102},
	Journal = {J. Reine Angew. Math.},
	Mrreviewer = {Javier Aramayona},
	Pages = {1--21},
	Title = {Trees and mapping class groups},
	Url = {http://dx.doi.org/10.1515/CRELLE.2009.087},
	Volume = {637},
	Year = {2009},
}

\bib{koberda:2014}{article}{
	Author = { Koberda, Thomas},
	Author = {Mangahas, Johanna},
	Author= {Taylor, Samuel J.},
	Eprint = {1412.3663},
	Journal = {ArXiv e-prints},
	Title = {The geometry of purely loxodromic subgroups of right--angled Artin groups},
	Year = {2014}}

\bib{leininger:2014aa}{article}{
	Author = {Leininger, Christopher},
	Author= {Schleimer, Saul},
	Doi = {10.4171/JEMS/495},
	Fjournal = {Journal of the European Mathematical Society (JEMS)},
	Issn = {1435-9855},
	Journal = {J. Eur. Math. Soc. (JEMS)},
	Mrclass = {Preliminary Data},
	Number = {12},
	Pages = {2669--2692},
	Title = {Hyperbolic spaces in {T}eichm\"uller spaces},
	Volume = {16},
	Year = {2014},
}

\bib{LS01}{book}{
  title = {Combinatorial group theory},
  publisher = {Springer-Verlag},
  year = {2001},
  author = {Lyndon, Roger C.},
  author = {Schupp, Paul E.},
  pages = {xiv+339},
  series = {Classics in Mathematics},
  address = {Berlin},
  note = {Reprint of the 1977 edition},
  }

\bib{MackSis}{article}{
    AUTHOR = {Mackay, John M.},
    AUTHOR = {Sisto, Alessandro},
     TITLE = {Embedding relatively hyperbolic groups in products of trees},
   JOURNAL = {Algebr. Geom. Topol.},
    VOLUME = {13},
      YEAR = {2013},
    NUMBER = {4},
     PAGES = {2261--2282},
}
\bib{Masur:1999aa}{article}{,
	Author = {Masur, Howard A.},
	Author= {Minsky, Yair N.},
	Journal = {Invent. Math.},
	Number = {1},
	Pages = {103--149},
	Title = {Geometry of the complex of curves. {I}. {H}yperbolicity},
	Volume = {138},
	Year = {1999}}
	
\bib{Masur:2000aa}{article}{,
	Author = {Masur, Howard A.},
	Author={Minsky, Yair N.},
	Coden = {GFANFB},
	Fjournal = {Geometric and Functional Analysis},
	Issn = {1016-443X},
	Journal = {Geom. Funct. Anal.},
	Number = {4},
	Pages = {902--974},
	Title = {Geometry of the complex of curves. {II}. {H}ierarchical structure},
	Volume = {10},
	Year = {2000}}
	
\bib{Min:2011aa}{article}{
	Author = {Min, Honglin},
	Doi = {10.2140/agt.2011.11.449},
	Fjournal = {Algebraic \& Geometric Topology},
	Issn = {1472-2747},
	Journal = {Algebr. Geom. Topol.},
	Mrclass = {20F67 (20F28 20F65 57M07)},
	Mrnumber = {2783234},
	Mrreviewer = {Igor Belegradek},
	Number = {1},
	Pages = {449--476},
	Title = {Hyperbolic graphs of surface groups},
	Url = {http://dx.doi.org/10.2140/agt.2011.11.449},
	Volume = {11},
	Year = {2011},
}

\bib{Morse}{article}{,
    AUTHOR = {Morse, Harold Marston},
     TITLE = {A fundamental class of geodesics on any closed surface of
              genus greater than one},
   JOURNAL = {Trans. Amer. Math. Soc.},
  FJOURNAL = {Transactions of the American Mathematical Society},
    VOLUME = {26},
      YEAR = {1924},
    NUMBER = {1},
     PAGES = {25--60},
      ISSN = {0002-9947},

}

\bib{Murray}{article}{
    AUTHOR = {Murray, Devin},
    Eprint = {1509.09314},
     TITLE = {Topology and Dynamics of the Contracting Boundary of Cocompact CAT(0) Spaces},
   Journal = {ArXiv e-prints},
      Year = {2015}
}	

\bib{Ollivier}{article}{
 author               = {Ollivier, Yann},
 journal              = {Bull. Belg. Math. Soc. Simon Stevin},
 number               = {1},
 pages                = {75--89},
 title                = {On a small cancellation theorem of {G}romov},
 url                  = {http://projecteuclid.org/euclid.bbms/1148059334},
 volume               = {13},
 year                 = {2006},
 }

\bib{Osajda}{article}{
    AUTHOR = {Osajda, Damian},
     TITLE = {Small cancellation labellings of some infinite graphs and applications},
      YEAR = {2014},
    Eprint = {1406.5015},
   Journal = {ArXiv e-prints},
}

\bib{OsinRelhyp}{article}{
    AUTHOR = {Osin, Denis V.},
     TITLE = {Relatively hyperbolic groups: intrinsic geometry, algebraic
              properties, and algorithmic problems},
   JOURNAL = {Mem. Amer. Math. Soc.},
    VOLUME = {179},
      YEAR = {2006},
    NUMBER = {843},
     PAGES = {vi+100},
}

\bib{OsinAclynhyp}{article}{
	Author = {Osin, Denis V.},
	Title={Acylindrically hyperbolic groups}  ,
   Journal = {Trans. Amer. Math. Soc.},
   Volume = {368},
   Number = {2},
   pages = {851--888},
      Year = {2016}}

\bib{PansuConf89}{article}{
    AUTHOR = {Pansu, Pierre},
     TITLE = {Dimension conforme et sph\`ere \`a l'infini des vari\'et\'es
              \`a courbure n\'egative},
   JOURNAL = {Ann. Acad. Sci. Fenn. Ser. A I Math.},
    VOLUME = {14},
      YEAR = {1989},
    NUMBER = {2},
     PAGES = {177--212},
}

\bib{sisto:2016aa}{article}{
	Author = {Sisto, Alessandro},
	Issn = {1432-1823},
	Journal = {Mathematische Zeitschrift},
	Pages = {1--10},
	Title = {Quasi--convexity of hyperbolically embedded subgroups},
	Year = {2016}}

\bib{Str90}{article}{
 author               = {Strebel, Ralph},
 booktitle            = {Sur les groupes hyperboliques d'apr{\`e}s {M}ikhael {G}romov ({B}ern, 1988)},
 date-added           = {2014-07-19 04:51:35 +0000},
 date-modified        = {2014-07-19 04:51:47 +0000},
 mrclass              = {57M05 (20F06 53C23)},
 mrnumber             = {1086661},
 pages                = {227--273},
 publisher            = {Birkh{\"a}user Boston, Boston, MA},
 series               = {Progr. Math.},
 title                = {Appendix. {S}mall cancellation groups},
 volume               = {83},
 year                 = {1990},
 }

\end{biblist}
\end{bibdiv}
}

\end{document}